\definecolor{my-linkcolor}{rgb}{0.75,0,0}
\definecolor{my-citecolor}{rgb}{0.1,0.57,0}
\definecolor{my-urlcolor}{rgb}{0,0,0.75}
\title[Singlet vertex algebras]{
Ribbon tensor structure on the full representation categories of the singlet vertex algebras
}
 \author{Thomas Creutzig, Robert McRae and Jinwei Yang}
\date{}
\address{(T. C.) Department of Mathematical and Statistical Sciences, University of Alberta, Edmonton, Alberta T6G 2G1, Canada}
 \email{creutzig@ualberta.ca}
 \address{(R. M.) Yau Mathematical Sciences Center, Tsinghua University, Beijing 100084, China}
  \email{rhmcrae@tsinghua.edu.cn}
  \address{(J. Y.) School of Mathematical Sciences, Shanghai Jiaotong University, Shanghai 200240, China}
  \email{jinwei2@sjtu.edu.cn}
 \subjclass{Primary 17B69, 18M15, 81R10, 81T40}
\newtheorem{thm}{Theorem}[section]
\newtheorem{cor}[thm]{Corollary}
\newtheorem{lem}[thm]{Lemma}
\newtheorem{prop}[thm]{Proposition}
\theoremstyle{definition}\newtheorem{defi}[thm]{Definition}
\theoremstyle{definition}\newtheorem{rem}[thm]{Remark}
\theoremstyle{definition}
\theoremstyle{definition}
\newcommand{\cE}{\mathcal{E}}
\newcommand{\cH}{\mathcal{H}}
\newcommand{\cY}{\mathcal{Y}}
\newcommand{\cV}{\mathcal{V}}
\newcommand{\cA}{\mathcal{A}}
\newcommand{\cR}{\mathcal{R}}
\newcommand{\cM}{\mathcal{M}}
\newcommand{\cS}{\mathcal{S}}
\newcommand{\cF}{\mathcal{F}}
\newcommand{\cL}{\mathcal{L}}
\newcommand{\cO}{\mathcal{O}}
\newcommand{\cP}{\mathcal{P}}
\newcommand{\cC}{\mathcal{C}}
\newcommand{\cG}{\mathcal{G}}
\newcommand{\cW}{\mathcal{W}}
\newcommand{\til}{\widetilde}
\newcommand{\CC}{\mathbb{C}}
\newcommand{\ZZ}{\mathbb{Z}}
\newcommand{\NN}{\mathbb{N}}
\newcommand{\QQ}{\mathbb{Q}}
\newcommand{\Id}{\mathrm{Id}}
\newcommand{\tens}{\boxtimes}
\newcommand{\vac}{\mathbf{1}}
\newcommand{\ind}{\mathrm{Ind}}
 \DeclareMathOperator{\im}{Im}
 \DeclareMathOperator{\rep}{Rep}
 \let\ker\relax
 \let\hom\relax
 \DeclareMathOperator{\ker}{Ker}
 \DeclareMathOperator{\hom}{Hom}
 \DeclareMathOperator{\Endo}{End}
\begin{document}
\bibliographystyle{alpha}

\numberwithin{equation}{section}

 \begin{abstract}
 We show that the category of finite-length generalized modules for the singlet vertex algebra $\mathcal{M}(p)$, $p\in\mathbb{Z}_{>1}$, is equal to the category $\mathcal{O}_{\mathcal{M}(p)}$ of $C_1$-cofinite $\mathcal{M}(p)$-modules, and that this category admits the vertex algebraic braided tensor category structure of Huang-Lepowsky-Zhang. Since $\mathcal{O}_{\mathcal{M}(p)}$ includes the uncountably many typical $\mathcal{M}(p)$-modules, which are simple $\mathcal{M}(p)$-module structures on Heisenberg Fock modules, our results substantially extend our previous work on tensor categories of atypical $\mathcal{M}(p)$-modules. We also introduce a tensor subcategory $\mathcal{O}_{\mathcal{M}(p)}^T$, graded by an algebraic torus $T$, which has enough projectives and is conjecturally tensor equivalent to the category of finite-dimensional weight modules for the unrolled restricted quantum group of $\mathfrak{sl}_2$ at a $2p$th root of unity. We compute all tensor products involving simple and projective $\mathcal{M}(p)$-modules, and we prove that both tensor categories $\mathcal{O}_{\mathcal{M}(p)}$ and $\mathcal{O}_{\mathcal{M}(p)}^T$ are rigid and thus also ribbon. As an application, we use vertex operator algebra extension theory to show that the representation categories of all finite cyclic orbifolds of the triplet vertex algebras $\mathcal{W}(p)$ are non-semisimple modular tensor categories, and we confirm a conjecture of Adamovi\'{c}-Lin-Milas on the classification of simple modules for these finite cyclic orbifolds.
\end{abstract}

\maketitle

\tableofcontents

\section{Introduction}

Besides being a rich subject in its own right, the representation theory of vertex (operator) algebras has applications to a variety of branches of mathematics and physics. Here we study the singlet vertex algebras $\cM(p)$, $p \in \ZZ_{>1}$, using and substantially extending our previous partial results in \cite{CMY-singlet}. These algebras were among the first examples of chiral algebras of logarithmic conformal field theory, and their representation theory is currently used to obtain new invariants of three-manifolds and three-dimensional topological and even quantum field theories. The applications to low-dimensional topology require ribbon tensor categories, so we start by sketching the state of the art of such categories associated to vertex operator algebras. We will describe the applications of our results in more detail at the end of the introduction. 

\subsection{Rigid vertex tensor categories}

Representation categories of general classes of vertex operator algebras are expected to admit natural braided ribbon (and in particular rigid) tensor category structure. This is a celebrated theorem of Huang for the class of rational $C_2$-cofinite vertex operator algebras, in which case the representation categories are semisimple modular tensor categories \cite{Hu-mod}. However, vertex operator algebras are rarely rational or $C_2$-cofinite, and their representation categories are rarely semisimple or finite. 

The first non-rational examples appeared three decades ago in physics in the context of low-dimensional topology and logarithmic conformal field theory, namely the WZW theory of the Lie superalgebra $\mathfrak{gl}_{1|1}$ \cite{Rozansky:1992rx}  and the singlet algebras $\cM(p)$ \cite{Kausch:1990vg}. By now, it is understood that affine vertex algebras (and their $W$-algebras) at almost all levels admit uncountably many inequivalent simple modules \cite{KR} and should also admit logarithmic modules (which are indecomposable but reducible modules on which the Virasoro zero-mode $L(0)$ acts non-semisimply).
 Even the construction of logarithmic modules is a difficult task: for affine vertex operator (super)algebras, this is achieved only for those associated to $\mathfrak{sl}_2, \mathfrak{osp}_{1|2}, \mathfrak{sl}_3$, and $\mathfrak{gl}_{1|1}$ \cite{Ad-sl, ACG, CMY3}. Among these, the complete ribbon (super)category of modules is currently understood only in the case of $\mathfrak{gl}_{1|1}$ \cite{CMY3}, and the only additional completely-understood example of a non-finite, non-semisimple tensor category of representations for a vertex operator algebra is the $\beta\gamma$-system \cite{AW}.

By `understanding' a tensor category, we mean finding both its abelian and monoidal structures. Understanding the abelian structure especially includes classifying simple and projective modules and determining the structure of all projective modules, for example their Loewy diagrams. Understanding the monoidal structure means first establishing its existence (for representation categories of vertex operator algebras, this is the vertex tensor category structure of \cite{HLZ1}-\cite{HLZ8}). Then we want to compute fusion rules, or more precisely, prove formulas for the tensor products of at least the simple and projective objects. The final goal is to prove rigidity; once this is done for a vertex algebraic tensor category, we immediately get ribbon category structure since we always have a natural ribbon twist. The main result of the present work is an understanding in this sense of the category of $C_1$-cofinite grading-restricted generalized $\cM(p)$-modules.

\subsection{Ribbon categories of atypical singlet modules}

The singlet algebra $\cM(p)$, $p\in\ZZ_{>1}$, is a subalgebra of the rank-one Heisenberg vertex algebra $\cH$, with a modified conformal vector giving it central charge $1-6\frac{(p-1)^2}{p}$. Irreducible ($\NN$-gradable) $\cM(p)$-modules were classified by Adamovi\'{c} \cite{Ad}: every Fock $\cH$-module $\cF_\lambda$, where $\lambda\in\CC$ is the eigenvalue of the Heisenberg zero-mode $h(0)$, restricts to an $\cM(p)$-module, and simple $\cM(p)$-modules are in one-to-one correspondence with Fock modules. Generically, $\cF_\lambda$ remains irreducible as an $\cM(p)$-module, with countably many exceptions. More precisely, for $r\in\ZZ$ and $s\in\lbrace 1,2,\ldots p\rbrace$, introduce
\begin{equation*}
 \alpha_{r,s}=\frac{1-r}{2}\alpha_++\frac{1-s}{2}\alpha_-, \qquad \alpha_+ = \sqrt{2p}, \qquad \alpha_- = - \sqrt{2/p},
\end{equation*}
as well as the lattice $L = \mathbb Z \alpha_+$ whose dual is 
$L^\circ=\ZZ\frac{\alpha_-}{2}$. So $L^\circ$ consists of all $\alpha_{r,s}$ for $r\in\ZZ$ and $1\leq s\leq p$. Then the Fock module $\cF_\lambda$ is simple as an $\cM(p)$-module for $\lambda \in \CC \setminus L^\circ$, while the simple $\cM(p)$-module corresponding to $\cF_{\alpha_{r, s}}$ is its socle, which we denote $\cM_{r, s}$. For $s=p$, $\cM_{r,p}$ is still equal to $\cF_{\alpha_{r, p}}$, but for $1 \leq s \leq p-1$, $\cM_{r,s}$ is characterized by the non-split exact sequence 
\begin{equation*}
  0\longrightarrow\cM_{r,s}\longrightarrow\cF_{\alpha_{r,s}}\longrightarrow\cM_{r+1,p-s}\longrightarrow 0.
 \end{equation*}
A simple $\cM(p)$-module is called \textit{typical} if it is a Fock module, and \textit{atypical} otherwise. 

In \cite{CMY-singlet}, we used the existence of tensor structure on the category of $C_1$-cofinite modules for the Virasoro algebra at central charge $1-6\frac{(p-1)^2}{p}$ (proved in \cite{CJORY}) to construct a vertex algebraic tensor category $\cC_{\cM(p)}$ of $\cM(p)$-modules containing all atypical modules. More precisely, $\cC_{\cM(p)}$ consists of all finite-length $\cM(p)$-modules whose composition factors come from the modules $\cM_{r,s}$ for $r\in\ZZ$, $1\leq s\leq p$; this category does not include the typical Fock modules $\cF_\lambda$ for $\lambda\in\CC\setminus L^\circ$, since these are not sums of $C_1$-cofinite Virasoro modules. 

No non-zero $\cM(p)$-module is projective in $\cC_{\cM(p)}$, but there is a tensor subcategory $\cC_{\cM(p)}^0$ that does have enough projectives. This subcategory is most practically defined to consist of objects $M$ having trivial monodromy with $\cM_{3,1}$, that is, the double braiding
\begin{equation*}
\cR_{\cM_{3,1},M}^2: \cM_{3,1}\tens M\longrightarrow M\tens\cM_{3,1}\longrightarrow \cM_{3,1}\tens M
\end{equation*}
is the identity. In \cite{CMY-singlet}, we showed that the typical modules $\cM_{r,p}=\cF_{\alpha_{r,p}}$ are projective in $\cC_{\cM(p)}^0$, while for $1\leq s\leq p-1$, $\cM_{r,s}$ has a length-four projective cover $\cP_{r,s}$ in $\cC_{\cM(p)}^0$ with Loewy diagram 
  \begin{equation*}
 \begin{matrix}
  \begin{tikzpicture}[->,>=latex,scale=1.5]
\node (b1) at (1,0) {$\cM_{r, s}$};
\node (c1) at (-1, 1){$\cP_{r, s}$:};
   \node (a1) at (0,1) {$\cM_{r-1, p-s}$};
   \node (b2) at (2,1) {$\cM_{r+1, p-s}$};
    \node (a2) at (1,2) {$\cM_{r,s}$};
\draw[] (b1) -- node[left] {} (a1);
   \draw[] (b1) -- node[left] {} (b2);
    \draw[] (a1) -- node[left] {} (a2);
    \draw[] (b2) -- node[left] {} (a2);
\end{tikzpicture}
\end{matrix} .
 \end{equation*}
We also showed that both tensor categories $\cC_{\cM(p)}$ and $\cC_{\cM(p)}^0$ are rigid, and we computed all tensor products involving the modules $\cM_{r,s}$ and $\cP_{r,s}$ for $r\in\ZZ$ and $1\leq s\leq p$.
 
 Recently in \cite{GN}, Gannon and Negron used our results in \cite{CMY-singlet} to show that $\cC_{\cM(p)}^0$ is ribbon tensor equivalent to a certain category of weight modules (with a suitable braiding and ribbon structure) for the unrolled restricted quantum group of $\mathfrak{sl}_2$ at $q=e^{\pi i/p}$. However, it is conjectured \cite{CGP2, CMR} that the \textit{entire} category of finite-dimensional weight modules for the quantum group is ribbon tensor equivalent to a suitable category of $\cM(p)$-modules. Our results in the present paper, described next, achieve for the first time the braided ribbon tensor structure on the correct category of $\cM(p)$-modules for this conjectural equivalence; $\cC_{\cM(p)}^0$ is then a tensor subcategory of this larger category of $\cM(p)$-modules.

\subsection{Results}

Our first main result, proved in Sections \ref{sec:genVerma} and \ref{subsec:tens_cats} is the existence of vertex tensor category structure on the category of $C_1$-cofinite $\cM(p)$-modules:
\begin{thm}\textup{(Theorem \ref{thm:C1_equals_fl}) }
  The category $\cO_{\cM(p)}$ of $C_1$-cofinite grading-restricted generalized $\cM(p)$-modules equals the category of finite-length grading-restricted generalized $\cM(p)$-modules and admits the vertex algebraic braided tensor category structure of \cite{HLZ1}-\cite{HLZ8}.
\end{thm}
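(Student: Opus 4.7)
The plan is to verify the hypotheses for the Huang-Lepowsky-Zhang construction of vertex tensor category structure, as refined in \cite{CJORY}, and simultaneously to establish the equivalence between the $C_1$-cofinite condition and the finite-length condition on grading-restricted generalized $\cM(p)$-modules. The key intermediate objects would be a family of generalized Verma-type modules $\cV_\lambda$ for each $\lambda \in \CC$, defined as universal lowest-weight $\cM(p)$-modules with one-dimensional top space on which $h(0)$ acts by $\lambda$. Each $\cV_\lambda$ is tautologically $C_1$-cofinite, being generated by a single vector, and its structure can be analyzed via the embedding $\cM(p) \hookrightarrow \cH$: the Fock module $\cF_\lambda$ is a canonical quotient of $\cV_\lambda$, with kernel generated by a finite set of singular vectors controlled by the atypicality of $\lambda$. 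In particular, the length of $\cV_\lambda$ should remain uniformly bounded, with composition factors the simple Fock module $\cF_\lambda$ for $\lambda \in \CC \setminus L^\circ$ and the atypical modules $\cM_{r,s}$ from \cite{CMY-singlet} otherwise.

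From here, two directions of inclusion yield $\cO_{\cM(p)}$ equals the category of finite-length modules. Every $C_1$-cofinite $\cM(p)$-module $M$ is a quotient of a finite direct sum of generalized Verma modules, since $M/C_1(M)$ is finite-dimensional and its $h(0)$-weight-vector lifts generate $M$; combined with finite length for the $\cV_\lambda$, this implies $M$ itself has finite length. Conversely, every simple $\cM(p)$-module is $C_1$-cofinite: for the atypical modules $\cM_{r,s}$ this was proved in \cite{CMY-singlet}, and for typical Fock modules $\cF_\lambda$ one must exhibit explicit elements of $\cM(p)$---built from normally ordered products of derivatives of the Heisenberg field---whose modes generate $\cF_\lambda$ modulo $C_1(\cF_\lambda)$. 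With the equivalence of $C_1$-cofinite and finite-length in hand, the \cite{CJORY} criteria for vertex tensor category structure apply: closure under submodules, quotients, and direct sums is immediate from the $C_1$-cofinite characterization; closure under contragredient duals follows from the finite-length characterization; closure under $P(z)$-tensor products reduces, via surjections from generalized Verma modules onto general objects of $\cO_{\cM(p)}$, to a finite-dimensional calculation about composition factors; and convergence of products and iterates of intertwining operators follows from Huang's differential-equation argument applied to $C_1$-cofinite modules.

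The main obstacle will be proving $C_1$-cofiniteness of the uncountable family of typical Fock modules $\cF_\lambda$ as $\cM(p)$-modules, together with uniformly bounding the lengths of the generalized Verma modules for atypical $\lambda$. Because $\cM(p)$ is a proper subalgebra of $\cH$, the Heisenberg modes $h(-n)$ do not directly act as $\cM(p)$-generators; one must build concrete $\cM(p)$-elements from normally ordered products of the Heisenberg field whose modes, acting on the highest-weight vector of $\cF_\lambda$, produce a spanning set modulo $C_1(\cF_\lambda)$. This is a delicate combinatorial argument requiring uniform control across all conformal weights and all Heisenberg eigenvalues $\lambda$, and it is the technical heart on which the transition from the atypical category $\cC_{\cM(p)}$ of \cite{CMY-singlet} to the full category $\cO_{\cM(p)}$ depends.
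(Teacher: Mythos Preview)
Your high-level strategy is the paper's: reduce to showing that every generalized Verma $\cM(p)$-module has finite length, then invoke the criteria of \cite{CY}. But the execution has a conceptual slip and misplaces the real difficulty. You parametrize one-dimensional top spaces by an $h(0)$-eigenvalue, yet $h\notin\cM(p)$, so $h(0)$ does not act on an abstract $\cM(p)$-module; the Zhu algebra $A(\cM(p))\cong\CC[x,y]/(P(x,y))$ is generated by $[\omega]$ and $[H]$, and irreducible top levels are labeled by $(L(0),H(0))$-eigenvalues. Relatedly, your claim that a $C_1$-cofinite module $M$ is a quotient of finitely many generalized Verma modules because lifts of $M/C_1(M)$ generate $M$ does not work as stated: such lifts need not be singular vectors, so no Verma surjection arises directly. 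The paper instead quotes \cite[Theorem~3.3.5]{CY} for the implication ``all generalized Verma modules have finite length $\Rightarrow$ $C_1$-cofinite equals finite-length.''

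More importantly, you identify $C_1$-cofiniteness of the typical Fock modules as the ``technical heart,'' but this is already \cite[Theorem~13]{CMR} and requires no new argument. The genuine difficulty, which your proposal glosses over in a single phrase, is the \emph{atypical} case: showing that the generalized Verma module $\cG_{r,s}$ has finite length. The paper accomplishes this (Lemmas \ref{lem:Gtil_Vir_submod}--\ref{lem:K_top_level} and Theorem \ref{thm:atypical_GVM}) by identifying $\cG_{r,s}$ with an explicit quotient $\til{\cG}_{r,s}$ of the projective cover $\cP_{r,s}$ constructed in \cite{CMY-singlet}, using induction from the Virasoro subalgebra, the explicit structure of $A(\cM(p))$, and the projectivity of $\cP_{r,s}$ in $\cC_{\cM(p)}^0$. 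The typical case (Theorem \ref{thm:typical_GVM}), by contrast, is a short argument with Virasoro intertwining operators showing $\cG_\lambda\cong\cF_\lambda$ directly.
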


To prove this, we verify the sufficient conditions for tensor category structure from \cite{CY}. First,
 every irreducible $\cM(p)$-module is $C_1$-cofinite by \cite[Theorem 13]{CMR}, so $\cO_{\cM(p)}$ contains the category of all finite-length generalized $\cM(p)$-modules. If these two categories coincide, then \cite[Theorem 3.3.4]{CY} shows that $\cO_{\cM(p)}$ satisfies the assumptions for applying the logarithmic tensor category theory of \cite{HLZ1}-\cite{HLZ8}, and thus $\cO_{\cM(p)}$ is indeed a braided tensor category. Then  by \cite[Theorem 3.3.5]{CY}, 
this equality of categories holds if the generalized Verma $\cM(p)$-module (using terminology from \cite{Li-intw-ops}) induced from any finite-dimensional irreducible module for the Zhu algebra of $\cM(p)$ has finite length. This we prove in Section \ref{sec:genVerma} by determining all generalized Verma $\cM(p)$-modules explicitly. 

In Theorem \ref{thm:typical_GVM}, we show that the typical irreducible $\cM(p)$-module $\cF_\lambda$, $\lambda\in\CC\setminus L^\circ$, is its own generalized Verma $\cM(p)$-module cover, by a Virasoro intertwining operator argument similar to the proof of \cite[Theorem 4.4]{AM-trip}. Finding the generalized Verma $\cM(p)$-module covers of the atypical $\cM(p)$-modules is much more difficult: besides properties of Virasoro intertwining operators, we heavily use our results on the category $\cC_{\cM(p)}^0$ from \cite{CMY-singlet}, especially the existence and projectivity of the modules $\cP_{r,s}$. In Theorem \ref{thm:atypical_GVM}, we show that the generalized Verma $\cM(p)$-module cover of $\cM_{r,s}$ is $\cP_{r,s}/M$ where $M$ is the smallest submodule such that $\cP_{r,s}/M$ has the same lowest conformal weight space as $\cM_{r,s}$. This quotient has length at most three, so all generalized Verma $\cM(p)$-modules have finite length.

Our second main result, in Sections \ref{subsec:tens_cats} and \ref{sec:projective}, is the classification of projective $\cM(p)$-modules. Since Heisenberg Fock modules admit indecomposable self-extensions of arbitary length, which remain indecomposable as $\cM(p)$-modules, $\cO_{\cM(p)}$ does not have any non-zero projective objects. As in \cite{CMY-singlet}, we resolve this problem by introducing a tensor subcategory that does have enough projectives. Specifically, we define $\cO_{\cM(p)}^T$ to be the subcategory of $\cO_{\cM(p)}$ whose objects have semisimple monodromy with $\cM_{2,1}$ (see Definitions \ref{defi:Ot} and \ref{defi:OT}). Here $T$ is the algebraic torus $T = \CC/2L^\circ$: the category $\cO_{\cM(p)}^T$ is $T$-graded with homogeneous subcategories $\cO_{\cM(p)}^t\subseteq\cO_{\cM(p)}^T$ for $t=\beta+2L^\circ\in T$ consisting of all $\cM(p)$-modules $M$ in $\cO_{\cM(p)}$ such that 
\begin{equation*}
\cR^2_{\cM_{2,1},M}=e^{-2\pi i\alpha_{2,1}\beta}\Id_{\cM_{2,1}\tens M}.
\end{equation*}
In Theorem \ref{thm:OT_properties} we show that  $\cO^T_{\cM(p)}$ is a full tensor subcategory of $\cO_{\cM(p)}$ that is closed under submodules and quotients, and in Proposition \ref{prop:irred_mod_grading} we show that $\cO_{\cM(p)}^T$ contains all simple $\cM(p)$-modules. We then describe its abelian structure:
\begin{thm} \label{thm:intro_projective}
A complete list of indecomposable projective objects in $\cO_{\cM(p)}^T$  is:
\begin{enumerate} 
\item   \textup{(Proposition \ref{prop:Prs_proj})}
For $r\in\ZZ$ and $1\leq s\leq p-1$, the indecomposable $\cM(p)$-module $\cP_{r,s}$ is a projective cover of $\cM_{r,s}$. 
\item  \textup{(Theorem \ref{thm:Flambda_proj})}
 For $\lambda\in(\CC\setminus L^\circ)\cup\lbrace\alpha_{r,p}\,\vert\,r\in\ZZ\rbrace$, the irreducible $\cM(p)$-module $\cF_\lambda$ is its own projective cover. 
 \end{enumerate}
\end{thm}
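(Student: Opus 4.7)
The plan is to combine three inputs: the $T$-grading on $\cO_{\cM(p)}^T$, the projectivity of $\cP_{r,s}$ in the subcategory $\cC_{\cM(p)}^0$ established in \cite{CMY-singlet}, and the explicit description of the generalized Verma $\cM(p)$-module covers provided by Theorems \ref{thm:typical_GVM} and \ref{thm:atypical_GVM}. Because $\cO_{\cM(p)}^T=\bigoplus_{t\in T}\cO_{\cM(p)}^t$ as an abelian category, with $\mathrm{Hom}$ and $\mathrm{Ext}^1$ vanishing across distinct grading components, projectivity can be tested one homogeneous component at a time against short exact sequences whose terms all lie in that component.

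For part (1), $\cP_{r,s}$ is already a projective cover of $\cM_{r,s}$ in $\cC_{\cM(p)}^0$ by \cite{CMY-singlet}, and the only simple objects of $\cO_{\cM(p)}^T$ that are not present in $\cC_{\cM(p)}^0$ are the typical Fock modules $\cF_\lambda$ with $\lambda\in\CC\setminus L^\circ$. The essential step is therefore to rule out nontrivial extensions $0\to\cF_\lambda\to E\to\cP_{r,s}\to 0$ inside a homogeneous component. Given such an $E$, the plan is to lift the composite $E\twoheadrightarrow\cP_{r,s}\twoheadrightarrow\cM_{r,s}$ through the generalized Verma cover of $\cM_{r,s}$ (Theorem \ref{thm:atypical_GVM}) and then chase through the length-four Loewy structure of $\cP_{r,s}$ to produce a splitting $\cP_{r,s}\to E$. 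Alternatively, one can propagate projectivity via the simple current $\cM_{r,1}$: since $\cP_{r,s}\simeq\cM_{r,1}\tens\cP_{1,s}$, tensoring by a rigid invertible object preserves projectivity, and it suffices to handle the case $r=1$.

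For part (2), since each listed $\cF_\lambda$ is simple in $\cO_{\cM(p)}^T$, being its own projective cover is equivalent to being projective. When $\lambda\in\CC\setminus L^\circ$, Theorem \ref{thm:typical_GVM} identifies $\cF_\lambda$ with its own generalized Verma module, so that for any surjection $\pi:M\twoheadrightarrow\cF_\lambda$ in $\cO_{\cM(p)}^T$, a generator $v$ of the one-dimensional lowest conformal weight space of $\cF_\lambda$ lifts to a vector $\til v$ in the corresponding finite-dimensional generalized weight space of $M$; since $\cF_\lambda$ is a simple module for the Zhu algebra of $\cM(p)$, we can arrange the lift so that the universal property of the generalized Verma module promotes $v\mapsto\til v$ to a homomorphism $\cF_\lambda\to M$ splitting $\pi$. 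The boundary case $\lambda=\alpha_{r,p}$ is handled analogously, using $\cM_{r,p}=\cF_{\alpha_{r,p}}$ together with Theorem \ref{thm:atypical_GVM}.

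The hard part is controlling extensions inside a homogeneous component that simultaneously contains typical and atypical simple modules, which occurs exactly when $\lambda-\alpha_{r,s}\in 2L^\circ$ for some $(r,s)$. The $T$-grading alone is insufficient here, and the argument must be supplemented by a careful analysis of the conformal weight spectra together with uniqueness properties of intertwining operators, in the same spirit as the proof of Theorem \ref{thm:atypical_GVM}. Once these Ext-vanishing statements are in place, indecomposability is automatic: $\cP_{r,s}$ is indecomposable by \cite{CMY-singlet}, and each listed $\cF_\lambda$ is simple.
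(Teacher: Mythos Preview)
Your plan has a structural misconception for part (1) and a genuine gap for part (2).

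For part (1), you worry about extensions $0\to\cF_\lambda\to E\to\cP_{r,s}\to 0$ with $\lambda\in\CC\setminus L^\circ$ lying in a single homogeneous component, and you identify as ``the hard part'' the case where a component simultaneously contains typical and atypical simples. But this never happens: by Proposition \ref{prop:irred_mod_grading} every atypical simple $\cM_{r,s}$ lies in $\cO_{\cM(p)}^{\alpha_{r,s}+2L^\circ}$ with $\alpha_{r,s}\in L^\circ$, while every typical $\cF_\lambda$ with $\lambda\notin L^\circ$ lies in $\cO_{\cM(p)}^{\lambda+2L^\circ}$ with $\lambda\notin L^\circ$; these cosets are disjoint. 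The paper's proof simply invokes Proposition \ref{prop:C0_and_OT}, which says $\cC_{\cM(p)}^0=\cO_{\cM(p)}^{0+2L^\circ}\oplus\cO_{\cM(p)}^{\alpha_-/2+2L^\circ}$, so projectivity of $\cP_{r,s}$ in $\cC_{\cM(p)}^0$ is already projectivity in the relevant homogeneous component, and the $T$-grading does the rest. There is no Ext-calculation to perform.

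For part (2), your generalized Verma lifting argument works when the obstructing simple $M$ has $h_M\neq h_\lambda$ or when $M\cong\cF_{\alpha_0-\lambda}$, but it fails for the self-extension case $M\cong\cF_\lambda$. The Fock module $\cF_\lambda$ has a genuine non-split self-extension $\cF_\lambda^{(2)}$ in $\cO_{\cM(p)}$, on whose two-dimensional top level $L(0)$ acts with a nontrivial Jordan block. No lift $\til v$ of the generator $v$ is an $L(0)$-eigenvector, so there is no $A(\cM(p))$-module map $T(\cF_\lambda)\to T(\cF_\lambda^{(2)})$ splitting the quotient, and the universal property of $\cG_\lambda$ gives you nothing. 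The paper closes this gap by showing $\cF_\lambda^{(2)}\notin\cO_{\cM(p)}^T$: if it were, then tensoring with $\cF_{\alpha_{1,p}-\lambda}$ and using a Heisenberg intertwining operator would force $\cF_{\alpha_{1,p}}^{(2)}\in\cO_{\cM(p)}^T$, contradicting the projectivity of $\cF_{\alpha_{1,p}}=\cM_{r,p}$ already established in part (1). Your sketch does not account for this obstruction, and without it the argument is incomplete.
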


Our third main result is the computation of tensor products involving irreducible and projective $\cM(p)$-modules; all fusion rules (dimensions of spaces of intertwining operators) follow as corollaries. As the atypical category $\cC_{\cM(p)}$ from \cite{CMY-singlet} is a tensor subcategory of $\cO_{\cM(p)}$, all tensor products involving the modules $\cM_{r,s}$ and $\cP_{r,s}$ are already computed in \cite[Theorem 5.2.1]{CMY-singlet}. In Section \ref{sec:fusion}, we use these fusion rules from \cite{CMY-singlet} as well as results on Virasoro intertwining operators and projectivity in $\cO_{\cM(p)}^T$ of the modules $\cP_{r,s}$ and $\cF_\lambda$, $\lambda\in\CC\setminus L^\circ$, to find the remaining tensor products involving typical modules:
\begin{thm}\label{thm:intro_fus_rules}
 The following tensor product formulas hold in $\cO_{\cM(p)}$:
 \begin{enumerate}
  \item \textup{(Theorem \ref{thm:Mrs_Flambda})}
  For $r\in\ZZ$, $1\leq s\leq p$, and $\lambda\in\CC\setminus L^\circ$, 
 \begin{equation*}
  \cM_{r,s}\tens\cF_\lambda\cong\bigoplus_{\ell=0}^{s-1} \cF_{\lambda+\alpha_{r,s}+\ell\alpha_-}.
 \end{equation*}
  
  \item \textup{(Theorem \ref{thm:Prs_Flambda})}
  For $r\in\ZZ$, $1\leq s\leq p-1$, and $\lambda\in\CC\setminus L^\circ$,
 \begin{align*}
  \cP_{r,s}\tens\cF_\lambda 
   \cong\bigoplus_{\ell =0}^{p-1} \left(\cF_{\lambda+\alpha_{r,s}+\ell\alpha_-}\oplus\cF_{\lambda+\alpha_{r-1,p-s}+\ell\alpha_-}\right).
  \end{align*}
  
  \item \textup{(Theorem \ref{thm:typ_typ_atyp_fusion})}
  For $\lambda,\mu\in\CC\setminus L^\circ$ such that $\lambda+\mu =\alpha_++\alpha_-+\alpha_{r,s}\in L^\circ$ for some $r\in\ZZ$, $1\leq s\leq p$,
  \begin{equation*}
  \cF_\lambda\tens\cF_{\mu}\cong\bigoplus_{\substack{s'= s\\ s'\equiv s\,\,(\mathrm{mod}\,2)\\}}^p \cP_{r,s'}\oplus\bigoplus_{\substack{s'=p+2-s\\s'\equiv p-s\,\,(\mathrm{mod}\,2)\\}}^p \cP_{r-1,s'}.
 \end{equation*}

  \item \textup{(Theorem \ref{thm:typ_typ_typ_fusion})}
  For $\lambda,\mu\in\CC\setminus L^\circ$ such that $\lambda+\mu\notin L^\circ$,
\begin{equation*}
\cF_{\lambda}\tens\cF_\mu \cong \bigoplus_{\ell=0}^{p-1} \cF_{\lambda + \mu + \ell \alpha_-}.
\end{equation*}
 \end{enumerate}

\end{thm}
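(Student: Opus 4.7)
The four parts of the theorem share a common strategy: construct enough $\cM(p)$-intertwining operators via Heisenberg vertex operators and screenings to produce nonzero maps onto the predicted right-hand sides, then use the projectivity and rigidity established in Theorem~\ref{thm:intro_projective} to upgrade these to isomorphisms. For the lower bounds, I would start from the lattice-type vertex operator $Y_\alpha\colon\cF_\alpha\otimes\cF_\beta\to\cF_{\alpha+\beta}\{z\}$ coming from the Heisenberg structure; this restricts to an $\cM(p)$-intertwining operator of type $\binom{\cF_{\alpha+\beta}}{\cF_\alpha\,\cF_\beta}$. Composing with the $\ell$-fold short screening $Q_-^\ell$, which commutes with $\cM(p)$, gives nonzero $\cM(p)$-intertwining operators of type $\binom{\cF_{\alpha+\beta+\ell\alpha_-}}{\cF_\alpha\,\cF_\beta}$ for $\ell=0,1,\ldots,p-1$. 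Restricting to the socle $\cM_{r,s}\subseteq\cF_{\alpha_{r,s}}$ in case (1), using the length-four Loewy filtration of $\cP_{r,s}$ in case (2), and using the Fock modules $\cF_\lambda,\cF_\mu$ directly in (3) and (4), these screened intertwiners produce nonzero $\cM(p)$-morphisms from the tensor product onto every summand listed on the right-hand side, with nontriviality verified on lowest conformal-weight vectors using the explicit form of $Y_\alpha$.

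To convert these morphisms to isomorphisms I exploit projectivity. By Theorem~\ref{thm:intro_projective} and the paper's rigidity of $\cO_{\cM(p)}^T$, each of $\cP_{r,s}$ and $\cF_\lambda$ with $\lambda\notin L^\circ$ is both projective and rigid, and the atypical $\cM_{r,s}$ is rigid by \cite{CMY-singlet}. Consequently every tensor product in (1)--(4) is projective, and so decomposes uniquely as a direct sum of indecomposable projectives of $\cO_{\cM(p)}^T$, namely $\cP_{r,s}$ for $1\leq s\leq p-1$ together with the typical Fock modules. In (1), (2), and (4) the hypotheses guarantee that every composition factor is a typical Fock module: in (1) and (2) because $\lambda\notin L^\circ$ while $\alpha_{r,s}+\ell\alpha_-\in L^\circ$ forces $\lambda+\alpha_{r,s}+\ell\alpha_-\notin L^\circ$, and in (4) because $\lambda+\mu\notin L^\circ$. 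Each typical Fock module is its own projective cover, so these three tensor products are semisimple direct sums of simple typicals, and the summands are exactly those produced in Step~1. Only in (3) do atypical composition factors appear and assemble into $\cP_{r,s'}$ summands; the condition $\lambda+\mu-\alpha_+-\alpha_-=\alpha_{r,s}\in L^\circ$ together with the atypical fusion of \cite[Theorem~5.2.1]{CMY-singlet} and the Loewy diagram $[\cP_{r,s'}]=2[\cM_{r,s'}]+[\cM_{r-1,p-s'}]+[\cM_{r+1,p-s'}]$ pins down which $\cP_{r,s'}$ and $\cP_{r-1,s'}$ enter via Grothendieck-ring bookkeeping against the simple constituents already produced by the screenings.

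The main obstacle is the matching upper bound on $\cM(p)$-fusion between Fock modules, namely that $\dim I\binom{\cF_\nu}{\cF_\lambda\,\cF_\mu}\leq 1$ and that this space vanishes unless $\nu-\lambda-\mu\in\{0,\alpha_-,\ldots,(p-1)\alpha_-\}$. The $p$ screenings provide a lower bound on the total fusion dimension, but ruling out further $\cM(p)$-intertwining operators is the real analytic content. I expect to control this either by decomposing the Fock modules as modules for the Virasoro subalgebra at central charge $1-6(p-1)^2/p$ and applying the rigid Virasoro intertwining operator theory of \cite{CJORY} (as already used in Theorem~\ref{thm:typical_GVM} to identify the typical generalized Verma cover), or alternatively by a Frobenius-reciprocity argument reducing typical-typical fusion to fusion with atypicals, which is fully controlled by \cite[Theorem~5.2.1]{CMY-singlet}. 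Once this single fusion-dimension count is established, all four parts follow from the projective decomposition argument sketched above.
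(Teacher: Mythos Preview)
Your argument has a genuine circularity for parts (3) and (4). You invoke ``the paper's rigidity of $\cO_{\cM(p)}^T$'' to conclude that $\cF_\lambda\tens\cF_\mu$ is projective, but in the paper the rigidity of typical Fock modules (Theorem~\ref{thm:F_lambda_rigid}) is proved \emph{after} the fusion rules and explicitly uses them: the coevaluation construction in Section~\ref{subsec:eval_and_coeval} relies on Corollary~\ref{cor:Flambda_Flambda_prime} (the $r=s=1$ case of part (3)), and the final step of Theorem~\ref{thm:F_lambda_rigid} uses part (4) to pass from generic $\lambda$ to all $\lambda$. So you cannot assume rigidity of $\cF_\lambda$ here. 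For parts (1), (2), and (4) this is actually repairable without rigidity: the tensor product lands in $\cO_{\cM(p)}^{t}$ for some $t\notin L^\circ/2L^\circ$, and Corollary~\ref{cor:Olambda_structure} already forces such objects to be semisimple direct sums of typical Fock modules. You should invoke that corollary rather than rigidity. But for part (3) there is no such shortcut: $\lambda+\mu\in L^\circ$, so the tensor product sits in $\cC_{\cM(p)}^0$, and nothing a priori tells you it is projective there.

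The paper avoids this by never claiming projectivity of $\cF_\lambda\tens\cF_\mu$ in (3). Instead it uses intertwining-operator symmetries to compute $\hom(\cF_\lambda\tens\cF_\mu,\cM_{r',s'})$ and $\hom(\cF_\lambda\tens\cF_\mu,\cP_{r',s'})$ directly from parts (1) and (2)---this is exactly the Frobenius-reciprocity reduction you mention, but reducing to the already-established $\cM_{r',s'}\tens\cF_\lambda$ and $\cP_{r',s'}\tens\cF_\lambda$, not to the atypical-atypical fusion of \cite{CMY-singlet}. Having two independent maps to $\cP_{r',s'}$ forces one to be surjective, and then projectivity of $\cP_{r',s'}$ (not of the source) splits it off. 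For part (4) the paper likewise avoids a direct upper bound on $\dim I\binom{\cF_\nu}{\cF_\lambda\,\cF_\mu}$: it embeds $\cF_\lambda$ as a summand of $\cF_{\lambda+\mu-\alpha_0}\tens\cF_{\alpha_0-\mu}$ (using only projectivity of $\cF_\lambda$), computes the resulting triple product via Lemma~\ref{lem:trip_F_tens}, and extracts the multiplicities by an inductive combinatorial identity. Your screening construction may well supply correct lower bounds, but the upper bounds require these more indirect arguments rather than either rigidity or a single fusion-dimension count.
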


The image of these fusion rules in the Grothendieck ring was predicted earlier in \cite{CM} using a conjectural Verlinde formula, and our results confirm that conjecture. Moreover, Theorems \ref{thm:intro_projective} and \ref{thm:intro_fus_rules} show that $\cO_{\cM(p)}^T$ is equivalent as an abelian category to the category of finite-dimensional weight modules for the unrolled restricted quantum group of $\mathfrak{sl}_2$ at $q = e^{\pi i /p}$, and that under this equivalence, tensor product decompositions agree. See \cite{CGP2} for the detailed structure of the unrolled quantum group category.

Our last result on $\cO_{\cM(p)}$, proved in Section \ref{sec:rigidity}, is rigidity:
\begin{thm} \textup{(Theorems \ref{thm:rig:O} and  \ref{thm:rig:OT})} 
The tensor categories $\cO_{\cM(p)}$ and $\cO_{\cM(p)}^T$  are rigid and ribbon. 
\end{thm}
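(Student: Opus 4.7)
The plan is to establish rigidity by first proving it for all simple objects of $\cO_{\cM(p)}$ and then extending to arbitrary objects; the ribbon structure will then follow automatically from the canonical twist. Rigidity of the atypical simples $\cM_{r,s}$ and of their projective covers $\cP_{r,s}$ is already known from \cite{CMY-singlet}, so the new content concerns the typical simple modules $\cF_\lambda$ for $\lambda\in\CC\setminus L^\circ$.

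The natural candidate for the dual of $\cF_\lambda$ is $\cF_{Q-\lambda}$, where $Q=\alpha_++\alpha_-$: this Fock module has the same conformal weights as the contragredient $\cF_\lambda'$, and it is the unique $\mu$ for which Theorem \ref{thm:intro_fus_rules}(3) produces an atypical tensor product with $\cM(p)$ among its composition factors. Indeed, applying Theorem \ref{thm:intro_fus_rules}(3) with $(r,s)=(1,1)$ gives
\[
\cF_\lambda\tens\cF_{Q-\lambda}\cong\bigoplus_{\substack{s'\,\mathrm{odd}\\ 1\le s'\le p}}\cP_{1,s'}.
\]
The summand $\cP_{1,1}$ contains $\cM_{1,1}=\cM(p)$ as both head and socle, which furnishes canonical candidates for the coevaluation $\cM(p)\hookrightarrow\cP_{1,1}\hookrightarrow\cF_\lambda\tens\cF_{Q-\lambda}$ and, via the braiding, the evaluation $\cF_{Q-\lambda}\tens\cF_\lambda\twoheadrightarrow\cP_{1,1}\twoheadrightarrow\cM(p)$.

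The main obstacle is to verify the snake identities. Since $\mathrm{End}_{\cM(p)}(\cF_\lambda)=\CC$, the composition
\[
\cF_\lambda\xrightarrow{\mathrm{coev}\,\tens\,\Id}\cF_\lambda\tens\cF_{Q-\lambda}\tens\cF_\lambda\xrightarrow{\Id\,\tens\,\mathrm{ev}}\cF_\lambda
\]
is necessarily a scalar, and it suffices to show this scalar is non-zero, after which rescaling the coevaluation yields a rigidity pair. I would approach this using the projectivity of $\cF_\lambda$ in $\cO^T_{\cM(p)}$ from Theorem \ref{thm:intro_projective}(2): the snake composition defines an endomorphism of the simple projective $\cF_\lambda$, which is either zero or an isomorphism, and vanishing can be ruled out by analyzing the associated endomorphism of $\cP_{1,1}$ and using that the socle and head of an indecomposable projective pair non-trivially. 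As a backup, an explicit computation in the Heisenberg free-field realization, where intertwiners among Fock modules and the standard Heisenberg duality are classical, should also confirm non-vanishing.

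Once every simple in $\cO_{\cM(p)}$ is rigid, rigidity extends to all of $\cO^T_{\cM(p)}$ using that this category has enough projectives (Theorem \ref{thm:intro_projective}): every indecomposable projective is either $\cP_{r,s}$ or a typical $\cF_\lambda$, hence rigid, so every object is a cokernel of a morphism between rigid projectives and is therefore rigid in a braided tensor category whose tensor product is right-exact. For the full category $\cO_{\cM(p)}$, Proposition \ref{prop:irred_mod_grading} places all simples inside $\cO^T_{\cM(p)}$, and Theorem \ref{thm:C1_equals_fl} ensures every object has finite length; an induction on length, combined with the standard construction of the dual of an extension from the duals of its factors, then yields rigidity of $\cO_{\cM(p)}$. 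Finally, the ribbon structure on both categories is immediate from rigidity together with the canonical twist $\theta_M=e^{2\pi i L(0)}|_M$ present in any vertex algebraic braided tensor category, which is natural, compatible with the braiding, and trivial on $\cM(p)$, so it automatically satisfies the ribbon axiom.
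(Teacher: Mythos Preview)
Your overall strategy---prove rigidity for all simples, then extend to finite-length objects---matches the paper, and you correctly identify the typical Fock modules as the new case. You also have the right evaluation and coevaluation candidates. However, there is a genuine gap at the crucial step: showing that the snake composition is non-zero.

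Your proposed justification, ``vanishing can be ruled out by analyzing the associated endomorphism of $\cP_{1,1}$ and using that the socle and head of an indecomposable projective pair non-trivially,'' does not work as stated. The snake composition is an endomorphism of $\cF_\lambda$, not of $\cP_{1,1}$, and there is no obvious ``associated'' endomorphism of $\cP_{1,1}$. More fundamentally, the coevaluation lands in the \emph{socle} copy of $\cM_{1,1}$ inside $\cP_{1,1}\subseteq\cF_\lambda\boxtimes\cF_{Q-\lambda}$, while the evaluation factors through the \emph{head}; after applying the associativity isomorphism $\cA_{\cF_\lambda,\cF_{Q-\lambda},\cF_\lambda}^{-1}$, there is no purely structural reason these should line up. The entire difficulty is precisely that the associator is not explicitly known, and knowing the tensor product decomposition (Theorem~\ref{thm:intro_fus_rules}(3)) tells you nothing about how the associator acts on specific subquotients. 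Your backup plan of a Heisenberg free-field computation is also problematic: the $\cM(p)$-tensor product and associator are \emph{not} those of $\mathcal{H}$, and there is no direct way to reduce the snake identity to a Heisenberg calculation.

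The paper circumvents this obstacle with a completely different idea: it constructs $e_\lambda$ and $i_\lambda$ so that the matrix coefficient $\langle v_{\alpha_0-\lambda},\mathfrak{R}_\lambda(v_\lambda)\rangle$ can be extracted from a four-point correlation function, then uses a generic Fock module (with $h(0)$ acting by a polynomial variable) to derive regular-singular-point differential equations whose coefficients are analytic in $\lambda$. This forces the rigidity scalar itself to be analytic in $\lambda$ on the complement of a finite set. Since it is already known to be non-zero at the atypical points $\lambda=\alpha_{r,p}$ (from \cite{CMY-singlet}), it cannot vanish identically, and the fusion rules of Theorem~\ref{thm:intro_fus_rules}(4) then transport rigidity from generic $\lambda$ to all $\lambda\in\CC\setminus L^\circ$. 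This analytic-continuation argument is the heart of the paper's Section~5 and is not something you can replace with structural considerations about $\cP_{1,1}$.

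For the extension step, your projective-resolution argument for $\cO_{\cM(p)}^T$ assumes right-exactness of the tensor product, which is not known before rigidity is established; the paper instead invokes \cite[Theorem~4.4.1]{CMY-singlet}, which is the finite-length induction you describe for $\cO_{\cM(p)}$ and applies uniformly to both categories.
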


To prove this, we use \cite[Theorem 4.4.1]{CMY-singlet} to reduce rigidity for the entire category of finite-length $\cM(p)$-modules to rigidity for all simple modules. Since we already proved in \cite{CMY-singlet} that the atypical category $\cC_{\cM(p)}$ is rigid, it is then enough to prove that the typical modules $\cF_{\lambda}$, $\lambda\in\CC\setminus L^\circ$ are rigid (as $\cM(p)$-modules). Our rigidity proof for $\cF_\lambda$ is new and completely different from the explicit calculational proofs of rigidity for typical modules of the $\beta\gamma$-vertex algebra \cite{AW} and of affine $\mathfrak{gl}_{1\vert 1}$ \cite{CMY3}. The idea is to choose evaluations $e_\lambda: \cF_\lambda'\tens\cF_\lambda\rightarrow\cM(p)$ and coevaluations $i_\lambda: \cM(p)\rightarrow\cF_\lambda\tens\cF_\lambda'$ (where $\cF_\lambda'$ is the $\cM(p)$-module contragredient of $\cF_\lambda$, also a typical Fock module) in such a way that the rigidity composition
\begin{align*}
 \cF_\lambda\xrightarrow{\cong}\cM(p)\tens\cF_\lambda\xrightarrow{i_\lambda\tens\Id} & (\cF_\lambda\tens\cF_{\lambda}')\tens\cF_\lambda\xrightarrow{\cong} \cF_\lambda\tens(\cF_{\lambda}'\tens\cF_\lambda)\xrightarrow{\Id\tens e_\lambda} \cF_\lambda\tens\cM(p)\xrightarrow{\cong} \cF_\lambda
\end{align*}
depends analytically on the Heisenberg weight $\lambda$. Then $\cF_\lambda$ is either rigid for a dense open set of $\lambda$ or non-rigid for all $\lambda$. The latter is impossible because we already know from \cite{CMY-singlet} that the modules $\cF_{\alpha_{r,p}}$, $r\in\ZZ$, are rigid, so rigidity for $\cF_\lambda$ holds for generic $\lambda$. Then we use the fusion rules of Theorem \ref{thm:intro_fus_rules}(4) to prove rigidity for all $\lambda\in\CC\setminus L^\circ$.

To show that the rigidity composition indeed depends analytically on $\lambda$, we revisit Huang's derivation \cite{Hu-diff-eqn} of regular-singular-point differential equations for conformal-field-theoretic four-point functions coming from intertwining operators among $C_1$-cofinite modules for a vertex operator algebra. Using generic Fock modules, on which the Heisenberg zero-mode acts by a polynomial variable, we show that such differential equations associated to intertwining operators among typical $\cM(p)$-modules can be chosen to depend analytically on $\lambda$. Then because matrix coefficients of the rigidity composition appear as coefficients of suitable four-point functions, the theory of ordinary differential equations combined with some additional complex analysis shows that the rigidity composition also depends analytically on $\lambda$, as desired.

\subsection{Applications and outlook}

To summarize, we show in this paper that the category $\cO_{\cM(p)}$ of $C_1$-cofinite grading-restricted generalized $\cM(p)$-modules is the same as the category of finite-length grading-restricted generalized $\cM(p)$-modules, and that this category has braided ribbon tensor category structure. We also compute all tensor products of simple modules in $\cO_{\cM(p)}$, identify a subcategory $\cO_{\cM(p)}^T$ which contains enough projective objects, and describe all projective objects in $\cO_{\cM(p)}^T$. Since all $\mathbb{N}$-gradable simple $\cM(p)$-modules are objects of $\cO_{\cM(p)}$ \cite{Ad, CMR}, our results mean that the full representation theory of the singlet vertex algebras is now `understood' in a strong sense. However, there are some remaining open problems related to the singlet vertex algebras which we discuss in this subsection. We also discuss some applications of our results.

The first major application is that we can now construct ribbon tensor (super)categories of modules for interesting vertex operator (super)algebras that contain $\cM(p)$ as a subalgebra. We give one example in this paper, namely the finite cyclic orbifolds of the triplet vertex operator algebra $\cW(p)$ studied previously in \cite{ALM, ALM2}, which are simple current extensions of $\cM(p)$. Thus we can use the vertex operator algebra extension theory of \cite{CKM-exts, CMY-completions} to show that the module category of any cyclic orbifold of $\cW(p)$ is a rigid non-degenerate braided tensor category. This gives new examples of $C_2$-cofinite vertex operator algebras whose representation categories are non-semisimple modular tensor categories. We also confirm the conjectural classification from \cite{ALM} of simple modules for these algebras, and we describe all projective modules.

We can also obtain ribbon tensor (super)categories which are both non-finite and non-semisimple, and which contain modules with infinite-dimensional conformal weight spaces, and even modules without lower bounds on their conformal weights. The first vertex operator algebras with such module categories that we can study are the $\mathcal B_p$-algebras, $p \in \ZZ_{>1}$, of \cite{CRW}. The $\mathcal B_2$-algebra is the $\beta\gamma$-vertex algebra (already analyzed in \cite{AW}), while the $\mathcal B_3$-algebra is the simple affine vertex algebra of $\mathfrak{sl}_2$ at the admissible level $- \frac{4}{3}$, first studied by Adamovi\'{c} \cite{Ad3}. For larger $p$, the $\mathcal B_p$-algebra is isomorphic to the simple subregular $W$-algebra of $\mathfrak{sl}_{p-1}$ at level $-(p-1)+ \frac{p-1}{p}$ \cite{ACGY, ACKM}. The singlet algebra $\cM(p)$ is a coset of $\mathcal B_p$ by a rank-one Heisenberg vertex operator algebra, which means that $\mathcal B_p$ is an extension of the tensor product of these two commuting subalgebras. Subregular $W$-algebras of type $A$ such as $\mathcal{B}_p$ enjoy a duality with certain principal $W$-superalgebras, first conjectured by Feigin and Semikhatov \cite{FS} and hence called Feigin-Semikhatov duality. This duality was proved in \cite{CGN, CL}, and its representation-theoretic consequences are explored in \cite{CGNS}. Let us denote the  Feigin-Semikhatov dual of $\mathcal B_p$ by $\mathcal S_p$; for $p>2$ it is the simple principal $W$-superalgebra of $\mathfrak{sl}_{p-1|1}$ at level $-(p-2) + \frac{p}{p-1}$.  The case $p=2$ is special: we set $\mathcal S_2$ to be the affine vertex superalgebra of $\mathfrak{gl}_{1|1}$. Then
$\mathcal S_p$ is also an extension of $\cM(p)$ times a  Heisenberg algebra.

Since the first version of this paper was completed, we have combined our results on $\cM(p)$ with vertex operator superalgebra extension theory to construct non-finite and non-semisimple rigid tensor (super)categories of modules for $\mathcal{B}_p$ and $\mathcal{S}_p$, $p>1$, as well as for the finite cyclic orbifolds of $\mathcal{B}_2$ and their Feigin-Semikhatov duals \cite{CMY-Bp}. These include the first examples, other than the tensor category for the $\beta\gamma$-vertex algebra $\mathcal{B}_2$ constructed by Allen and Wood \cite{AW}, of vertex algebraic tensor categories containing modules with infinite-dimensional conformal weight spaces and modules with no lower bounds on their conformal weights. 

Another family of vertex algebras that contains $\mathcal M(2)$ as a subalgebra is the minimal $W$-algebras of $\mathfrak{sl}_{2n}$ at level $-n -\frac{1}{2}$ \cite{ACPV}. Using this result together with our understanding of the full representation category of $\mathcal M(2)$ from the present paper, the main result of \cite{ACPV} is then that the Kazhdan-Lusztig category for the simple affine vertex operator algebra of  $\mathfrak{sl}_{2n}$ at the non-admissible level $-n -\frac{1}{2}$, $n\in\mathbb{Z}_{\geq 2}$, is semisimple, and hence a braided tensor category by \cite{CY}. Moreover, all simple objects of this tensor category are simple currents and hence rigid. This example is interesting because so far there are no good techniques in general for studying representation categories of affine vertex operator algebras at non-admissible levels. 
Further vertex operator (super)algebras containing singlet subalgebras will be analyzed in forthcoming papers; examples will include most affine vertex operator superalgebras at level $1$.

Further applications of the singlet algebras and their extensions appear in physics and low-dimensional topology. For example, the $\mathcal B_p$-algebras are the chiral algebras of certain four-dimensional superconformal field theories called Argyres-Douglas theories of type $(A_1, A_{2p-3})$ \cite{C, ACGY}. The singlet algebra itself has recently appeared in connection with new invariants of $3$-manifolds called $\hat{Z}$-invariants introduced in \cite{CCFGH}. These are formal power series associated to a $3$-manifold together with a connected component of the moduli space of flat $G$-connections for some complex Lie group $G$. For $G=SL(2)$, the power series for certain $3$-manifolds  coincide with characters of certain $\cM(p)$-modules, and it is proposed that the modules themselves are associated to the flat connections. Also in low-dimensional topology,  Costantino, Geer and Patureau-Mirand have introduced axiomatic TQFTs associated to non-finite and non-semisimple categories \cite{CGP}. In \cite{CDGG}, non-semisimple topological field theories in the physics sense are introduced. These theories support vertex operator algebras at certain two-dimensional \emph{corners} of the theory. These vertex operator algebras  are closely related to $\cM(p)$, and in particular a TQFT (in the sense of \cite{CGP}) associated to  $\cO_{\cM(p)}$  should appear (see \cite[Conjecture 1]{CDGG}). Actually, in that conjecture, the unrolled restricted quantum group of $\mathfrak{sl}_2$ appears; this leads to an interesting open problem related to the singlet algebra. 

The logarithmic Kazhdan-Lusztig correspondence refers to equivalences of non-semisimple braided tensor categories associated to quantum groups and vertex operator algebras. The best-known example is the correspondence between a quasi-Hopf modification of the restricted quantum group of $\mathfrak{sl}_2$ at a $2p$th root of unity and the triplet algebra $\cW(p)$ \cite{FGST1, FGST2, FHST, NT, CGR, CLM, GN}. But there is also a conjectural correspondence between the category of finite-dimensional weight modules for the unrolled restricted quantum group of $\mathfrak{sl}_2$ at a $2p$th root of unity and a suitable category of $\cM(p)$-modules \cite{CGP2, CMR}. Based on our results in this paper, especially Theorems \ref{thm:intro_projective} and \ref{thm:intro_fus_rules}, it is clear that $\cO_{\cM(p)}^T$ is the correct category of $\cM(p)$-modules for this correspondence.

So far, existence of a braided tensor equivalence between $\cO_{\cM(p)}^T$ and the corresponding category for the unrolled restricted quantum group remains an open problem. Even though we have described $\cO_{\cM(p)}^T$ in great detail in this paper, to the extent that we can see that the singlet and quantum group categories are equivalent as categories and have the same Grothendieck rings, this does not necessarily mean there is an equivalence of categories that also preserves braided tensor structure. Recently, Gannon and Negron have achieved a major partial result on this Kazhdan-Lusztig correspondence \cite{GN}: they proved that there is a braided tensor equivalence between the atypical subcategory $\cC_{\cM(p)}^0$ (which we constructed in \cite{CMY-singlet}) and the corresponding subcategory on the unrolled restricted quantum group side. However, their methods do not extend to proving a tensor equivalence between $\cO_{\cM(p)}^T$ and the full quantum group category since they relied on the existence of a tensor generator for $\cC_{\cM(p)}^0$, namely $\cM_{1,2}$, whereas $\cO_{\cM(p)}^T$ is not singly-generated as a tensor category.

A different method for proving logarithmic Kazhdan-Lusztig correspondences was introduced recently in \cite{CLM}. We believe that this new method can be adapted to prove the full correspondence between $\cO_{\cM(p)}^T$ and the unrolled restricted quantum group category. If so, then the tensor category structure on $\cO_{\cM(p)}^T$ derived in this paper will surely be needed in the proof, since the techniques of \cite{CLM} require \textit{a priori} rigid tensor category structure on $\cO_{\cM(p)}^T$. (In particular, although \cite{CLM} proves the logarithmic Kazhdan-Lusztig correspondence for the triplet algebra $\cW(2)$, it does not address rigidity or fusion rules for typical singlet modules in the $p=2$ case.)

We conclude this introduction with one more open problem related to the singlet vertex algebras. In \cite{ALM-Dm}, it was shown that $\cM(p)$ has an order-$2$ automorphism $\Psi$, and partial results on the classification of simple modules for the $\ZZ/2\ZZ$-fixed-point subalgebra $\cM(p)^+$ were achieved. However, except for small values of $p$, the full classification of simple $\mathcal{M}(p)^+$-modules, equivalently of simple $\Psi$-twisted $\cM(p)$-modules, is still conjectural. It is also not known whether there is a braided tensor category of $\cM(p)^+$-modules which contains all simple modules. (If such a tensor category exists, then one could use vertex operator algebra extension theory to study the dihedral group orbifolds of the triplet algebra $\cW(p)$, since these orbifolds contain $\cM(p)^+$ as subalgebra.) Unfortunately, constructing such a tensor category for $\cM(p)^+$ seems to be difficult, since some simple $\cM(p)^+$-modules (the ones obtained from $\Psi$-twisted $\cM(p)$-modules in \cite{ALM-Dm})  seem not to be $C_1$-cofinite. But on the other hand, this means that $\cM(p)^+$ could be an interesting test case for extending vertex algebraic tensor category theory beyond $C_1$-cofinite modules.

\vspace{3mm}
 
 \noindent{\bf Acknowledgments.} We thank Shashank Kanade for discussions, and we thank the referee for suggestions on the presentation of this paper. T. Creutzig  is supported by  NSERC Grant Number RES0048511. R. McRae is supported by a startup grant from Tsinghua University. J. Yang is supported by a startup grant from Shanghai Jiaotong University.

\section{Preliminaries}

In this section, we briefly recall notation for vertex operator algebras, their modules, and intertwining operators, and then we discuss in more detail the representation theories of the Virasoro and singlet vertex operator algebras at central charge $c_{p,1}=13-6p-6p^{-1}$.

\subsection{Vertex operator algebras and intertwining operators}\label{subsec:VOAs_and_intw_ops}

We use the definition of vertex operator algebra from \cite{FLM, LL}. In particular, a vertex operator algebra $V$ has a conformal weight grading $V=\bigoplus_{n\in\ZZ} V_{(n)}$ given by eigenvalues of the Virasoro zero-mode $L(0)$, a vertex operator map $Y: V\otimes V\rightarrow V((x))$, a vacuum vector $\vac\in V_{(0)}$, and a conformal vector $\omega\in V_{(2)}$. Given a vertex operator algebra $V$, we use the definition of generalized $V$-module from \cite{HLZ1}. In particular, a generalized $V$-module $W=\bigoplus_{h\in\CC} W_{[h]}$ is a graded vector space such that each $W_{[h]}$ is the generalized $L(0)$-eigenspace with generalized eigenvalue $h$. A generalized $V$-module is \textit{grading restricted} if each $W_{[h]}$ is finite dimensional and for any $h\in\CC$, $W_{[h-n]}=0$ for all sufficiently positive $n\in\ZZ$. We use the notation
\begin{align*}
Y_W: V\otimes W & \rightarrow W((x))\nonumber\\
v\otimes w & \mapsto Y_W(v,x)w=\sum_{n\in\ZZ} v_n w\,x^{-n-1}
\end{align*}
for the vertex operator action of $V$ on a (grading-restricted) generalized $V$-module $W$. We will sometimes call grading-restricted generalized $V$-modules simply \textit{$V$-modules} for short.

A \textit{weak $V$-module} is a module for $V$ considered as a vertex algebra, that is, no grading is assumed. Then an \textit{$\NN$-gradable weak $V$-module} is a weak $V$-module $W$ that admits an $\NN$-grading $W=\bigoplus_{n\in\NN} W(n)$ such that
\begin{align}\label{eqn:N-gradable}
\deg v_n w = \mathrm{wt}\,v+\deg w-n-1
\end{align}
for any $n\in\ZZ$ and homogeneous $v\in V$, $w\in W$ (here we are using the notation $\deg w = m$ for $w\in W(m)$ to distinguish the $\NN$-grading of $W$ from the conformal weight grading of a generalized $V$-module). It is easy to see that any grading-restricted generalized $V$-module is $\NN$-gradable (see for example \cite[Remark 2.4]{CMY-completions}).

For $W$ an $\NN$-gradable weak $V$-module, we define its \textit{top level} to be
\begin{equation*}
T(W)=\lbrace w\in W\,\vert\,v_n w =0\,\,\text{if}\,\,v\,\,\text{is homogeneous and}\,\,\mathrm{wt}\,v-n-1<0\rbrace.
\end{equation*}
Clearly $W(0)\subseteq T(W)$, though the reverse inclusion might not hold if $W$ is not simple. In \cite{Zh}, Y. Zhu showed that $T(W)$ is a module for the \textit{Zhu algebra} $A(V)$. We will not need the precise definition of $A(V)$ here; we just recall that $A(V)$ is a unital associative algebra structure on $V/O(V)$ for a certain subspace $O(V)\subseteq V$. For $v\in V$, we use the notation $[v]=v+O(V)\in A(V)$; the action of $[v]$ on the top level of an $\NN$-gradable weak $V$-module $W$ is given by $[v]\cdot w=o(v)w$ for $v\in V$, $w\in T(W)$, where \begin{align*}
o(v)=\mathrm{Res}_x\,x^{-1}Y_W(x^{L(0)} v,x)
\end{align*}
is the degree-preserving component of $Y_W$. If $v$ is homogeneous, then $o(v)=v_{\mathrm{wt}\,v-1}$.

The forgetful functor $T$ from $\NN$-gradable weak $V$-modules to $A(V)$-modules has a left adjoint: As in \cite[Definition 2.7]{Li-intw-ops}, the \textit{generalized Verma $V$-module} $G(M)$ induced from an $A(V)$-module $M$ is an $\NN$-gradable weak $V$-module equipped with a homomorphism $M\rightarrow T(G(M))$ such that for any $A(V)$-module homomorphism $f: M\rightarrow T(W)$, where $W$ is an $\NN$-gradable weak $V$-module, there is a unique $V$-module homomorphism $F: G(M)\rightarrow W$ making the diagram
\begin{equation*}
\xymatrix{
G(M) \ar[rd]^{F} & \\
M \ar[u] \ar[r]_{f} & W\\
}
\end{equation*}
commute. In particular, if $W$ is a simple $V$-module, with conformal weight grading necessarily of the form $\bigoplus_{n\in\NN} W_{[h+n]}$ for some $h\in\CC$, then $T(W)=W_{[h]}$ and $W$ is the unique irreducible quotient of $G(W_{[h]})$.

More generally, for $W=\bigoplus_{n\in\NN} W(n)$ an $\NN$-gradable weak $V$-module and any $N\in\NN$, each subspace $W(n)$ for $0\leq n\leq N$ is a module for the higher-level Zhu algebra $A_N(V)=V/O_N(V)$ constructed in \cite{DLM}. For $v\in V$, the action of $[v]=v+O_N(V)\in A_N(V)$ on each $W(n)$ is again given by $o(v)$. Moreover, \cite[Theorem 4.1]{DLM} shows that for any $A_N(V)$-module $M$, there is an $\NN$-gradable generalized Verma $V$-module $G_N(M)$ such that $[G_N(M)](N)\cong M$ as $A_N(V)$-modules and such that $G_N(M)$ is generated by this subspace. (The assertion in \cite[Theorem 4.1]{DLM} that $[G_N(M)](0)\neq 0$ only applies if $M$ does not factor through an $A_{N-1}(V)$-module).

If $M$ is a finite-dimensional simple $A_N(V)$-module for some $N\in\NN$, then $L(0)=o(\omega)$ (where $\omega$ is the conformal vector of $V$) acts by a scalar on $M$, since $[\omega]$ is central in $A_N(V)$ by \cite[Theorem 2.3(iii)]{DLM}. Then \eqref{eqn:N-gradable} and the fact that $M$ generates $G_N(M)$ as a $V$-module imply that $G_N(M)$ has a conformal weight grading which is just a shift of its $\NN$-grading. In particular, every $V$-submodule of $G_N(M)$ is $\NN$-graded. This means that $G_N(M)$ has a maximal proper $V$-submodule, which is the sum of all ($\NN$-graded) submodules which intersect the generating subspace $[G_N(M)](N)\cong M$ trivially. Thus $G_N(M)$ has a unique simple (weak) $V$-module quotient when $M$ is a finite-dimensional simple $A_N(V)$-module.

We now recall some elements of the (logarithmic) vertex algebraic tensor category theory developed in \cite{HLZ1}-\cite{HLZ8}. We use the definition of (logarithmic) intertwining operator from \cite{HLZ2}. In particular, if $W_1$, $W_2$, and $W_3$ are three modules for a vertex operator algebra $V$, an intertwining operator of type $\binom{W_3}{W_1\,W_2}$ is a linear map
\begin{align*}
\cY: W_1\otimes W_2 & \rightarrow W_3[\log x]\lbrace x\rbrace\nonumber\\
w_1\otimes w_2 & \mapsto\cY(w_1,x)w_2=\sum_{h\in\CC}\sum_{k\in\NN} (w_1)_{h;k}w_2\,x^{-h-1}(\log x)^k
\end{align*}
which satisfies lower truncation, the $L(-1)$-derivative property, and the Jacobi identity. We will need the following two consequences of the Jacobi identity: the commutator formula
\begin{equation}\label{eqn:intw_op_comm}
v_n\cY(w_1,x) = \cY(w_1,x)v_n +\sum_{k\geq 0}\binom{n}{k} x^{n-k}\cY(v_k w_1,x)
\end{equation}
for $v\in V$, $n\in\ZZ$, and $w_1\in W_1$; and the iterate formula
\begin{equation}\label{eqn:intw_op_iterate}
\cY(v_n w_1,x)=\sum_{k\geq 0}(-1)^k\binom{n}{k}\left(v_{n-k} x^k\cY(w_1,x)-(-1)^n x^{n-k}\cY(w_1,x)v_k\right)
\end{equation}
for $v\in V$, $n\in\ZZ$, and $w_1\in W_1$. Given $V$-modules $W_1$, $W_2$, and $W_3$, we use $I_V\binom{W_3}{W_1\,W_2}$ to denote the vector space of intertwining operators of type $\binom{W_3}{W_1\,W_2}$. An intertwining operator $\cY\in I_V\binom{W_3}{W_1\,W_2}$ is called surjective if $W_3$ is spanned by vectors of the form $(w_1)_{h;k} w_2$ for $w_1\in W_1$, $w_2\in W_2$, $h\in\CC$, and $k\in\NN$.

For $z\in\CC^\times$, a $P(z)$-intertwining map is a linear map $W_1\otimes W_2\rightarrow \overline{W}_3:=\prod_{h\in\CC} (W_3)_{[h]}$ obtained by substituting $x\mapsto z$ in an intertwining operator $\cY$ of type $\binom{W_3}{W_1\,W_2}$, using any choice of branch of $\log z$. Given a branch $\ell(z)$ of logarithm, $\cY(w_1,e^{\ell(z)})$ denotes the intertwining map specified by this branch. In \cite{HLZ3}, tensor products of $V$-modules are defined using intertwining maps, but one can equivalently use intertwining operators:
\begin{defi}
Let $\cC$ be a category of generalized $V$-modules, and let $W_1$, $W_2$ be objects of $\cC$. A \textit{tensor product} of $W_1$ and $W_2$ in $\cC$ is a pair $(W_1\tens W_2, \cY_\tens)$, where $W_1\tens W_2$ is an object of $\cC$ and $\cY_\tens$ is an intertwining operator of type $\binom{W_1\tens W_2}{W_1\,W_2}$, that satisfies the following universal property: For any object $W_3$ of $\cC$ and intertwining operator $\cY$ of type $\binom{W_3}{W_1\,W_2}$, there is a unique $V$-module homomorphism $f_\cY: W_1\tens W_2\rightarrow W_3$ such that $f_\cY\circ\cY_\tens=\cY$.
\end{defi}

If a tensor product $(W_1\tens W_2,\cY_\tens)$ of $W_1$ and $W_2$ in $\cC$ exists, then it is unique up to unique isomorphism, and the tensor product intertwining operator $\cY_\tens$ is surjective (see \cite[Proposition 4.23]{HLZ3}). Under suitable conditions on $\cC$ specified in \cite{HLZ1}-\cite{HLZ8} (such as closure under tensor products), vertex algebraic tensor products give $\cC$ braided tensor category structure. For a detailed description of the left and right unit isomorphisms $l$ and $r$, the associativity isomorphisms $\cA$, and the braiding isomorphisms $\cR$ in a braided tensor category of grading-restricted generalized modules for a vertex operator algebra, see \cite{HLZ8} or the exposition in \cite[Section 3.3]{CKM-exts}. 

Although the conditions imposed on $\cC$ in \cite{HLZ1}-\cite{HLZ8} are extensive, most of them are satisfied by the category of $C_1$-cofinite $V$-modules (especially convergence of compositions of intertwining operators \cite{Hu-diff-eqn} and closure under tensor products \cite{Mi}). A $V$-module $W$ is $C_1$-cofinite if $\dim W/C_1(W)<\infty$, where $C_1(W)$ is the span of vectors $v_{-1} w$ for $w\in W$ and $v\in V$ homogeneous such that $\mathrm{wt}\,v>0$. In \cite{CJORY, CY}, it is essentially shown that the category of $C_1$-cofinite grading-restricted $V$-modules indeed satisfies all conditions in \cite{HLZ1}-\cite{HLZ8} (and thus is a vertex algebraic braided tensor category) if it is closed under contragredient modules. Recall from \cite{FHL} that the contragredient of a $V$-module $W$ is a $V$-module structure on the graded dual $W'=\bigoplus_{h\in\CC} W_{[h]}^*$. The category of $C_1$-cofinite grading-restricted generalized $V$-modules is indeed closed under contragredients if it equals the category of finite-length grading-restricted generalized $V$-modules, and this in turn holds if for all finite-dimensional irreducible $A(V)$-modules $M$, the generalized Verma $V$-module $G(M)$ has finite length (see \cite[Theorems 3.3.4 and 3.3.5]{CY}). In Section \ref{sec:tens_cats}, we will use this criterion to show that the category of $C_1$-cofinite modules for the singlet vertex operator algebra $\cM(p)$, $p>1$, is a vertex algebraic braided tensor category.

\subsection{The Virasoro algebra at central charge \texorpdfstring{$c_{p,1}$}{c(p,1)}}

As usual, the Virasoro algebra $\cV ir$ is the Lie algebra with basis $\lbrace L(n)\,\vert\,n\in\ZZ\rbrace\cup\lbrace\mathbf{c}\rbrace$ with $\mathbf{c}$ central and commutators
\begin{equation*}
 [L(m),L(n)]=(m-n)L(m+n)+\frac{m^3-m}{12}\delta_{m+n,0}\mathbf{c}
\end{equation*}
for $m,n\in\ZZ$. Let $\cV ir_{\geq 0}=\mathrm{span}\lbrace L(n),\mathbf{c}\,\vert\,n\geq 0\rbrace$ and $\cV ir_-=\mathrm{span}\lbrace L(n)\,\vert\,n<0\rbrace$.

A $\cV ir$-module $W$ has \textit{central charge} $c\in\CC$ if $\mathbf{c}$ acts on $W$ as scalar multiplication by $c$. In this work, we only consider $\cV ir$-modules of central charge
\begin{equation*}
 c_{p,1}:=13-6p-6p^{-1}=1-6\frac{(p-1)^2}{p}
\end{equation*}
for $p\in\ZZ_{>1}$. At this central charge, the Verma module $\cV_h$ for $h\in\CC$ is the induced module
\begin{equation*}
 \cV_h=U(\cV ir)\otimes_{U(\cV ir_{\geq 0})} \CC v_h,
\end{equation*}
where $\CC v_h$ is the one-dimensional $\cV ir_{\geq 0}$-module on which $\mathbf{c}$ acts by the central charge $c_{p,1}$, $L(0)$ acts by the conformal weight $h$, and $L(n)$ for $n>0$ acts by $0$. By the Feigin-Fuchs criterion for the existence of singular vectors in Verma modules \cite{FF}, $\cV_h$ is reducible if and only if $h=h_{r,s}$ for some $r,s\in\ZZ_+$, where
\begin{equation}\label{eqn:hrs_def}
 h_{r,s}:=\frac{r^2-1}{4}p-\frac{rs-1}{2}+\frac{s^2-1}{4}p^{-1}=\frac{(pr-s)^2-(p-1)^2}{4p}.
\end{equation}
Due to the conformal weight symmetries $h_{r+1,s+p}=h_{r,s}$ and $h_{r,s}=h_{-r,-s}$, we may assume $1\leq s\leq p$. For $r\geq 1$ and $1\leq s\leq p$, we use the notation $\cV_{r,s}=\cV_{h_{r,s}}$, and the notation $v_{r,s}$ for a generating vector of conformal weight $h_{r,s}$ in $\cV_{r,s}$. Non-zero (necessarily injective) homomorphisms between reducible Verma modules $\cV_{r,s}$ are completely described by the following embedding diagrams (see for example \cite[Section 5.3]{IK}):
\begin{itemize}
 \item For $r\geq 1$ and $1\leq s\leq p-1$, we have the diagram
 \begin{equation}\label{eqn:embedding_rs}
  \cV_{r,s}\longleftarrow\cV_{r+1,p-s}\longleftarrow\cV_{r+2,s}\longleftarrow\cV_{r+3,p-s}\longleftarrow\cdots
 \end{equation}

 \item For $r\geq 1$ and $s=p$, we have the diagram
 \begin{equation}\label{eqn:embedding_rp}
  \cV_{r,p}\longleftarrow\cV_{r+2,p}\longleftarrow\cV_{r+4,p}\longleftarrow\cV_{r+6,p}\longleftarrow\cdots
 \end{equation} 
\end{itemize}
For each $r\geq 1$ and $1\leq s\leq p$, every non-zero submodule of $\cV_{r,s}$ is generated by its singular vectors, that is, its $L(0)$-eigenvectors which are annihilated by $L(n)$ for $n>0$ (see \cite[Theorem 6.5]{IK}). Since a singular vector of conformal weight $h$ in $\cV_{r,s}$ induces a non-zero $\cV ir$-module homomorphism $\cV_h\rightarrow\cV_{r,s}$, we can thus make the following observations based on the above embedding diagrams:
\begin{prop}\label{prop:Verma_structure}
 For $r\geq 1$ and $1\leq s\leq p$,
 \begin{enumerate}
  \item Each non-zero submodule $W$ in $\cV_{r,s}$ or one of its quotients is generated by a unique (up to scale) singular vector of minimal conformal weight in $W$.
  
  \item The unique irreducible quotient of $\cV_{r,s}$ is
  \begin{equation*}
   \cL_{r,s}=\left\lbrace\begin{array}{lll}
                          \cV_{r,s}/\cV_{r+1,p-s} & \text{if} & 1\leq s\leq p-1\\
                          \cV_{r,p}/\cV_{r+2,p} & \text{if} & s=p\\
                         \end{array}
\right. .
  \end{equation*}
 \end{enumerate}
\end{prop}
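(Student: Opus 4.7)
The plan is to derive both parts from the embedding diagrams \eqref{eqn:embedding_rs}-\eqref{eqn:embedding_rp} combined with the cited result that every non-zero submodule of $\cV_{r,s}$ is generated by its singular vectors. Together these inputs imply the key intermediate statement that the singular vectors of $\cV_{r,s}$ occur exactly at the weights $h_{r,s} < h_{r+1,p-s} < h_{r+2,s} < \cdots$ (respectively $h_{r,p} < h_{r+2,p} < \cdots$ for $s=p$) listed along the chain, with a one-dimensional singular-vector subspace at each such weight, and that the submodule generated by the corresponding singular vector is precisely the Verma submodule $\cV_{r+k,?}$ at that position in the chain. Strict monotonicity of these weights is immediate from \eqref{eqn:hrs_def} via the identities $h_{r+1,p-s}-h_{r,s}=rs$, $h_{r+2,s}-h_{r+1,p-s}=(r+1)(p-s)$, and $h_{r+2,p}-h_{r,p}=pr$, all positive for $r \geq 1$ and $1\leq s\leq p-1$.

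For part (1) applied to a non-zero submodule $W\subseteq\cV_{r,s}$, I let $h_0$ be the minimal conformal weight in $W$ and observe that every $w\in W_{[h_0]}$ is automatically singular, since $L(n)w$ would lie in $W_{[h_0-n]}=0$ for $n>0$. Since $W$ is generated by its singular vectors, $W_{[h_0]}\neq 0$; by the catalogue above, $h_0=h_{r+k,?}$ for some $k\geq 0$, $W_{[h_0]}$ is one-dimensional, spanned by some $v$ that generates $\cV_{r+k,?}\subseteq W$. Any other singular vector of $W$ has weight $h_{r+j,?}$ for some $j\geq k$ (since $j<k$ would contradict minimality of $h_0$) and hence lies in $\cV_{r+j,?}\subseteq\cV_{r+k,?}$; thus $W=\cV_{r+k,?}$, establishing both existence and uniqueness of $v$ up to scalar. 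A byproduct is that the submodule lattice of $\cV_{r,s}$ is the linear chain $\cV_{r,s}\supset\cV_{r+1,p-s}\supset\cV_{r+2,s}\supset\cdots$ (and analogously for $s=p$).

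For part (1) in a quotient $Q=\cV_{r,s}/W$, I pull back a non-zero submodule $W'\subseteq Q$ to $W''$ with $W\subsetneq W''\subseteq\cV_{r,s}$, and apply the submodule case to write $W''=U(\cV ir)\cdot v$ for a unique-up-to-scalar singular vector $v$ of minimal weight $h_v$ in $W''$. The condition $W\subsetneq W''$ forces $v\notin W$, so the image $\bar v\in W'$ is a non-zero singular vector that generates $W'$. Any weight-$h$ vector in $W'$ lifts to a weight-$h$ vector in $W''$, so the minimal weight of $W'$ is also $h_v$, and the surjection $\CC v=W''_{[h_v]}\twoheadrightarrow W'_{[h_v]}$ forces $W'_{[h_v]}=\CC\bar v$, giving uniqueness. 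Part (2) is then immediate from the linear chain structure: the maximal proper submodule of $\cV_{r,s}$ is $\cV_{r+1,p-s}$ for $1\leq s\leq p-1$ (or $\cV_{r+2,p}$ for $s=p$), and since no submodule lies strictly between this term and $\cV_{r,s}$, the quotient $\cL_{r,s}$ is simple. The main care point, rather than a genuine obstacle, is justifying the catalogue of singular vectors of $\cV_{r,s}$ from the embedding diagrams, which is a standard consequence of Feigin-Fuchs theory.
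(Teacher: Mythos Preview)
Your proposal is correct and follows exactly the approach the paper intends: the paper states this proposition as a direct ``observation'' from the embedding diagrams \eqref{eqn:embedding_rs}--\eqref{eqn:embedding_rp} together with the cited fact (\cite[Theorem 6.5]{IK}) that every non-zero submodule of $\cV_{r,s}$ is generated by its singular vectors, and you have simply written out those observations in full. Your verification of the weight gaps, the identification of the submodule lattice as a chain, and the pullback argument for quotients are all standard and correct.
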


Taking $(r,s)=(1,1)$, the maximal proper submodule of $\cV_{1,1}$ is generated by the singular vector $L(-1)v_{1,1}$, and the irreducible quotient $\cL_{1,1}$ is a vertex operator algebra with vacuum $\vac=v_{1,1}+\langle L(-1)v_{1,1}\rangle$ and conformal vector $\omega=L(-2)\vac$ \cite{FZ1}. We use $\cL(p)$ to denote $\cL_{1,1}$ considered as a vertex operator algebra. Irreducible $\cL(p)$-modules are precisely the irreducible quotients of Verma modules, that is, the $\cL_{r,s}$ for $r,s\in\ZZ_+$ and the $\cV_h$ for $h\in\CC\setminus\lbrace h_{r,s}\,\vert\,r,s\in\ZZ_+\rbrace$. It was shown in \cite{CJORY} that the category of $C_1$-cofinite grading-restricted generalized $\cL(p)$-modules equals the category of finite-length central-charge-$c_{p,1}$ $\cV ir$-modules whose composition factors come from the $\cL_{r,s}$ for $r\geq 1$, $1\leq s\leq p$. Further, $\cO_p$ admits the vertex algebraic braided tensor category structure of \cite{HLZ1}-\cite{HLZ8}; the detailed structure of this tensor category was determined in \cite{MY}, where it was shown in particular that $\cO_p$ is rigid.

The Zhu algebra $A(\cL(p))$ is isomorphic to $\CC[x]$, with the isomorphism given by $[\omega]\mapsto x$ \cite{FZ1}. Intertwining operators among $\cL(p)$-modules $W$ can be studied using the $A(\cL(p))$-bimodules $A(W)$; see \cite{FZ1} for their precise definition. For our purposes here, we will only need a special case of \cite[Proposition 2.5]{MY}; to prepare for its statement, note that a conformal weight space of an $\cL(p)$-module with minimal conformal weight is an $A(\cL(p))$-module on which $[\omega]$ acts by $L(0)$: 
\begin{prop}\label{prop:piY_surjective}
 Let $\cY$ be a surjective intertwining operator of type $\binom{W_3}{W_1\,W_2}$, where $W_1$, $W_2$, and $W_3$ are generalized $\cL(p)$-modules such that $W_2$ is a quotient of a Verma module $\cV_{h_2}$, $h_2\in\CC$, and the conformal weight grading of $W_3$ has the form $W_3=\bigoplus_{n\in\NN} (W_3)_{[h_3+n]}$ for some $h_3\in\CC$. Then there is a surjective $A(\cL(p))$-module homomorphism
 \begin{equation*}
  \pi(\cY): A(W_1)\otimes_{A(\cL(p))}\CC v_{h_2}\rightarrow (W_3)_{[h_3]}.
 \end{equation*}
In particular, if $A(W_1)\otimes_{A(\cL(p))}\CC v_{h_2}$ is finite dimensional and $(W_3)_{[h_3]}$ is non-zero, then $h_3$ is an eigenvalue for the action of $[\omega]$ on $A(W_1)\otimes_{A(\cL(p))}\CC v_{h_2}$. 
\end{prop}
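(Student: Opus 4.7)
\noindent\emph{Proof plan.} The strategy is to construct $\pi(\cY)$ by the standard Frenkel-Zhu recipe, adapted to logarithmic intertwining operators, following the proof of \cite[Proposition~2.5]{MY}. For a homogeneous element $w_1 \in W_1$ of conformal weight $h_1$, the definition sets $\pi(\cY)([w_1] \otimes v_{h_2})$ equal to the $(W_3)_{[h_3]}$-component of $\cY(w_1, x) v_{h_2}$, which by a conformal-weight count is extracted from a single value $h = h_1 + h_2 - h_3 - 1$ of the mode index, giving an element of $(W_3)_{[h_3]}$. The count uses that $v_{h_2}$ is a genuine $L(0)$-eigenvector with eigenvalue $h_2$, and that $W_3$ is grading-restricted from below with $(W_3)_{[h_3]}$ as its lowest conformal weight space. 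One then extends $\pi(\cY)$ to general $w_1 \in W_1$ by linearity.

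The main work is to check that $\pi(\cY)$ descends to a map on $A(W_1) \otimes_{A(\cL(p))} \CC v_{h_2}$, that is, that it vanishes on the Frenkel-Zhu subspace $O(W_1) \subseteq W_1$ and respects balance over $A(\cL(p))$. Both assertions reduce, via repeated use of the commutator formula \eqref{eqn:intw_op_comm} and iterate formula \eqref{eqn:intw_op_iterate} with $v = \omega$, to the two basic identities $L(n) v_{h_2} = 0$ for $n > 0$ and $L(0) v_{h_2} = h_2 v_{h_2}$ (which hold because $v_{h_2}$ is the image of the generating highest-weight vector of $\cV_{h_2}$), together with the grading restriction $(W_3)_{[h_3 - n]} = 0$ for $n > 0$. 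Under the isomorphism $A(\cL(p)) \cong \CC[x]$ sending $[\omega] \mapsto x$, the right $[\omega]$-action on $[w_1] \in A(W_1)$ translates via \eqref{eqn:intw_op_iterate} into an operation on $\cY(w_1, x) v_{h_2}$ that, after passing positive Virasoro modes through $\cY$ to annihilate $v_{h_2}$, collapses to the scalar $h_2 = L(0) \cdot v_{h_2}$, yielding the desired balance.

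For surjectivity, because $\cY$ is surjective the space $(W_3)_{[h_3]}$ is spanned by projections of vectors $(u_1)_{h;k} u_2$ for $u_1 \in W_1$ and $u_2 \in W_2$; writing $u_2 = X \cdot v_{h_2}$ for some $X \in U(\cV ir_-)$ (possible since $W_2$ is a quotient of $\cV_{h_2}$) and successively applying \eqref{eqn:intw_op_comm} with Virasoro elements to move the negative Virasoro modes in $X$ past $\cY(u_1, x)$ shows each such projection lies in the image of $\pi(\cY)$, modulo error terms of conformal weight strictly less than $h_3$ that vanish by grading-restriction. The ``in particular'' clause is then immediate: $\pi(\cY)$ is a surjective $A(\cL(p)) \cong \CC[x]$-module homomorphism onto $(W_3)_{[h_3]}$, on which $[\omega] = L(0)$ has $h_3$ as its only generalized eigenvalue, so $h_3$ must be an eigenvalue of $[\omega]$ on the finite-dimensional source. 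The main obstacle is the careful bookkeeping of $\log x$-contributions when moving Virasoro modes past $\cY$; these are controlled by the fact that $v_{h_2}$ is a genuine (not merely generalized) $L(0)$-eigenvector, which keeps the logarithmic structure tractable throughout.
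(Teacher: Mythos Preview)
The paper does not actually give its own proof of this proposition; it simply states the result as a special case of \cite[Proposition~2.5]{MY}, with a remark that the finite-generation hypothesis in that reference can be dropped since it was only used to control the cosets of conformal weights. Your proposal correctly reconstructs the Frenkel--Zhu-type argument that underlies \cite[Proposition~2.5]{MY}: defining $\pi(\cY)$ via the degree-zero component of the intertwining operator applied to the highest-weight vector, checking well-definedness on $A(W_1)\otimes_{A(\cL(p))}\CC v_{h_2}$ using the commutator and iterate formulas together with the highest-weight property of $v_{h_2}$, and deriving surjectivity by commuting elements of $U(\cV ir_-)$ past $\cY$. So your approach is essentially the same as what the paper invokes by citation, just made explicit.
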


\begin{rem}
 In \cite[Proposition 2.5]{MY}, it was assumed that $W_3$ is finitely generated, but this assumption was only used to ensure that the conformal weights of $W_3$ are contained in finitely many cosets of $\CC/\ZZ$.
\end{rem}

We will need to use Proposition \ref{prop:piY_surjective} in the special cases $W_1=\cL_{1,2},\cL_{3,1}$. In these cases, the $A(\cL(p))$-bimodule $A(W_1)$ was determined in \cite{FZ2} (under the unnecessary assumption $p\notin\QQ$); see \cite[Example 2.12]{FZ2} or the calculations in \cite[Sections 3.1 and 5.1]{MY}. The $A(\cL(p))\cong\CC[x]$-bimodule $A(\cL_{1,2})$ is given by
\begin{equation*}
 A(\cL_{1,2})\cong\CC[x,y]/(f_{1,2}(x,y))
\end{equation*}
where
\begin{equation*}
 f_{1,2}(x,y)=\left(x-y-\left(h_{1,2}+1-p^{-1}\right)\right)(x-y-h_{1,2})-p^{-1} y
\end{equation*}
and the left and right actions of $[\omega]$ are multiplication by $x$ and $y$, respectively. Similarly,
\begin{equation*}
 A(\cL_{3,1})\cong\CC[x,y]/(f_{3,1}(x,y))
\end{equation*}
where
\begin{equation*}
 f_{3,1}(x,y)=(x-y)\left((x-y-h_{3,1})(x-y-1)-4py\right).
\end{equation*}
Thus for any $h\in\CC$,
\begin{align*}
 A(\cL_{1,2})\otimes_{A(\cL(p))} \CC v_h &\cong \CC[x]/(f_{1,2}(x,h)),\nonumber\\
 A(\cL_{3,1})\otimes_{A(\cL(p))} \CC v_h & \cong \CC[x]/(f_{3,1}(x,h)).
\end{align*}
After finding the roots of $f_{1,2}(x,h)$ and $f_{3,1}(x,h)$, we use Proposition \ref{prop:piY_surjective} to conclude:
\begin{cor}\label{cor:conf_wts}
 Let $W_2$ and $W_3$ be grading-restricted generalized $\cL(p)$-modules such that $W_2$ is a quotient of a Verma module $\cV_{h_2}$ for some $h_2\in\CC$, and the conformal weight grading of $W_3$ has the form $W_3=\bigoplus_{n\in\NN} (W_3)_{[h_3+n]}$ for some $h_3\in\CC$ with $(W_3)_{[h_3]}\neq 0$.
 \begin{enumerate}
  \item If there is a surjective intertwining operator of type $\binom{W_3}{\cL_{1,2}\,W_2}$, then
  \begin{equation*}
   h_3\in\left\lbrace h_2+\frac{p^{-1}}{4}\pm\frac{p^{-1}}{2}\sqrt{4ph_2+(p-1)^2} \right\rbrace .
  \end{equation*}

  \item If there is a surjective intertwining operator of type $\binom{W_3}{\cL_{3,1}\,W_2}$, then
  \begin{equation*}
   h_3\in\left\lbrace h_2, h_2+p\pm\sqrt{4ph_2+(p-1)^2}\right\rbrace.
  \end{equation*}
 \end{enumerate}
\end{cor}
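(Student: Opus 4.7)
The plan is to invoke Proposition \ref{prop:piY_surjective} with $W_1=\cL_{1,2}$ or $\cL_{3,1}$ and then read off the possible values of $h_3$ from the explicit bimodule descriptions of $A(\cL_{1,2})$ and $A(\cL_{3,1})$ recalled just before the corollary. Once a surjective intertwining operator of type $\binom{W_3}{W_1\,W_2}$ is given, the proposition supplies a surjective $A(\cL(p))$-module homomorphism $A(W_1)\otimes_{A(\cL(p))}\CC v_{h_2}\twoheadrightarrow(W_3)_{[h_3]}$. Since the target is non-zero and carries the generalized $[\omega]$-eigenvalue $h_3$, the number $h_3$ must appear as an eigenvalue of $[\omega]$ on the source, and since $[\omega]$ acts by multiplication by $x$ on $A(W_1)\otimes_{A(\cL(p))}\CC v_{h_2}\cong\CC[x]/(f_{1,i}(x,h_2))$, this pins $h_3$ down to the roots of the appropriate polynomial.

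For part (1), the relevant polynomial is $f_{1,2}(x,h_2)=\bigl(x-h_2-(h_{1,2}+1-p^{-1})\bigr)(x-h_2-h_{1,2})-p^{-1}h_2$, a quadratic in $x$. Setting $y=x-h_2$ and substituting the values $h_{1,2}=\frac{3-2p}{4p}$ and $h_{1,2}+1-p^{-1}=\frac{2p-1}{4p}$ (both obtained from formula \eqref{eqn:hrs_def}), the equation becomes $y^2-\frac{1}{2p}y+\frac{(2p-1)(3-2p)}{16p^2}-p^{-1}h_2=0$, and the quadratic formula yields
\begin{equation*}
y=\frac{p^{-1}}{4}\pm\frac{p^{-1}}{2}\sqrt{4ph_2+(p-1)^2}.
\end{equation*}
Translating back to $x=h_2+y$ gives the claimed set of possible $h_3$'s.

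For part (2), the polynomial $f_{3,1}(x,h_2)=(x-h_2)\bigl((x-h_2-h_{3,1})(x-h_2-1)-4ph_2\bigr)$ is cubic but already factors as a linear times a quadratic. One root is $x=h_2$. Substituting $h_{3,1}=2p-1$ (from \eqref{eqn:hrs_def}) into the quadratic factor and again setting $y=x-h_2$ gives $y^2-2py+(2p-1)-4ph_2=0$, whose solutions are $y=p\pm\sqrt{(p-1)^2+4ph_2}$. This produces the remaining two possible values $h_3=h_2+p\pm\sqrt{4ph_2+(p-1)^2}$.

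Thus the proof is a direct application of Proposition \ref{prop:piY_surjective} combined with the quadratic formula. There is no conceptual obstacle; the only thing to watch is the arithmetic bookkeeping required to show that the discriminant of the quadratic in $y$ simplifies to $4ph_2+(p-1)^2$ in both cases, which is essentially the content of the identity $(p+1)^2-4p=(p-1)^2$ plus routine algebra with the explicit expressions for $h_{1,2}$ and $h_{3,1}$.
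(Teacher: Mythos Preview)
Your proof is correct and follows exactly the approach indicated in the paper, which simply states that the corollary follows from Proposition~\ref{prop:piY_surjective} after finding the roots of $f_{1,2}(x,h_2)$ and $f_{3,1}(x,h_2)$. Your explicit root computations, including the verification that the discriminants simplify to $4ph_2+(p-1)^2$, fill in precisely the arithmetic the paper leaves to the reader.
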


We will also use a special case of \cite[Proposition 2.10]{Li-intw-ops}, which says that under certain conditions, the map $\pi(\cY)$ in the statement of Proposition \ref{prop:piY_surjective} vanishes if and only if $\cY$ does:
\begin{prop}\label{prop:pi_injective}
 In the setting of Proposition \ref{prop:piY_surjective}, if $W_1$, $W_2$, and $W_3$ are irreducible $\cL(p)$-modules, then 
 \begin{equation*}
  \dim I_{\cL(p)}\binom{W_3}{W_1\,W_2}\leq\dim\hom_{A(\cL(p))}\left(A(W_1)\otimes_{A(\cL(p))} \CC v_{h_2}, (W_3)_{[h_3]}\right).
 \end{equation*}
\end{prop}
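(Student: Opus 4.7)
The statement is a special case of \cite[Proposition 2.10]{Li-intw-ops}; my plan is to sketch its proof. The idea is to extend the construction of $\pi(\cY)$ from Proposition \ref{prop:piY_surjective} to arbitrary (not necessarily surjective) intertwining operators, yielding a linear map
\begin{equation*}
\pi : I_{\cL(p)}\binom{W_3}{W_1\,W_2} \to \hom_{A(\cL(p))}\bigl(A(W_1) \otimes_{A(\cL(p))} \CC v_{h_2},\, (W_3)_{[h_3]}\bigr),
\end{equation*}
and then to show that $\pi$ is injective.

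The definition of $\pi(\cY)$ is the same as before: for $w_1\in W_1$ homogeneous of conformal weight $h_1$, let $\pi(\cY)(\overline{w_1}\otimes v_{h_2})$ be the coefficient of $x^{h_1+h_2-h_3-1}$ (with no $\log x$ factor) in $\cY(w_1,x)v_{h_2}$. By conformal-weight considerations this lies in $(W_3)_{[h_3]}$, and the same Frenkel--Zhu Jacobi-identity calculations that justify Proposition \ref{prop:piY_surjective} show that $\pi(\cY)$ descends to an $A(\cL(p))$-module homomorphism on $A(W_1)\otimes_{A(\cL(p))}\CC v_{h_2}$; dropping the surjectivity hypothesis on $\cY$ only removes the surjectivity conclusion for $\pi(\cY)$. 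Linearity of $\pi$ in $\cY$ is clear.

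For injectivity, suppose $\cY\neq 0$; I would show $\pi(\cY)\neq 0$. The linear span $\im\cY$ of all coefficients $(w_1)_{h;k}w_2$ of $\cY$ is an $\cL(p)$-submodule of $W_3$ by the intertwining-operator commutator formula \eqref{eqn:intw_op_comm}, and since $W_3$ is irreducible with $\cY\neq 0$, we have $\im\cY=W_3$, so in particular $(W_3)_{[h_3]}\subseteq\im\cY$. Hence there exist $w_1^{(i)}\in W_1$ and $w_2^{(i)}\in W_2$ such that $u:=\sum_i (w_1^{(i)})_{h_i;k_i}w_2^{(i)}$ is a nonzero vector in $(W_3)_{[h_3]}$. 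Since $W_2$ is irreducible and generated by $v_{h_2}$, each $w_2^{(i)}$ lies in $U(\cL(p))v_{h_2}$, and repeated application of the commutator formula \eqref{eqn:intw_op_comm} rewrites $u$ as a linear combination of Virasoro modes acting on coefficients of $\cY(w_1',x)v_{h_2}$ for various $w_1'\in W_1$. Since $W_1$ is irreducible and generated by its top level $T(W_1)$, the iterate formula \eqref{eqn:intw_op_iterate} further reduces these to coefficients of $\cY(u',x)v_{h_2}$ for $u'\in T(W_1)$. Projecting back onto $(W_3)_{[h_3]}$ and matching weights, $u$ becomes a linear combination of values $\pi(\cY)(\overline{u'}\otimes v_{h_2})$, so $\pi(\cY)\neq 0$.

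The main obstacle is the careful bookkeeping in this final reduction: one must verify that the weight-matching constraint picks out precisely the mode of each $\cY(u',x)v_{h_2}$ that feeds into the $(W_3)_{[h_3]}$ component, and that this contribution is exactly $\pi(\cY)(\overline{u'}\otimes v_{h_2})$ rather than some other quantity. This is nontrivial but is standard in Frenkel--Zhu $A(V)$-bimodule theory, and is executed in detail in \cite{Li-intw-ops}.
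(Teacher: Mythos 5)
Your sketch is correct and follows the same Frenkel--Zhu strategy used in the cited source \cite[Proposition~2.10]{Li-intw-ops}: extend $\pi(\cdot)$ to a linear map on the whole intertwiner space and show it is injective by using irreducibility of $W_3$ to produce a nonzero vector of $\operatorname{Im}\cY$ in $(W_3)_{[h_3]}$, then the commutator formula \eqref{eqn:intw_op_comm} to move $W_2$ down to $v_{h_2}$ and the iterate formula \eqref{eqn:intw_op_iterate} to move $W_1$ down to $T(W_1)$. The paper itself gives no proof of this proposition -- it cites Li and, in Remark \ref{rem:pi_injective}, reinterprets the statement as the classical fact that fusion rules among irreducible Virasoro modules are at most $1$ (citing \cite[Lemma~2.20]{FZ2}, which is where one finds the careful bookkeeping you flag: the two reduction steps interact, since the iterate formula reintroduces modes on $W_2$ that again need the commutator formula, and termination must be argued via weight bounds; one also uses that the surviving Virasoro words, once normally ordered and applied to the lowest-weight space of $W_3$, collapse to scalars). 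Your identification of where the difficulty lies is exactly right, and deferring those details to the reference is consistent with how the paper handles it.
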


\begin{rem}\label{rem:pi_injective}
 If $W_1$, $W_2=U(\cV ir)\cdot v_{h_2}$, and $W_3$ are irreducible $\cL(p)$-modules, then $\dim (W_3)_{[h_3]} =1$ and $A(W_1)\otimes_{A(\cL_p)}\CC v_{h_2}$ will be some quotient of $\CC[x]$ as an $A(\cL(p))\cong\CC[x]$-module. Thus the above proposition is simply the well-known fact that fusion rules for irreducible $\cV ir$-modules are never greater than $1$ (see in particular \cite[Lemma 2.20]{FZ2}).  
\end{rem}

\subsection{The singlet vertex operator algebras}

In this subsection, we discuss definitions and known results in the representation theory of the singlet vertex operator algebras, mainly using notation from \cite{CRW, CMY-singlet}; see also \cite{Ad} for the first systematic mathematical study of the singlet algebras. Fix an integer $p>1$ and set 
\begin{equation*}
 \alpha_+=\sqrt{2p},\qquad\alpha_-=-\sqrt{2/p}.
\end{equation*}
We define $L=\ZZ\alpha_+$; this is a rank-$1$ even lattice because 
\begin{equation*}
 \langle \alpha_+,\alpha_+\rangle=2p.
\end{equation*}
The dual lattice of $L$ is $L^\circ=\ZZ\frac{\alpha_-}{2}$. 

Viewing $\CC$ as an abelian Lie algebra with symmetric bilinear form $\langle\cdot,\cdot\rangle$, we have the associated rank-$1$ Heisenberg vertex algebra $\mathcal{H}$. We use the symbol $h$ to denote the basis vector $1$ of $\CC$, that is,  $\langle h,h\rangle=1$. Then $\mathcal{H}$ is generated by the degree-one vector $h(-1)\vac$, and the standard conformal vector of $\mathcal{H}$ is $\omega_{Std}=\frac{1}{2}h(-1)^2\vac$. However, we shall consider $\mathcal{H}$ as a vertex operator algebra with respect to the modified conformal vector
\begin{equation*}
 \omega=\frac{1}{2}h(-1)^2\vac+\frac{p-1}{\sqrt{2p}}h(-2)\vac =\omega_{Std}+\frac{\alpha_0}{2}h(-2)\vac,
\end{equation*}
where $\alpha_0=\alpha_++\alpha_-$. With respect to this conformal vector, the Virasoro central charge of $\mathcal{H}$ is $c_{p,1}=13-6p-6p^{-1}$, so the vertex operator subalgebra of $\mathcal{H}$ generated by $\omega$ is $\cL(p)$. 

The irreducible $\mathcal{H}$-modules consist of the Heisenberg Fock modules $\cF_\lambda$ for $\lambda\in\CC$, where the unique (up to scale) lowest conformal weight vector $v_\lambda$ that generates $\cF_\lambda$ satisfies
\begin{equation*}
 h(n)v_\lambda=\delta_{n,0}\lambda v_\lambda
\end{equation*}
for $n\geq 0$. The lowest conformal weight of $\cF_\lambda$ (with respect to $\omega$, not $\omega_{Std}$) is 
\begin{equation}\label{eqn:h_lambda_def}
 h_\lambda=\frac{1}{2}\lambda(\lambda-\alpha_0),
\end{equation}
rather than the usual $\frac{1}{2}\lambda^2$. The modification to $\omega$ also affects the $\mathcal{H}$-module structure of the contragredient $\cF_{\lambda}'$: we have
\begin{equation}\label{eqn:Heis_contras}
 \cF_\lambda'\cong\cF_{\alpha_0-\lambda}
\end{equation}
rather than the usual $\cF_{-\lambda}$.

We will need to focus particularly on the Heisenberg weights in $L^\circ$, that is, $\lambda\in\ZZ\frac{\alpha_-}{2}$. Any such $\lambda$ can be expressed as 
\begin{equation*}
 \alpha_{r,s}=\frac{1-r}{2}\alpha_++\frac{1-s}{2}\alpha_-
\end{equation*}
for certain $r,s\in\ZZ$. Thanks to the periodicity $\alpha_{r+1,s+p}=\alpha_{r,s}$, any $\lambda\in L^\circ$ is equal to a unique $\alpha_{r,s}$ with $1\leq s\leq p$. In particular, the Heisenberg weights $\lambda\in L=\ZZ\alpha_+$ have the form $\alpha_{2n+1,1}$ for $n\in\ZZ$. The minimal conformal weight \eqref{eqn:h_lambda_def} of the Fock module $\cF_{\alpha_{r,s}}$ is
\begin{equation*}
 h_{\alpha_{r,s}}=\frac{r^2-1}{4}p-\frac{rs-1}{2}+\frac{s^2-1}{4}p^{-1}.
\end{equation*}
For $r\geq 1$ and $1\leq s\leq p$, this is exactly the Virasoro conformal weight $h_{r,s}$ defined in \eqref{eqn:hrs_def}. For $r\leq 0$, symmetries of the Virasoro conformal weights show that $h_{\alpha_{r,s}}=h_{1-r,p-s}$. Also, \eqref{eqn:Heis_contras} specializes to
\begin{equation*}
 \cF_{\alpha_{r,s}}'\cong\cF_{\alpha_{-r,-s}} =\left\lbrace\begin{array}{lll}
                              \cF_{\alpha_{1-r,p-s}} & \text{if} & 1\leq s\leq p-1\\
                              \cF_{\alpha_{2-r,p}} & \text{if} & s=p
                                      \end{array}
\right. 
\end{equation*}
 for $r\in\ZZ$ and $1\leq s\leq p$.
 
 The Heisenberg vertex operator algebra $\mathcal{H}$ is not semisimple as a $\cV ir$-module. In fact, by \cite[Theorem 3.1]{Ad}, one way to define the singlet vertex operator algebra $\cM(p)$ for any integer $p>1$ is that $\cM(p)$ is the Virasoro socle of $\mathcal{H}$. See \cite{Ad} also for another definition of $\cM(p)$ as the kernel of a certain screening operator $\til{Q}: \mathcal{H}\rightarrow\cF_{\alpha_-}$. As a $\cV ir$-module,
 \begin{equation}\label{eqn:sing_decomp_Vir}
  \cM(p)\cong\bigoplus_{n= 0}^\infty\cL_{2n+1,1},
 \end{equation}
while as a vertex algebra, $\cM(p)$ is generated by $\omega$ together with a Virasoro singular vector $H$ of conformal weight $h_{3,1}=2p-1$ (see \cite[Theorem 3.2]{Ad}). That is, $H$ generates the $\cV ir$-submodule $\cL_{3,1}\subseteq\cM(p)$. We will use two notations for the modes of the vertex operator $Y_M(H,x)$ acting on any $\cM(p)$-module $M$:
\begin{equation*}
 Y_M(H,x) =\sum_{n\in\ZZ} H_n\,x^{-n-1}=\sum_{n\in\ZZ} H(n)\,x^{-n-2p+1}.
\end{equation*}
That is, $H(n)=H_{n+2p-2}$ is the mode of $Y_M(H,x)$ that lowers conformal weight by $n$.

Irreducible $\cM(p)$-modules were first classified in \cite{Ad}; see also \cite[Section 2]{CRW}. There is a one-to-one correspondence between irreducible $\cM(p)$-modules and Heisenberg Fock modules: For $\lambda\in\CC\setminus L^\circ$, $\cF_\lambda$ remains irreducible as an $\cM(p)$-module, while for $r\in\ZZ$, $1\leq s\leq p$, we define $\cM_{r,s}=\mathrm{Soc}(\cF_{\alpha_{r,s}})$. The $\cM(p)$-module $\cM_{r,s}$ is irreducible and:
\begin{itemize}
 \item For $s=p$, $\cM_{r,p}=\cF_{\alpha_{r,p}}$.
 \item For $1\leq s\leq p-1$, there is a non-split exact sequence
 \begin{equation}\label{eqn:Frs_structure}
  0\longrightarrow\cM_{r,s}\longrightarrow\cF_{\alpha_{r,s}}\longrightarrow\cM_{r+1,p-s}\longrightarrow 0.
 \end{equation}
\end{itemize}
The irreducible $\cM(p)$-modules which are Fock modules are the \textit{typical} irreducible $\cM(p)$-modules, while the $\cM(p)$-modules $\cM_{r,s}$ for $r\in\ZZ$, $1\leq s\leq p-1$ are \textit{atypical}.

For $\lambda\in\CC$, \eqref{eqn:Heis_contras} shows that $\cF_{\alpha_0-\lambda}$ is the $\cM(p)$-module contragredient of $\cF_\lambda$. In particular, $\cF_{\alpha_{r,p}}'\cong\cF_{\alpha_{2-r,p}}$ for $r\in\ZZ$. For $r\in\ZZ$ and $1\leq s\leq p-1$, dualizing \eqref{eqn:Frs_structure} shows that $\cM_{r,s}'$ is the unique irreducible quotient of $\cF_{\alpha_{r,s}}'\cong\cF_{\alpha_{1-r,p-s}}$. Then substituting $(r,s)\mapsto(1-r,p-s)$ in \eqref{eqn:Frs_structure} shows that
\begin{equation}\label{Mrs_contra}
 \cM_{r,s}'\cong\cM_{2-r,s}
\end{equation}
is valid for all $r\in\ZZ$ and $1\leq s\leq p$.

The $\cM(p)$-modules $\cM_{r,s}$ are semisimple as $\cV ir$-modules:
\begin{equation}\label{eqn:Mrs_decomp}
 \cM_{r,s}\cong\bigoplus_{n= 0}^\infty \cL_{r+2n,s} \cong\cM_{2-r,s}
\end{equation}
for $r\geq 1$, $1\leq s\leq p$. The isomorphism of $\cM_{r,s}$ and $\cM_{2-r,s}$ as $\cV ir$-modules is expected because these are a contragredient pair, and because semisimple $\cL(p)$-modules are self-contragredient; but $\cM_{r,s}$ and $\cM_{2-r,s}$ are \textit{not} isomorphic as $\cM(p)$-modules unless $r=1$. From \eqref{eqn:Mrs_decomp}, we see that the minimal conformal weight of $\cM_{r,s}$ is $h_{r,s}$ if $r\geq 1$ and $h_{2-r,s}$ if $r\leq 1$, and that $\cM(p)$ itself is identified with the atypical simple (and self-contragredient) module $\cM_{1,1}$. From now on, we will typically use the notation $\cM_{1,1}$ when we are considering $\cM(p)$ as a module for itself.

In \cite{CMY-singlet}, we found two categories of grading-restricted generalized $\cM(p)$-modules that admit the vertex algebraic braided tensor category structure of \cite{HLZ1}-\cite{HLZ8}. We also showed that these categories are rigid, so they are in particular braided ribbon tensor categories. The first category, denoted $\cC_{\cM(p)}$, is the category of all finite-length grading-restricted generalized $\cM(p)$-modules whose composition factors come from the $\cM_{r,s}$ for $r\in\ZZ$, $1\leq s\leq p$. This category is rather wild; for example, the irreducible modules $\cM_{r,s}$ do not have projective covers in $\cC_{\cM(p)}$.

To define the second braided tensor category of $\cM(p)$-modules, we need to recall the direct limit completion $\ind(\cO_p)$, constructed in \cite{CMY-completions}, of the category $\cO_p$ of $C_1$-cofinite grading-restricted generalized $\cL(p)$-modules, and we also need to recall the triplet vertex operator algebra $\cW(p)$. First, $\ind(\cO_p)$ is the category of generalized $\cL(p)$-modules which are the unions, equivalently the sums, of their $C_1$-cofinite $\cL(p)$-submodules; it has the vertex algebraic braided tensor category structure of \cite{HLZ1}-\cite{HLZ8} by \cite[Theorem 7.1]{CMY-completions}. The decomposition \eqref{eqn:sing_decomp_Vir} shows that $\cM(p)$, as an $\cL(p)$-module, is an object of $\ind(\cO_p)$, and thus $\cM(p)$ is a commutative algebra in the braided tensor category $\ind(\cO_p)$ by \cite[Theorem 3.2]{HKL}, or more precisely \cite[Theorem 7.5]{CMY-completions}. Consequently, as in \cite{KO, CKM-exts}, we have a tensor category $\rep\cM(p)$ of not-necessarily-local $\cM(p)$-modules which, as $\cL(p)$-modules, are objects of $\ind(\cO_p)$. The subcategory $\rep^0\cM(p)$ of (local) generalized $\cM(p)$-modules which, as $\cL(p)$-modules, are objects of $\ind(\cO_p)$ is a vertex algebraic braided tensor category by \cite[Theorem 3.65]{CKM-exts}, or more precisely \cite[Theorem 7.7]{CMY-completions}. 

The tensor categories $\cO_p$ and $\rep\cM(p)$ are related by the induction functor
\begin{align*}
 \cF_{\cM(p)}: \cO_p & \rightarrow \rep\cM(p)\nonumber\\
  W & \mapsto \cM(p)\boxtimes W\nonumber\\
  f & \mapsto \Id_{\cM(p)}\boxtimes f
\end{align*}
where $\tens$ denotes the tensor product bifunctor on $\ind(\cO_p)$. Induction is a monoidal functor by \cite[Theorem 1.6(3)]{KO} or \cite[Theorem 2.59]{CKM-exts}. Induction is also exact because $\cO_p$ is a rigid tensor category (see the proof of \cite[Proposition 3.2.4]{CMY-singlet}). Further, induction satisfies Frobenius reciprocity, that is, there is a natural isomorphism
\begin{equation*}
 \hom_{\cL(p)}(W, M) \xrightarrow{\cong}\hom_{\cM(p)}(\cF_{\cM(p)}(W),M)
\end{equation*}
for grading-restricted generalized modules $\cL(p)$-modules $W$ in $\cO_p$ and generalized $\cM(p)$-modules $M$ in $\rep\cM(p)$.

Now the triplet vertex operator algebra $\cW(p)$ is a $C_2$-cofinite but non-rational simple current extension of $\cM(p)$. Triplet vertex operator algebras have been studied extensively (see for example \cite{FHST, FGST1, FGST2, CF, GR, AM-trip, AM-log-mods, NT, TW, MY}, but we will not need too much from the representation theory of $\cW(p)$ here.  By \cite[Theorem 1.1]{AM-trip},
\begin{equation}\label{eqn:trip_decomp_Vir}
 \cW(p)\cong\bigoplus_{n\geq 0} \,(2n+1)\cdot\cL_{2n+1,1}
\end{equation}
as an $\cL(p)$-module, while
\begin{equation}\label{eqn:trip_decomp_sing}
 \cW(p)\cong\bigoplus_{n\in\ZZ} \cM_{2n+1,1}
\end{equation}
as an $\cM(p)$-module (see for example \cite[Section 3.2]{CMY-singlet}). 

By \eqref{eqn:trip_decomp_Vir}, $\cW(p)$ as an $\cL(p)$-module is an object of $\ind(\cO_p)$, and thus $\cW(p)$ as an $\cM(p)$-module is an object of $\rep^0\cM(p)$. Thus $\cW(p)$ is a commutative algebra in $\rep^0\cM(p)$, there is a tensor category $\rep\cW(p)$ of not-necessarily-local $\cW(p)$-modules which, as $\cM(p)$-modules, are objects of $\rep^0\cM(p)$, and we have the exact monoidal induction functor
\begin{align*}
 \cF_{\cW(p)}: \rep^0\cM(p) &\rightarrow \rep\cW(p)\nonumber\\
  M & \mapsto \cW(p)\tens M\nonumber\\
  f & \mapsto \Id_{\cW(p)}\tens f
\end{align*}
where $\tens$ now denotes the tensor product bifunctor on $\rep^0\cM(p)$. In \cite{CMY-singlet}, we showed that every grading-restricted generalized $\cW(p)$-module is an object of $\rep\cW(p)$.

Now we define, as in \cite{CMY-singlet}, the category $\cC_{\cM(p)}^0$ of $\cM(p)$-modules to be the full subcategory of $\rep^0\cM(p)$ whose objects induce to grading-restricted generalized $\cW(p)$-modules. By \cite[Theorem 3.3.1]{CMY-singlet}, $\cC_{\cM(p)}^0$ is a tensor subcategory of $\cC_{\cM(p)}$; in particular, $\cC_{\cM(p)}^0$ is closed under submodules, quotients, and tensor products, and every object of $\cC_{\cM(p)}^0$ is a finite-length grading-restricted generalized $\cM(p)$-module. The irreducible $\cM(p)$-modules $\cM_{r,s}$ for $r\in\ZZ$, $1\leq s\leq p$ are objects of $\cC_{\cM(p)}^0$ (see \cite[Proposition 3.2.5]{CMY-singlet}), and each $\cM_{r,s}$ has a projective cover $\cP_{r,s}$ in $\cC_{\cM(p)}^0$, although not in $\cC_{\cM(p)}$ (see \cite[Section 5.1]{CMY-singlet}):
\begin{itemize}
 \item For $s=p$, $\cP_{r,p} =\cM_{r,p}$ for all $r\in\ZZ$ (recall that this is also the Fock module $\cF_{\alpha_{r,p}}$).
 
 \item For $1\leq s\leq p-1$, $\cP_{r,s}$ is a length-$4$ indecomposable $\cM(p)$-module with Loewy diagram
 \begin{equation}\label{eqn:Prs_Loewy_diag}
 \begin{matrix}
  \begin{tikzpicture}[->,>=latex,scale=1.5]
\node (b1) at (1,0) {$\cM_{r, s}$};
\node (c1) at (-1, 1){$\cP_{r, s}$:};
   \node (a1) at (0,1) {$\cM_{r-1, p-s}$};
   \node (b2) at (2,1) {$\cM_{r+1, p-s}$};
    \node (a2) at (1,2) {$\cM_{r,s}$};
\draw[] (b1) -- node[left] {} (a1);
   \draw[] (b1) -- node[left] {} (b2);
    \draw[] (a1) -- node[left] {} (a2);
    \draw[] (b2) -- node[left] {} (a2);
\end{tikzpicture}
\end{matrix} .
 \end{equation}
 The rows of the Loewy diagram indicate the socle series of $\cP_{r,s}$, so in particular $\mathrm{Soc}(\cP_{r,s})\cong\cM_{r,s}$ and $\mathrm{Soc}(\cP_{r,s}/\cM_{r,s})\cong\cM_{r-1,p-s}\oplus\cM_{r+1,p-s}$. An arrow between two nodes of the diagram indicates that the corresponding length-$2$ subquotient of $\cP_{r,s}$ is indecomposable, with the arrow pointing towards the quotient of the length-$2$ subquotient.
\end{itemize}

\begin{rem}
 From the projectivity of $\cP_{r,s}$ in $\cC_{\cM(p)}^0$, the Loewy diagram \eqref{eqn:Prs_Loewy_diag}, and the surjection $\cF_{\alpha_{r-1,p-s}}\rightarrow\cM_{r,s}$ in $\cC_{\cM(p)}^0$, it is easy to see that there is a non-split exact sequence
 \begin{equation*}
  0\longrightarrow\cF_{\alpha_{r,s}}\longrightarrow\cP_{r,s}\longrightarrow\cF_{\alpha_{r-1,p-s}}\longrightarrow 0
 \end{equation*}
of $\cM(p)$-modules.
\end{rem}

In the next section, we will need the $\cM(p)$-module inductions of simple $\cL(p)$-modules in $\cO_p$; a couple cases of the following proposition were also obtained in \cite[Lemma 11.4]{GN}:
\begin{prop}\label{prop:Vir_to_sing_ind}
 For $r\geq 1$ and $1\leq s\leq p$,
 \begin{equation*}
  \cF_{\cM(p)}(\cL_{r,s})\cong\bigoplus_{k=1}^r \cM_{2k-r,s}.
 \end{equation*}
\end{prop}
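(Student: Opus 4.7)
The plan is to match both sides as $\cL(p)$-modules and then use this identification to promote an explicit surjective $\cM(p)$-module homomorphism to an isomorphism.

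First I would compute the $\cL(p)$-decomposition of the right-hand side. Combining \eqref{eqn:Mrs_decomp} with the $\cV ir$-isomorphism $\cM_{r',s}\cong\cM_{2-r',s}$ yields the uniform formula $\cM_{r',s}\cong\bigoplus_{n\geq 0}\cL_{|r'-1|+1+2n,s}$ valid for any $r'\in\ZZ$ and $1\leq s\leq p$. A direct combinatorial count then gives
\begin{equation*}
\bigoplus_{k=1}^r\cM_{2k-r,s}\cong\bigoplus_{\substack{m\geq 1\\ m\equiv r\,(\mathrm{mod}\,2)}}\min(m,r)\cdot\cL_{m,s}.
\end{equation*}
For the left-hand side, I would use $\cM(p)\cong\bigoplus_{n\geq 0}\cL_{2n+1,1}$ from \eqref{eqn:sing_decomp_Vir} together with the $\cO_p$-fusion rule
\begin{equation*}
\cL_{2n+1,1}\tens\cL_{r,s}\cong\bigoplus_{k=0}^{\min(2n+1,r)-1}\cL_{|2n+1-r|+1+2k,s},
\end{equation*}
available from the explicit description of $\cO_p$ in \cite{MY}, and verify that summing over $n\geq 0$ reproduces exactly the same multiplicities $\min(m,r)$.

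Next, the uniform formula above shows that for each $k=1,\ldots,r$, the module $\cM_{2k-r,s}$ contains $\cL_{r,s}$ with multiplicity one as an $\cL(p)$-submodule. Fixing a nonzero $\cL(p)$-module map $\iota_k:\cL_{r,s}\to\cM_{2k-r,s}$ for each $k$, Frobenius reciprocity for the induction functor $\cF_{\cM(p)}$ produces nonzero $\cM(p)$-module maps $f_k:\cF_{\cM(p)}(\cL_{r,s})\to\cM_{2k-r,s}$, each surjective since the target is simple. Assembling these I obtain $f=(f_1,\ldots,f_r):\cF_{\cM(p)}(\cL_{r,s})\to\bigoplus_{k=1}^r\cM_{2k-r,s}$. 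Because distinct integers $r'$ correspond to distinct Heisenberg weights $\alpha_{r',s}$ and hence (by Adamovi\'c's classification) distinct simple $\cM(p)$-modules $\cM_{r',s}$, the summands in the target are pairwise non-isomorphic simples, so every $\cM(p)$-submodule of $\bigoplus_{k=1}^r\cM_{2k-r,s}$ has the form $\bigoplus_{k\in A}\cM_{2k-r,s}$ for some $A\subseteq\{1,\ldots,r\}$. Since each $f_k$ is surjective, the image of $f$ projects onto every summand, forcing $A=\{1,\ldots,r\}$ and hence $f$ surjective. Comparing $\cL(p)$-characters from the first step then upgrades $f$ to an isomorphism.

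The main obstacle is extracting the needed $\cO_p$-fusion rule $\cL_{2n+1,1}\tens\cL_{r,s}$ in clean closed form from \cite{MY}; given that, the remaining ingredients — the uniform $\cL(p)$-decomposition of $\cM_{r',s}$, Frobenius reciprocity, and the submodule structure of a finite direct sum of pairwise non-isomorphic simples — are routine.
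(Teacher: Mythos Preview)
Your proposal is correct and follows essentially the same approach as the paper's own proof: both construct the surjection via Frobenius reciprocity and pairwise non-isomorphism of the simple targets, and both establish injectivity by matching the $\cL(p)$-module decompositions of source and target using \eqref{eqn:sing_decomp_Vir}, \eqref{eqn:Mrs_decomp}, and the Virasoro fusion rules from \cite{MY}. The only difference is cosmetic ordering---you compute the characters first and build the map second, while the paper does the reverse.
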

\begin{proof}
 From \eqref{eqn:Mrs_decomp}, there is a non-zero (and one-dimensional) space of $\cL(p)$-module homomorphisms $\cL_{r,s}\rightarrow\cM_{r',s'}$ if and only if $s'=s$ and for some $n\in\NN$,
 \begin{equation*}
  r'=\left\lbrace\begin{array}{lll}
                  r-2n & \text{if} & r'\geq 1\\
                  2-r+2n & \text{if} & r'<1\\
                 \end{array}
\right. .
 \end{equation*}
Thus by Frobenius reciprocity, there is a non-zero (necessarily surjective) $\cM(p)$-module homomorphism $f_k: \cF_{\cM(p)}(\cL_{r,s})\rightarrow\cM_{2k-r,s}$ for each $k\in\lbrace 1,\ldots, r\rbrace$. We combine these maps into a homomorphism
\begin{equation*}
 F=\sum_{k=1}^r q_k\circ f_k : \cF_{\cM(p)}(\cL_{r,s})\longrightarrow\bigoplus_{k=1}^r\cM_{2k-r,s},
\end{equation*}
where $q_k$ is the inclusion of $\cM_{2k-r,s}$ into the direct sum. Because the $\cM_{2k-r,s}$ are non-isomorphic simple $\cM(p)$-modules and each $f_k$ is surjective, $F$ is surjective as well.

To show that $F$ is also injective and thus an isomorphism, it is enough to show that $\cF_{\cM(p)}(\cL_{r,s})$ and $\bigoplus_{k=1}^r\cM_{2k-r,s}$ are isomorphic as $\cV ir$-modules, since this will show they are isomorphic as graded vector spaces with finite-dimensional homogeneous subspaces. In fact, \eqref{eqn:sing_decomp_Vir} and the Virasoro fusion rules in \cite[Theorem 4.3]{MY} show that as $\cL(p)$-modules,
\begin{align*}
 \cF_{\cM(p)}(\cL_{r,s}) & \cong\bigoplus_{n= 0}^\infty \cL_{2n+1,1}\boxtimes\cL_{r,s}\cong\bigoplus_{n= 0}^\infty\bigoplus_{\substack{k=\vert 2n+1-r\vert+1\\ k\equiv r\;\mathrm{mod}\; 2}}^{r+2n} \cL_{k,s}.
\end{align*}
We need to determine the multiplicity of $\cL_{r+2m,s}$ in this sum for $m\geq-\frac{r-1}{2}$: $\cL_{r+2m,s}$ appears once for each $n\geq 0$ such that
\begin{equation*}
 \vert 2n+1-r\vert+1\leq r+2m\leq r+2n.
\end{equation*}
Rearranging these inequalities, we see that $\cL_{r+2m,s}$ occurs once for each $n\geq 0$ such that
\begin{equation*}
 \vert m\vert \leq n\leq m+r-1.
\end{equation*}
From this, we can see that as $\cL(p)$-modules,
\begin{align*}
 \cF_{\cM(p)}(\cL_{r,s})&\cong\bigoplus_{-\frac{r-1}{2}\leq m<0} (r-2\vert m\vert)\cdot\cL_{r+2m,s}\oplus\bigoplus_{m=0}^\infty r\cdot\cL_{r+2m,s}\nonumber\\
 &\cong\left\lbrace \begin{array}{lll}
 \bigoplus_{k=0}^{\frac{r-2}{2}} 2\cdot\bigoplus_{m=0}^\infty \cL_{r-2k+2m,s} & \text{if} & r\equiv 0\;\mathrm{mod}\;2\\
\bigoplus_{m=0}^\infty \cL_{2m+1,s}\oplus\bigoplus_{k=0}^{\frac{r-3}{2}} 2\cdot\bigoplus_{m=0}^\infty\cL_{r-2k+2m,s} & \text{if} & r\equiv 1\;\mathrm{mod}\;2\\
                   \end{array}
\right.\nonumber\\
& \cong\bigoplus_{k=1}^r \cM_{2k-r,s},
\end{align*}
where the last isomorphism follows from \eqref{eqn:Mrs_decomp}. This proves the proposition.
\end{proof}

We will also need a criterion for determining when generalized $\cM(p)$-modules are objects of $\cC_{\cM(p)}^0$. Note that \cite[Proposition 2.65]{CKM-exts} shows that an object $M$ of $\rep^0\cM(p)$ is an object of $\cC_{\cM(p)}^0$ if and only if the monodromy, or double braiding, of $\cW(p)$ with $M$ is trivial. By \eqref{eqn:trip_decomp_sing}, this is equivalent to 
\begin{equation*}
 \cR^2_{\cM_{2n+1,1},M}:=\cR_{M,\cM_{2n+1,1}}\circ\cR_{\cM_{2n+1,1},M} =\Id_{\cM_{2n+1,1}\tens M}
\end{equation*}
for all $n\in\ZZ$. But since $\cM_{3,1}$ generates the group of simple currents $\lbrace\cM_{2n+1,1}\rbrace_{n\in\ZZ}$ by \cite[Lemma 3.2.1]{CMY-singlet}, it is actually necessary and sufficient that
\begin{equation*}
 \cR^2_{\cM_{3,1},M}=\Id_{\cM_{3,1},M}
\end{equation*}
(see \cite[Theorem 2.11(2)]{CKL}).

\begin{lem}\label{lem:untwisted_induction}
 Suppose we have a short exact sequence
 \begin{equation*}
  0\longrightarrow A\longrightarrow M\longrightarrow B\longrightarrow 0
 \end{equation*}
of generalized $\cM(p)$-modules such that $A$ and $B$ are objects of $\cC_{\cM(p)}^0$ having no composition factors in common. Then $M$ is also an object of $\cC_{\cM(p)}^0$.
\end{lem}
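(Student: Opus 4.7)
The plan is to verify the monodromy criterion $\cR^2_{\cM_{3,1},M}=\Id_{\cM_{3,1}\tens M}$ recalled just before the lemma, which characterizes objects of $\cC_{\cM(p)}^0$ inside $\rep^0\cM(p)$. First I would observe that $M$ itself is an object of $\rep^0\cM(p)$: it is local because it is an (untwisted) generalized $\cM(p)$-module, and as an $\cL(p)$-module it is an extension of $A,B\in\ind(\cO_p)$ and so also lies in $\ind(\cO_p)$, which is closed under extensions in the ambient category of generalized $\cL(p)$-modules.

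Since $\cM_{3,1}$ is a simple current in $\rep^0\cM(p)$ by \cite[Lemma 3.2.1]{CMY-singlet}, tensoring with $\cM_{3,1}$ is an exact autoequivalence. Applying $\cM_{3,1}\tens(-)$ to the given short exact sequence and using naturality of the monodromy $\cR^2_{\cM_{3,1},-}$, I would check that $\varphi:=\cR^2_{\cM_{3,1},M}-\Id$ restricts to $0$ on the subobject $\cM_{3,1}\tens A$, where the monodromy equals $\cR^2_{\cM_{3,1},A}=\Id$, and descends to $0$ on the quotient $\cM_{3,1}\tens B$, where the monodromy equals $\cR^2_{\cM_{3,1},B}=\Id$. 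Consequently $\varphi$ factors as
\begin{equation*}
\cM_{3,1}\tens M\twoheadrightarrow\cM_{3,1}\tens B\xrightarrow{\bar\varphi}\cM_{3,1}\tens A\hookrightarrow\cM_{3,1}\tens M
\end{equation*}
for some $\bar\varphi\in\hom_{\cM(p)}(\cM_{3,1}\tens B,\cM_{3,1}\tens A)$.

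It then remains to show $\bar\varphi=0$. Since $\cM_{3,1}$ is invertible, tensoring with it is fully faithful, giving $\hom_{\cM(p)}(\cM_{3,1}\tens B,\cM_{3,1}\tens A)\cong\hom_{\cM(p)}(B,A)$. The image of any $\cM(p)$-homomorphism $B\to A$ is simultaneously a quotient of $B$ and a submodule of $A$, so its composition factors lie in the intersection of those of $B$ and those of $A$; the disjointness hypothesis forces this image, and hence the homomorphism itself, to vanish. Thus $\bar\varphi=0$, whence $\varphi=0$ and $M\in\cC_{\cM(p)}^0$. No real obstacle arises beyond setting up the naturality and factorization steps carefully in the braided tensor category $\rep^0\cM(p)$; the rest is routine categorical bookkeeping based on the monodromy characterization and the invertibility of $\cM_{3,1}$.
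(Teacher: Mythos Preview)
Your proposal is correct and follows essentially the same approach as the paper: both verify the monodromy criterion $\cR^2_{\cM_{3,1},M}=\Id$ by showing that $\cR^2_{\cM_{3,1},M}-\Id$ factors through a morphism $\cM_{3,1}\tens B\to\cM_{3,1}\tens A$, which then vanishes by the disjoint-composition-factors hypothesis. The only minor difference is that the paper justifies $M\in\rep^0\cM(p)$ by first observing $M\in\cC_{\cM(p)}$ and citing \cite[Proposition~3.1.3]{CMY-singlet}, rather than appealing to closure of $\ind(\cO_p)$ under extensions; and in the final step the paper argues directly with composition factors of $\cM_{3,1}\tens A$ and $\cM_{3,1}\tens B$ before tensoring back with $\cM_{0,1}$, whereas you use full faithfulness of $\cM_{3,1}\tens(-)$.
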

\begin{proof}
 Since $A$ and $B$ are (finite-length) objects of $\cC_{\cM(p)}^0$, $M$ is a finite-length $\cM(p)$-module in $\cC_{\cM(p)}$. Thus $M$ is also an object of $\rep^0\cM(p)$ by \cite[Proposition 3.1.3]{CMY-singlet}. By the discussion preceding the proposition, it is now enough to show that $\cR^2_{\cM_{3,1},M}=\Id_{\cM_{3,1},M}$.

 We will show that if $\cR_{\cM_{3,1},M}^2\neq\Id_{\cM_{3,1},M}$, then $A$ and $B$ must share a common composition factor. Since $A$ and $B$ are objects of $\cC_{\cM(p)}^0$, since the tensoring functor $\cM_{3,1}\boxtimes\bullet$ is exact, and since the monodromy isomorphisms are natural, we have a commuting diagram
 \begin{equation*}
\begin{matrix}
  \xymatrix{
0\ar[r] &  \cM_{3,1}\boxtimes A \ar[r] \ar[d]^{\Id_{\cM_{3,1}\boxtimes A}} & \cM_{3,1}\boxtimes M \ar[r] \ar[d]^{\cR^2_{\cM_{3,1},M}} &\cM_{3,1}\boxtimes B \ar[d]^{\Id_{\cM_{3,1}\boxtimes B}} \ar[r] & 0\\
0\ar[r] & \cM_{3,1}\boxtimes A \ar[r]  & \cM_{3,1}\boxtimes M \ar[r] & \cM_{3,1}\boxtimes B\ar[r] & 0\\
  } 
  \end{matrix}
 \end{equation*}
with exact rows. Thus $\cM_{3,1}\tens A$ is a submodule of $\cM_{3,1}\tens M$. Moreover,$\mathcal{N}:=\cR_{\cM_{3,1},M}^2-\Id_{\cM_{3,1}\boxtimes M}$ is an $\cM(p)$-module endomorphism of $\cM_{3,1}\boxtimes M$ such that
\begin{equation*}
 \im\mathcal{N}\subseteq \cM_{3,1}\boxtimes A\subseteq\ker\mathcal{N}.
\end{equation*}
Thus if $\mathcal{N}\neq 0$, there is a non-zero homomorphism
\begin{align*}
 \cM_{3,1}\boxtimes B & \xrightarrow{\cong} (\cM_{3,1}\boxtimes M)/(\mathcal{M}_{3,1}\boxtimes A)\nonumber\\
 &\twoheadrightarrow (\cM_{3,1}\boxtimes M)/\ker\mathcal{N}\xrightarrow{\cong}\im\mathcal{N}\nonumber\\
 &\hookrightarrow \cM_{3,1}\boxtimes A.
\end{align*}
Since this composition is non-zero, any irreducible submodule of its image (in $\cM_{3,1}\boxtimes A$) occurs as a subquotient of $\cM_{3,1}\boxtimes B$, showing that $\cM_{3,1}\boxtimes A$ and $\cM_{3,1}\boxtimes B$ share a common composition factor.

Now because $\cM_{3,1}$ is a simple current $\cM(p)$-module with inverse $\cM_{0,1}$, the composition factors of $A$, respectively $B$, are obtained by tensoring the composition factors of $\cM_{3,1}\boxtimes A$, respectively $\cM_{3,1}\boxtimes B$, with $\cM_{0,1}$ (see for example \cite[Proposition 2.5(5)]{CKLR}). Thus $A$ and $B$ also share a common composition factor.
\end{proof}

\section{Tensor categories of \texorpdfstring{$\cM(p)$}{M(p)}-modules}\label{sec:tens_cats}

In this section we prove that the category $\cO_{\cM(p)}$ of $C_1$-cofinite grading-restricted generalized $\cM(p)$-modules has the vertex algebraic braided tensor category structure of \cite{HLZ1}-\cite{HLZ8}. By \cite[Theorem 13]{CMR}, all irreducible $\cM(p)$-modules are $C_1$-cofinite, so \cite[Theorem 3.3.4]{CY} implies that $\cO_{\cM(p)}$ is a
braided tensor category if it is equal to the category of finite-length generalized $\cM(p)$-modules. For this, \cite[Theorem 3.3.5]{CY} implies that it is enough to show that the generalized Verma $\cM(p)$-module induced from any finite-dimensional irreducible $A(\cM(p))$-module has finite length. We prove this next.

\subsection{Generalized Verma \texorpdfstring{$\cM(p)$}{M(p)}-modules}\label{sec:genVerma}

We first show that the generalized Verma $\cM(p)$-module $\cG_\lambda$ induced from the simple $A(\cM(p))$-module $T(\cF_\lambda)$, $\lambda\in\CC\setminus L^\circ$, is simple; the argument is based on the proof of \cite[Theorem 4.4]{AM-trip}. Note that $T(\cF_\lambda)=\CC v_\lambda$, where $v_\lambda$ is a generating vector in $\cF_\lambda$ of minimal conformal weight $h_\lambda$.

\begin{thm}\label{thm:typical_GVM}
 For $\lambda\in\CC\setminus L^\circ$, the generalized Verma $\cM(p)$-module $\cG_\lambda$ is isomorphic to the typical irreducible Fock module $\cF_\lambda$.
\end{thm}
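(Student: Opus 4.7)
By the universal property of generalized Verma modules, since $\CC v_\lambda=T(\cF_\lambda)$ as $A(\cM(p))$-modules, there is a surjective $\cM(p)$-module homomorphism $\pi\colon\cG_\lambda\twoheadrightarrow\cF_\lambda$ sending $v_\lambda$ to $v_\lambda$; the task is to show $\pi$ is injective. When $\lambda\in\CC\setminus L^\circ$, direct comparison of $h_\lambda=\frac{1}{2}\lambda(\lambda-\alpha_0)$ with $h_{r,s}=\frac{(pr-s)^2-(p-1)^2}{4p}$ shows $h_\lambda\ne h_{r,s}$ for any $r,s\in\ZZ_+$, so the Virasoro Verma module $\cV_{h_\lambda}$ is irreducible. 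Since $\cV_{h_\lambda}$ and the Heisenberg Fock module $\cF_\lambda$ have the same graded dimension $q^{h_\lambda}\prod_{n\ge 1}(1-q^n)^{-1}$, the Virasoro submodule $U(\cV ir)v_\lambda\subseteq\cF_\lambda$ already equals $\cF_\lambda$ and is isomorphic to $\cV_{h_\lambda}$. Consequently, to prove $\pi$ is an isomorphism it suffices to establish the Virasoro-level equality $\cG_\lambda=U(\cV ir)v_\lambda$, for then the restriction of $\pi$ would be a nonzero map $\cV_{h_\lambda}\to\cV_{h_\lambda}$ between irreducible Vermas, hence an isomorphism.

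Because $\cM(p)$ is strongly generated by $\omega$ and $H\in\cL_{3,1}$, repeated use of the commutator and iterate formulas \eqref{eqn:intw_op_comm}--\eqref{eqn:intw_op_iterate} reduces the desired equality to the single claim that $H(n)v_\lambda\in U(\cV ir)v_\lambda$ for every $n\in\ZZ$. The restriction of the vertex operator action of $\cM(p)$ on $\cG_\lambda$ to the $\cV ir$-submodule $\cL_{3,1}\subseteq\cM(p)$ is a Virasoro intertwining operator $\cY_H\in I_{\cL(p)}\binom{\cG_\lambda}{\cL_{3,1}\,\cG_\lambda}$, and further restricting the second input to $U(\cV ir)v_\lambda\cong\cV_{h_\lambda}$ surjects onto the Virasoro submodule $W'\subseteq\cG_\lambda$ generated by $\{H(n)v_\lambda:n\in\ZZ\}$. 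To force $W'\subseteq U(\cV ir)v_\lambda$, I consider the quotient $\overline{W'}=W'/(W'\cap U(\cV ir)v_\lambda)$: if $\overline{W'}\ne 0$, composition yields a surjective Virasoro intertwining operator $\cL_{3,1}\otimes\cV_{h_\lambda}\twoheadrightarrow\overline{W'}$, and Corollary \ref{cor:conf_wts}(2) constrains its minimal conformal weight $h_3$ to lie in $\{h_\lambda,\,h_\lambda+p\pm\sqrt{4ph_\lambda+(p-1)^2}\}$. Using $\alpha_0^2=2(p-1)^2/p$ the square root simplifies to $\alpha_+|\lambda-\alpha_0/2|$, so the two non-trivial allowed values differ from $h_\lambda$ by $1+\alpha_+\lambda$ and $2p-1-\alpha_+\lambda$. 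Since $\alpha_+\alpha_-=-2$ gives $L^\circ=\ZZ(\alpha_-/2)=\alpha_+^{-1}\ZZ$, the hypothesis $\lambda\notin L^\circ$ forces $\alpha_+\lambda\notin\ZZ$, so neither shift is an integer. On the other hand, $\overline{W'}$ is a subquotient of $\cG_\lambda$ with weights in $h_\lambda+\NN$, and $(\overline{W'})_{[h_\lambda]}=0$ by construction, so its minimum weight lies in $h_\lambda+\ZZ_{>0}$: contradiction. Hence $\overline{W'}=0$, so $H(n)v_\lambda\in U(\cV ir)v_\lambda$ for all $n$, and propagation via \eqref{eqn:intw_op_comm}--\eqref{eqn:intw_op_iterate} then gives $\cG_\lambda=U(\cV ir)v_\lambda\cong\cF_\lambda$.

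The main obstacle is the conformal-weight analysis in the second paragraph: one must simultaneously recognize $U(\cV ir)v_\lambda$ as a genuine Virasoro Verma (requiring irreducibility of $\cV_{h_\lambda}$ and hence the hypothesis $\lambda\notin L^\circ$), pass to the quotient $\overline{W'}$ in order to uniformly handle both the generic case $o(H)v_\lambda\ne 0$ (where $v_\lambda\in W'$) and the degenerate case $o(H)v_\lambda=0$ (where $W'$ might have no weight-$h_\lambda$ component at all), and finally extract the arithmetic fact $\alpha_+\lambda\notin\ZZ$ from the geometric hypothesis $\lambda\notin L^\circ$ to rule out the non-trivial allowed conformal weights. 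Once these three pieces are in place, the remaining induction is routine manipulation using the intertwining operator formulas.
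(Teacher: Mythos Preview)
Your proof is correct and follows essentially the same approach as the paper: both identify $U(\cV ir)v_\lambda\subseteq\cG_\lambda$ with the irreducible Verma module $\cV_{h_\lambda}$ and then apply Corollary~\ref{cor:conf_wts}(2) to the Virasoro intertwining operator $Y_{\cG_\lambda}|_{\cL_{3,1}\otimes\cV_{h_\lambda}}$ to conclude that $Y(H,x)$ preserves this submodule, whence $\cG_\lambda=U(\cV ir)v_\lambda\cong\cF_\lambda$. The only differences are cosmetic: the paper projects onto a Virasoro complement $K$ of $\cV_{h_\lambda}$ in $\cG_\lambda$ where you pass to the quotient $\overline{W'}$, and the paper phrases the arithmetic obstruction as $h_\lambda\neq h_{r,s}$ rather than your equivalent $\alpha_+\lambda\notin\ZZ$.
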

\begin{proof}
First observe that as a $\cV ir$-module, $\cF_\lambda$ is isomorphic to the irreducible Verma module $\cV_{h_\lambda}$: Since $\cF_\lambda$ contains a Virasoro singular vector of conformal weight $h_\lambda$, there is a non-zero $\cV ir$-module homomorphism $\cV_{h_\lambda}\rightarrow\cF_\lambda$. This map is injective because $\cV_{h_\lambda}$ is irreducible for $h_\lambda\neq h_{r,s}$, and then it is also surjective because $\cV_{h_\lambda}$ and $\cF_\lambda$ have the same graded dimension.

Now by the universal property of generalized Verma $\cM(p)$-modules, there is a surjection $\cG_\lambda\rightarrow\cF_\lambda$ and thus a short exact sequence
\begin{equation*}
 0\longrightarrow K\longrightarrow \cG_\lambda\longrightarrow \cF_\lambda\longrightarrow 0
\end{equation*}
of $\cM(p)$-modules, with $K$ denoting the kernel. Because $\cG_\lambda$ contains a Virasoro singular vector of conformal weight $h_\lambda$ and $\cF_\lambda\cong\cV_{h_\lambda}$ as $\cV ir$-modules, this exact sequence splits when considered as a sequence of $\cV ir$-module homomorphisms. That is, $\cF_\lambda$ occurs as a $\cV ir$-submodule of $\cG_\lambda$ and thus also as a $\cV ir$-module direct summand. So there is a $\cV ir$-module projection $\pi: \cG_\lambda\rightarrow K$.

Recall that the  Virasoro singular vector $H\in\cM(p)$ generates the $\cV ir$-submodule $\cL_{3,1}\subseteq\cM(p)$. Thus we may consider the $\cL(p)$-module intertwining operator
\begin{equation*}
 \cY=\pi\circ Y_{\cG_\lambda}\vert_{\cL_{3,1}\otimes\cF_\lambda}
\end{equation*}
of type $\binom{K}{\cL_{3,1}\,\cF_\lambda}$. If the $\cV ir$-submodule $\im\cY\subseteq K$ is non-zero, its minimal conformal weight is one of
\begin{equation*}
 h_\lambda,\quad h_\lambda+p\pm\sqrt{4ph_\lambda+(p-1)^2}
\end{equation*}
by Corollary \ref{cor:conf_wts}(2). Since the surjection $\cG_\lambda\rightarrow\cF_\lambda$ is an isomorphism on top levels, the conformal weights of $\im\cY\subseteq K$ are contained in $h_\lambda+\ZZ_+$, so that $h_\lambda$ cannot be the minimal conformal weight of $\im\cY$. The remaining two options are also impossible because
\begin{equation*}
p\pm\sqrt{4ph_\lambda+(p-1)^2}\in\ZZ\longleftrightarrow h_\lambda=\frac{n^2-(p-1)^2}{4p}
\end{equation*}
for some $n\in\ZZ$, that is, $h_\lambda=h_{r,s}$ for some $r\in\ZZ$, $1\leq s\leq p$. We conclude that $\im\cY$ has no minimal conformal weight, that is, $\im\cY=0$.

We have now shown that $Y_{\cG_\lambda}(H,x)$ preserves the $\cV ir$-submodule $\cF_\lambda\subseteq\cG_\lambda$. Since $\omega$ and $H$ generate $\cM(p)$, this shows that $\cF_\lambda$ is actually the $\cM(p)$-submodule of $\cG_\lambda$ generated by $v_\lambda$. Since $v_\lambda$ generates $\cG_\lambda$ as an $\cM(p)$-module, it follows that $\cG_\lambda =\cF_\lambda$.
\end{proof}

Now for $r\in\ZZ$ and $1\leq s\leq p$, we use $\cG_{r,s}$ to denote the generalized Verma $\cM(p)$-module induced from $T(\cM_{r,s})$. To determine $\cG_{r,s}$, we first identify candidates obtained as quotients of the projective covers $\cP_{r,s}$ in $\cC_{\cM(p)}^0$: we define $\til{\cG}_{r,s}$ to be the maximal quotient of $\cP_{r,s}$ having a one-dimensional top level isomorphic to $T(\cM_{r,s})$. More specifically:
\begin{itemize}
 \item If $s=p$, then $\til{\cG}_{r,p} =\cM_{r,p}\,\,(=\cF_{\alpha_{r,p}}=\cP_{r,p})$.
 
 \item If $1\leq s\leq p-1$, then we deduce from \eqref{eqn:Prs_Loewy_diag} and the lowest conformal weights of irreducible $\cM(p)$-modules that there is a non-split short exact sequence
 \begin{equation}\label{eqn:Gtil_structure}
  0\longrightarrow \mathrm{Soc}(\til{\cG}_{r,s})\longrightarrow\til{\cG}_{r,s}\longrightarrow\cM_{r,s}\longrightarrow 0
 \end{equation}
with
\begin{equation}\label{eqn:Gtil_socle}
 \mathrm{Soc}(\til{\cG}_{r,s}) \cong \left\lbrace\begin{array}{lll}
                                \cM_{r+1,p-s} & \text{if} & r>1\\
                                 \cM_{0,s}\oplus\cM_{2,s}& \text{if} & r=1\\
                                \cM_{r-1,p-s} & \text{if} & r<1
                               \end{array}
\right. .
\end{equation}
Note that $\til{\cG}_{r,s}\cong\cF_{\alpha_{r-1,p-s}}$ if $r<1$ and $1\leq s\leq p-1$.
\end{itemize}
For any $r$ and $s$, the universal property of generalized Verma $\cM(p)$-modules yields a surjection $\pi_{r,s}: \cG_{r,s}\rightarrow\til{\cG}_{r,s}$. Our goal is to show that  $\pi_{r,s}$ is actually an isomorphism.

To handle the cases $r\geq 1$ and $r\leq 1$ simultaneously, we fix $r\geq 1$ and use $r'$ to denote either $r$ or $2-r$. Recall that $\cM_{r,s}$ and $\cM_{2-r,s}$ are isomorphic as $\cV ir$-modules and in particular have the same lowest conformal weight $h_{r,s}$. 
\begin{lem}\label{lem:Gtil_Vir_submod}
 Let $\til{v}_{r,s}$ be a generating vector for $\til{\cG}_{r',s}$ of conformal weight $h_{r,s}$. Then the $\cV ir$-submodule of $\til{\cG}_{r',s}$ generated by $\til{v}_{r,s}$ is isomorphic to $\cV_{r,s}/\cV_{r+2,s}$.
\end{lem}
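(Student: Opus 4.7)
The plan is to identify the $\cV ir$-submodule $W\subseteq\til{\cG}_{r',s}$ generated by $\til{v}_{r,s}$ as the appropriate length-two quotient of the Verma module $\cV_{r,s}$. Since $\til{v}_{r,s}$ lies in the top level of $\til{\cG}_{r',s}$, it is $\cV ir$-primary, so $W\cong\cV_{r,s}/K$ for some submodule $K\subseteq\cV_{r,s}$. By Proposition \ref{prop:Verma_structure}(1) together with the embedding diagrams \eqref{eqn:embedding_rs}, \eqref{eqn:embedding_rp}, $K$ sits in the chain $\cdots\subsetneq\cV_{r+2,s}\subsetneq\cV_{r+1,p-s}\subsetneq\cV_{r,s}$, so it suffices to prove $\cV_{r+2,s}\subseteq K\subsetneq\cV_{r+1,p-s}$. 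The case $s=p$ is immediate: $\til{\cG}_{r',p}=\cM_{r',p}$ is $\cV ir$-semisimple by \eqref{eqn:Mrs_decomp}, and $\til{v}_{r,p}$ generates its lowest summand $\cL_{r,p}=\cV_{r,p}/\cV_{r+2,p}$. I therefore assume $1\leq s\leq p-1$ from now on.

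For the inclusion $\cV_{r+2,s}\subseteq K$ I would examine the image of $v_{r+2,s}\cdot\til{v}_{r,s}$ under the canonical surjection $\til{\cG}_{r',s}\twoheadrightarrow\cM_{r',s}$. This image vanishes because the $\cV ir$-submodule of $\cM_{r',s}$ generated by its top vector is $\cL_{r,s}=\cV_{r,s}/\cV_{r+1,p-s}$ and $v_{r+2,s}\in\cV_{r+1,p-s}$. Hence $v_{r+2,s}\cdot\til{v}_{r,s}$ is a $\cV ir$-singular vector of conformal weight $h_{r+2,s}$ lying in $\mathrm{Soc}(\til{\cG}_{r',s})$. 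By \eqref{eqn:Gtil_socle} together with \eqref{eqn:Mrs_decomp}, $\mathrm{Soc}(\til{\cG}_{r',s})$ is a direct sum of simple $\cV ir$-modules of the form $\cL_{k,p-s}$, so its singular vectors occur only at the weights $h_{k,p-s}$ with $k\in\ZZ_{\geq 1}$. A short manipulation of $h_{r,s}=\frac{(pr-s)^2-(p-1)^2}{4p}$ shows $h_{r+2,s}\neq h_{k,p-s}$ for any such $k$, forcing $v_{r+2,s}\cdot\til{v}_{r,s}=0$.

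The harder step is $K\subsetneq\cV_{r+1,p-s}$, equivalently $W\not\cong\cL_{r,s}$. I would argue by contradiction: if $W\cong\cL_{r,s}$, the $\cL(p)$-module inclusion $W\hookrightarrow\til{\cG}_{r',s}$ together with Frobenius reciprocity produces an $\cM(p)$-module homomorphism $\cF_{\cM(p)}(\cL_{r,s})\to\til{\cG}_{r',s}$; since $\til{v}_{r,s}$ generates $\til{\cG}_{r',s}$ as an $\cM(p)$-module, this map is surjective. Now Proposition \ref{prop:Vir_to_sing_ind} gives $\cF_{\cM(p)}(\cL_{r,s})\cong\bigoplus_{k=1}^{r}\cM_{2k-r,s}$, whose composition factors all carry second index $s$ and first index of parity $r\pmod{2}$. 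On the other hand, by \eqref{eqn:Gtil_socle} the socle of $\til{\cG}_{r',s}$ contributes composition factors of the form $\cM_{r\pm 1,p-s}$ (or the pair arising for $r=1$), and these match no $\cM_{2k-r,s}$: the second index $p-s$ fails when $s\neq p/2$, and the first-index parity $r\pm 1\not\equiv r\pmod{2}$ fails in all cases, in particular handling $s=p/2$. This contradicts the existence of the surjection.

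The main obstacle is precisely this last step, because $\til{\cG}_{r',s}$ being a non-split extension of $\cM(p)$-modules does not automatically force the underlying $\cV ir$-extension to be non-split; a genuinely non-Virasoro input is needed, and Frobenius reciprocity combined with the explicit decomposition of Proposition \ref{prop:Vir_to_sing_ind} is what makes the comparison of composition factors possible. Once both inclusions are in hand, the Verma submodule chain forces $K=\cV_{r+2,s}$, whence $W\cong\cV_{r,s}/\cV_{r+2,s}$.
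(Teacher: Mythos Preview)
Your overall strategy matches the paper's: show $W\cong\cV_{r,s}/K$ with $\cV_{r+2,s}\subseteq K\subsetneq\cV_{r+1,p-s}$, handling the hard inclusion via Frobenius reciprocity and Proposition~\ref{prop:Vir_to_sing_ind}. For $K\subsetneq\cV_{r+1,p-s}$ you use surjectivity of the Frobenius map (correct, since $\til{v}_{r,s}$ generates $\til{\cG}_{r',s}$) and compare composition factors; the paper instead notes that some simple summand $\cM_{2k-r,s}$ must inject, contradicting the socle description. These are equivalent.

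The gap is in your argument for $\cV_{r+2,s}\subseteq K$. You assert that singular vectors in $\mathrm{Soc}(\til{\cG}_{r',s})$ lie at weights $h_{k,p-s}$ with $k\in\ZZ_{\geq 1}$, and that $h_{r+2,s}\neq h_{k,p-s}$ for all such $k$. This last claim is false when $p$ is even and $s=p/2$: then $p-s=s$ and $h_{r+2,s}=h_{r+2,p-s}$, so $k=r+2$ gives equality. What actually saves the argument is that the $\cV ir$-summands $\cL_{k,p-s}$ appearing in the socle (via \eqref{eqn:Mrs_decomp} applied to \eqref{eqn:Gtil_socle}) all satisfy $k\equiv r+1\pmod 2$, which excludes $k=r+2$; you must make this parity restriction explicit for the weight comparison to go through. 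The paper sidesteps the comparison entirely with a cleaner step: rather than looking at $v_{r+2,s}\cdot\til{v}_{r,s}$ directly, it observes that already the first singular vector $\sigma_{r+1,p-s}\cdot\til{v}_{r,s}$ lands in the $\cV ir$-semisimple socle and hence generates a \emph{simple} $\cV ir$-module there; since a simple highest-weight module has no singular vector strictly above its lowest weight, applying $\sigma_{r+2,s}$ to this generator vanishes automatically, with no weight bookkeeping required.
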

\begin{proof}
 Since $\til{v}_{r,s}$ is a Virasoro singular vector of conformal weight $h_{r,s}$, the $\cV ir$-submodule it generates is a quotient of $\cV_{r,s}$. When $s=p$, the $\cV ir$-module structure \eqref{eqn:Mrs_decomp} of $\til{\cG}_{r',p}=\cM_{r',p}$ shows that $\til{v}_{r,p}$ generates a $\cV ir$-module isomorphic to $\cL_{r,p}$. Then by Proposition \ref{prop:Verma_structure}(2), $\cL_{r,p}\cong\cV_{r,p}/\cV_{r+2,p}$ as required. 
 
 When $1\leq s\leq p-1$, the embedding diagram \eqref{eqn:embedding_rs} shows that the first two non-trivial singular vectors in $\cV_{r,s}$ have the form
 \begin{equation*}
  v_{r+1,p-s} =\sigma_{r+1,p-s}\cdot v_{r,s}\quad\text{and}\quad v_{r+2,s}=\sigma_{r+2,s}\cdot v_{r+1,p-s}
 \end{equation*}
for suitable $\sigma_{r+1,p-s},\sigma_{r+2,s}\in U(\cV ir_-)$, where the three singular vectors $v_{r,s}$, $v_{r+1,p-s}$, and $v_{r+2,s}$ have conformal weights $h_{r,s}$, $h_{r+1,p-s}$, and $h_{r+2,s}$, respectively. Thus to prove the lemma when $1\leq s\leq p-1$, we need to show
\begin{equation}\label{eqn:nonzero_sing_vect}
 \sigma_{r+1,p-s}\cdot \til{v}_{r,s}\neq 0
\end{equation}
while
\begin{equation}\label{eqn:zero_sing_vect}
\sigma_{r+2,s} \sigma_{r+1,p-s}\cdot \til{v}_{r,s}= 0.
\end{equation}

To prove \eqref{eqn:nonzero_sing_vect}, suppose to the contrary that $\sigma_{r+1,p-s}\cdot\til{v}_{r,s}=0$; this would imply that $\til{v}_{r,s}$ generates a $\cV ir$-submodule of $\til{\cG}_{r',s}$ isomorphic to $\cL_{r,s}$. Then Frobenius reciprocity applied to the inclusion $\cL_{r,s}\hookrightarrow\til{\cG}_{r',s}$ yields a non-zero $\cM(p)$-module homomorphism
\begin{equation*}
 \cF_{\cM(p)}(\cL_{r,s})\rightarrow\til{\cG}_{r',s}.
\end{equation*}
Since $\cF_{\cM(p)}(\cL_{r,s})\cong\bigoplus_{k=1}^r \cM_{2k-r,s}$ by Proposition \ref{prop:Vir_to_sing_ind}, we get an $\cM(p)$-module injection $\cM_{2k-r,s}\hookrightarrow\til{\cG}_{r',s}$ for at least one $k\in\lbrace 1,2,\ldots, r\rbrace$. But since the $\cM(p)$-module exact sequence \eqref{eqn:Gtil_structure} does not split, no $\cM_{2k-r,s}$ occurs as a submodule of $\til{\cG}_{r',s}$. Thus $\cL_{r,s}$ cannot occur as a $\cV ir$-submodule of $\til{\cG}_{r',s}$, proving \eqref{eqn:nonzero_sing_vect}.

To prove \eqref{eqn:zero_sing_vect}, note from \eqref{eqn:Mrs_decomp} that the $\cV ir$-submodule of $\til{\cG}_{r',s}/\mathrm{Soc}(\til{\cG}_{r',s})\cong\cM_{r',s}$ generated by $\til{v}_{r,s}+\mathrm{Soc}(\til{\cG}_{r',s})$ is isomorphic to $\cL_{r,s}$. Thus
\begin{equation*}
 \sigma_{r+1,p-s}\cdot\til{v}_{r,s}\in\mathrm{Soc}(\til{\cG}_{r',s}),
\end{equation*}
and this is a Virasoro singular vector of conformal weight $h_{r+1,p-s}$. Since \eqref{eqn:Gtil_socle} and \eqref{eqn:Mrs_decomp} show that $\mathrm{Soc}(\til{\cG}_{r',s})$ is semisimple as a $\cV ir$-module, $\sigma_{r+1,p-s}\cdot\til{v}_{r,s}$ generates a $\cV ir$-submodule isomorphic to $\cL_{r+1,p-s}$ which in particular does not contain any singular vector of conformal weight $h_{r+2,s}$. Thus \eqref{eqn:zero_sing_vect} holds, completing the proof of the lemma.
\end{proof}

Now we begin to study $\cG_{r',s}$. We use $v_{r,s}$ to denote a non-zero vector in the top level $T(\cG_{r',s})$. It is a Virasoro singular vector of conformal weight $h_{r,s}$ and thus generates a $\cV ir$-submodule $\til{\cV}_{r,s}$ which is a quotient of the Verma module $\cV_{r,s}$. From the embedding diagrams \eqref{eqn:embedding_rs} and \eqref{eqn:embedding_rp}, $\cV_{r,s}$ contains a singular vector of conformal weight $h_{r+2,s}$, and we use $v_{r+2,s}$ to denote its image in $\til{\cV}_{r,s}$. Note that $v_{r+2,s}$ could possibly be $0$ (and in fact, we will show that it is $0$). Let $K$ denote the kernel of $\pi_{r',s}:\cG_{r',s}\rightarrow\til{\cG}_{r',s}$, so that we have a short exact sequence
\begin{equation}\label{eqn:K_exact_seq}
 0\longrightarrow K\longrightarrow\cG_{r',s}\longrightarrow\til{\cG}_{r',s}\longrightarrow 0
\end{equation}
of $\cM(p)$-modules.
\begin{lem}\label{lem:tilVrs_cap_K}
 The $\cV ir$-submodule $\til{\cV}_{r,s}\cap K$ in $\cG_{r',s}$ is generated by $v_{r+2,s}$.
\end{lem}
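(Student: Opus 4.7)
The plan is to deduce the lemma from Lemma \ref{lem:Gtil_Vir_submod} by tracking precisely how $\pi_{r',s}$ acts on the Virasoro submodule $\til{\cV}_{r,s}$. Since $\til{\cV}_{r,s}$ is generated as a $\cV ir$-module by the singular vector $v_{r,s}$ of conformal weight $h_{r,s}$, it is a quotient $\cV_{r,s}/N$ for some $\cV ir$-submodule $N\subseteq\cV_{r,s}$; we do not yet know what $N$ is. On the other hand, $\pi_{r',s}$ sends $v_{r,s}$ to (a non-zero scalar multiple of) the generating vector $\til{v}_{r,s}$ of $\til{\cG}_{r',s}$, so Lemma \ref{lem:Gtil_Vir_submod} identifies the image $\pi_{r',s}(\til{\cV}_{r,s})$ with $\cV_{r,s}/\cV_{r+2,s}$.

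From this identification, $\pi_{r',s}\vert_{\til{\cV}_{r,s}}$ is a surjective $\cV ir$-module map $\cV_{r,s}/N\twoheadrightarrow\cV_{r,s}/\cV_{r+2,s}$ sending top generator to top generator, which forces $N\subseteq\cV_{r+2,s}$ and identifies the kernel $\til{\cV}_{r,s}\cap K$ with $\cV_{r+2,s}/N$. Since $\cV_{r+2,s}$ is generated as a $\cV ir$-module by the singular vector of conformal weight $h_{r+2,s}$ in $\cV_{r,s}$, and $v_{r+2,s}$ is by construction the image of this singular vector under the quotient $\cV_{r,s}\twoheadrightarrow\cV_{r,s}/N=\til{\cV}_{r,s}$, the quotient $\cV_{r+2,s}/N$ is generated by $v_{r+2,s}$. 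This is exactly the conclusion of the lemma.

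I do not expect any serious obstacle: the argument is essentially a diagram chase once Lemma \ref{lem:Gtil_Vir_submod} is in hand, so the only thing to be careful about is distinguishing between the abstract Verma module $\cV_{r,s}$, its quotient $\til{\cV}_{r,s}\subseteq\cG_{r',s}$, and the further quotient $\pi_{r',s}(\til{\cV}_{r,s})\cong\cV_{r,s}/\cV_{r+2,s}$ inside $\til{\cG}_{r',s}$, and to keep straight that ``$v_{r+2,s}$'' denotes the image in $\til{\cV}_{r,s}$ (possibly zero a priori, though the argument will show that it is non-zero precisely when $\pi_{r',s}$ fails to be injective on $\til{\cV}_{r,s}$). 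No use of intertwining operator techniques or the $\cM(p)$-module structure beyond what is already packaged into Lemma \ref{lem:Gtil_Vir_submod} is needed.
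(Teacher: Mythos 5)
Your argument is correct and is essentially the paper's proof: the paper also uses the commutative triangle $\cV_{r,s}\twoheadrightarrow\til{\cV}_{r,s}\xrightarrow{\pi_{r',s}}U(\cV ir)\cdot\til{v}_{r,s}\cong\cV_{r,s}/\cV_{r+2,s}$ furnished by Lemma \ref{lem:Gtil_Vir_submod} to identify $\til{\cV}_{r,s}\cap K$ with the image of $\cV_{r+2,s}$ in $\til{\cV}_{r,s}$, and then observes that this image is generated by $v_{r+2,s}$. You merely make explicit the auxiliary kernel $N$ and the inclusion $N\subseteq\cV_{r+2,s}$, which the paper leaves implicit.
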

\begin{proof}
The commutative diagram of $\cV ir$-module surjections
\begin{equation*}
\xymatrixcolsep{4pc}
 \xymatrix{
 \cV_{r,s} \ar[rd] \ar[d] & \\
 \til{\cV}_{r,s} \ar[r]_(.4){\pi_{r',s}\vert_{\til{\cV}_{r,s}}} & U(\cV ir)\cdot\til{v}_{r,s}
 }
\end{equation*}
combined with Lemma \ref{lem:Gtil_Vir_submod} shows that $\til{\cV}_{r,s}\cap K$ is the image in $\til{\cV}_{r,s}$ of the Verma submodule $\cV_{r+2,s}\subseteq\cV_{r,s}$. Since this Verma submodule is generated by a preimage of $v_{r+2,s}$, it follows that $v_{r+2,s}$ generates $\til{\cV}_{r,s}\cap K$ as a $\cV ir$-module.
\end{proof}

The crucial next lemma considerably strengthens the preceding lemma:
\begin{lem}\label{lem:K_generator}
 The kernel $K$ is generated by $v_{r+2,s}$ as an $\cM(p)$-module.
\end{lem}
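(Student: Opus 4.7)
Set $K':=U(\cM(p))\cdot v_{r+2,s}$, which is a submodule of $K$ since $v_{r+2,s}\in K$, and put $M:=\cG_{r',s}/K'$. The lemma is then equivalent to showing that the induced surjection $\bar\pi:M\to\til{\cG}_{r',s}$ is an isomorphism.

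First, since $K'\subseteq K$ contains $v_{r+2,s}$, the chain of inclusions
\begin{equation*}
U(\cV ir)\cdot v_{r+2,s}\subseteq K'\cap\til{\cV}_{r,s}\subseteq K\cap\til{\cV}_{r,s}
\end{equation*}
together with Lemma~\ref{lem:tilVrs_cap_K} forces $\til{\cV}_{r,s}\cap K'=U(\cV ir)\cdot v_{r+2,s}$. Hence by Lemma~\ref{lem:Gtil_Vir_submod}, the Virasoro submodule $W\subseteq M$ generated by (the image of) $v_{r,s}$ is isomorphic to $\cV_{r,s}/\cV_{r+2,s}$, and $\bar\pi|_W$ is an isomorphism onto the corresponding Virasoro submodule of $\til{\cG}_{r',s}$.

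The plan is then to control $M$ via the induction $\cF_{\cM(p)}(W)=\cM(p)\tens W$. Since $M$ is cyclically generated by $v_{r,s}\in W$ as an $\cM(p)$-module, the $\cL(p)$-inclusion $W\hookrightarrow M$ extends via Frobenius reciprocity to a surjection $\cF_{\cM(p)}(W)\twoheadrightarrow M$; Proposition~\ref{prop:Vir_to_sing_ind} together with exactness of induction then gives an explicit finite-length decomposition of $\cF_{\cM(p)}(W)$, so $M$ itself has finite length. The constraints inherited from $\cG_{r',s}$---that every conformal weight of $M$ is at least $h_{r,s}$, and that the weight-$h_{r,s}$ subspace is one-dimensional carrying the $A(\cM(p))$-character of $T(\cM_{r',s})$---restrict $M$'s composition factors to a short list. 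A case analysis (with the case $r=1$, where $\til{\cG}_{1,s}$ has length $3$, treated separately) then matches these factors with those of $\til{\cG}_{r',s}$, and combined with the surjection $\bar\pi$ this forces $M\cong\til{\cG}_{r',s}$.

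The main obstacle is legitimizing the surjection $\cF_{\cM(p)}(W)\twoheadrightarrow M$: Frobenius reciprocity requires $M\in\rep^0\cM(p)$, equivalently that $M|_{\cL(p)}$ lies in $\ind(\cO_p)$, and this is not automatic for $M=\cG_{r',s}/K'$ because $\cG_{r',s}$ need not be grading-restricted as an $\cL(p)$-module. I would establish this by iterating the $\cM(p)$-action on $W$ inside $M$: using the decomposition $\cM(p)=\bigoplus_{n\geq 0}\cL_{2n+1,1}$ as a $\cV ir$-module, each iteration enlarges the current $\cL(p)$-submodule by the image of a Virasoro intertwining operator whose target sits inside a finite-length $\cL(p)$-module by the Virasoro fusion rules of \cite{MY}, so that $M|_{\cL(p)}$ is the directed union of its $C_1$-cofinite $\cL(p)$-submodules.
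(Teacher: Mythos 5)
Your argument follows the paper's through the computation $\til{\cV}_{r,s}\cap K'=U(\cV ir)\cdot v_{r+2,s}$, the identification of the $\cV ir$-submodule $W\cong\cV_{r,s}/\cV_{r+2,s}$ in $M=\cG_{r',s}/K'$, the Frobenius-reciprocity surjection $\cF_{\cM(p)}(W)\twoheadrightarrow M$, and the deferred argument that $M|_{\cL(p)}\in\ind(\cO_p)$ (your iteration idea is morally the paper's one-line argument citing \cite{Mi}). The gap is in the final step. Where the paper establishes $M\in\cC^0_{\cM(p)}$ (via Lemma \ref{lem:untwisted_induction} and Proposition \ref{prop:Vir_to_sing_ind}) and then invokes projectivity of $\cP_{r',s}$ in $\cC^0_{\cM(p)}$ together with the defining maximality of $\til{\cG}_{r',s}$, you substitute a composition-factor case analysis, and this does not close. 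For $r\geq 2$, the surjection from $\cF_{\cM(p)}(\cV_{r,s}/\cV_{r+2,s})$, the constraint that all weights are $\geq h_{r,s}$, and the one-dimensional top level together leave exactly three candidate composition factors for $M$, each with multiplicity at most one: $\cM_{r',s}$, $\cM_{r+1,p-s}$, and $\cM_{1-r,p-s}$. The latter two both have lowest weight $h_{r+1,p-s}$. Only two of them occur in $\til{\cG}_{r',s}$, and nothing in the constraints you list excludes the third (namely $\cM_{1-r,p-s}$ when $r'=r$, or $\cM_{r+1,p-s}$ when $r'=2-r$) from sitting inside $\ker\bar\pi$. Your counting does work for $r=1$, where $\cF_{\cM(p)}(\cV_{1,s}/\cV_{3,s})$ and $\til{\cG}_{1,s}$ share exactly the same three factors, but that is the exceptional case.

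What actually kills the extra factor is the projectivity step you dropped: once $M$ is known to lie in $\cC^0_{\cM(p)}$, a lift $\cP_{r',s}\to M$ of $\cP_{r',s}\twoheadrightarrow\til{\cG}_{r',s}$ hits the generating top-level vector of $M$ and is therefore surjective, so $M$ is a quotient of $\cP_{r',s}$ with top level $T(\cM_{r',s})$, and the extra candidate is not even a composition factor of $\cP_{r',s}$; moreover maximality of $\til{\cG}_{r',s}$ then forces $\bar\pi$ to be an isomorphism with no case analysis at all. So establishing $M\in\cC^0_{\cM(p)}$ is not merely a prerequisite for Frobenius reciprocity as you treat it---it is the substantive input that finishes the proof, and without it your case analysis has a hole.
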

\begin{proof}
The preceding lemma shows that $v_{r+2,s}\in K$, so the $\cM(p)$-submodule $\langle v_{r+2,s}\rangle$ generated by $v_{r+2,s}$ is contained in $K$. Then to prove the reverse inclusion, it is enough to show that the $\cM(p)$-module surjection $\cG_{r',s}/\langle v_{r+2,s}\rangle\twoheadrightarrow\til{\cG}_{r',s}$ induced by $\pi_{r',s}$ is an isomorphism.

We first claim that $\cG_{r',s}/\langle v_{r+2,s}\rangle$, as an $\cL(p)$-module, is an object of the direct limit completion $\ind(\cO_p)$, that is, $\cG_{r',s}/\langle v_{r+2,s}\rangle$ is the sum of $C_1$-cofinite $\cL(p)$-submodules. To prove this, note that the preceding lemma implies 
\begin{equation*}
 \til{\cV}_{r,s}\cap\langle v_{r+2,s}\rangle = \til{\cV}_{r,s}\cap K,
\end{equation*}
so that as $\cV ir$-modules,
\begin{align*}
 (\til{\cV}_{r,s}+\langle v_{r+2,s}\rangle)/\langle v_{r+2,s}\rangle &\cong\til{\cV}_{r,s}/(\til{\cV}_{r,s}\cap\langle v_{r+2,s}\rangle)\nonumber\\ &=\til{\cV}_{r,s}/(\til{\cV}_{r,s}\cap K)\cong\cV_{r,s}/\cV_{r+2,s},
\end{align*}
using Lemma \ref{lem:Gtil_Vir_submod} for the last isomorphism. This shows $\cV_{r,s}/\cV_{r+2,s}$ is a ($C_1$-cofinite) $\cL(p)$-submodule of $\cG_{r',s}/\langle v_{r+2,s}\rangle$, and it generates $\cG_{r',s}/\langle v_{r+2,s}\rangle$ as an $\cM(p)$-module since it contains $v_{r,s}+\langle v_{r+2,s}\rangle$. Thus since $\cM(p)\cong\bigoplus_{n= 0}^\infty \cL_{2n+1,1}$ as an $\cL(p)$-module, we have
\begin{align*}
 \cG_{r'.s}/\langle v_{r+2,s}\rangle & =\im Y_{\cG_{r',s}/\langle v_{r+2,s}\rangle}\vert_{\cM(p)\otimes(\cV_{r,s}/\cV_{r+2,s})}\nonumber\\
 & =\sum_{n=0}^\infty \im Y_{\cG_{r',s}/\langle v_{r+2,s}\rangle}\vert_{\cL_{2n+1,1}\otimes(\cV_{r,s}/\cV_{r+2,s})}.
\end{align*}
Each $\im Y_{\cG_{r',s}/\langle v_{r+2,s}\rangle}\vert_{\cL_{2n+1,1}\otimes(\cV_{r,s}/\cV_{r+2,s})}$ is a $C_1$-cofinite $\cL(p)$-module by \cite[Key Theorem]{Mi}, so $\cG_{r',s}/\langle v_{r+2,s}\rangle$, as an $\cL(p)$-module, is the sum of $\cO_p$-submodules.

We now know that $\cG_{r',s}/\langle v_{r+2,s}\rangle$ is an object in the braided tensor category $\rep^0\cM(p)$ of generalized $\cM(p)$-modules that, as $\cL(p)$-modules, are objects of $\ind(\cO_p)$. Thus the $\cL(p)$-module inclusion $\cV_{r,s}/\cV_{r+2,s}\hookrightarrow\cG_{r',s}/\langle v_{r+2,s}\rangle$ induces, by Frobenius reciprocity, an $\cM(p)$-module homomorphism
\begin{equation*}
 \cF_{\cM(p)}(\cV_{r,s}/\cV_{r+2,s})\longrightarrow\cG_{r',s}/\langle v_{r+2,s}\rangle;
\end{equation*}
this map is surjective because its image contains the generating vector $v_{r,s}+\langle v_{r+2,s}\rangle$. Thus because the $\cM(p)$-module category $\cC_{\cM(p)}^0$ is closed under quotients, $\cG_{r',s}/\langle v_{r+2,s}\rangle$ will be an object of $\cC_{\cM(p)}^0$ if the same holds for $\cF_{\cM(p)}(\cV_{r,s}/\cV_{r+2,s})$.

For $s=p$, $\cF_{\cM(p)}(\cV_{r,p}/\cV_{r+2,p})\cong\cF_{\cM(p)}(\cL_{r,p})$
 is an object of $\cC^0_{\cM(p)}$ by Proposition \ref{prop:Vir_to_sing_ind}. For $1\leq s\leq p-1$, the $\cV ir$-module exact sequence
\begin{equation*}
 0\longrightarrow \cL_{r+1,p-s}\longrightarrow\cV_{r,s}/\cV_{r+2,s}\longrightarrow \cL_{r,s}\longrightarrow 0
\end{equation*}
induces to an $\cM(p)$-module exact sequence
\begin{equation*}
 0\longrightarrow \bigoplus_{k=1}^{r+1} \cM_{2k-r-1,p-s}\longrightarrow\cF_{\cM(p)}(\cV_{r,s}/\cV_{r+2,s})\longrightarrow\bigoplus_{k=1}^{r} \cM_{2k-r,s}\longrightarrow 0
\end{equation*}
by Proposition \ref{prop:Vir_to_sing_ind} and exactness of induction. Since $\lbrace\cM_{2k-r-1,p-s}\rbrace_{k=1}^{r+1}$ and $\lbrace \cM_{2k-r,s}\rbrace_{k=1}^r$ are disjoint sets of irreducible $\cM(p)$-modules which are objects of $\cC_{\cM(p)}^0$, Lemma \ref{lem:untwisted_induction} shows that $\cF_{\cM(p)}(\cV_{r,s}/\cV_{r+2,s})$ is an object of $\cC_{\cM(p)}^0$, and thus so is $\cG_{r',s}/\langle v_{r+2,s}\rangle$. 

Now since both $\cG_{r',s}/\langle v_{r+2,s}\rangle$ and $\cP_{r',s}$ surject onto $\til{\cG}_{r',s}$, since $\cG_{r',s}/\langle v_{r+2,s}\rangle$ is an object of $\cC_{\cM(p)}^0$, and since $\cP_{r',s}$ is projective in $\cC_{\cM(p)}^0$, we get a commuting diagram
\begin{equation*}
 \xymatrixcolsep{3pc}
 \xymatrix{
 & \cP_{r',s} \ar[ld] \ar[d] \\
 \cG_{r',s}/\langle v_{r+2,s}\rangle \ar[r] & \til{\cG}_{r',s}\\
 }
\end{equation*}
of $\cM(p)$-module homomorphisms, with the horizontal and vertical arrows surjective. In addition, the horizontal arrow restricts to an isomorphism of conformal-weight-$h_{r,s}$ spaces, so the generator $v_{r,s}+\langle v_{r+2,s}\rangle$ of $\cG_{r',s}/\langle v_{r+2,s}\rangle$ is in the image of the map $\cP_{r',s}\rightarrow\cG_{r',s}/\langle v_{r+2,s}\rangle$. Thus $\cG_{r',s}/\langle v_{r+2,s}\rangle$ is a quotient of $\cP_{r',s}$ with a one-dimensional top level isomorphic to $T(\cM_{r',s})$. Since we defined $\til{\cG}_{r',s}$ to be a maximal such quotient, the horizontal arrow in the above diagram must be an isomorphism, completing the proof of the lemma.
\end{proof}

Now that we know $K$ is generated by $v_{r+2,s}$ as an $\cM(p)$-module, the next lemma further elucidates the structure of $K$:
\begin{lem}\label{lem:K_top_level}
 The generator $v_{r+2,s}$ of $K$ is contained in the top level $T(K)$ and generates an $A(\cM(p))$-module isomorphic to a quotient of $T(\cM_{r+2,s}\oplus\cM_{-r,s})$.
\end{lem}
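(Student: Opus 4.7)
The plan is to prove both assertions using the $\cL(p)$-intertwining-operator techniques that were employed in the proof of Theorem \ref{thm:typical_GVM}.

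For the first assertion, I would show equivalently that the lowest conformal weight of $K$ is $h_{r+2,s}$. Since $\cM(p)$ is generated as a vertex algebra by $\omega$ and $H$, a standard induction using the iterate formula \eqref{eqn:intw_op_iterate} reduces membership in $T(K)$ to the two conditions $L(n)w=0$ and $H(n)w=0$ for all $n>0$; the first holds automatically since $v_{r+2,s}$ is a Virasoro singular vector. For the second, let $W_2=U(\cV ir)\cdot v_{r+2,s}\subseteq K$, a quotient of $\cV_{h_{r+2,s}}$, and consider the $\cL(p)$-intertwining operator $\cY=Y_{\cG_{r',s}}\vert_{\cL_{3,1}\otimes W_2}$. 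Its image is a Virasoro submodule $M\subseteq\cG_{r',s}$, and since $W_2\subseteq K$, we have $M\subseteq K$. Applying Corollary \ref{cor:conf_wts}(2) with $h_2=h_{r+2,s}$, and noting $\sqrt{4ph_{r+2,s}+(p-1)^2}=p(r+2)-s$, the lowest conformal weight $h_3$ of $M$ must lie in $\{h_{r+2,s},\,h_{r,s},\,h_{r+2,s}+p(r+3)-s\}$. The third value exceeds $h_{r+2,s}$ while $v_{r+2,s}\in M$ sits at weight $h_{r+2,s}$, and the value $h_{r,s}$ is ruled out because $\pi_{r',s}$ restricts to an isomorphism on top levels, so $K\cap\cG_{r',s}(0)=0$. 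Thus $h_3=h_{r+2,s}$, and since $H(n)v_{r+2,s}\in M$ has weight $h_{r+2,s}-n<h_{r+2,s}$ for any $n>0$, it must vanish.

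For the second assertion, $A(\cM(p))\cdot v_{r+2,s}$ is a cyclic $A(\cM(p))$-module on which $[\omega]$ acts as the scalar $h_{r+2,s}$. By the Zhu correspondence, every simple subquotient corresponds to a simple $\cM(p)$-module whose lowest conformal weight is $h_{r+2,s}$. Solving the quadratic $h_\lambda=h_{r+2,s}$ yields $\lambda\in\{\alpha_{r+2,s},\alpha_{-r-1,p-s}\}\subset L^\circ$, so no typical Fock module contributes, and the only candidates are the atypical simples $\cM_{r+2,s}$ and $\cM_{-r,s}$. Their one-dimensional top levels are non-isomorphic $A(\cM(p))$-modules on which $[H]=H(0)$ acts by two distinct scalars $\gamma_{r+2,s}\neq\gamma_{-r,s}$.

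Since $A(\cM(p))$ is generated as an algebra by $[\omega]$ and $[H]$, and $[\omega]$ acts as a scalar, $A(\cM(p))\cdot v_{r+2,s}$ is cyclic over $\CC[H(0)]$, and the desired conclusion reduces to showing the quadratic relation $(H(0)-\gamma_{r+2,s})(H(0)-\gamma_{-r,s})v_{r+2,s}=0$. This is the main obstacle, and I expect to handle it by bounding the dimension of the space of Virasoro singular vectors of weight $h_{r+2,s}$ in $K$ via a refinement of the intertwining-operator analysis from part (a), applied not just to $\cL_{3,1}$ but across the higher Virasoro components $\cL_{2n+1,1}\subseteq\cM(p)$ from the decomposition \eqref{eqn:sing_decomp_Vir}.
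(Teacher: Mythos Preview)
Your argument for the first assertion is essentially correct and in fact slightly streamlines the paper's approach. The key observation you use—that $M=\im\cY\subseteq K$ because $K$ is an $\cM(p)$-submodule and $W_2\subseteq K$—lets you rule out the value $h_{r,s}$ immediately from $K\cap(\cG_{r',s})_{[h_{r,s}]}=0$, whereas the paper instead passes to the quotient $\im\cY/\langle(\im\cY)_{[h_{r,s}]}\rangle$ and then intersects with $\til{\cV}_{r,s}\cap K$. One slip: you assert $v_{r+2,s}\in M$, but $\cL_{3,1}$ does not contain the vacuum, so this is not obvious. Fortunately you do not need it: whether the minimal weight of $M$ is $h_{r+2,s}$ or $h_{r+4,s}$, any $H(n)v_{r+2,s}$ with $n>0$ has weight strictly below $h_{r+2,s}$ and hence vanishes.

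For the second assertion there is a genuine gap. You correctly reduce the problem to the quadratic relation $(H(0)-\gamma_{r+2,s})(H(0)-\gamma_{-r,s})v_{r+2,s}=0$, but your proposed method—bounding the dimension of the space of Virasoro singular vectors in $K$ of weight $h_{r+2,s}$ via intertwining-operator analysis over the higher summands $\cL_{2n+1,1}$—is not a clear path to this bound: those intertwining operators constrain minimal conformal weights of images, not dimensions of fixed weight spaces in $K$. The paper bypasses this entirely by invoking Adamovi\'{c}'s explicit presentation $A(\cM(p))\cong\CC[x,y]/(P(x,y))$ with $P(x,y)=y^2-C_p(x-h_{1,p})\prod_{s=1}^{p-1}(x-h_{1,s})^2$ (see \cite[Theorem~6.1]{Ad}). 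This relation immediately gives $H(0)^2$ as a polynomial in $L(0)$ on $T(K)$; since $L(0)$ acts on $v_{r+2,s}$ by the scalar $h_{r+2,s}$, the cyclic module $A(\cM(p))\cdot v_{r+2,s}$ is spanned by $v_{r+2,s}$ and $H(0)v_{r+2,s}$, hence is a quotient of the two-dimensional module $T_{r+2,s}$, which one then identifies with $T(\cM_{r+2,s})\oplus T(\cM_{-r,s})$ by matching $[H]$-eigenvalues. This Zhu-algebra input is the missing ingredient in your proposal.
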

\begin{proof}
 To show $v_{r+2,s}\in T(K)$, we need to show $v_n v_{r+2,s}=0$ for any homogeneous $v\in\cM(p)$ and $n\in\ZZ$ such that $\mathrm{wt}\,v-n-1<0$. For $v=\omega$, this is clear because $v_{r+2,s}$ is the image in $\cG_{r',s}$ of a Virasoro singular vector in $\cV_{r,s}$. 
 
 For $v=H$, recall that $H$ generates a $\cV ir$-submodule of $\cM(p)$ isomorphic to $\cL_{3,1}$, so consider the $\cL(p)$-module intertwining operator
 \begin{equation*}
  \cY=Y_{\cG_{r',s}}\vert_{\cL_{3,1}\otimes(\til{\cV}_{r,s}\cap K)}.
 \end{equation*}
By Lemma \ref{lem:tilVrs_cap_K},  $\til{\cV}_{r,s}\cap K$ is a quotient of $\cV_{r+2,s}$, so Corollary \ref{cor:conf_wts}(2) implies that the minimal conformal weight of the $\cV ir$-submodule $\im\cY\subseteq\cG_{r',s}$ is one of
\begin{equation*}
 h_{r+2,s}, h_{r+2,s}+p\pm\sqrt{4ph_{r+2,s}+(p-1)^2}.
\end{equation*}
These conformal weights turn out to be $h_{r,s}$, $h_{r+2,s}$, $h_{r+4,s}$.
Consider the quotient of $\im\cY$ by the $\cV ir$-submodule generated by $(\im\cY)_{[h_{r,s}]}$ (note this submodule might be $0$). Since $\cY$ induces a surjective intertwining operator of type $\binom{\im\cY/\langle(\im\cY)_{[h_{r,s}]}\rangle}{\cL_{3,1}\;\til{\cV}_{r,s}\cap K}$, Corollary \ref{cor:conf_wts}(2) again implies that if $\im\cY/\langle(\im\cY)_{[h_{r,s}]}\rangle\neq 0$, then its minimal conformal weight is one of $h_{r+2,s}$, $h_{r+4,s}$. Since $\mathrm{wt}\,H(n)v_{r+2,s}=h_{r+2,s}-n$, this means that
\begin{equation*}
 H(n)v_{r+2,s}\in\langle(\im\cY)_{[h_{r,s}]}\rangle\subseteq\til{\cV}_{r,s}
\end{equation*}
for all $n>0$. We also have $H(n)v_{r+2,s}\in K$ because $v_{r+2,s}\in K$ and $K$ is an $\cM(p)$-submodule of $\cG_{r',s}$. Then by Lemma \ref{lem:tilVrs_cap_K},
\begin{equation*}
 H(n)v_{r+2,s}\in\til{\cV}_{r,s}\cap K =U(\cV ir_-)\cdot v_{r+2,s},
\end{equation*}
so that $H(n)v_{r+2,s}=0$ for $n>0$.

Now for general homogeneous $v\in\cM(p)$, it will follow that $v_n v_{r+2,s}=0$ for $n\in\ZZ$ such that $\mathrm{wt}\,v-n-1<0$ if we can show that each mode of the vertex operator $Y_{\cG_{r',s}}(v,x)$ can be expressed as an (infinite) linear combination of monomials of the form
\begin{equation*}
 v^{(1)}_{n_1}v^{(2)}_{n_2}\cdots v^{(k)}_{n_k},
\end{equation*}
where $v^{(i)}\in\lbrace\omega, H\rbrace$ for each $i=1,2,\ldots,k$ and all modes that weakly raise conformal weight appear to the left of all modes that strictly lower conformal weight. We can prove this by induction on $\mathrm{wt}\,v$, with the base case $v=\mathbf{1}$ obvious.

Now consider $\mathrm{wt}\,v>0$ and suppose the inductive hypothesis has been proved for all weights less than $\mathrm{wt}\,v$. By \cite[Theorem 4.2(i)]{Ad}, it is enough to consider the two cases $v=L(-m)\til{v}$ for some $m\geq 2$, $\til{v}\in\cM(p)_{(\mathrm{wt}\,v-m)}$ and $v=H_{-n}\til{v}$ for some $n\geq 1$, $\til{v}\in\cM(p)_{(\mathrm{wt}\,v-2p-n+2)}$. For the case $v=H_{-n}\til{v}$, the Jacobi identity iterate \eqref{eqn:intw_op_iterate} and commutator \eqref{eqn:intw_op_comm} formulas yield
\begin{align*}
 Y_{\cG_{r',s}} & (H_{-n}\til{v},x)  =\sum_{i\geq 0} (-1)^i\binom{-n}{i}\left(H_{-n-i}x^iY_{\cG_{r',s}}(\til{v},x)-(-1)^n x^{-n-i}Y_{\cG_{r',s}}(\til{v},x)H_i\right)\nonumber\\
 & =\sum_{i\geq 0} (-1)^i\binom{-n}{i}H_{-n-i}x^i Y_{\cG_{r',s}}(\til{v},x)-\sum_{i\geq 2p-1} (-1)^{n+i}\binom{-n}{i}x^{-n-i}Y_{\cG_{r',s}}(\til{v},x)H_i\nonumber\\
 &\qquad -\sum_{i=0}^{2p-2} (-1)^{n+i}\binom{-n}{i}x^{-n-i} \bigg(H_iY_{\cG_{r',s}}(\til{v},x)-\sum_{j\geq 0} \binom{i}{j}x^{i-j}Y_{\cG_{r',s}}(H_j\til{v},x)\bigg).
\end{align*}
Since the weights of $\til{v}$ and $H_j\til{v}$ for $j\geq 0$ are strictly less than $\mathrm{wt}\,v$, it follows by induction that the modes of $Y_{\cG_{r',s}}(H_{-n}\til{v},x)$ can be expressed in the required manner. The case $v=L(-m)\til{v}$ is similar, so this completes the proof that $v_{r+2,s}\in T(K)$.

Finally, we need to determine the $A(\cM(p))$-submodule of $T(K)$ generated by $v_{r+2,s}$. By \cite[Theorem 6.1]{Ad}, there is an isomorphism
\begin{align*}
 A(\cM(p)) & \rightarrow \CC[x,y]/(P(x,y))\nonumber\\
 [\omega] & \mapsto x+(P(x,y))\nonumber\\
 \quad[H] &\mapsto y+(P(x,y))
\end{align*}
where
\begin{equation*}
P(x,y)=  y^2-C_p(x-h_{1,p})\prod_{s=1}^{p-1}(x-h_{1,s})^2\quad\text{for}\quad C_p=\frac{(4p)^{2p-1}}{((2p-1)!)^2}.
\end{equation*}
Thus
\begin{equation}\label{eqn:H0_square}
 H(0)^2=C_p(L(0)-h_{1,p})\prod_{s=1}^{p-1}(L(0)-h_{1,s})^2
\end{equation}
on $T(K)$. Since $v_{r+2,s}$ is an $L(0)$-eigenvector, this means $A(\cM(p))\cdot v_{r+2,s}$ is spanned by $v_{r+2,s}$ and $H(0)v_{r+2,s}$; moreover, $A(\cM(p))\cdot v_{r+2,s}$ is a quotient of the two-dimensional $A(\cM(p))$-module $T_{r+2,s}$ on which $[\omega]$ and $[H]$ act by the matrices
\begin{equation*}
 [\omega]=\left[\begin{array}{cc}
                 h_{r+2,s} & 0\\
                 0 & h_{r+2,s}\\
                \end{array}
\right],\qquad [H]=\left[\begin{array}{cc}
                    0 & C_p(h_{r+2,s}-h_{1,p})\prod_{s=1}^{p-1}(h_{r+2,s}-h_{1,s})^2\\
                    1 & 0\\
                   \end{array}
\right].
\end{equation*}
The $[H]$-eigenvalue(s) on $T_{r+2,s}$ are the square root(s) of 
\begin{equation*}
 C_p(h_{r+2,s}-h_{1,p})\prod_{s=1}^{p-1}(h_{r+2,s}-h_{1,s})^2.
\end{equation*}
Because $T(\cM_{r+2,s})$ and $T(\cM_{-r,s})$ are two distinct one-dimensional $A(\cM(p))$-modules on which $[H]$ acts by such square root(s) (while $[\omega]$ acts on both by $h_{r+2,s}$), the $[H]$-eigenvalues on $T_{r+2,s}$ must be distinct and we conclude
\begin{equation*}
 T_{r+2,s}\cong T(\cM_{r+2,s})\oplus T(\cM_{-r,s}).
\end{equation*}
Thus $A(\cM(p))\cdot v_{r+2,s}$ is a quotient of  $T(\cM_{r+2,s}\oplus\cM_{-r,s})$.
\end{proof}

Finally, we are ready to prove:
\begin{thm}\label{thm:atypical_GVM}
 For $r\in\ZZ$ and $1\leq s\leq p$, the generalized Verma $\cM(p)$-module $\cG_{r,s}$ is isomorphic to $\til{\cG}_{r,s}$. In particular, $\cG_{r,s}$ is a finite-length $\cM(p)$-module in $\cC_{\cM(p)}^0$.
\end{thm}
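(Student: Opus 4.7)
The plan is to show that the kernel $K$ of the surjection $\pi_{r',s}:\cG_{r',s}\to\til{\cG}_{r',s}$ is zero. By Lemmas \ref{lem:K_generator} and \ref{lem:K_top_level}, the universal property of generalized Verma modules immediately produces an $\cM(p)$-module homomorphism
\[
\Psi:\cG_{r+2,s}\oplus\cG_{-r,s}\twoheadrightarrow K,
\]
so the theorem reduces to showing $\Psi=0$, equivalently $K=0$.

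The main idea is to leverage the projectivity of $\cP_{r',s}$ in $\cC^0_{\cM(p)}$ from \cite{CMY-singlet}. If we can first establish that $\cG_{r',s}$ itself lies in $\cC^0_{\cM(p)}$, then projectivity of $\cP_{r',s}$ applied to the composition $\cG_{r',s}\twoheadrightarrow\til{\cG}_{r',s}\twoheadrightarrow\cM_{r',s}$ produces a surjection $\cP_{r',s}\twoheadrightarrow\cG_{r',s}$. This realizes $\cG_{r',s}$ as a quotient of $\cP_{r',s}$ whose top level coincides with the one-dimensional space $T(\cM_{r',s})$, and by the maximality defining $\til{\cG}_{r',s}$, the map $\pi_{r',s}$ must be an isomorphism, forcing $K=0$.

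To place $\cG_{r',s}$ in $\cC^0_{\cM(p)}$, I apply Lemma \ref{lem:untwisted_induction} to the short exact sequence \eqref{eqn:K_exact_seq}. Since $\til{\cG}_{r',s}$ is a quotient of $\cP_{r',s}$ and hence already lies in $\cC^0_{\cM(p)}$, this reduces to showing $K\in\cC^0_{\cM(p)}$ and that $K$ and $\til{\cG}_{r',s}$ share no common composition factors. The disjointness of composition factors follows by inspection of the Loewy diagram \eqref{eqn:Prs_Loewy_diag} and the socle list \eqref{eqn:Gtil_socle}: the composition factors of $\til{\cG}_{r',s}$ lie in $\{\cM_{r,s},\cM_{r\pm 1,p-s}\}$ (or $\{\cM_{1,s},\cM_{0,s},\cM_{2,s}\}$ when $r=1$), none of which equals $\cM_{r+2,s}$ or $\cM_{-r,s}$ for $r\geq 1$, and the same holds for the iterated descendants appearing in the composition series of $K$. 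Placing $K\in\cC^0_{\cM(p)}$ uses that $K$ is a quotient of $\cG_{r+2,s}\oplus\cG_{-r,s}$, which by induction will equal $\til{\cG}_{r+2,s}\oplus\til{\cG}_{-r,s}$ and hence lie in $\cC^0_{\cM(p)}$.

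The main obstacle is setting up a well-founded induction, since the recursion $(r,s)\mapsto(r+2,s),(-r,s)$ does not monotonically decrease a natural parameter. I expect to resolve this by handling the base case $s=p$ separately (where $\til{\cG}_{r,p}=\cM_{r,p}$ is irreducible, allowing a direct Virasoro-singular-vector argument in the spirit of Theorem \ref{thm:typical_GVM}, adapted to the semisimple $\cV ir$-structure of $\cM_{r,p}$), and then running a joint induction on the remaining pairs $(r,s)$ with $1\leq s\leq p-1$, exploiting the contragredient duality \eqref{Mrs_contra} and the simple-current action of $\cM_{3,1}$ to transport the statement across parameter values until finite length of the ambient projective covers terminates the recursion.
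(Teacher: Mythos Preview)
Your overall strategy---use projectivity of $\cP_{r',s}$ in $\cC_{\cM(p)}^0$ to realize a suitable module as a quotient of $\cP_{r',s}$, then invoke maximality of $\til{\cG}_{r',s}$---is exactly the paper's strategy. The genuine gap is in how you place the module inside $\cC_{\cM(p)}^0$. You try to show $\cG_{r',s}$ itself lies in $\cC_{\cM(p)}^0$, which forces you to first show $K\in\cC_{\cM(p)}^0$; since $\cC_{\cM(p)}^0$ consists of finite-length modules, this already requires $K$ to have finite length, which is essentially the content of the theorem. You propose to get this by induction, using that $K$ is a quotient of $\cG_{r+2,s}\oplus\cG_{-r,s}$, but the recursion $(r,s)\mapsto(r+2,s)$ strictly increases $r$ and never terminates. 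None of your proposed workarounds repairs this: the $s=p$ case is a separate family under this recursion and does not anchor the $s<p$ cases; contragredient duality $\cM_{r,s}'\cong\cM_{2-r,s}$ does not relate generalized Verma modules in a useful way; and tensoring by the simple current $\cM_{3,1}$ has no evident compatibility with the generalized Verma construction. The vague appeal to ``finite length of the ambient projective covers'' does not supply a well-founded descent.

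The paper avoids this circularity with a single move: rather than trying to control all of $K$, assume $K\neq 0$, choose a maximal proper submodule $J\subseteq K$, and work with the quotient $\cG_{r',s}/J$. Now the kernel of $\cG_{r',s}/J\twoheadrightarrow\til{\cG}_{r',s}$ is the \emph{simple} module $K/J\cong\cM_{r'',s}$ with $r''\in\{r+2,-r\}$, which is trivially an object of $\cC_{\cM(p)}^0$ and is not a composition factor of $\til{\cG}_{r',s}$. Lemma~\ref{lem:untwisted_induction} then places $\cG_{r',s}/J$ in $\cC_{\cM(p)}^0$, projectivity of $\cP_{r',s}$ exhibits it as a quotient of $\cP_{r',s}$, and one obtains a contradiction because $\cM_{r'',s}$ is not a composition factor of $\cP_{r',s}$. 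No induction is needed.
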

\begin{proof}
 To handle the cases $r\geq 1$ and $r\leq 1$ simultaneously, we continue to fix $r\geq 1$ and take $r'$ to be either $r$ or $2-r$. We need to show that the kernel $K$ of the surjection $\pi_{r',s}: \cG_{r',s}\rightarrow\til{\cG}_{r',s}$ is $0$. From Lemmas \ref{lem:K_generator} and \ref{lem:K_top_level}, together with the universal property of generalized Verma $\cM(p)$-modules, we know that $K$ is some quotient of $\cG_{r+2,s}\oplus\cG_{-r,s}$.
 
 Assume towards a contradiction that $K\neq 0$, in which case $K$ has a maximal proper submodule $J$ such that $K/J\cong\cM_{r'',s}$ for either $r''=r+2$ or $r''=-r$.  Then we get an exact sequence
 \begin{equation*}
  0\longrightarrow \cM_{r'',s}\longrightarrow\cG_{r',s}/J\longrightarrow\til{\cG}_{r',s}\longrightarrow 0
 \end{equation*}
of $\cM(p)$-modules. Note that $\cM_{r'',s}$ and $\til{\cG}_{r',s}$ are both objects of $\cC_{\cM(p)}^0$, and that $\cM_{r'',s}$ is not a composition factor of $\til{\cG}_{r',s}$ by \eqref{eqn:Gtil_structure} and \eqref{eqn:Gtil_socle}. Thus $\cG_{r',s}/J$ is an object of $\cC_{\cM(p)}^0$ by Lemma \ref{lem:untwisted_induction}. Then because $\cP_{r',s}$ is projective in $\cC_{\cM(p)}^0$, and because both $\cG_{r',s}/J$ and $\cP_{r',s}$ surject onto $\til{\cG}_{r',s}$, there is a commuting diagram
 \begin{equation*}
 \xymatrixcolsep{3pc}
 \xymatrix{
 & \cP_{r',s} \ar[ld] \ar[d] \\
 \cG_{r',s}/J \ar[r] & \til{\cG}_{r',s}\\
 }
\end{equation*}
of $\cM(p)$-module homomorphisms, with the horizontal and vertical arrows both surjective. As in the proof of Lemma \ref{lem:K_generator}, the generator $v_{r,s}+J$ is in the image of the map $\cP_{r',s}\rightarrow\cG_{r',s}/J$, so $\cG_{r',s}/J$ is a quotient of $\cP_{r',s}$. But this is impossible because $\cM_{r'',s}$ is not a composition factor of $\cP_{r',s}$, so in fact $K$ must be $0$.
\end{proof}

\subsection{Existence of tensor category structure}\label{subsec:tens_cats}

Let $\cO_{\cM(p)}$ denote the category of $C_1$-cofinite grading-restricted generalized $\cM(p)$-modules. In light of Theorems \ref{thm:typical_GVM} and \ref{thm:atypical_GVM}, we can now use \cite[Theorems 3.3.4 and 3.3.5]{CY} to immediately conclude:
\begin{thm}\label{thm:C1_equals_fl}
  The category $\cO_{\cM(p)}$ equals the category of finite-length grading-restricted generalized $\cM(p)$-modules and admits the vertex algebraic braided tensor category structure of \cite{HLZ1}-\cite{HLZ8}.
\end{thm}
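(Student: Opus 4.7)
The plan is to assemble this theorem as a direct consequence of the generalized Verma module computations established in Theorems \ref{thm:typical_GVM} and \ref{thm:atypical_GVM} together with the abstract tensor-category-existence criteria from \cite[Theorems 3.3.4 and 3.3.5]{CY}. Since all the hard analytical work has already been carried out, essentially no new argument is needed here; the task is to verify that the hypotheses of the CY criteria are met.

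First, I would note that every irreducible $\cM(p)$-module is $C_1$-cofinite by \cite[Theorem 13]{CMR}, and $C_1$-cofiniteness is preserved by extensions within the class of grading-restricted generalized $V$-modules. Hence the category of finite-length grading-restricted generalized $\cM(p)$-modules is automatically contained in $\cO_{\cM(p)}$. For the reverse inclusion, I would invoke \cite[Theorem 3.3.5]{CY}: it suffices to show that every generalized Verma $\cM(p)$-module induced from a finite-dimensional irreducible $A(\cM(p))$-module has finite length. By the classification of irreducible $\cM(p)$-modules recalled in Section 2.3, every such finite-dimensional simple $A(\cM(p))$-module is the top level of either a typical Fock module $\cF_\lambda$ with $\lambda \in \CC \setminus L^\circ$, or an atypical module $\cM_{r,s}$ with $r \in \ZZ$, $1 \leq s \leq p$. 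Theorem \ref{thm:typical_GVM} identifies the corresponding Verma $\cG_\lambda$ with the irreducible module $\cF_\lambda$ itself (length one), and Theorem \ref{thm:atypical_GVM} identifies $\cG_{r,s}$ with $\til{\cG}_{r,s}$, which by \eqref{eqn:Gtil_structure}-\eqref{eqn:Gtil_socle} has length at most three. Thus all generalized Verma modules are of finite length, so the two categories coincide.

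Having established this equality, I would then apply \cite[Theorem 3.3.4]{CY}, which says that once $\cO_{\cM(p)}$ coincides with the finite-length category, it automatically satisfies the hypotheses of \cite{HLZ1}-\cite{HLZ8} — in particular closure under $P(z)$-tensor products, contragredients, finite direct sums, and the requisite convergence of iterated intertwining operator compositions — and therefore carries the vertex algebraic braided tensor category structure of Huang-Lepowsky-Zhang. There is no substantive obstacle remaining: this theorem is a clean assembly of the generalized Verma classification in Section \ref{sec:genVerma} with off-the-shelf machinery, and a single sentence referencing the relevant results from \cite{CMR, CY} suffices.
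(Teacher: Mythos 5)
Your proposal is correct and follows the exact same route as the paper: verify $C_1$-cofiniteness of irreducibles from \cite[Theorem 13]{CMR} for one inclusion, use Theorems \ref{thm:typical_GVM} and \ref{thm:atypical_GVM} to show all generalized Verma $\cM(p)$-modules have finite length so that \cite[Theorem 3.3.5]{CY} gives the reverse inclusion, and then invoke \cite[Theorem 3.3.4]{CY} for the braided tensor category structure. The paper's proof of this theorem is essentially the single sentence ``In light of Theorems \ref{thm:typical_GVM} and \ref{thm:atypical_GVM}, we can now use \cite[Theorems 3.3.4 and 3.3.5]{CY} to immediately conclude,'' which your more expository write-up unpacks correctly without deviation.
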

\begin{rem}
 Since the irreducible $\cM(p)$-modules $\cM_{r,s}$ are $C_1$-cofinite, the braided tensor categories $\cC_{\cM(p)}$ and $\cC_{\cM(p)}^0$ are subcategories of $\cO_{\cM(p)}$. By \cite[Proposition 3.1.1]{CMY-singlet}, they are also tensor subcategories; in particular, the tensor product formulas of \cite[Theorem 5.2.1]{CMY-singlet} hold in $\cO_{\cM(p)}$.
\end{rem}

Our next goal is to find a tensor subcategory of $\cO_{\cM(p)}$ that contains $\cC_{\cM(p)}^0$ and in which the typical Fock modules $\cF_\lambda$ will be projective. Recall that $\cC_{\cM(p)}^0$ is defined as the subcategory of $\rep^0\cM(p)$ whose objects induce to (untwisted) $\cW(p)$-modules. We also recall that $\cW(p)$ has automorphism group $PSL(2,\CC)$ \cite{ALM} and that $\cM(p)\subseteq\cW(p)$ is the fixed-point subalgebra for the one-dimensional torus $T^{\vee}\subseteq PSL(2,\CC)$. We can identify $T^\vee=\CC/L^\circ$ acting on $\cW(p)\cong\bigoplus_{n\in\ZZ}\cM_{2n+1,1}$ by
\begin{equation*}
 (\beta+L^\circ)\vert_{\cM_{2n+1,1}} = e^{2\pi i\alpha_{2n+1,1}\beta}\Id_{\cM_{2n+1,1}}
\end{equation*}
for $\beta\in\CC$ (recall that $L=\lbrace\alpha_{2n+1,1}\,\vert\,n\in\ZZ\rbrace$). Then we could consider the $T^\vee$-graded subcategory of $\cO_{\cM(p)}$ whose objects $M$ are homogeneous of degree $t^\vee\in T^\vee$ if $\cF_{\cW(p)}(M)$ is a $t^\vee$-twisted $\cW(p)$-module, so that $\cC_{\cM(p)}^0$ would be the subcategory of degree $0$. 

However, we will actually need to grade our subcategory of $\cO_{\cM(p)}$ more finely by the double cover $T=\CC/2L^\circ$ of $T^\vee$, which we can view as the one-dimensional torus of $SL(2,\CC)$. This is the automorphism group of the doublet abelian intertwining algebra $\cA(p)\cong\bigoplus_{r\in\ZZ} \cM_{r,1}$, which is a simple current extension of $\cW(p)$ \cite{AM-doub, ACGY}. Similar to before, $\cM(p)$ is the $T$-fixed-point subalgebra of $\cA(p)$, so for $t\in T$, it would make sense to consider the subcategory of $\cO_{\cM(p)}$ whose objects induce to $t$-twisted $\cA(p)$-modules. However, since the theory of twisted modules for abelian intertwining algebras is not well developed, it is more straightforward to define this subcategory in terms of monodromies with the generator $\cM_{2,1}$ of the group of simple current $\cM(p)$-modules $\lbrace \cM_{r,1}\,\vert\,r\in\ZZ\rbrace$. Note that the monodromy condition of the following definition is a straightforward twisted module generalization of the monodromy condition in \cite[Proposition 2.65]{CKM-exts}:
\begin{defi}\label{defi:Ot}
 For $t=\beta+2L^\circ\in T$, we define $\cO_{\cM(p)}^t$ to be the full subcategory of $\cO_{\cM(p)}$ consisting of $\cM(p)$-modules $M$ such that
 \begin{equation}\label{eqn:twist_mono_cond}
  \cR_{\cM_{2,1},M}^2 =e^{-2\pi i\alpha_{2,1}\beta}\Id_{\cM_{2,1}\tens M}\,\,(=e^{\pi i\alpha_+\beta}\Id_{\cM_{2,1}\tens M} = e^{\pi i\beta\sqrt{2p}}\Id_{\cM_{2,1}\tens M}).
 \end{equation}
\end{defi}
\begin{rem}
 The negative sign in the first exponential of \eqref{eqn:twist_mono_cond} is included to ensure consistency with the usual definition of twisted module for a vertex operator algebra: If $V$ is a vertex operator algebra with automorphism $g$, then one usually defines the vertex operator $Y_W$ of a $g$-twisted $V$-module $W$ to satisfy 
 \begin{equation*}
  Y_W(g\cdot v, e^{2\pi i} x)=Y_W(v,x)
 \end{equation*}
for $v\in V$, or equivalently
\begin{equation*}
 Y_W(v,e^{2\pi i} x) =Y_W(g^{-1}\cdot v,x).
\end{equation*}
That is, the monodromy of $Y_W$ is given by the action of $g^{-1}$ on $V$, not $g$. Thus \eqref{eqn:twist_mono_cond} corresponds to the action of $t=\beta+2L^\circ$ on $\cA(p)$ given by
\begin{equation*}
 (\beta+2L^\circ)\vert_{\cM_{r,1}} =e^{2\pi i\alpha_{r,1}\beta}\Id_{\cM_{r,1}}
\end{equation*}
for $r\in\ZZ$.
\end{rem}

\begin{prop}\label{prop:Ot_abelian}
 For $t=\beta+2L^\circ\in T$, the category $\cO_{\cM(p)}^t$ is closed under submodules and quotients. In particular, $\cO_{\cM(p)}^t$ is an abelian category.
\end{prop}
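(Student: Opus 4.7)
The plan is to deduce both closure properties from naturality of the monodromy isomorphism $\cR^2_{\cM_{2,1},-}$ combined with the fact that tensoring with $\cM_{2,1}$ is an exact endofunctor of $\cO_{\cM(p)}$. For the latter, I would first observe that $\cM_{2,1}$ belongs to the rigid tensor subcategory $\cC_{\cM(p)}$ of \cite{CMY-singlet}; in fact it is a simple current with inverse $\cM_{0,1}$, so $\cM_{2,1}\tens -$ is an autoequivalence of $\cO_{\cM(p)}$ and in particular exact.

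Granted this, for a submodule inclusion $\iota:N\hookrightarrow M$ with $M\in\cO^t_{\cM(p)}$, naturality of the double braiding gives
\begin{equation*}
(\Id_{\cM_{2,1}}\tens\iota)\circ \cR^2_{\cM_{2,1},N} \,=\, \cR^2_{\cM_{2,1},M}\circ(\Id_{\cM_{2,1}}\tens\iota) \,=\, e^{-2\pi i\alpha_{2,1}\beta}(\Id_{\cM_{2,1}}\tens\iota).
\end{equation*}
Injectivity of $\Id_{\cM_{2,1}}\tens\iota$, which follows from the exactness above, lets me cancel this morphism from the left to conclude $\cR^2_{\cM_{2,1},N}=e^{-2\pi i\alpha_{2,1}\beta}\Id_{\cM_{2,1}\tens N}$, so $N\in\cO^t_{\cM(p)}$. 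Dually, for a surjection $\pi:M\twoheadrightarrow Q$, the analogous naturality square combined with surjectivity of $\Id_{\cM_{2,1}}\tens\pi$ forces $\cR^2_{\cM_{2,1},Q}=e^{-2\pi i\alpha_{2,1}\beta}\Id_{\cM_{2,1}\tens Q}$, so $Q\in\cO^t_{\cM(p)}$.

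To finish, I would observe that $\cO^t_{\cM(p)}$ is a full subcategory of the abelian category $\cO_{\cM(p)}$ that contains the zero module, is closed under finite direct sums (the monodromy distributes over direct sums, so the same eigenvalue condition holds on a sum of modules satisfying it), and admits kernels and cokernels inside itself by the two closure properties just established. Thus it inherits an abelian structure from $\cO_{\cM(p)}$.

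The main obstacle to watch for is that tensor products in a vertex algebraic braided tensor category are in general only right exact, which would give closure under quotients for free but not under submodules. It is precisely the simple-current/rigidity property of $\cM_{2,1}$, recorded in \cite{CMY-singlet}, that supplies the left exactness needed to make the cancellation argument for inclusions go through.
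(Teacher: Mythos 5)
Your proposal is correct and takes essentially the same approach as the paper: both rely on naturality of the monodromy $\cR^2_{\cM_{2,1},-}$ together with exactness of $\cM_{2,1}\tens\bullet$ (which the paper also attributes to the simple-current/rigidity property of $\cM_{2,1}$) to transfer the scalar eigenvalue condition to submodules and quotients. The only cosmetic difference is that the paper phrases the argument as one commuting diagram with exact rows for the short exact sequence $0\to N\to M\to M/N\to 0$, whereas you handle the injection and surjection by separate cancellation arguments; these are equivalent.
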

\begin{proof}
 Suppose $M$ is a module in $\cO_{\cM(p)}^t$ and $N\subseteq M$ is any $\cM(p)$-submodule; $N$ is an object of $\cO_{\cM(p)}$ since $\cO_{\cM(p)}$ is closed under submodules. Then because the monodromy isomorphisms in $\cO_{\cM(p)}$ are natural and because the tensoring functor $\cM_{2,1}\tens\bullet$ is exact, we have a commuting diagram
 \begin{equation*}
  \xymatrixcolsep{3pc}
  \xymatrix{
  0 \ar[r] & \cM_{2,1}\tens N \ar[r] \ar[d]^{\cR_{\cM_{2,1},N}^2} & \cM_{2,1}\tens M \ar[r] \ar[d]^{e^{\pi i\beta\sqrt{2p}}\Id_{\cM_{2,1}\tens M}} & \cM_{2,1}\tens(M/N) \ar[r] \ar[d]^{\cR_{\cM_{2,1},M/N}^2} & 0\\
  0 \ar[r] & \cM_{2,1}\tens N \ar[r] & \cM_{2,1}\tens M \ar[r] & \cM_{2,1}\tens(M/N) \ar[r] & 0
  }
 \end{equation*}
with exact rows. It follows that
\begin{equation*}
 \cR_{\cM_{2,1},N}^2=e^{\pi i\beta\sqrt{2p}}\Id_{\cM_{2,1}\tens N},\qquad\cR_{\cM_{2,1},M/N}^2=e^{\pi i\beta\sqrt{2p}}\Id_{\cM_{2,1}\tens M/N},
\end{equation*}
so both $N$ and $M/N$ are objects of $\cO_{\cM(p)}^t$. 

Closure under submodules and quotients guarantees that every morphism in $\cO_{\cM(p)}^t$ has a kernel and cokernel. The other properties of an abelian category are also easy; $\cO_{\cM(p)}^t$ is closed under finite direct sums, for example, because $\tens$ is an additive functor and monodromy is natural.
\end{proof}

Now we can define a $T$-graded subcategory of $\cO_{\cM(p)}$:
\begin{defi}\label{defi:OT}
 We define $\cO_{\cM(p)}^T$ to be the direct sum subcategory $\bigoplus_{t\in T} \cO_{\cM(p)}^t$ of $\cO_{\cM(p)}$: Objects of $\cO_{\cM(p)}^T$ are $\cM(p)$-modules $\bigoplus_{t\in T} M_t$ such that each $M_t$ is an object of $\cO_{\cM(p)}^t$ and $M_t=0$ for all but finitely many $t\in T$. For objects $\bigoplus_{t\in T} M_t$ and $\bigoplus_{t\in T} N_t$ in $\cO_{\cM(p)}^T$, we define
 \begin{equation*}
  \hom_{\cO_{\cM(p)}^T}\bigg(\bigoplus_{t\in T} M_t,\bigoplus_{t\in T} N_t\bigg)=\bigoplus_{t\in T}\hom_{\cM(p)}(M_t,N_t).
 \end{equation*}
\end{defi}

It is not immediate from the definition that $\cO_{\cM(p)}^T$ is a full subcategory of $\cO_{\cM(p)}$, but we will now prove this along with some other basic properties:
\begin{thm}\label{thm:OT_properties}
 With $\cO_{\cM(p)}^T$ defined as above,
 \begin{enumerate}
  \item Let $M_1$ be an object of $\cO_{\cM(p)}^{t_1}$ and $M_2$ an object $\cO_{\cM(p)}^{t_2}$ for $t_1,t_2\in T$. If $t_1\neq t_2$, then $\hom_{\cM(p)}(M_1,M_2)=0$. In particular, $\cO_{\cM(p)}^T$ is a full subcategory of $\cO_{\cM(p)}$.
  
\item  The category $\cO_{\cM(p)}^T$ is closed under submodules and quotients. In particular, $\cO_{\cM(p)}^T$ is an abelian category.

\item If $M_1$ is an object of $\cO_{\cM(p)}^{t_1}$ and $M_2$ is an object $\cO_{\cM(p)}^{t_2}$ for $t_1,t_2\in T$, then $M_1\boxtimes M_2$ is an object of $\cO_{\cM(p)}^{t_1+t_2}$. In particular, $\cO_{\cM(p)}^T$ is a tensor subcategory of $\cO_{\cM(p)}$.
\end{enumerate}
\end{thm}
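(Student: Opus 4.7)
The plan is to handle the three parts in order, using naturality of the monodromy $\cR^2_{\cM_{2,1},\bullet}$, the invertibility of the simple current $\cM_{2,1}$, and the hexagon axiom. For part (1), given $f\in\hom_{\cM(p)}(M_1,M_2)$, naturality of the double braiding together with the defining eigenvalue conditions from Definition \ref{defi:Ot} yields
\begin{equation*}
\bigl(e^{-2\pi i\alpha_{2,1}\beta_1}-e^{-2\pi i\alpha_{2,1}\beta_2}\bigr)\bigl(\Id_{\cM_{2,1}}\boxtimes f\bigr)=0.
\end{equation*}
A direct computation shows $\alpha_{2,1}\cdot 2L^\circ=\ZZ$, so the character $\beta\mapsto e^{-2\pi i\alpha_{2,1}\beta}$ descends to an injective character of $T$, and for $t_1\neq t_2$ we conclude $\Id_{\cM_{2,1}}\boxtimes f=0$. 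Because $\cM_{2,1}$ is a simple current with inverse $\cM_{0,1}$, the functor $\cM_{2,1}\boxtimes\bullet$ is an autoequivalence of $\cO_{\cM(p)}$ and in particular faithful, so $f=0$. Fullness of $\cO_{\cM(p)}^T$ in $\cO_{\cM(p)}$ then follows from the biproduct decomposition $\hom_{\cO_{\cM(p)}}(\bigoplus_tM_t,\bigoplus_tN_t)\cong\bigoplus_{s,t}\hom(M_t,N_s)$, whose off-diagonal entries vanish.

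For part (2), Proposition \ref{prop:Ot_abelian} already gives closure of each $\cO_{\cM(p)}^t$ under submodules and quotients, so the remaining content is to show that any submodule $N$ of $M=\bigoplus_tM_t\in\cO_{\cM(p)}^T$ decomposes as $N=\bigoplus_t(N\cap M_t)$. By naturality, $\cR^2_{\cM_{2,1},N}$ is the restriction of $\cR^2_{\cM_{2,1},M}$ to $\cM_{2,1}\boxtimes N\subseteq\bigoplus_t\cM_{2,1}\boxtimes M_t$, where the outer operator acts as the distinct scalars $e^{-2\pi i\alpha_{2,1}\beta_t}$ on each summand. Hence
\begin{equation*}
\cM_{2,1}\boxtimes N=\bigoplus_t\bigl((\cM_{2,1}\boxtimes N)\cap(\cM_{2,1}\boxtimes M_t)\bigr)=\bigoplus_t\cM_{2,1}\boxtimes(N\cap M_t),
\end{equation*}
the second equality because the exact autoequivalence $\cM_{2,1}\boxtimes\bullet$ preserves finite intersections. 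Applying its quasi-inverse gives $N=\bigoplus_t(N\cap M_t)$, with each summand in $\cO_{\cM(p)}^t$ again by Proposition \ref{prop:Ot_abelian}. Closure under quotients is then immediate from $M/N\cong\bigoplus_tM_t/(N\cap M_t)$.

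For part (3), assume first $M_i\in\cO_{\cM(p)}^{t_i}$ with $t_i=\beta_i+2L^\circ$; applying the hexagon axiom twice expresses $\cR^2_{\cM_{2,1},M_1\boxtimes M_2}$ as an associator-conjugated composition of $\cR^2_{\cM_{2,1},M_1}\boxtimes\Id_{M_2}$ and $\Id_{M_1}\boxtimes\cR^2_{\cM_{2,1},M_2}$. Under the scalar hypotheses these factors commute with the associators and multiply to $e^{-2\pi i\alpha_{2,1}(\beta_1+\beta_2)}\Id$, so $M_1\boxtimes M_2\in\cO_{\cM(p)}^{t_1+t_2}$. Bilinearity of $\boxtimes$ over the finite direct sum decompositions handles general objects of $\cO_{\cM(p)}^T$, and together with the observation $\cM_{1,1}=\cM(p)\in\cO_{\cM(p)}^0$ this shows $\cO_{\cM(p)}^T$ is a tensor subcategory. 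I expect the submodule decomposition in (2) to be the main subtle point, since it requires transporting the semisimple monodromy eigenspace decomposition through the simple current equivalence $\cM_{2,1}\boxtimes\bullet$, whereas parts (1) and (3) are essentially direct consequences of naturality and the hexagon.
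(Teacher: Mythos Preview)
Your proof is correct and follows the same conceptual outline as the paper's: naturality of monodromy plus invertibility of the simple current $\cM_{2,1}$ for parts (1) and (2), and the hexagon axiom for part (3). The main technical difference is that the paper packages the monodromy into an \emph{open Hopf link endomorphism} $h_M\in\Endo_{\cM(p)}(M)$, defined by conjugating $\Id_{\cM_{0,1}}\boxtimes\cR^2_{\cM_{2,1},M}$ through the unit and associativity isomorphisms and the inverse of an evaluation $e:\cM_{0,1}\boxtimes\cM_{2,1}\to\cM_{1,1}$. This $h_M$ is a natural automorphism of the identity functor on $\cO_{\cM(p)}$ that acts on $\cO_{\cM(p)}^t$ by the scalar $e^{\pi i\beta\sqrt{2p}}$, so parts (1) and (2) become immediate linear algebra on $M$ itself: for (1), $f=h_{M_2}\circ f\circ h_{M_1}^{-1}=e^{\pi i(\beta_2-\beta_1)\sqrt{2p}}f$; for (2), $h_N=h_M|_N$ is diagonalizable with eigenspaces contained in the $M_t$.

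Your route instead keeps the monodromy on $\cM_{2,1}\boxtimes M$ and then transports the resulting eigenspace decomposition back through the autoequivalence $\cM_{2,1}\boxtimes\bullet$. This avoids introducing $h_M$ and is slightly more elementary, at the cost of the extra step in (2) where you invoke that an exact autoequivalence preserves intersections and reflects direct sum decompositions. The paper's device has the advantage that the eigenspace decomposition happens directly inside $M$, so no transport is needed; your approach has the advantage of not requiring the definition and well-definedness of the open Hopf link composition. Part (3) is identical in both proofs.
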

\begin{proof}
 To prove (1) and (2), we will need the open Hopf link map $h_M\in\Endo_{\cM(p)}(M)$ associated to $\cM_{2,1}$, for any object $M$ in $\cO_{\cM(p)}$. Since $\cM_{2,1}$ is a rigid simple current $\cM(p)$-module with tensor inverse $\cM_{0,1}$, any evaluation homomorphism
 \begin{equation*}
  e: \cM_{0,1}\tens\cM_{2,1}\longrightarrow\cM_{1,1}
 \end{equation*}
is an isomorphism. Then for an object $M$ in $\cO_{\cM(p)}$, we define $h_M$ to be the composition
\begin{align*}
 M\xrightarrow{l_{M}^{-1}}  \cM_{1,1}\tens M & \xrightarrow{e^{-1}\tens\Id_M} (\cM_{0,1}\tens\cM_{2,1})\tens M\xrightarrow{\cA_{\cM_{0,1},\cM_{2,1}, M}^{-1}} \cM_{0,1}\tens(\cM_{2,1}\tens M)\nonumber\\
 &\xrightarrow{\Id_{\cM_{0,1}}\tens\cR_{\cM_{2,1},M}^2} \cM_{0,1}\tens(\cM_{2,1}\tens M)\xrightarrow{\cA_{\cM_{0,1},\cM_{2,1}, M}} (\cM_{0,1}\tens\cM_{2,1})\tens M\nonumber\\
 &\xrightarrow{e\tens\Id_M}\cM_{1,1}\tens M\xrightarrow{l_M} M.
\end{align*}
Since the unit, associativity, and braiding isomorphisms in $\cO_{\cM(p)}$ are natural, the $\cM(p)$-module isomorphisms $h_M$ define a natural automorphism of the identity functor on $\cO_{\cM(p)}$. Moreover, if $M$ is an object of $\cO_{\cM(p)}^t$ for $t=\beta+2L^\circ\in T$, then $h_M=e^{\pi i\beta\sqrt{2p}}\Id_M$.

Now to prove (1), let $M_1$ be an object of $\cO_{\cM(p)}^{t_1}$ and $M_2$ an object $\cO_{\cM(p)}^{t_2}$ for $t_1=\beta_1+2L^\circ$ and $t_2=\beta_2+2L^\circ$. If $f\in\hom_{\cM(p)}(M_1,M_2)$, then naturality of the open Hopf link automorphisms implies
\begin{equation*}
 f=f\circ h_{M_1}\circ h_{M_1}^{-1} = h_{M_2}\circ f\circ h_{M_1}^{-1}= e^{\pi i(\beta_2-\beta_1)\sqrt{2p}} f.
\end{equation*}
Thus $f=0$ unless perhaps $\beta_1-\beta_2\in\sqrt{2/p}\ZZ=2L^\circ$, that is, $t_1=t_2$.

To prove (2), let $M=\bigoplus_{t\in T} M_t$ be an object of $\cO_{\cM(p)}^T$. The non-zero $M_t$ are the distinct eigenspaces of $h_M$, so $h_M$ is diagonalizable with finitely many eigenvalues.  Now if $N\subseteq M$ is an $\cM(p)$-submodule, then $N$ is an object of $\cO_{\cM(p)}$, so naturality of the open Hopf link automorphisms implies that $h_N=h_M\vert_{N}$. In particular, $h_M$ preserves the submodule $N$, $h_N$ is also diagonalizable with finitely many eigenvalues, and the eigenspaces of $h_N$ are subspaces of the corresponding eigenspaces of $h_M$. Since the $h_M$-eigenspaces are the non-zero $M_t$, this shows that $N=\bigoplus_{t\in T} N_t$ where $N_t\subseteq M_t$ and $N_t\neq 0$ for only finitely many $t\in T$. Each $N_t$ is an $\cM(p)$-submodule of $M_t$ because it is an eigenspace of the $\cM(p)$-module endomorphism $h_N$. Proposition \ref{prop:Ot_abelian} now shows that $N_t$ is an object of $\cO_{\cM(p)}^t$, so $N$ is an object of $\cO_{\cM(p)}^T$. This proves $\cO_{\cM(p)}^T$ is closed under $\cM(p)$-submodules.

To show that $\cO_{\cM(p)}^T$ is also closed under quotients, we have just shown that any quotient $M/N$, with $M$ an object of $\cO_{\cM(p)}^T$ and $N$ an $\cM(p)$-submodule, has the form $\bigoplus_{t\in T} M_t/N_t$ where $M_t/N_t=0$ for all but finitely many $t\in T$ and both $M_t$ and $N_t$ are objects of $\cO_{\cM(p)}^t$ for $t=\beta+2L^\circ\in T$. Then Proposition \ref{prop:Ot_abelian} shows that $M_t/N_t$ is an object of $\cO_{\cM(p)}^t$, so $M/N$ is an object of $\cO_{\cM(p)}^T$.

To prove (3), suppose $M_1$ is an object of $\cO_{\cM(p)}^{t_1}$ and $M_2$ is an object of $\cO_{\cM(p)}^{t_2}$ for $t_1=\beta_1+2L^\circ$ and $t_2=\beta_2+2L^\circ$. Then by the hexagon axiom,
\begin{align*}
 \cR_{\cM_{2,1},M_1\tens M_2}^2 & =\cA_{\cM_{2,1},M_1,M_2}^{-1}\circ(\cR_{M_1,\cM_{2,1}}\tens\Id_{M_2})\circ\cA_{M_1,\cM_{2,1},M_2}\circ(\Id_{M_1}\tens\cR_{\cM_{2,1},M_2}^2)\circ\nonumber\\
 &\qquad\qquad\circ\cA_{M_1,\cM_{2,1},M_2}^{-1}\circ(\cR_{\cM_{2,1},M_1}\tens\Id_{M_2})\circ\cA_{\cM_{2,1},M_1,M_2}\nonumber\\
 & =e^{\pi i\beta_2\sqrt{2p}}\left[\cA_{\cM_{2,1},M_1,M_2}^{-1}\circ(\cR_{\cM_{2,1},M_1}^2\tens\Id_{M_2})\circ\cA_{\cM_{2,1},M_1,M_2}\right]\nonumber\\
 & =e^{\pi i(\beta_1+\beta_2)\sqrt{2p}}\Id_{\cM_{2,1}\tens(M_1\tens M_2)},
\end{align*}
which means that $M_1\tens M_2$ is an object of $\cO_{\cM(p)}^{t_1+t_2}$.
\end{proof}

\begin{rem}\label{rem:open_Hopf_link}
The open Hopf link map $h_M$ associated to $\cM_{2,1}$ used in the preceding proof is somewhat different from the usual open Hopf link map defined in references such as \cite[Section 3.1.3]{CG}, since we have defined $h_M$ using the map $e^{-1}:\cM_{1,1}\rightarrow\cM_{0,1}\tens\cM_{2,1}$, rather than using the composition
\begin{equation*}
\til{i}: \cM_{1,1} \xrightarrow{i_{\cM_{2,1}}} \cM_{2,1}\tens\cM_{0,1}\xrightarrow{\theta_{\cM_{2,1}}\tens\Id_{\cM_{0,1}}} \cM_{2,1}\tens\cM_{0,1}\xrightarrow{\cR_{\cM_{2,1},\cM_{0,1}}} \cM_{0,1}\tens\cM_{2,1}
\end{equation*}
(where $i_{\cM_{2,1}}$ is the coevaluation and $\theta=e^{2\pi i L(0)}$ is the ribbon twist). Since $e\circ\til{i}$ is by definition the categorical dimension $\dim_{\cM(p)} \cM_{2,1}$, $h_M$ is related to the usual open Hopf link map (denoted $\Phi_{\cM_{2,1},M}$ in \cite{CG}) by $\Phi_{\cM_{2,1},M} =(\dim_{\cM(p)}\cM_{2,1})h_M$.
\end{rem}

Our next goal is to show that $\cO_{\cM(p)}^T$ contains all simple $\cM(p)$-modules in $\cO_{\cM(p)}$. First we need a lemma:
\begin{lem}\label{lem:simple_curr_tens}
 For $r\in\ZZ$ and $\cF_\lambda$ a typical irreducible $\cM(p)$-module,
 \begin{equation}\label{eqn:simple_curr_tens}
  \cM_{r,1}\tens\cF_\lambda\cong\cF_{\lambda+\alpha_{r,1}}\,\,\left(=\cF_{\lambda-(r-1)\frac{\alpha_+}{2}}=\cF_{\lambda-(r-1)\sqrt{\frac{p}{2}}}\right)
 \end{equation}
in $\cO_{\cM(p)}$.
\end{lem}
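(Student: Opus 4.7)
The plan is to construct a nonzero $\cM(p)$-module homomorphism from $\cM_{r,1}\tens\cF_\lambda$ to $\cF_{\lambda+\alpha_{r,1}}$ and then conclude that it must be an isomorphism because both modules are simple. The key inputs are that $\cM_{r,1}$ is a simple current (invertible object) in $\cO_{\cM(p)}$, and that the standard Heisenberg Fock module intertwining operators are automatically $\cM(p)$-module intertwining operators since $\cM(p)\subseteq\mathcal{H}$.

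First I reduce to the case $r\geq 1$. For $r=1$ we have $\alpha_{1,1}=0$ and $\cM_{1,1}=\cM(p)$, so \eqref{eqn:simple_curr_tens} follows from the left unit isomorphism of $\cO_{\cM(p)}$. For $r\leq 0$, note that $2-r\geq 2$ and, by the fusion rules of \cite[Theorem 5.2.1]{CMY-singlet} (which hold in $\cO_{\cM(p)}$ as noted after Theorem \ref{thm:C1_equals_fl}), $\cM_{r,1}\tens\cM_{2-r,1}\cong\cM_{1,1}$. In particular, $\cM_{2-r,1}\tens(-)$ is an autoequivalence of $\cO_{\cM(p)}$. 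Applying it to both sides of \eqref{eqn:simple_curr_tens} and using $\alpha_{r,1}+\alpha_{2-r,1}=0$, the $r\leq 0$ case reduces to $\cF_\lambda\cong\cM_{2-r,1}\tens\cF_{\lambda+\alpha_{r,1}}$, which is \eqref{eqn:simple_curr_tens} for the index $2-r\geq 2$ applied to the typical weight $\lambda+\alpha_{r,1}\in\CC\setminus L^\circ$.

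For $r\geq 1$, there is a standard Heisenberg Fock module intertwining operator
\begin{equation*}
\cY_{\mathcal{H}}:\cF_{\alpha_{r,1}}\otimes\cF_\lambda\longrightarrow\cF_{\alpha_{r,1}+\lambda}\lbrace x\rbrace
\end{equation*}
satisfying $\cY_{\mathcal{H}}(v_{\alpha_{r,1}},x)v_\lambda=x^{\langle\alpha_{r,1},\lambda\rangle}v_{\alpha_{r,1}+\lambda}+\cdots\neq 0$, which is automatically an $\cM(p)$-module intertwining operator. By \eqref{eqn:Frs_structure}, $\cM_{r,1}=\mathrm{Soc}(\cF_{\alpha_{r,1}})$ as an $\cM(p)$-module, and restricting $\cY_{\mathcal{H}}$ to $\cM_{r,1}\otimes\cF_\lambda$ yields an $\cM(p)$-intertwining operator $\cY$ of type $\binom{\cF_{\alpha_{r,1}+\lambda}}{\cM_{r,1}\;\cF_\lambda}$. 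For $r\geq 1$, the lowest conformal weight of $\cF_{\alpha_{r,1}}$ is $h_{\alpha_{r,1}}=h_{r,1}$, which matches that of $\cM_{r,1}$, so $v_{\alpha_{r,1}}\in\cM_{r,1}$ and $\cY$ is nonzero. By the universal property of the tensor product in $\cO_{\cM(p)}$ (Theorem \ref{thm:C1_equals_fl}), $\cY$ induces a nonzero $\cM(p)$-module homomorphism $f:\cM_{r,1}\tens\cF_\lambda\rightarrow\cF_{\alpha_{r,1}+\lambda}$.

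Finally, since $\cM_{r,1}$ is invertible in $\cO_{\cM(p)}$, the functor $\cM_{r,1}\tens(-)$ is an autoequivalence and hence preserves simplicity. Consequently $\cM_{r,1}\tens\cF_\lambda$ is a simple $\cM(p)$-module. Likewise, $\alpha_{r,1}+\lambda\notin L^\circ$, so $\cF_{\alpha_{r,1}+\lambda}$ is also a simple typical $\cM(p)$-module. A nonzero homomorphism between two simple modules is an isomorphism, so $f$ is the desired isomorphism. I do not expect any serious obstacle; the only subtle point is observing that for $r\leq 0$ the Fock module ground state $v_{\alpha_{r,1}}$ lies outside the socle $\cM_{r,1}$, which forces the initial reduction to the range $r\geq 1$.
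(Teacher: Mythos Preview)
Your proof is correct and follows the same overall strategy as the paper: produce a nonzero $\cM(p)$-intertwining operator of type $\binom{\cF_{\lambda+\alpha_{r,1}}}{\cM_{r,1}\;\cF_\lambda}$ by restricting a Heisenberg intertwining operator, then conclude by Schur's Lemma using that $\cM_{r,1}$ is a simple current and $\cF_{\lambda+\alpha_{r,1}}$ is simple. The one point of divergence is how nonvanishing of the restricted intertwining operator is established. You observe that $v_{\alpha_{r,1}}\in\cM_{r,1}$ precisely when $r\geq 1$, use the explicit leading term $x^{\langle\alpha_{r,1},\lambda\rangle}v_{\alpha_{r,1}+\lambda}$ for that range, and then reduce $r\leq 0$ to the case $2-r\geq 2$ via invertibility of $\cM_{2-r,1}$. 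The paper instead invokes \cite[Proposition~11.9]{DL}, which says $\cY(w_1,x)w_2\neq 0$ for \emph{any} nonzero $w_1\in\cF_{\alpha_{r,1}}$ and $w_2\in\cF_\lambda$ since both are irreducible $\mathcal{H}$-modules; this yields nonvanishing of the restriction to $\cM_{r,1}\otimes\cF_\lambda$ uniformly for all $r\in\ZZ$ with no case split. Your route is more elementary (it avoids the Dong--Lepowsky citation), while the paper's is shorter and handles all $r$ at once.
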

\begin{proof}
 There is a non-zero $\mathcal{H}$-module intertwining operator $\cY$ of type $\binom{\cF_{\lambda+\alpha_{r,1}}}{\cF_{\alpha_{r,1}}\,\cF_\lambda}$. Thus if $f$ denotes the inclusion $\cM_{r,1}\hookrightarrow\cF_{\alpha_{r,1}}$, then $\cY\circ(f\otimes\Id_{\cF_\lambda})$ is an $\cM(p)$-module intertwining operator of type $\binom{\cF_{\lambda+\alpha_{r,1}}}{\cM_{r,1}\,\cF_\lambda}$. As $\cF_{\alpha_{r,1}}$ and $\cF_\lambda$ are irreducible $\mathcal{H}$-modules, \cite[Proposition 11.9]{DL} says $\cY(w_1,x)w_2\neq 0$ for any non-zero $w_1\in\cF_{\alpha_{r,1}}$, $w_2\in\cF_\lambda$, and this means $\cY\circ(f\otimes\Id_{\cF_\lambda})\neq 0$. Then the universal property of tensor products in $\cO_{\cM(p)}$ induces a non-zero $\cM(p)$-module homomorphism $F:\cM_{r,1}\tens\cF_\lambda\rightarrow\cF_{\lambda+\alpha_{r,1}}$. 
 
 Since $\cM_{r,1}$ is a simple current $\cM(p)$-module, the domain of $F$ is irreducible. The codomain is also irreducible because $\lambda+\alpha_{r,1}\neq\alpha_{r',s'}$ for $r'\in\ZZ$, $1\leq s'\leq p-1$ when $\lambda\in(\CC\setminus L^\circ)\cup\lbrace\alpha_{r,p}\,\vert\,r\in\ZZ\rbrace$. Thus $F$ is an isomorphism by Schur's Lemma.
\end{proof}

Now we can prove:
\begin{prop}\label{prop:irred_mod_grading}
 For $\lambda\in(\CC\setminus L^\circ)\cup\lbrace\alpha_{r,p}\,\vert\,r\in\ZZ\rbrace$, the typical irreducible $\cM(p)$-module $\cF_\lambda$ is an object of $\cO_{\cM(p)}^{\lambda+2L^\circ}$. Moreover, for $r\in\ZZ$ and $1\leq s\leq p$, the irreducible $\cM(p)$-module $\cM_{r,s}$ is an object of $\cO_{\cM(p)}^{\alpha_{r,s}+2L^\circ}$ 
\end{prop}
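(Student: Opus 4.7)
The plan is to compute $\cR^2_{\cM_{2,1},M}$ directly for each simple $\cM(p)$-module $M$ in the statement and verify it equals the scalar prescribed by the claimed grading. First I observe that in each case, $\cM_{2,1}\tens M$ is again simple: Lemma~\ref{lem:simple_curr_tens} gives $\cM_{2,1}\tens\cF_\lambda\cong\cF_{\lambda+\alpha_{2,1}}$ for the typical case, while the simple-current fusion rule $\cM_{2,1}\tens\cM_{r,s}\cong\cM_{r+1,s}$ from \cite[Theorem~5.2.1]{CMY-singlet} handles the atypical case. Consequently, by Schur's Lemma, $\cR^2_{\cM_{2,1},M}$ is scalar multiplication by some $c_M\in\CC^\times$ on this simple module.

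To extract $c_M$, I apply the ribbon balancing axiom to the lowest conformal weight vector of $\cM_{2,1}\tens M$: since $\theta_N=e^{2\pi iL(0)}$ acts as $e^{2\pi ih_N}\Id_N$ on any simple $N$ of minimal conformal weight $h_N$, the balancing equation yields
\[
c_M = e^{-2\pi i(h_{\cM_{2,1}\tens M}-h_{\cM_{2,1}}-h_M)}
\]
in the braiding convention of this paper (consistent with the open Hopf link scalar computed in the proof of Theorem~\ref{thm:OT_properties}).

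In the typical case $M=\cF_\lambda$, a direct expansion using $h_\mu=\tfrac{1}{2}\mu(\mu-\alpha_0)$ gives $h_{\lambda+\alpha_{2,1}}-h_{\alpha_{2,1}}-h_\lambda=\alpha_{2,1}\lambda$, hence $c_{\cF_\lambda}=e^{-2\pi i\alpha_{2,1}\lambda}$ and so $\cF_\lambda\in\cO_{\cM(p)}^{\lambda+2L^\circ}$. In the atypical case $M=\cM_{r,s}$ with $r\geq 1$, a short Virasoro-weight computation using $h_{\cM_{r,s}}=h_{r,s}$ and $h_{\cM_{r+1,s}}=h_{r+1,s}$ shows $h_{\cM_{r+1,s}}-h_{\cM_{2,1}}-h_{\cM_{r,s}}=\alpha_{2,1}\alpha_{r,s}$, hence $c_{\cM_{r,s}}=e^{-2\pi i\alpha_{2,1}\alpha_{r,s}}$, placing $\cM_{r,s}$ in $\cO_{\cM(p)}^{\alpha_{r,s}+2L^\circ}$.

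The main obstacle is the atypical case with $r\leq 0$, where the lowest conformal weight of $\cM_{r,s}$ is $h_{2-r,s}=h_{1-r,p-s}$ rather than $h_{\alpha_{r,s}}$, and similarly for $\cM_{r+1,s}$ since $r+1\leq 1$. A direct computation in this range shows that the balancing exponent $h_{\cM_{r+1,s}}-h_{\cM_{2,1}}-h_{\cM_{r,s}}$ differs from $\alpha_{2,1}\alpha_{r,s}$ by the integer $s-p$, so the two exponentials still agree and $\cM_{r,s}\in\cO_{\cM(p)}^{\alpha_{r,s}+2L^\circ}$ as required.
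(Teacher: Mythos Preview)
Your approach is essentially identical to the paper's: both use the balancing equation $\cR_{\cM_{2,1},M}^2=\theta_{\cM_{2,1}\tens M}\circ(\theta_{\cM_{2,1}}^{-1}\tens\theta_M^{-1})$ together with the simple-current fusion rules $\cM_{2,1}\tens\cF_\lambda\cong\cF_{\lambda+\alpha_{2,1}}$ and $\cM_{2,1}\tens\cM_{r,s}\cong\cM_{r+1,s}$ to reduce the monodromy to an exponential of conformal weights. The only cosmetic difference is in handling the atypical case when $r\leq 0$: you split off this range and compute the integer discrepancy $s-p$ explicitly, whereas the paper dispatches all $r$ at once with the parenthetical observation that the lowest conformal weight of $\cM_{r,s}$ is always congruent to $h_{r,s}$ modulo $\ZZ$.
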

\begin{proof}
For $\lambda\in(\CC\setminus L^\circ)\cup\lbrace\alpha_{r,p}\,\vert\,r\in\ZZ\rbrace$, we  use the balancing equation for monodromy, the tensor product formula \eqref{eqn:simple_curr_tens}, and the conformal weight \eqref{eqn:h_lambda_def} to calculate
 \begin{align*}
  \cR_{\cM_{2,1},\cF_\lambda}^2 & = \theta_{\cM_{2,1}\tens\cF_\lambda}\circ(\theta_{\cM_{2,1}}^{-1}\tens\theta_{\cF_\lambda}^{-1})\nonumber\\
  &= e^{2\pi i \left(h_{\lambda+\alpha_{2,1}}-h_{\alpha_{2,1}}-h_\lambda\right)}\Id_{\cM_{2,1}\tens\cF_\lambda}\nonumber\\
  &= e^{\pi i\left[(\lambda+\alpha_{2,1})(\alpha_0-\lambda-\alpha_{2,1})-\alpha_{2,1}(\alpha_0-\alpha_{2,1})-\lambda(\alpha_0-\lambda)\right]}\Id_{\cM_{2,1}\tens\cF_\lambda}\nonumber\\
  & =e^{-2\pi i\alpha_{2,1}\lambda}\Id_{\cM_{2,1}\tens\cF_\lambda}.
 \end{align*}
Thus $\cF_\lambda$ is an object of $\cO_{\cM(p)}^{\lambda+2L^\circ}$ by \eqref{eqn:twist_mono_cond}. For $r\in\ZZ$ and $1\leq s\leq p$, the same calculation using the formula $\cM_{2,1}\tens\cM_{r,s}\cong \cM_{r+1,s}$ from \cite[Theorem 3.2.8(1)]{CMY-singlet} shows
\begin{equation*}
 \cR_{\cM_{2,1},\cM_{r,s}}^2 =e^{2\pi i(h_{r+1,s}-h_{2,1}-h_{r,s})}\Id_{\cM_{2,1}\tens\cM_{r,s}} = e^{\pi i[(r-1)p-(s-1)]}\Id_{\cM_{2,1}\tens\cM_{r,s}}.
\end{equation*}
(Note that although the lowest conformal weight of $\cM_{r,s}$ equals $h_{r,s}$ only for $r\geq 1$, it is always congruent to $h_{r,s}$ modulo $\ZZ$.) Since
\begin{equation*}
 e^{\pi i[(r-1)p-(s-1)]}=e^{\pi i[(1-r)p-(1-s)]}=e^{2\pi i\left(\frac{1-r}{2}p-\frac{1-s}{2}\right)\sqrt{\frac{2}{p}}\sqrt{\frac{p}{2}}} =e^{\pi i\alpha_{r,s}\sqrt{2p}},
\end{equation*}
it follows from \eqref{eqn:twist_mono_cond} that $\cM_{r,s}$ is an object of $\cO_{\cM(p)}^{\alpha_{r,s}+2L^\circ}$.
\end{proof}

Next we determine the relation between $\cC_{\cM(p)}^0$ and $\cO_{\cM(p)}^T$:
\begin{prop}\label{prop:C0_and_OT}
 The category $\cC_{\cM(p)}^0$ of \cite[Section 3.1]{CMY-singlet} equals $\cO_{\cM(p)}^{0+2L^\circ}\oplus\cO_{\cM(p)}^{\alpha_-/2+2L^\circ}$.
\end{prop}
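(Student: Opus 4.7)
The plan is to invoke the characterization stated just above Lemma~\ref{lem:untwisted_induction}: an object $M$ of $\rep^0\cM(p)$ belongs to $\cC_{\cM(p)}^0$ if and only if $\cR^2_{\cM_{3,1},M}=\Id_{\cM_{3,1}\tens M}$. I would first verify $\cO_{\cM(p)}\subseteq\rep^0\cM(p)$ so that this criterion applies. By Theorem~\ref{thm:C1_equals_fl}, every object of $\cO_{\cM(p)}$ has finite length, and each simple composition factor is either an atypical module $\cM_{r,s}$ (already in $\cC_{\cM(p)}^0\subseteq\rep^0\cM(p)$) or a typical Fock module $\cF_\lambda$ with $\lambda\in\CC\setminus L^\circ$; the latter restricts to the irreducible Virasoro Verma module $\cV_{h_\lambda}$ (as in the proof of Theorem~\ref{thm:typical_GVM}), which lies in $\cO_p\subseteq\ind(\cO_p)$. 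An induction on length, together with closure of $\ind(\cO_p)$ under extensions of modules of this type, then places every object of $\cO_{\cM(p)}$ into $\rep^0\cM(p)$.

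The crucial step is the identification $\cR^2_{\cM_{3,1},M}=\Id_{\cM_{3,1}\tens M}\Leftrightarrow h_M^{2}=\Id_M$, where $h_M$ is the $\cM_{2,1}$-open-Hopf-link endomorphism from the proof of Theorem~\ref{thm:OT_properties}. This uses that $\cM_{2,1}\tens\cM_{2,1}\cong\cM_{3,1}$ as simple currents (see \cite[Theorem 3.2.8(1)]{CMY-singlet}): choosing an evaluation $\cM_{0,1}\tens\cM_{3,1}\to\cM_{1,1}$ compatible with the fixed evaluation $\cM_{0,1}\tens\cM_{2,1}\to\cM_{1,1}$ via this isomorphism, the hexagon axiom and naturality imply that the analogous open Hopf link endomorphism associated to $\cM_{3,1}$ equals $h_M^{2}$. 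Since $\cM_{3,1}$ is invertible, the scalar-scalar correspondence (conjugation of a scalar endomorphism by unit, associativity, and evaluation isomorphisms remains a scalar endomorphism with the same scalar) then converts the $\cC_{\cM(p)}^0$ criterion $\cR^2_{\cM_{3,1},M}=\Id$ into the clean condition $h_M^{2}=\Id_M$.

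With this translation in hand, everything reduces to scalar arithmetic via Theorem~\ref{thm:OT_properties}. For the $\supseteq$ inclusion, on an object of $\cO^{\beta+2L^\circ}_{\cM(p)}$ we have $h_M=e^{\pi i\beta\sqrt{2p}}\Id_M$, and this scalar has square $1$ precisely when $\beta\sqrt{2p}\in\ZZ$, i.e.\ $\beta\in L^\circ=\ZZ\frac{\alpha_-}{2}$, which modulo $2L^\circ=\ZZ\alpha_-$ gives exactly the two cosets $0+2L^\circ$ and $\frac{\alpha_-}{2}+2L^\circ$. For the $\subseteq$ inclusion, given $M\in\cC_{\cM(p)}^0$, the relation $h_M^{2}=\Id_M$ makes $h_M$ an involutive $\cM(p)$-module endomorphism, so $M$ splits as $M_+\oplus M_-$ into its $\pm 1$ eigenspaces; the same scalar-scalar correspondence applied to the invertible object $\cM_{2,1}$ identifies $M_+$ with an object of $\cO^{0+2L^\circ}_{\cM(p)}$ and $M_-$ with an object of $\cO^{\alpha_-/2+2L^\circ}_{\cM(p)}$. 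I expect the main obstacle to be establishing the identity between $h_M^{2}$ and the $\cM_{3,1}$-open Hopf link endomorphism; although morally clear, it requires careful bookkeeping with hexagon and associativity diagrams involving the simple current isomorphism $\cM_{2,1}\tens\cM_{2,1}\cong\cM_{3,1}$.
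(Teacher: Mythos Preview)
Your proof is essentially correct and, for one inclusion, more direct than the paper's. A small correction: the dual of $\cM_{3,1}$ is $\cM_{-1,1}$, not $\cM_{0,1}$, so the evaluation you want is $\cM_{-1,1}\tens\cM_{3,1}\to\cM_{1,1}$. This does not affect your argument, since the open Hopf link $h^{\cM_{3,1}}_M$ is independent of which nonzero evaluation is chosen (it uses both $e'$ and $(e')^{-1}$), and the identity $h^{\cM_{3,1}}_M=h_M^2$ then follows from multiplicativity of the standard open Hopf link $\Phi_{\bullet,M}$ together with multiplicativity of categorical dimension (Remark~\ref{rem:open_Hopf_link}). Also, you do not need the full inclusion $\cO_{\cM(p)}\subseteq\rep^0\cM(p)$: for $\supseteq$ only objects of $\cO_{\cM(p)}^{L^\circ}$ arise, and these have atypical composition factors by Proposition~\ref{prop:irred_mod_grading}, so \cite[Proposition~3.1.3]{CMY-singlet} already puts them in $\rep^0\cM(p)$.

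The paper's proof agrees with yours on the inclusion $\cO_{\cM(p)}^{L^\circ}\subseteq\cC_{\cM(p)}^0$: it performs the hexagon computation directly on $\cR^2_{\cM_{3,1},M}$ rather than packaging it as $h_M^2$, but the content is identical. For the reverse inclusion $\cC_{\cM(p)}^0\subseteq\cO_{\cM(p)}^{L^\circ}$, the approaches genuinely differ. The paper argues indirectly: since $\cC_{\cM(p)}^0$ has enough projectives and every indecomposable projective occurs as a summand of a tensor product of simples (from \cite{CMY-singlet}), $\cC_{\cM(p)}^0$ is the \emph{smallest} full tensor subcategory of $\cC_{\cM(p)}$ containing all $\cM_{r,s}$, hence is contained in any such subcategory, in particular $\cO_{\cM(p)}^{L^\circ}$. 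Your argument instead deduces $h_M^2=\Id_M$ from $\cR^2_{\cM_{3,1},M}=\Id$ and splits $M$ into $\pm1$-eigenspaces. Your route is more elementary and self-contained, avoiding any appeal to the projective-cover structure of $\cC_{\cM(p)}^0$; the paper's route is shorter given that this structure is already available from \cite{CMY-singlet}.
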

\begin{proof}
 Recalling that $\frac{\alpha_-}{2}$ spans $L^\circ$, we denote $\cO_{\cM(p)}^{0+2L^\circ}\oplus\cO_{\cM(p)}^{\alpha_-/2+2L^\circ}$ by $\cO_{\cM(p)}^{L^\circ}$. Since $\frac{\alpha_-}{2}+\frac{\alpha_-}{2}\in 2L^\circ$, Theorem \ref{thm:OT_properties} and its proof show that $\cO_{\cM(p)}^{L^\circ}$ is a full tensor subcategory of $\cO_{\cM(p)}$; in particular, $\cO_{\cM(p)}^{L^\circ}$ is closed under tensor products, finite direct sums, and subquotients. Any object in $\cO_{\cM(p)}^{L^\circ}$ has finite length, and its composition factors are objects of $\cO_{\cM(p)}^{L^\circ}$ since $\cO_{\cM(p)}^{L^\circ}$ is closed under subquotients. Proposition \ref{prop:irred_mod_grading} shows that an irreducible $\cM(p)$-module is an object of $\cO_{\cM(p)}^{L^\circ}$ if and only if it is one of the $\cM_{r,s}$ for $r\in\ZZ$, $1\leq s\leq p$, so $\cO_{\cM(p)}^{L^\circ}$ is a full tensor subcategory of $\cC_{\cM(p)}$ defined in \cite[Section 3.1]{CMY-singlet}.
 
 On the other hand, $\cC_{\cM(p)}^0$ is also a full tensor subcategory of $\cC_{\cM(p)}$ that contains every irreducible $\cM_{r,s}$ for $r\in\ZZ$, $1\leq s\leq p$ (see Proposition 3.2.5 and Theorem 3.3.1 of \cite{CMY-singlet}). Moreover, \cite[Section 5]{CMY-singlet} shows that $\cC_{\cM(p)}^0$ has enough projectives and that every indecomposable projective object occurs as a direct summand of a tensor product of two simple objects. This means that $\cC_{\cM(p)}^0$ is the smallest full tensor subcategory of $\cC_{\cM(p)}$ that contains all the $\cM_{r,s}$, so $\cC_{\cM(p)}^0$  is a subcategory of  $\cO_{\cM(p)}^{L^\circ}$.
 
 Conversely, to show that $\cO_{\cM(p)}^{L^\circ}$ is a subcategory of $\cC_{\cM(p)}^0$, note that $\cO_{\cM(p)}^{L^\circ}$ is the subcategory of modules in $\cC_{\cM(p)}$ whose indecomposable summands $M$ satisfy $\cR_{\cM_{2,1},M}^2 =\pm\Id_{\cM_{2,1},M}$, while from the discussion preceding Lemma \ref{lem:untwisted_induction}, $\cC_{\cM(p)}^0$ is the subcategory of modules $M$ such that $\cR_{\cM_{3,1},M}^2=\Id_{\cM_{3,1}\tens M}$. Now let $M$ be an indecomposable object of $\cO_{\cM(p)}^{L^\circ}$. To show that $M$ is also an object of $\cC_{\cM(p)}^0$, we fix an isomorphism $f:\cM_{2,1}\tens\cM_{2,1}\rightarrow\cM_{3,1}$ (guaranteed by \cite[Theorem 3.2.8(1)]{CMY-singlet}) and calculate
 \begin{align*}
  \cR_{\cM_{3,1},M}^2 & =(f\tens\Id_M)\circ\cR_{\cM_{2,1}\tens\cM_{2,1},M}^2\circ (f^{-1}\tens\Id_M)\nonumber\\
  & =(f\tens\Id_M)\circ\cA_{\cM_{2,1},\cM_{2,1},M}\circ(\Id_{\cM_{2,1}}\tens\cR_{M,\cM_{2,1}})\circ\cA_{\cM_{2,1},M,\cM_{2,1}}^{-1}\circ\nonumber\\
  &\qquad\qquad\circ(\cR_{\cM_{2,1},M}^2\tens\Id_{\cM_{2,1}})\circ\cA_{\cM_{2,1},M,\cM_{2,1}}\circ(\Id_{\cM_{2,1}}\tens\cR_{\cM_{2,1},M})\circ\nonumber\\
  &\qquad\qquad\circ\cA_{\cM_{2,1},\cM_{2,1},M}^{-1}\circ(f^{-1}\tens\Id_M)\nonumber\\
  & =\pm(f\tens\Id_M)\circ\cA_{\cM_{2,1},\cM_{2,1},M}\circ(\Id_{\cM_{2,1}}\tens\cR_{\cM_{2,1},M}^2)\circ\cA_{\cM_{2,1},\cM_{2,1},M}^{-1}\circ(f^{-1}\tens\Id_M)\nonumber\\
  & =\Id_{\cM_{3,1}\tens M}
 \end{align*}
as required.
\end{proof}

\subsection{Projective \texorpdfstring{$\cM(p)$}{M(p)}-modules}\label{sec:projective}

In this subsection, we classify projective objects in the tensor category $\cO_{\cM(p)}^T$ defined in the previous subsection. In particular, we will show that $\cO_{\cM(p)}^T$ has enough projectives, that is, every irreducible $\cM(p)$-module has a projective cover in $\cO_{\cM(p)}^T$. For the atypical modules, this is an easy consequence of Proposition \ref{prop:C0_and_OT} combined with the results of \cite{CMY-singlet}:
\begin{prop}\label{prop:Prs_proj}
For $r\in\ZZ$ and $1\leq s\leq p$, the indecomposable $\cM(p)$-module $\cP_{r,s}$ is a projective cover of $\cM_{r,s}$ in $\cO_{\cM(p)}^T$.
\end{prop}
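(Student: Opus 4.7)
The strategy is to use the $T$-grading of $\cO_{\cM(p)}^T$ to reduce the projectivity of $\cP_{r,s}$ in $\cO_{\cM(p)}^T$ to its already-established projectivity in $\cC_{\cM(p)}^0$ from \cite[Section 5]{CMY-singlet}. The key observation will be that $\cP_{r,s}$ lives entirely inside a single graded component $\cO_{\cM(p)}^t$ with $t=\alpha_{r,s}+2L^\circ$, and that this component is a direct summand of $\cC_{\cM(p)}^0$.

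First, I would verify that $\cP_{r,s}$ is an object of $\cO_{\cM(p)}^{t}$ for $t=\alpha_{r,s}+2L^\circ$. By \eqref{eqn:Prs_Loewy_diag}, the composition factors of $\cP_{r,s}$ are $\cM_{r,s}$ and $\cM_{r\pm 1,p-s}$ (or just $\cM_{r,p}$ itself when $s=p$). Using $\alpha_+=-p\alpha_-$ one computes $\alpha_{r+1,p-s}-\alpha_{r,s}=s\alpha_-$ and $\alpha_{r-1,p-s}-\alpha_{r,s}=(s-p)\alpha_-$, both of which lie in $\ZZ\alpha_-=2L^\circ$. So by Proposition \ref{prop:irred_mod_grading} every composition factor of $\cP_{r,s}$ belongs to $\cO_{\cM(p)}^t$, and because $\cP_{r,s}$ is indecomposable while $\cO_{\cM(p)}^T=\bigoplus_{t'\in T}\cO_{\cM(p)}^{t'}$, the module $\cP_{r,s}$ itself must lie in $\cO_{\cM(p)}^t$. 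Since $\alpha_{r,s}\in L^\circ=\ZZ\frac{\alpha_-}{2}$ and $L^\circ/2L^\circ$ has order two generated by $\frac{\alpha_-}{2}+2L^\circ$, the coset $t$ is either $0+2L^\circ$ or $\frac{\alpha_-}{2}+2L^\circ$, so Proposition \ref{prop:C0_and_OT} yields $\cO_{\cM(p)}^t\subseteq\cC_{\cM(p)}^0$.

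With this in hand, the lifting argument becomes formal. Given a surjection $\pi:M\twoheadrightarrow N$ in $\cO_{\cM(p)}^T$ and a morphism $f:\cP_{r,s}\to N$, I decompose $M=\bigoplus_{t'\in T}M_{t'}$ and $N=\bigoplus_{t'\in T}N_{t'}$. Theorem \ref{thm:OT_properties}(1) then implies $f$ factors as $\iota_t\circ f'$ with $f':\cP_{r,s}\to N_t$, while $\pi$ restricts to a surjection $\pi_t:M_t\twoheadrightarrow N_t$ of objects in $\cO_{\cM(p)}^t\subseteq\cC_{\cM(p)}^0$. Projectivity of $\cP_{r,s}$ in $\cC_{\cM(p)}^0$ then produces a lift $\til{f}':\cP_{r,s}\to M_t$ of $f'$, and composing with the inclusion $M_t\hookrightarrow M$ gives the required lift of $f$.

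Finally, to upgrade projectivity to projective cover, I note that $\cP_{r,s}$ is indecomposable with unique irreducible quotient $\cM_{r,s}$, as is visible in the Loewy diagram \eqref{eqn:Prs_Loewy_diag} (and trivially when $s=p$). I do not anticipate any serious obstacle — the substantive work has been done in \cite{CMY-singlet} and in Propositions \ref{prop:irred_mod_grading} and \ref{prop:C0_and_OT}; the point is that the $T$-grading is fine enough to isolate the atypical sector containing $\cP_{r,s}$ inside $\cC_{\cM(p)}^0$, so that projectivity propagates automatically from the smaller category to the larger one.
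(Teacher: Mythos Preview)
Your approach is essentially the same as the paper's: both use the $T$-grading to reduce projectivity in $\cO_{\cM(p)}^T$ to the known projectivity in $\cC_{\cM(p)}^0$, via Proposition \ref{prop:C0_and_OT}. The paper works with the two-component subcategory $\cO_{\cM(p)}^{L^\circ}=\cO_{\cM(p)}^{0+2L^\circ}\oplus\cO_{\cM(p)}^{\alpha_-/2+2L^\circ}$ rather than isolating a single $\cO_{\cM(p)}^t$, but this is cosmetic.

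One logical ordering issue in your first paragraph: the implication ``all composition factors lie in $\cO_{\cM(p)}^t$, $\cP_{r,s}$ is indecomposable, and $\cO_{\cM(p)}^T$ is graded, therefore $\cP_{r,s}\in\cO_{\cM(p)}^t$'' presupposes that $\cP_{r,s}$ is already known to be in $\cO_{\cM(p)}^T$. Having composition factors in $\cO_{\cM(p)}^t$ does \emph{not} by itself force membership in $\cO_{\cM(p)}^T$ --- indeed, the non-split self-extension $\cF_\lambda^{(2)}$ appearing in the proof of Theorem \ref{thm:Flambda_proj} has both composition factors in $\cO_{\cM(p)}^{\lambda+2L^\circ}$ yet is not an object of $\cO_{\cM(p)}^T$. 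The fix is trivial and already implicit in what you wrote: invoke Proposition \ref{prop:C0_and_OT} \emph{first} to get $\cP_{r,s}\in\cC_{\cM(p)}^0=\cO_{\cM(p)}^{L^\circ}\subseteq\cO_{\cM(p)}^T$, and only then use indecomposability to pin it to a single graded piece.
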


\begin{proof}
 It is shown in \cite{CMY-singlet} that $\cP_{r,s}$ is a projective object of $\cC_{\cM(p)}^0$, so by Proposition \ref{prop:C0_and_OT}, $\cP_{r,s}$ is an object of $\cO_{\cM(p)}^T$ which is projective in the subcategory $\cO_{\cM(p)}^{L^\circ}=\cO_{\cM(p)}^{0+2L^\circ}\oplus\cO_{\cM(p)}^{\alpha_-/2+2L^\circ}$. To show that $\cP_{r,s}$ is still projective in $\cO_{\cM(p)}^T$, consider a surjection $p: M\rightarrow N$ and a morphism $q: \cP_{r,s}\rightarrow N$, where $M=\bigoplus_{t\in T} M_t$ and $N=\bigoplus_{t\in T} N_t$ are objects of $\cO_{\cM(p)}^T$. We need to show that there exists $f:\cP_{r,s}\rightarrow M$ such that $p\circ f=q$.
 
 Assuming as we may that $q\neq 0$, Theorem \ref{thm:OT_properties}(1) implies $N_{L^\circ}=N_{0+2L^\circ}\oplus N_{\alpha_-/2+2L^\circ}$ is non-zero. Then because $p$ is surjective, $M_{L^\circ}=M_{0+2L^\circ}\oplus M_{\alpha_-/2+2L^\circ}$ is also non-zero and
 \begin{equation*}
  \im p\vert_{M_{L^\circ}} = N_{L^\circ}.
 \end{equation*}
Thus because $\cP_{r,s}$ is projective in $\cO_{\cM(p)}^{L^\circ}$, there is a morphism $f: \cP_{r,s}\rightarrow M_{L^\circ}\hookrightarrow M$ such that $p\circ f=q$, showing $\cP_{r,s}$ is also projective in $\cO_{\cM(p)}^T$. Moreover, the same argument as concludes the proof of \cite[Proposition 3.3.5]{CMY-singlet} now shows that $\cP_{r,s}$ is a projective cover of $\cM_{r,s}$ in $\cO_{\cM(p)}^T$.
\end{proof}

The typical irreducible $\cM(p)$-modules are their own projective covers in $\cO_{\cM(p)}^T$:
\begin{thm}\label{thm:Flambda_proj}
 For $\lambda\in(\CC\setminus L^\circ)\cup\lbrace\alpha_{r,p}\,\vert\,r\in\ZZ\rbrace$, the irreducible $\cM(p)$-module $\cF_\lambda$ is projective in $\cO_{\cM(p)}^T$. In particular, $\cF_\lambda$ is its own projective cover in $\cO_{\cM(p)}^T$.
\end{thm}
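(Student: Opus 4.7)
The case $\lambda = \alpha_{r,p}$, $r \in \ZZ$, is immediate from Proposition~\ref{prop:Prs_proj}, since then $\cF_\lambda = \cM_{r,p} = \cP_{r,p}$ is already a projective cover of itself in $\cO_{\cM(p)}^T$. Henceforth I focus on the typical case $\lambda \in \CC \setminus L^\circ$ and plan to split an arbitrary surjection $\pi: M \twoheadrightarrow \cF_\lambda$ in $\cO_{\cM(p)}^T$.

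The strategy exploits Theorem~\ref{thm:typical_GVM}, which identifies $\cF_\lambda$ with its own generalized Verma $\cM(p)$-module $\cG_\lambda$. By the universal property of $\cG_\lambda$, producing a splitting of $\pi$ reduces to exhibiting a vector $v \in T(M)$ with $\pi(v) = v_\lambda$ such that $A(\cM(p)) \cdot v \subseteq T(M)$ is isomorphic to $T(\cF_\lambda) = \CC v_\lambda$ as an $A(\cM(p))$-module; the induced map $\cF_\lambda = \cG_\lambda \to M$ then sends $v_\lambda \mapsto v$ and composes with $\pi$ to the identity on $\cF_\lambda$ by simplicity. To set up the construction of $v$, I use Theorem~\ref{thm:OT_properties}(1) and Proposition~\ref{prop:irred_mod_grading} to restrict the composition factors of $M$ to the form $\cF_\mu$ with $\mu \in \lambda + \ZZ\alpha_-$, and I further reduce by decomposing $M = \bigoplus_{c \in \CC/\ZZ} M_c$ according to cosets of conformal weight modulo $\ZZ$ (each $M_c$ is an $\cM(p)$-submodule, and $\pi$ factors through $M_{h_\lambda + \ZZ}$). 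After this reduction every composition factor $\cF_\mu$ also satisfies $h_\mu - h_\lambda \in \ZZ$, and the explicit formula
\begin{equation*}
h_{\lambda + n\alpha_-} - h_\lambda = \frac{n(n-1)}{p} + n + n\alpha_-\lambda
\end{equation*}
shows that this integrality condition selects only $n = 0$ for generic $\lambda$ and at most finitely many $n$ in general.

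In the case that the minimal conformal weight of $M$ equals $h_\lambda$, the top level $T(M)$ surjects onto $T(\cF_\lambda)$ under $\pi$, and the Zhu algebra relation \eqref{eqn:H0_square} combined with $\lambda \notin L^\circ$ (which forces $h_\lambda \ne h_{1,s}$ for every $s$, hence the $H(0)$-eigenvalue $\eta_\lambda$ on $v_\lambda$ is nonzero) decomposes $T(M)$ into a direct sum of one-dimensional $A(\cM(p))$-modules. From this decomposition I extract a summand mapping isomorphically to $T(\cF_\lambda)$ and take $v$ to be its generator. The general case, where $M$ has composition factors of conformal weight strictly below $h_\lambda$, will be treated by induction on the multiplicity of such lower factors.

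The main obstacle is the inductive step: splitting off one composition factor $\cF_\mu$ with $h_\mu < h_\lambda$. Two ingredients enter. First, since $\lambda \notin L^\circ$ forces $h_\lambda \ne h_{r,s}$ for all $r, s \in \ZZ_+$, the Verma module $\cV_{h_\lambda}$ is irreducible, and a singular-vector lifting argument analogous to the proof of Lemma~\ref{lem:Gtil_Vir_submod} gives the $\mathrm{Ext}$-vanishing required to split $M$ as a $\cV ir$-module. Second, the defining monodromy condition $\cR^2_{\cM_{2,1}, M} = e^{-2\pi i \alpha_{2,1} \lambda}\,\Id_{\cM_{2,1}\tens M}$ of $\cO_{\cM(p)}^T$, together with the balancing equation $\cR^2_{\cM_{2,1}, M} = \theta_{\cM_{2,1}\tens M}(\theta_{\cM_{2,1}}^{-1}\tens\theta_M^{-1})$, forces $\theta_M = e^{2\pi i L(0)}$ to be semisimple and hence $L(0)$ to act semisimply on $M$, ruling out logarithmic extensions. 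Upgrading the $\cV ir$-splitting to an $\cM(p)$-splitting then reduces to checking compatibility of the $H$-action between the two $\cV ir$-summands on the top level, which is again controlled by the $A(\cM(p))$-analysis of the previous paragraph since $\eta_\lambda \ne 0$. Iterating over the finitely many lower composition factors reduces $M$ to the case of minimal weight $h_\lambda$ already handled, and $\cF_\lambda$ is its own projective cover because it is simple.
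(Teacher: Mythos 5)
The approach is broadly the same as the paper's (split arbitrary surjections onto $\cF_\lambda$ using the generalized Verma module description of Theorem~\ref{thm:typical_GVM}, the Zhu algebra relation \eqref{eqn:H0_square}, and the fact that $\lambda\notin L^\circ$ makes $h_\lambda\neq h_{1,s}$), but there is a genuine gap at the crucial step: ruling out non-split self-extensions of $\cF_\lambda$ inside $\cO_{\cM(p)}^T$.

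Your claim that the monodromy condition $\cR^2_{\cM_{2,1},M}=e^{-2\pi i\alpha_{2,1}\lambda}\Id$ together with the balancing equation ``forces $\theta_M$ to be semisimple and hence $L(0)$ to act semisimply on $M$'' is false. The balancing equation gives $\cR^2_{\cM_{2,1},M}=\theta_{\cM_{2,1}\tens M}\circ(\theta_{\cM_{2,1}}^{-1}\tens\theta_M^{-1})$, and since $\cM_{2,1}$ is a simple current, tensoring with it preserves Jordan block structure; the nilpotent part of $L(0)$ on $\cM_{2,1}\tens M$ then cancels against $\Id\tens\theta_M^{-1}$, so the monodromy remains scalar even when $L(0)$ acts non-semisimply on $M$. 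A concrete counterexample to your claim: the projective covers $\cP_{r,s}$ (for $1\leq s\leq p-1$) are objects of $\cO_{\cM(p)}^T$ by Proposition~\ref{prop:Prs_proj}, yet they have logarithmic $L(0)$-action. So membership in $\cO_{\cM(p)}^T$ does not entail semisimplicity of $L(0)$.

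This same error also invalidates your claim that the Zhu algebra relation \eqref{eqn:H0_square} ``decomposes $T(M)$ into a direct sum of one-dimensional $A(\cM(p))$-modules'': that decomposition only works if $L(0)$ acts semisimply on $T(M)$, precisely the point you have not established. When $M$ contains the self-extension $\cF_\lambda^{(2)}$ (the Heisenberg module on which $h(0)$ acts with a single $2\times 2$ Jordan block), $T(M)$ has a nilpotent $L(0)$, $H(0)^2=q(L(0))$ is invertible but not diagonal, and $T(M)$ is \emph{indecomposable} as an $A(\cM(p))$-module. So no summand mapping isomorphically onto $T(\cF_\lambda)$ exists, and your construction of $v$ fails. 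The paper handles this case by a different argument: it first shows $\cF_\lambda^{(2)}$ is, up to isomorphism, the unique non-split self-extension of $\cF_\lambda$ in $\cO_{\cM(p)}$, and then excludes it from $\cO_{\cM(p)}^T$ by tensoring with $\cF_{\alpha_{1,p}-\lambda}$ and observing that the quotient $\cF_{\alpha_{1,p}}^{(2)}$ would then lie in $\cO_{\cM(p)}^T$, contradicting projectivity of $\cF_{\alpha_{1,p}}=\cP_{1,p}$. You need some such argument; the semisimplicity-of-$L(0)$ shortcut is not available.
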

\begin{proof}
 The case $\lambda=\alpha_{r,p}$ is covered by Proposition \ref{prop:Prs_proj}, so we assume $\lambda\in\CC\setminus L^\circ$. Since $\cF_\lambda$ is simple and every module in $\cO_{\cM(p)}^T$ has finite length, a straightforward induction on length implies that it is enough to show that any short exact sequence
 \begin{equation}\label{eqn:Flambda_extension}
  0\longrightarrow M \longrightarrow N \longrightarrow\cF_\lambda\longrightarrow 0
 \end{equation}
splits when $M$ is simple and $N$ is an object of $\cO_{\cM(p)}^T$.

Let $h$ be the minimal conformal weight of $M$, so that \eqref{eqn:Flambda_extension} can fail to split only when $h-h_\lambda\in\ZZ$. If $h-h_\lambda\in\ZZ_+$, then the top level $T(N)$ contains $T(\cF_\lambda)$, so the universal property of generalized Verma $\cM(p)$-modules implies that $N$ contains a non-zero homomorphic image of $\cG_\lambda\cong\cF_\lambda$ (using Theorem \ref{thm:typical_GVM}). That is, $N$ contains $\cF_\lambda$ as a submodule and \eqref{eqn:Flambda_extension} splits.

If $h_\lambda-h\in\ZZ_+$, then we can dualize \eqref{eqn:Flambda_extension} to get an exact sequence
\begin{equation*}
 0\longrightarrow\cF_{\alpha_0-\lambda}\longrightarrow N'\longrightarrow M'\longrightarrow 0.
\end{equation*}
In this case $N'$ contains a non-zero quotient of $G(T(M'))$; this quotient is isomorphic to either $M'$ or $N'$ since $N'$ has length $2$. But since $M'\ncong\cF_{\alpha_0-\lambda}$, Theorems \ref{thm:typical_GVM} and \ref{thm:atypical_GVM} show that $\cF_{\alpha_0-\lambda}$ is not a composition factor of $G(T(M'))$. Thus $N'$ contains a submodule isomorphic to $M'$, and then $N'\cong\cF_{\alpha_0-\lambda}\oplus M'$. Dualizing again shows that \eqref{eqn:Flambda_extension} splits.

It remains to consider $h=h_\lambda$; in this case \eqref{eqn:h_lambda_def} implies that $M$ is isomorphic to either $\cF_\lambda$ or its contragredient $\cF_{\alpha_0-\lambda}$. If $M\cong\cF_{\alpha_0-\lambda}$, note that $\cF_{\alpha_0-\lambda}\ncong\cF_\lambda$ as $\cM(p)$-modules (since $\lambda\neq\alpha_{1,p}$) while $L(0)$ acts identically on $T(\cF_{\alpha_0-\lambda})$ and $T(\cF_\lambda)$. Thus $H(0)$ must act by different eigenvalues on $T(\cF_{\alpha_0-\lambda})$ and $T(\cF_\lambda)$. If we denote these $H(0)$-eigenvalues by $H_{\alpha_0-\lambda}$ and $H_\lambda$, respectively, then the top level $T(M)$ has a basis with respect to which $H(0)$ and $L(0)$ act by the matrices
\begin{equation*}
 H(0)=\left[\begin{array}{cc}
              H_{\alpha_0-\lambda} & 0\\
              0 & H_\lambda\\
             \end{array}
\right],\qquad L(0)=\left[\begin{array}{cc}
                     h_\lambda & a\\
                     0 & h_\lambda\\
                    \end{array}
\right]
\end{equation*}
for some $a\in\CC$. Since $H(0)$ commutes with $L(0)$ and $H_{\alpha_0-\lambda}\neq H_\lambda$, we get $a=0$, so $T(M)\cong T(\cF_{\alpha_0-\lambda})\oplus T(\cF_\lambda)$. Thus $M$ is a length-$2$ homomorphic image of the generalized Verma $\cM(p)$-module $\cG_{\alpha_0-\lambda}\oplus\cG_\lambda\cong\cF_{\alpha_0-\lambda}\oplus\cF_\lambda$, and again we see that \eqref{eqn:Flambda_extension} splits.

Finally, we need to consider the possibility $M\cong\cF_\lambda$. In this case, $\cF_\lambda$ does have a non-split self-extension in $\cO_{\cM(p)}$, namely, the $\mathcal{H}$-module $\cF_\lambda^{(2)}$ with two-dimensional top level on which $h(0)$ acts by the matrix
\begin{equation*}
 \left[\begin{array}{cc}
        \lambda & 1\\
        0 & \lambda\\
       \end{array}
\right].
\end{equation*}
Thus it is enough to show that any non-split self-extension of $\cF_\lambda$ in $\cO_{\cM(p)}$ is isomorphic to $\cF_{\lambda}^{(2)}$, and that $\cF_{\lambda}^{(2)}$ is not an object of $\cO_{\cM(p)}^T$. 

First consider any non-split exact sequence
\begin{equation*}
 0\longrightarrow\cF_\lambda\longrightarrow M\longrightarrow \cF_\lambda\longrightarrow 0
\end{equation*}
in $\cO_{\cM(p)}$. Then the top level $T(M)$ has a basis with respect to which $L(0)$ and $H(0)$ act by the matrices
\begin{equation*}
 L(0)=\left[\begin{array}{cc}
        h_\lambda & a\\
        0 & h_\lambda\\
       \end{array}
\right],\qquad H(0)=\left[\begin{array}{cc}
                           H_\lambda & b\\
                           0 & H_\lambda\\
                          \end{array}
\right]
\end{equation*}
for certain $a,b\in\CC$. Since $H(0)^2$ is a polynomial in $L(0)$ by \eqref{eqn:H0_square}, and since $H_\lambda\neq 0$ (because \eqref{eqn:H0_square} shows that $H_\lambda=0$ only for $\lambda=\alpha_{1,s}$, $1\leq s\leq p$), it follows that $b$ is completely determined by $a$. In particular, $a=0$ would imply both $L(0)$ and $H(0)$ are diagonalizable on $T(M)$, which would then imply $T(M)=T(\cF_\lambda)\oplus T(\cF_\lambda)$. By the universal property of generalized Verma $\cM(p)$-modules, this would imply $M=\cF_\lambda\oplus\cF_\lambda$. Consequently, $a\neq 0$ since $M$ is indecomposable, and by adjusting the basis of $T(M)$ if necessary, we may assume $a=1$. Thus up to isomorphism, there is only one possible $A(\cM(p))$-module structure on $T(M)$. In fact, $T(M)\cong T(\cF_{\lambda}^{(2)})$ since $\cF_\lambda^{(2)}$ is one possible non-split self-extension of $\cF_\lambda$. 

Since $M$ has length $2$ and both its composition factors intersect $T(M)$, we see that $M$ is generated by $T(M)$. Then the universal property of generalized Verma $\cM(p)$-modules applied to the isomorphism $T(M)\cong T(\cF_{\lambda}^{(2)})$ shows that $M$ is a length-$2$ quotient of $G(T(\cF_\lambda^{(2)}))$. Thus if we can show that $G(T(\cF_\lambda^{(2)}))\cong\cF_\lambda^{(2)}$, then it will follow that $M\cong\cF_\lambda^{(2)}$ as required. In fact, since $T(\cF_\lambda^{(2)})$ contains $T(\cF_\lambda)$ as an $A(\cM(p))$-submodule, $G(T(\cF_\lambda^{(2)}))$ has a submodule isomorphic to $\cG_\lambda\cong\cF_\lambda$. Then since $G(T(\cF_\lambda^{(2)}))$ is generated by its top level, so is $G(T(\cF_\lambda^{(2)}))/\cF_\lambda$. But since
\begin{equation*}
T\left( G(T(\cF_\lambda^{(2)}))/\cF_\lambda\right)\cong T(\cF_\lambda),
\end{equation*}
this means that $G(T(\cF_\lambda^{(2)}))/\cF_\lambda$ is also a homomorphic image of $\cG_\lambda\cong\cF_\lambda$. This shows that $G(T(\cF_\lambda^{(2)}))$ is a length-$2$ self-extension of $\cF_\lambda$. Since by definition $G(T(\cF_\lambda^{(2)}))$ surjects onto the length-$2$ self-extension $\cF_\lambda^{(2)}$, we must have $G(T(\cF_\lambda^{(2)}))\cong\cF_\lambda^{(2)}$, as required.

We have now shown that up to isomorphism, $\cF_\lambda^{(2)}$ is the only non-split length-$2$ extension of $\cF_\lambda$ in $\cO_{\cM(p)}$. So to complete the proof of the theorem, we just need to show that $\cF_\lambda^{(2)}$ is not an object of $\cO_{\cM(p)}^T$. If $\cF_\lambda^{(2)}$ were an object of $\cO_{\cM(p)}^T$, then it would be an object of $\cO_{\cM(p)}^{\lambda+2L^\circ}$ because it is indecomposable and its composition factors are objects of $\cO_{\cM(p)}^{\lambda+2L^\circ}$ by Proposition \ref{prop:irred_mod_grading}. Then Theorem \ref{thm:OT_properties}(3) would imply that $\cF_{\alpha_{1,p}-\lambda}\tens\cF_\lambda^{(2)}$ is an object of $\cO_{\cM(p)}^{\alpha_{1,p}+2L^\circ}$. Now, there is a surjective $\mathcal{H}$-module intertwining operator
\begin{equation*}
 \cY: \cF_{\alpha_{1,p}-\lambda}\otimes\cF_\lambda^{(2)}\longrightarrow\cF_{\alpha_{1,p}}^{(2)}[\log x]\lbrace x\rbrace.
\end{equation*}
As $\cY$ is also an $\cM(p)$-module intertwining operator, it induces an $\cM(p)$-module surjection
\begin{equation*}
 \cF_{\alpha_{1,p}-\lambda}\tens\cF_\lambda^{(2)}\longrightarrow\cF_{\alpha_{1,p}}^{(2)}.
\end{equation*}
Thus because $\cO_{\cM(p)}^T$ is closed under quotients by Theorem \ref{thm:OT_properties}(2), $\cF_\lambda^{(2)}$ an object of $\cO_{\cM(p)}^T$ would imply the same for $\cF_{\alpha_{1,p}}^{(2)}$. But $\cF_{\alpha_{1,p}}^{(2)}$ is not an object of $\cO_{\cM(p)}^T$ because $\cF_{\alpha_{1,p}}$ is projective in $\cO_{\cM(p)}^T$ by Proposition \ref{prop:Prs_proj}. So $\cF_\lambda^{(2)}$ is not an object of $\cO_{\cM(p)}^T$ either.
\end{proof}

We conclude this subsection with a useful observation about the subcategories $\cO_{\cM(p)}^{\lambda+2L^\circ}$ for $\lambda\in\CC\setminus L^\circ$. By Propositions \ref{prop:Ot_abelian} and \ref{prop:irred_mod_grading}, any composition factor of an object in $\cO_{\cM(p)}^{\lambda+2L^\circ}$ is isomorphic to some $\cF_\mu$ such that $\lambda-\mu\in 2L^\circ$. Thus an easy induction on length using Theorem \ref{thm:Flambda_proj} yields:
\begin{cor}\label{cor:Olambda_structure}
 For $\lambda\in\CC\setminus L^\circ$, any object of $\cO_{\cM(p)}^{\lambda+2L^\circ}$ is isomorphic to a finite direct sum of Fock modules $\cF_\mu$ such that $\lambda-\mu\in 2L^\circ$.
\end{cor}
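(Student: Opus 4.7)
The plan is a straightforward induction on length, using projectivity of typical Fock modules as the crucial input. By Theorem \ref{thm:C1_equals_fl}, any object $M$ of $\cO_{\cM(p)}^{\lambda+2L^\circ}\subseteq\cO_{\cM(p)}$ has finite length, so induction on $\mathrm{length}(M)$ is available. The base case, $\mathrm{length}(M)=0$, is the empty direct sum.

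For the inductive step, I would pick any maximal proper submodule $N\subsetneq M$, so that $M/N$ is simple. By Proposition \ref{prop:Ot_abelian}, the subcategory $\cO_{\cM(p)}^{\lambda+2L^\circ}$ is closed under submodules and quotients, so both $N$ and $M/N$ remain in $\cO_{\cM(p)}^{\lambda+2L^\circ}$. Next I would identify the simple quotient: by Proposition \ref{prop:irred_mod_grading}, each atypical simple $\cM_{r,s}$ belongs to $\cO_{\cM(p)}^{\alpha_{r,s}+2L^\circ}$, and since $\alpha_{r,s}\in L^\circ$ whereas $\lambda\notin L^\circ$, no atypical simple can lie in $\cO_{\cM(p)}^{\lambda+2L^\circ}$. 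Hence $M/N\cong\cF_\mu$ for some $\mu\in\CC\setminus L^\circ$ with $\mu+2L^\circ=\lambda+2L^\circ$, i.e.\ $\lambda-\mu\in 2L^\circ$.

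The decisive step is then to invoke Theorem \ref{thm:Flambda_proj}: $\cF_\mu$ is projective in $\cO_{\cM(p)}^T$, so the short exact sequence
\begin{equation*}
0\longrightarrow N\longrightarrow M\longrightarrow\cF_\mu\longrightarrow 0
\end{equation*}
splits, giving $M\cong N\oplus\cF_\mu$. Applying the inductive hypothesis to $N$ (which has strictly smaller length and still lies in $\cO_{\cM(p)}^{\lambda+2L^\circ}$) expresses $N$ as a finite direct sum of Fock modules $\cF_\nu$ with $\lambda-\nu\in 2L^\circ$, and the result for $M$ follows.

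There is essentially no obstacle beyond checking these bookkeeping points; all the real work has been done in Theorem \ref{thm:Flambda_proj} and Proposition \ref{prop:irred_mod_grading}. The only mild subtlety is making sure to take a simple \emph{quotient} of $M$ (rather than a simple submodule), so that projectivity of $\cF_\mu$ can be applied to split off a direct summand.
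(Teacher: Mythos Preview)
Your proof is correct and follows essentially the same approach as the paper, which simply states that the composition factors must be typical Fock modules $\cF_\mu$ with $\lambda-\mu\in 2L^\circ$ (by Propositions \ref{prop:Ot_abelian} and \ref{prop:irred_mod_grading}) and then invokes an easy induction on length using Theorem \ref{thm:Flambda_proj}. Your write-up spells out exactly this induction, including the point that one should split off a simple quotient via projectivity.
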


\section{Fusion rules}\label{sec:fusion}

In this section, we compute tensor products in $\cO_{\cM(p)}$ involving typical irreducible modules; see \cite[Theorem 5.2.1]{CMY-singlet} for all tensor products involving only atypical irreducible modules and their projective covers.

\subsection{Atypical-typical fusion}

We first determine how $\cM_{1,2}$ tensors with typical modules:
\begin{lem}\label{lem:M12_Flambda}
 For any $\lambda\in\CC\setminus L^\circ$,
 \begin{equation*}
  \cM_{1,2}\tens\cF_\lambda\cong\cF_{\lambda+\alpha_{1,2}}\oplus\cF_{\lambda-\alpha_{1,2}}.
 \end{equation*}
\end{lem}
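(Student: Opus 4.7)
The plan is to decompose $\cM_{1,2}\tens\cF_\lambda$ by constructing non-zero intertwining operators onto the two expected simple summands, splitting them off using projectivity, and then ruling out any remainder via a Virasoro intertwining operator analysis.

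First I would produce non-zero intertwining operators $\cY_\pm\in I_{\cM(p)}\binom{\cF_{\lambda\pm\alpha_{1,2}}}{\cM_{1,2}\,\cF_\lambda}$. For $\cY_+$, I compose the inclusion $\cM_{1,2}\hookrightarrow\cF_{\alpha_{1,2}}$ with the Heisenberg intertwining operator of type $\binom{\cF_{\lambda+\alpha_{1,2}}}{\cF_{\alpha_{1,2}}\,\cF_\lambda}$; non-vanishing follows from \cite[Proposition 11.9]{DL}, exactly as in Lemma \ref{lem:simple_curr_tens}. For $\cY_-$, I apply this same construction with $\lambda$ replaced by $\lambda-\alpha_{1,2}$ (which remains outside $L^\circ$) to obtain a surjection $\cM_{1,2}\tens\cF_{\lambda-\alpha_{1,2}}\twoheadrightarrow\cF_\lambda$. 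Theorem \ref{thm:Flambda_proj} makes $\cF_\lambda$ projective in $\cO_{\cM(p)}^T$, so this surjection splits, yielding an embedding $\cF_\lambda\hookrightarrow\cM_{1,2}\tens\cF_{\lambda-\alpha_{1,2}}$; rigidity and self-duality of $\cM_{1,2}$ from \cite{CMY-singlet} then supply the adjunction $\hom(\cF_\lambda,\cM_{1,2}\tens\cF_{\lambda-\alpha_{1,2}})\cong\hom(\cM_{1,2}\tens\cF_\lambda,\cF_{\lambda-\alpha_{1,2}})$, producing $\cY_-\neq 0$.

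Combining $\cY_\pm$ gives a surjective $\Phi\colon\cM_{1,2}\tens\cF_\lambda\twoheadrightarrow\cF_{\lambda+\alpha_{1,2}}\oplus\cF_{\lambda-\alpha_{1,2}}$, since both components are non-zero with non-isomorphic simple targets. By Theorem \ref{thm:Flambda_proj} the target is projective in $\cO_{\cM(p)}^T$, so $\Phi$ splits and $\cM_{1,2}\tens\cF_\lambda\cong\cF_{\lambda+\alpha_{1,2}}\oplus\cF_{\lambda-\alpha_{1,2}}\oplus K$ for some complement $K$. Because $\lambda+\alpha_{1,2}\notin L^\circ$, Corollary \ref{cor:Olambda_structure} realizes $K$ as a direct sum of typical Fock modules $\cF_\mu$ with $\mu-\lambda-\alpha_{1,2}\in 2L^\circ$. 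For each such summand, a corresponding non-zero $\cY\in I_{\cM(p)}\binom{\cF_\mu}{\cM_{1,2}\,\cF_\lambda}$ restricts non-trivially to $\cL_{1,2}\otimes\cF_\lambda$ (using the iterate formula \eqref{eqn:intw_op_iterate} and simplicity of $\cM_{1,2}$ as an $\cM(p)$-module), and Corollary \ref{cor:conf_wts}(1) then forces $h_\mu\in\{h_{\lambda+\alpha_{1,2}},h_{\lambda-\alpha_{1,2}}\}$, giving four a priori candidates $\mu\in\{\lambda\pm\alpha_{1,2},\alpha_0-\lambda\mp\alpha_{1,2}\}$. The two contragredient candidates $\alpha_0-\lambda\mp\alpha_{1,2}$ lie in the coset $\lambda+\alpha_{1,2}+2L^\circ$ only if $2\lambda\in\alpha_+ +\ZZ\alpha_-$, but the Heisenberg identity $\alpha_+=-p\alpha_-$ shows this is equivalent to $\lambda\in L^\circ$, which is excluded by hypothesis. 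Finally, Proposition \ref{prop:pi_injective} bounds each relevant Virasoro intertwining operator space by $1$, so via the injective restriction $I_{\cM(p)}\hookrightarrow I_{\cL(p)}$ each of $\cF_{\lambda\pm\alpha_{1,2}}$ occurs with multiplicity at most one in $\cM_{1,2}\tens\cF_\lambda$; since both already appear once in the complement of $K$, we must have $K=0$.

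The main anticipated obstacle is the last step. Virasoro fusion does not distinguish $\cF_\mu$ from its $\cM(p)$-contragredient $\cF_{\alpha_0-\mu}$, so the conformal weight constraint by itself yields four candidate summands rather than two. The fact that only two survive relies crucially on the fine $T=\CC/2L^\circ$-grading of $\cO_{\cM(p)}^T$ combined with the calculation $\alpha_+=-p\alpha_-$ which places $\alpha_+/2$ inside $L^\circ$; this is precisely where the choice of grading by $T$ (rather than the coarser $\cW(p)$-grading $T^\vee=\CC/L^\circ$) becomes indispensable.
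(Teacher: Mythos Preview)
Your argument is correct and follows essentially the same route as the paper: both proofs use the Heisenberg intertwining operator for the $\cF_{\lambda+\alpha_{1,2}}$ summand, rigidity of $\cM_{1,2}$ for the $\cF_{\lambda-\alpha_{1,2}}$ summand, Corollary~\ref{cor:conf_wts}(1) together with the injective restriction $I_{\cM(p)}\hookrightarrow I_{\cL(p)}$ to bound the possible summands, and the $2L^\circ$-coset argument to exclude the contragredient candidates. The only difference is cosmetic: the paper first applies Corollary~\ref{cor:Olambda_structure} to decompose $\cM_{1,2}\tens\cF_\lambda$ into Fock modules and then constructs the two maps, whereas you construct the maps first and split them off via projectivity (Theorem~\ref{thm:Flambda_proj}) before analyzing the complement; since Corollary~\ref{cor:Olambda_structure} is itself a consequence of that projectivity, the two arguments are really the same.
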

\begin{proof}
By Theorem \ref{thm:OT_properties}(3), Proposition \ref{prop:irred_mod_grading}, and Corollary \ref{cor:Olambda_structure}, $\cM_{1,2}\tens\cF_\lambda$ is a finite direct sum of Fock modules $\cF_{\mu}$ such that $\lambda+\alpha_{1,2}-\mu\in 2L^\circ$. Since $\cM_{1,2}$ contains $\cL_{1,2}$ as a $\cV ir$-submodule by \eqref{eqn:Mrs_decomp}, we have for any such $\mu$ a linear map
\begin{align*}
 I_{\cM(p)}\binom{\cF_\mu}{\cM_{1,2}\,\cF_\lambda} & \rightarrow I_{\cL(p)}\binom{\cF_\mu}{\cL_{1,2}\,\cF_\lambda}\nonumber\\
 \cY &\mapsto \cY\vert_{\cL_{1,2}\otimes\cF_\lambda}
\end{align*}
Since $\cM_{1,2}$ is an irreducible $\cM(p)$-module, \cite[Proposition 11.9]{DL} says that this map is injective. Thus because $\cF_\lambda$ and $\cF_\mu$ are irreducible $\cV ir$-modules (since $\lambda,\mu\notin L^\circ)$, Proposition \ref{prop:pi_injective} and Remark \ref{rem:pi_injective} imply that the multiplicity of $\cF_\mu$ in $\cM_{1,2}\tens\cF_\lambda$ cannot be greater than $1$. Moreover, by Corollary \ref{cor:conf_wts}, the multiplicity of $\cF_\mu$ can be non-zero only if 
\begin{align*}
 h_\mu &= h_\lambda+\frac{p^{-1}}{4}\pm\frac{p^{-1}}{2}\sqrt{4ph_\lambda+(p-1)^2}\nonumber\\
 & = \frac{1}{2}\lambda(\lambda-\alpha_0)+\frac{1}{2}\alpha_{1,2}^2\pm\frac{\alpha_{1,2}}{\sqrt{2p}}\left(\sqrt{2p}\lambda-p+1\right) = h_{\lambda\pm\alpha_{1,2}}
\end{align*}
(for this calculation, note that $\alpha_{1,2}=-\frac{1}{2}\alpha_-=\frac{1}{\sqrt{2p}}$). Thus if $\cF_\mu$ has non-zero multiplicity (equal to $1$) in $\cM_{1,2}\tens\cF_\lambda$, then $\mu$ is one of $\lambda\pm\alpha_{1,2}$ or $\alpha_0-(\lambda\pm\alpha_{1,2})$. We can rule out the latter two possibilities because
\begin{equation*}
 \lambda+\alpha_{1,2}-(\alpha_0-\lambda\mp\alpha_{1,2})\in 2\lambda+2L^\circ,
\end{equation*}
and $2\lambda+2L^\circ$ is disjoint from $2L^\circ$ since by assumption $\lambda\notin L^\circ$.

We have now shown that $\cM_{1,2}\tens\cF_{\lambda}$ is a submodule of $\cF_{\lambda+\alpha_{1,2}}\oplus\cF_{\lambda-\alpha_{1,2}}$. To show that this direct sum is indeed the tensor product module, we need to demonstrate non-zero $\cM(p)$-module homomorphisms $\cM_{1,2}\tens\cF_\lambda\rightarrow\cF_{\lambda\pm\alpha_{1,2}}$ for both sign choices. First, as in the proof of Lemma \ref{lem:simple_curr_tens}, there is a non-zero map $\cM_{1,2}\tens\cF_\lambda\rightarrow\cF_{\lambda+\alpha_{1,2}}$ induced by a non-zero $\mathcal{H}$-module intertwining operator of type $\binom{\cF_{\lambda+\alpha_{1,2}}}{\cF_{\alpha_{1,2}}\,\,\cF_\lambda}$, for any $\lambda\in\CC\setminus L^\circ$. To get the second non-zero map, the first case implies that $\cF_\lambda$ is a direct summand of $\cM_{1,2}\tens\cF_{\lambda-\alpha_{1,2}}$. So because $\cM_{1,2}$ is a rigid $\cM(p)$-module (see \cite[Section 4.2]{CMY-singlet}),
\begin{equation*}
 \dim\hom_{\cM(p)}(\cM_{1,2}\tens\cF_\lambda,\cF_{\lambda-\alpha_{1,2}}) =\dim\hom_{\cM(p)}(\cF_\lambda,\cM_{1,2}\tens\cF_{\lambda-\alpha_{1,2}})=1.
\end{equation*}
More concretely, using $q$ to denote an injection $\cF_\lambda\hookrightarrow\cM_{1,2}\tens\cF_{\lambda-\alpha_{1,2}}$, the composition 
\begin{align*}
 \cM_{1,2}\tens\cF_\lambda\xrightarrow{\Id_{\cM_{1,2}}\tens q} &\cM_{1,2}\tens(\cM_{1,2}\tens\cF_{\lambda-\alpha_{1,2}})\xrightarrow{\cA_{\cM_{1,2},\cM_{1,2},\cF_{\lambda-\alpha_{1,2}}}} (\cM_{1,2}\tens\cM_{1,2})\tens\cF_{\lambda-\alpha_{1,2}}\nonumber\\
 &\xrightarrow{e_{\cM_{1,2}}\tens\Id_{\cF_{\lambda-\alpha_{1,2}}}} \cM_{1,1}\tens\cF_{\lambda-\alpha_{1,2}}\xrightarrow{l_{\cF_{\lambda-\alpha_{1,2}}}} \cF_{\lambda-\alpha_{1,2}}
\end{align*}
is non-zero; here $e_{\cM_{1,2}}$ is the evaluation homomorphism.
\end{proof}

We now use Lemmas \ref{lem:simple_curr_tens} and \ref{lem:M12_Flambda} to compute how each $\cM_{r,s}$ tensors with the typical Fock modules:
\begin{thm}\label{thm:Mrs_Flambda}
 For $r\in\ZZ$, $1\leq s\leq p$, and $\lambda\in\CC\setminus L^\circ$, 
 \begin{equation}\label{eqn:Mrs_Flambda}
  \cM_{r,s}\tens\cF_\lambda\cong\bigoplus_{\ell=0}^{s-1} \cF_{\lambda+\alpha_{r,s}+\ell\alpha_-}.
 \end{equation}
\end{thm}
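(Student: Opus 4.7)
The plan is to induct on $s$ with $r=1$ held fixed, then extend to arbitrary $r\in\ZZ$ by tensoring with the invertible object $\cM_{r,1}$. The base cases $s=1$ and $s=2$ are furnished directly: for $s=1$, $\cM_{1,1}=\cM(p)$ is the tensor unit so both sides equal $\cF_\lambda$; for $s=2$, Lemma \ref{lem:M12_Flambda} gives $\cM_{1,2}\tens\cF_\lambda\cong\cF_{\lambda+\alpha_{1,2}}\oplus\cF_{\lambda-\alpha_{1,2}}$, which matches the claimed formula upon using $\alpha_-=-2\alpha_{1,2}$ to rewrite $\cF_{\lambda-\alpha_{1,2}}=\cF_{\lambda+\alpha_{1,2}+\alpha_-}$.

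For the inductive step, fix $3\leq s\leq p$ and assume the formula for all $s'<s$. I would invoke the atypical fusion rule
\begin{equation*}
\cM_{1,2}\tens\cM_{1,s-1}\cong\cM_{1,s-2}\oplus\cM_{1,s}
\end{equation*}
from \cite[Theorem 5.2.1]{CMY-singlet}, which holds throughout this range since $s+(s-1)-1=s\leq p$ keeps us in the semisimple regime of atypical-atypical fusion (no projective summands appear). Tensoring both sides with $\cF_\lambda$, using associativity, applying the induction hypothesis to decompose $\cM_{1,s-1}\tens\cF_\lambda$ into typical summands, and then applying Lemma \ref{lem:M12_Flambda} termwise gives the right-hand side as an explicit direct sum of typicals. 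Note that each intermediate Heisenberg weight $\lambda+\alpha_{1,s-1}+\ell\alpha_-$ lies outside $L^\circ$ (since $\alpha_{1,s-1}+\ell\alpha_-\in L^\circ$ while $\lambda\notin L^\circ$), so Lemma \ref{lem:M12_Flambda} is applicable. Using $\alpha_{1,s-1}+\alpha_{1,2}=\alpha_{1,s}$ and $\alpha_{1,s-1}-\alpha_{1,2}=\alpha_{1,s-2}$, a reindexing yields
\begin{equation*}
(\cM_{1,s-2}\oplus\cM_{1,s})\tens\cF_\lambda\cong\bigoplus_{\ell=0}^{s-1}\cF_{\lambda+\alpha_{1,s}+\ell\alpha_-}\,\oplus\,\bigoplus_{\ell=0}^{s-3}\cF_{\lambda+\alpha_{1,s-2}+\ell\alpha_-}.
\end{equation*}
The second summand equals $\cM_{1,s-2}\tens\cF_\lambda$ by the induction hypothesis. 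Since $\cO_{\cM(p)}$ consists of finite-length modules and is therefore Krull--Schmidt, cancellation of indecomposable summands isolates $\cM_{1,s}\tens\cF_\lambda$ as the first summand, completing the induction.

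For general $r\in\ZZ$, I would apply $\cM_{r,s}\cong\cM_{r,1}\tens\cM_{1,s}$ (another case of \cite[Theorem 5.2.1]{CMY-singlet}) together with Lemma \ref{lem:simple_curr_tens} to obtain
\begin{equation*}
\cM_{r,s}\tens\cF_\lambda\cong\cM_{r,1}\tens\bigoplus_{\ell=0}^{s-1}\cF_{\lambda+\alpha_{1,s}+\ell\alpha_-}\cong\bigoplus_{\ell=0}^{s-1}\cF_{\lambda+\alpha_{r,s}+\ell\alpha_-},
\end{equation*}
where the last step uses $\alpha_{r,1}+\alpha_{1,s}=\alpha_{r,s}$. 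The main obstacle is purely combinatorial: correctly tracking Heisenberg weight indices in the inductive step so that the known summand $\cM_{1,s-2}\tens\cF_\lambda$ cancels cleanly against a matching block on the right, leaving the desired decomposition. The conceptual inputs — Lemmas \ref{lem:simple_curr_tens} and \ref{lem:M12_Flambda} and the atypical fusion rules of \cite{CMY-singlet} — are all in place.
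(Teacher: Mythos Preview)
Your proposal is correct and follows essentially the same approach as the paper: induction on $s$ for $r=1$ using the fusion rule $\cM_{1,2}\tens\cM_{1,s-1}\cong\cM_{1,s-2}\oplus\cM_{1,s}$ together with Lemma~\ref{lem:M12_Flambda}, then extension to general $r$ via $\cM_{r,s}\cong\cM_{r,1}\tens\cM_{1,s}$ and Lemma~\ref{lem:simple_curr_tens}. The only differences are cosmetic---the paper indexes the induction so as to prove the case $s+1$ from $s$ and $s-1$ rather than $s$ from $s-1$ and $s-2$, and cancels directly rather than naming Krull--Schmidt---and there is a small typo in your justification (``$s+(s-1)-1=s$'' should read ``$2+(s-1)-1=s$'').
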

\begin{proof}
 We prove the $r=1$ case first by induction on $s$. The base case $s=1$ is obvious because $\cM_{1,1}$ is the unit object of $\cO_{\cM(p)}$ and $\alpha_{1,1}=0$, and the $s=2$ case is Lemma \ref{lem:M12_Flambda}. Now suppose we have proved \eqref{eqn:Mrs_Flambda} up to some $s\in\lbrace 2,3,\ldots p-1\rbrace$ and consider $s+1$. As in the proof of Lemma \ref{lem:M12_Flambda}, $\cM_{1,s+1}\tens\cF_\lambda$ is a direct sum of $\cF_\mu$ such that $\lambda+\alpha_{1,s+1}-\mu\in 2 L^\circ$, so we just need to determine which $\cF_\mu$ appear in the tensor product, and with what multiplicity.
 
 On the one hand, associativity, the fusion rules of \cite[Theorem 3.2.8(2)]{CMY-singlet}, and the inductive hypothesis show that $\cM_{1,s+1}\tens\cF_\lambda$ is a summand of:
 \begin{align*}
  \cM_{1,2}\tens(\cM_{1,s}\tens\cF_\lambda) &\cong (\cM_{1,2}\tens\cM_{1,s})\tens\cF_\lambda\nonumber\\
  &\cong(\cM_{1,s+1}\tens\cF_\lambda)\oplus(\cM_{1,s-1}\tens\cF_\lambda)\nonumber\\
  &\cong (\cM_{1,s+1}\tens\cF_\lambda)\oplus\bigoplus_{\ell=0}^{s-2}\cF_{\lambda+\alpha_{1,s-1}+\ell\alpha_-}.
 \end{align*}
On the other hand, the inductive hypothesis, Lemma \ref{lem:M12_Flambda}, and the observation $\alpha_{1,s}\pm\alpha_{1,2}=\alpha_{1,s\pm1}$ yield
\begin{align*}
 \cM_{1,2}\tens(\cM_{1,s}\tens\cF_\lambda) &\cong\bigoplus_{\ell=0}^{s-1} \cM_{1,2}\tens\cF_{\lambda+\alpha_{1,s}+\ell\alpha_-}\nonumber\\
 &\cong\bigoplus_{\ell=0}^{s-1} \left(\cF_{\lambda+\alpha_{1,s}+\alpha_{1,2}+\ell\alpha_-}\oplus\cF_{\lambda+\alpha_{1,s}-\alpha_{1,2}+\ell\alpha_-}\right)\nonumber\\
 &\cong\bigoplus_{\ell=0}^{s-1} \cF_{\lambda+\alpha_{1,s+1}+\ell\alpha_-}\oplus\bigg(\cF_{\lambda+\alpha_{1,s-1}+(s-1)\alpha_-}\oplus\bigoplus_{\ell=0}^{s-2}\cF_{\lambda+\alpha_{1,s-1}+\ell\alpha_-}\bigg).
\end{align*}
Comparing these two computations, we see that
\begin{align*}
 \cM_{1,s+1}\tens\cF_\lambda\cong\cF_{\lambda+\alpha_{1,s-1}+(s-1)\alpha_-}\oplus\bigoplus_{\ell=0}^{s-1}\cF_{\lambda+\alpha_{1,s+1}+\ell\alpha_-} \cong\bigoplus_{\ell=0}^s \cF_{\lambda+\alpha_{1,s+1}+\ell\alpha_-}
\end{align*}
since $\alpha_{1,s-1}-\alpha_-=\alpha_{1,s+1}$. This completes the proof of the $r=1$ case of \eqref{eqn:Mrs_Flambda}.

For general $r$, we need the identity $\cM_{r,1}\tens\cM_{1,s}\cong\cM_{r,s}$ proved in \cite{CMY-singlet}, the $r=1$ case of \eqref{eqn:Mrs_Flambda}, and Lemma \ref{lem:simple_curr_tens}:
\begin{align*}
 \cM_{r,s}\tens\cF_\lambda & \cong(\cM_{r,1}\tens\cM_{1,s})\tens\cF_\lambda\cong\cM_{r,1}\tens(\cM_{1,s}\tens\cF_\lambda)\nonumber\\
 &\cong \bigoplus_{\ell=0}^{s-1} \cM_{r,1}\tens\cF_{\lambda+\alpha_{1,s}+\ell\alpha_-} \cong\bigoplus_{\ell=0}^{s-1}\cF_{\lambda+\alpha_{r,1}+\alpha_{1,s}+\ell\alpha_-}.
\end{align*}
Since $\alpha_{r,1}+\alpha_{1,s}=\alpha_{r,s}$, this yields \eqref{eqn:Mrs_Flambda}.
\end{proof}

We next compute how the projective covers $\cP_{r,s}$ tensor with typical Fock modules, using the preceding theorem and the fusion rules
\begin{equation}\label{eqn:M12_Prs}
 \cM_{1,2}\tens\cP_{r,s}=\left\lbrace\begin{array}{lll}
                                      \cP_{r,s-1}\oplus\cP_{r,s+1} &\text{if}& 2\leq s\leq p-2\\
                                      \cP_{r,p-2}\oplus2\cdot\cP_{r,p} &\text{if}& s=p-1\\
                                      \cP_{r,p-1} &\text{if}& s=p\\
                                     \end{array}\right.
\end{equation}
from \cite[Theorems 3.2.8(2) and 5.1.4(2)]{CMY-singlet}. Note that the first case in the above fusion rules is vacuous when $p =2, 3$, and the second case is valid for $p\geq 3$.

\begin{thm}\label{thm:Prs_Flambda}
 For $r\in\ZZ$, $1\leq s\leq p-1$, and $\lambda\in\CC\setminus L^\circ$,
 \begin{align}\label{eqn:Prs_Flambda}
  \cP_{r,s}\tens\cF_\lambda & \cong(\cM_{r+1,p-s}\tens\cF_\lambda)\oplus 2\cdot(\cM_{r,s}\tens\cF_\lambda)\oplus(\cM_{r-1,p-s}\tens\cF_\lambda)\nonumber\\
  & \cong\bigoplus_{\ell =0}^{p-1} (\cF_{\lambda+\alpha_{r,s}+\ell\alpha_-}\oplus\cF_{\lambda+\alpha_{r-1,p-s}+\ell\alpha_-}).
 \end{align}

\end{thm}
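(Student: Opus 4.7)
The plan is to combine projectivity with Krull--Schmidt to obtain the decomposition, rather than computing the fusion product directly. First I would observe that $\cP_{r,s}\tens\cF_\lambda$ lies in $\cO_{\cM(p)}^T$: by Proposition \ref{prop:Prs_proj} (and Proposition \ref{prop:C0_and_OT}) $\cP_{r,s}$ is in $\cO_{\cM(p)}^{\alpha_{r,s}+2L^\circ}$, by Proposition \ref{prop:irred_mod_grading} $\cF_\lambda$ is in $\cO_{\cM(p)}^{\lambda+2L^\circ}$, and $\cO_{\cM(p)}^T$ is closed under tensor products by Theorem \ref{thm:OT_properties}(3). The key step is to promote $\cP_{r,s}\tens\cF_\lambda$ to a projective object of $\cO_{\cM(p)}^T$. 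Since $\cC_{\cM(p)}^0$ is rigid by \cite{CMY-singlet}, $\cP_{r,s}$ admits a dual $\cP_{r,s}^\vee$ lying in $\cC_{\cM(p)}^0\subseteq\cO_{\cM(p)}^T$, and hence $\cP_{r,s}\tens-$ has $\cP_{r,s}^\vee\tens-$ as a right adjoint endofunctor of $\cO_{\cM(p)}^T$. This right adjoint is exact (tensoring with a rigid object is always exact), so $\cP_{r,s}\tens-$ preserves projectives; as $\cF_\lambda$ is projective in $\cO_{\cM(p)}^T$ by Theorem \ref{thm:Flambda_proj}, so is $\cP_{r,s}\tens\cF_\lambda$.

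Next I would invoke Krull--Schmidt, which applies in $\cO_{\cM(p)}^T$ because its objects have finite length with finite-dimensional endomorphism algebras: $\cP_{r,s}\tens\cF_\lambda$ decomposes uniquely as a finite direct sum of indecomposable projectives. By Proposition \ref{prop:Prs_proj} and Theorem \ref{thm:Flambda_proj} these indecomposables are exactly the atypical covers $\cP_{r',s'}$ with $1\leq s'\leq p-1$ and the typical Fock modules $\cF_\mu$ with $\mu\in(\CC\setminus L^\circ)\cup\lbrace\alpha_{r',p}\,\vert\,r'\in\ZZ\rbrace$. Applying the exact functor $-\tens\cF_\lambda$ to the Loewy series \eqref{eqn:Prs_Loewy_diag} and using Theorem \ref{thm:Mrs_Flambda}, the class of $\cP_{r,s}\tens\cF_\lambda$ in the Grothendieck group equals
\begin{equation*}
[\cM_{r+1,p-s}\tens\cF_\lambda]+2\,[\cM_{r,s}\tens\cF_\lambda]+[\cM_{r-1,p-s}\tens\cF_\lambda],
\end{equation*}
a sum of classes of simple typical Fock modules $\cF_\mu$ with $\mu\notin L^\circ$ (since each $\lambda+\alpha_{r',s'}+\ell\alpha_-$ lies outside $L^\circ$ when $\lambda$ does). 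Because every $\cP_{r',s'}$ has only atypical, non-Fock composition factors, no such $\cP_{r',s'}$ can occur as a summand; hence every indecomposable summand of $\cP_{r,s}\tens\cF_\lambda$ must already be a simple typical Fock module. Reading off multiplicities then yields the first isomorphism of \eqref{eqn:Prs_Flambda}, and the second follows at once by applying Theorem \ref{thm:Mrs_Flambda} to each of the three tensor products on the right-hand side.

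The main obstacle, and the only non-formal step, is the projectivity claim for $\cP_{r,s}\tens\cF_\lambda$. It rests essentially on rigidity of $\cP_{r,s}$ (imported from \cite{CMY-singlet}) to produce the right adjoint $\cP_{r,s}^\vee\tens-$; without this, one would have only the Grothendieck-class identity and no way to promote it to a genuine direct sum decomposition, since a priori $\cP_{r,s}\tens\cF_\lambda$ could contain non-split extensions among the various $\cF_\mu$ of the type ruled out in the proof of Theorem \ref{thm:Flambda_proj}.
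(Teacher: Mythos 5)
Your strategy is genuinely different from the paper's and the central idea is a good one. The paper proves the theorem by downward induction on $s$, tensoring the known fusion rules $\cM_{1,2}\tens\cP_{r,s}$ from \cite{CMY-singlet} with $\cF_\lambda$, using associativity to compare two decompositions of $\cM_{1,2}\tens(\cP_{r,s}\tens\cF_\lambda)$, and reading off the answer with Krull--Schmidt. You instead establish directly that $\cP_{r,s}\tens\cF_\lambda$ is projective and then classify its indecomposable summands. The projectivity argument itself is sound: $\cP_{r,s}$ is rigid in $\cC_{\cM(p)}^0$ and hence in the full subcategory $\cO_{\cM(p)}^T$, so $\cP_{r,s}\tens-$ is left adjoint to the exact functor $\cP_{r,s}^\vee\tens-$ and therefore preserves projectives; since $\cF_\lambda$ is projective by Theorem \ref{thm:Flambda_proj}, so is $\cP_{r,s}\tens\cF_\lambda$. (In fact projectivity is not strictly needed here, since by Theorem \ref{thm:OT_properties}(3) and Corollary \ref{cor:Olambda_structure}, $\cP_{r,s}\tens\cF_\lambda$ already lies in a typical grading component and so is automatically a direct sum of Fock modules.) The observation that no atypical $\cP_{r',s'}$ can appear as a summand is also correct.

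The gap is in the step ``Applying the exact functor $-\tens\cF_\lambda$ to the Loewy series \eqref{eqn:Prs_Loewy_diag}.'' At this point in the paper, exactness of $-\tens\cF_\lambda$ (equivalently, of $\cF_\lambda\tens-$, via the braiding) has not been established; tensoring with a fixed module in the HLZ framework is right exact, but left exactness requires rigidity of that module, and rigidity of $\cF_\lambda$ is precisely the content of Section \ref{sec:rigidity}, proved later using the fusion rules of this very theorem. Projectivity of $\cF_\lambda$ in $\cO_{\cM(p)}^T$ does not by itself force flatness. Without exactness, you only get surjections $\cP_{r,s}\tens\cF_\lambda\twoheadrightarrow\cM_{r,s}\tens\cF_\lambda$ from the top of the Loewy series, which gives a lower bound on the summands, not the exact multiplicities; the Grothendieck-class identity you write down is therefore unjustified. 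The alternative of computing $\hom(\cP_{r,s}\tens\cF_\lambda,\cF_\mu)$ via rigidity of $\cP_{r,s}$ leads back to $\hom(\cF_\lambda,\cP_{2-r,s}\tens\cF_\mu)$, i.e., the same unknown. The paper's inductive argument avoids this by only ever tensoring with rigid atypical modules (where exactness is available) and comparing indecomposable summands of triple products by Krull--Schmidt; you would need to import something like this, or compute the relevant intertwining operator spaces directly by Virasoro methods as in the proof of Theorem \ref{thm:Mrs_Flambda}, to pin down the multiplicities.
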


\begin{proof}
We first note that the two formulas given for $\cP_{r,s}\tens\cF_\lambda$ are equivalent because by the fusion rule \eqref{eqn:Mrs_Flambda} and the identity $\alpha_{r+1,p-s}=\alpha_{r,s}+s\alpha_-$,
\begin{align*}
 (\cM_{r,s}\tens\cF_\lambda) & \oplus(\cM_{r+1,p-s}\tens\cF_\lambda)\nonumber\\
 &\cong \bigoplus_{\ell=0}^{s-1} \cF_{\lambda+\alpha_{r,s}+\ell\alpha_-}\oplus\bigoplus_{\ell=0}^{p-s-1}\cF_{\lambda+\alpha_{r+1,p-s}+\ell\alpha_-}\cong\bigoplus_{\ell=0}^{p-1}\cF_{\lambda+\alpha_{r,s}+\ell\alpha_-}
\end{align*}
for $r\in\ZZ$, $1\leq s\leq p-1$, and $\lambda\in\CC\setminus L^\circ$.

 We now prove the theorem by downward induction on $s$ beginning with $s=p-1$. For this case, we use the third case in \eqref{eqn:M12_Prs}, associativity, the fusion rule \eqref{eqn:Mrs_Flambda}, and Lemma \ref{lem:M12_Flambda} to compute
 \begin{align*}
  \cP_{r,p-1}\tens\cF_\lambda & \cong(\cM_{1,2}\tens\cM_{r,p})\tens\cF_\lambda \cong\cM_{1,2}\tens(\cM_{r,p}\tens\cF_\lambda)\nonumber\\
  &\cong\cM_{1,2}\tens\bigoplus_{\ell=0}^{p-1} \cF_{\lambda+\alpha_{r,p}+\ell\alpha_-} \cong\bigoplus_{\ell=0}^{p-1} (\cF_{\lambda+\alpha_{r,p}-\alpha_{1,2}+\ell\alpha_-}\oplus\cF_{\lambda+\alpha_{r,p}+\alpha_{1,2}+\ell\alpha_-}).
 \end{align*}
Since
\begin{equation*}
 \alpha_{r,p}-\alpha_{1,2}=\alpha_{r,p-1},\qquad\alpha_{r,p}+\alpha_{1,2}=\alpha_{r-1,1},
\end{equation*}
this proves \eqref{eqn:Prs_Flambda} for $s=p-1$. This also proves the theorem in the case $p=2$.

For $p\geq 3$, we now prove the $s=p-2$ case of \eqref{eqn:Prs_Flambda}. On the one hand, the middle case of the fusion rule \eqref{eqn:M12_Prs} together with \eqref{eqn:Mrs_Flambda} yield
\begin{align}\label{eqn:sp2_1}
 (\cM_{1,2}\tens\cP_{r,p-1})\tens\cF_\lambda & \cong(\cP_{r,p-2}\tens\cF_\lambda)\oplus2\cdot(\cM_{r,p}\tens\cF_\lambda).
\end{align}
On the other hand, the $s=p-1$ case of \eqref{eqn:Prs_Flambda} together with the fusion rules of \cite[Theorem 3.2.8(2)]{CMY-singlet} yield
\begin{align}\label{eqn:sp2_2}
 \cM_{1,2}\tens(\cP_{r,p-1}\tens\cF_\lambda) &\cong\cM_{1,2}\tens\left((\cM_{r+1,1}\tens\cF_\lambda)\oplus 2\cdot(\cM_{r,p-1}\tens\cF_\lambda)\oplus(\cM_{r-1,1}\tens\cF_\lambda)\right)\nonumber\\
 &\cong (\cM_{r+1,2}\tens\cF_\lambda)\oplus 2\cdot(\cM_{r,p-2}\tens\cF_\lambda)\nonumber\\
 &\hspace{7.8em}\oplus2\cdot(\cM_{r,p}\tens\cF_\lambda)\oplus(\cM_{r-1,2}\tens\cF_\lambda).
\end{align}
After comparing the right sides of \eqref{eqn:sp2_1} and \eqref{eqn:sp2_2}, the Krull-Schmidt Theorem shows that $\cP_{r,p-2}\tens\cF_\lambda$ has the same indecomposable summands as 
\begin{equation*}
 (\cM_{r+1,2}\tens\cF_\lambda)\oplus 2\cdot(\cM_{r,p-2}\tens\cF_\lambda)\oplus(\cM_{r-1,2}\tens\cF_\lambda),
\end{equation*}
proving \eqref{eqn:Prs_Flambda} in the case $s=p-2$.

Now in general for $p\geq 4$, suppose we have proven the $s$ and $s+1$ cases of \eqref{eqn:Prs_Flambda} for some $s\in\lbrace 2,\ldots,p-2\rbrace$; we will prove \eqref{eqn:Prs_Flambda} for $s-1$ using the first case in the fusion rule \eqref{eqn:M12_Prs}. On the one hand,
\begin{align*}
 (\cM_{1,2} & \tens\cP_{r,s})\tens\cF_\lambda  \cong (\cP_{r,s-1}\tens\cF_\lambda)\oplus(\cP_{r,s+1}\tens\cF_\lambda) 
 \nonumber\\
 &\cong (\cP_{r,s-1}\tens\cF_\lambda)\oplus(\cM_{r+1,p-s-1}\tens\cF_\lambda)\oplus2\cdot(\cM_{r,s+1}\tens\cF_\lambda)\oplus(\cM_{r-1,p-s-1}\tens\cF_\lambda).
\end{align*}
On the other hand,
\begin{align*}
 \cM_{1,2}\tens(\cP_{r,s}\tens\cF_\lambda) & \cong\cM_{1,2}\tens\left((\cM_{r+1,p-s}\tens\cF_\lambda)\oplus2\cdot(\cM_{r,s}\tens\cF_\lambda)\oplus(\cM_{r-1,p-s}\tens\cF_\lambda)\right)\nonumber\\
 & \cong (\cM_{r+1,p-s-1}\tens\cF_\lambda)\oplus(\cM_{r+1,p-s+1}\tens\cF_\lambda)\oplus 2\cdot(\cM_{r,s-1}\tens\cF_\lambda)\nonumber\\
 &\qquad\oplus2\cdot(\cM_{r,s+1}\tens\cF_\lambda)\oplus(\cM_{r-1,p-s-1}\tens\cF_\lambda)\oplus(\cM_{r-1,p-s+1}\tens\cF_\lambda).
\end{align*}
Comparing indecomposable summands as before then shows that \eqref{eqn:Prs_Flambda} indeed holds for $s-1$. This proves the theorem.
\end{proof}

\subsection{Typical-typical fusion}

We now compute $\cF_\lambda\tens\cF_\mu$ for $\lambda,\mu\in\CC\setminus L^\circ$ and either $\lambda+\mu\in L^\circ$ or $\lambda+\mu\notin L^\circ$. The first possibility is covered by the following theorem:
\begin{thm}\label{thm:typ_typ_atyp_fusion}
 For $\lambda,\mu\in\CC\setminus L^\circ$ such that $\lambda+\mu=\alpha_0+\alpha_{r,s}$ for some $r\in\ZZ$ and $1\leq s\leq p$,
 \begin{equation*}
  \cF_\lambda\tens\cF_{\mu}\cong\bigoplus_{\substack{s'= s\\ s'\equiv s\,\,(\mathrm{mod}\,2)\\}}^p \cP_{r,s'}\oplus\bigoplus_{\substack{s'=p+2-s\\s'\equiv p-s\,\,(\mathrm{mod}\,2)\\}}^p \cP_{r-1,s'}.
 \end{equation*}
\end{thm}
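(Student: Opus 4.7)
The strategy has three parts: a simple-current reduction to the case $r=1$, a recursion via associativity with the atypical modules $\cM_{1,s}$, and a direct argument for the base case $(r,s)=(1,1)$.

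\emph{Reduction to $r=1$.} Since $\alpha_{r,s}=\alpha_{r,1}+\alpha_{1,s}$, the shift $\lambda\mapsto\lambda'=\lambda-\alpha_{r,1}$ gives $\lambda'+\mu=\alpha_0+\alpha_{1,s}$, and by Lemma~\ref{lem:simple_curr_tens} we have $\cF_\lambda\cong\cM_{r,1}\tens\cF_{\lambda'}$, hence
\[
\cF_\lambda\tens\cF_\mu\cong\cM_{r,1}\tens(\cF_{\lambda'}\tens\cF_\mu).
\]
The simple current fusion rules from \cite[Theorem 5.2.1]{CMY-singlet} give $\cM_{r,1}\tens\cP_{1,s'}\cong\cP_{r,s'}$ and $\cM_{r,1}\tens\cP_{0,s'}\cong\cP_{r-1,s'}$, so this tensor product transports the $r=1$ formula into the stated general-$r$ formula. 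It thus suffices to establish the theorem when $r=1$.

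\emph{Recursion for $r=1$.} Write $F(s):=\cF_\lambda\tens\cF_{\alpha_0+\alpha_{1,s}-\lambda}$, where we understand $F(s)$ for $s\notin[1,p]$ via periodicity $\alpha_{1,s-2\ell}=\alpha_{0,s+p-2\ell}$, so $F(s-2\ell)\cong\cM_{0,1}\tens F(1,s+p-2\ell)$ when $s-2\ell\leq 0$. By associativity and Theorem~\ref{thm:Mrs_Flambda},
\[
\cM_{1,s}\tens F(1)\cong(\cM_{1,s}\tens\cF_{\alpha_0-\lambda})\tens\cF_\lambda\cong\bigoplus_{\ell=0}^{s-1}\cF_{\alpha_0-\lambda+\alpha_{1,s-2\ell}}\tens\cF_\lambda=\bigoplus_{\ell=0}^{s-1}F(s-2\ell).
\]
Given the base case expression for $F(1)$ below, the left-hand side is computable by iterating the rules \eqref{eqn:M12_Prs} for $\cM_{1,s}\tens\cP_{1,s'}$ from \cite{CMY-singlet}. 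Induction on $s$ together with Krull--Schmidt uniqueness in $\cO^T_{\cM(p)}$ then extracts the summand $F(s)$ from the identity, producing the claimed decomposition for each $s\in\{1,\ldots,p\}$ (and simultaneously for the $r=0$ companions via the simple current $\cM_{0,1}$).

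\emph{Base case and main obstacle.} The crux is to show
\[
\cF_\lambda\tens\cF_{\alpha_0-\lambda}\cong\bigoplus_{\substack{s'=1\\ s'\equiv 1\,(\mathrm{mod}\,2)}}^p\cP_{1,s'}.
\]
The plan is to combine the projectivity of $\cF_\lambda$ and $\cF_{\alpha_0-\lambda}=\cF_\lambda'$ in $\cO^T_{\cM(p)}$ (Theorem~\ref{thm:Flambda_proj}) with a count of composition factors: the contragredient pairing gives a distinguished non-zero morphism $\cF_\lambda\tens\cF_\lambda'\to\cM(p)=\cM_{1,1}$ (forcing $\cP_{1,1}$ as a summand), and an analysis of the Virasoro-level intertwining operators among typical Fock modules combined with the atypical-typical rule of Theorem~\ref{thm:Mrs_Flambda} pins down the other summands $\cP_{1,s'}$ for odd $s'$. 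The main obstacle is precisely this base-case calculation, since the recursion and the induction are essentially mechanical once $F(1)$ is known; identifying $F(1)$ requires extracting genuinely new information about the tensor category beyond what the already-established atypical fusion rules supply.
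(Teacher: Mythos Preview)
Your approach is genuinely different from the paper's, and it has a structural gap.

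The paper does \emph{not} reduce to a base case and recurse. Instead, for \emph{any} $(r,s)$ it computes all simple quotients of $\cF_\lambda\tens\cF_\mu$ at once via the intertwining-operator symmetry
\[
\hom_{\cM(p)}(\cF_\lambda\tens\cF_\mu,\cM_{r',s'})\;\cong\;I_{\cM(p)}\binom{\cF_{\lambda-\alpha_{r,s}}}{\cM_{2-r',s'}\ \cF_\lambda}\;\cong\;\hom_{\cM(p)}(\cM_{2-r',s'}\tens\cF_\lambda,\cF_{\lambda-\alpha_{r,s}}),
\]
and the right side is read off from Theorem~\ref{thm:Mrs_Flambda}. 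This determines the set $S_{r,s}$ of labels $(r',s')$ for which the hom is one-dimensional. Then, repeating the same symmetry with $\cP_{r',s'}$ in place of $\cM_{r',s'}$ and invoking Theorem~\ref{thm:Prs_Flambda}, the paper finds two linearly independent maps $\cF_\lambda\tens\cF_\mu\to\cP_{r',s'}$; a Loewy-diagram check shows one of them must be surjective, so projectivity of $\cP_{r',s'}$ in $\cO^T_{\cM(p)}$ splits it off as a summand. Finally a Krull--Schmidt/count of simple quotients forces the leftover to vanish. No recursion is needed, and the ``base case'' $(r,s)=(1,1)$ is not singled out.

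About your scheme: first, the periodicity goes the other way, $\alpha_{1,s-2\ell}=\alpha_{2,s+p-2\ell}$ when $s-2\ell\le 0$ (since $\alpha_{r,s}=\alpha_{r+1,s+p}$), so the overflow terms land in $r=2$, not $r=0$. More seriously, those overflow terms have second label $s+p-2\ell$, which can exceed $s$; so $\cM_{1,s}\tens F(1)\cong\bigoplus_\ell F(s-2\ell)$ is \emph{not} a triangular system in $s$ and does not yield a straight induction. You would need to solve a coupled system over all $s$ (and $r=1,2$) simultaneously, which you have not set up. Second, your base-case plan leans on ``projectivity of $\cF_\lambda$'' to force a projective decomposition of $\cF_\lambda\tens\cF_{\alpha_0-\lambda}$, but at this point rigidity of $\cF_\lambda$ is not yet available, so you cannot conclude that $\cF_\lambda\tens\bullet$ is exact or that the tensor product is projective; the paper circumvents this by using projectivity of the $\cP_{r',s'}$ instead. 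Finally, the part of your base-case sketch that would actually work, ``Virasoro-level intertwining operators combined with Theorem~\ref{thm:Mrs_Flambda}'', is precisely the paper's uniform technique, and once you have it there is no reason to restrict to $(r,s)=(1,1)$ and then recurse.
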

\begin{proof}
 We first determine the simple quotients of $\cF_\lambda\tens\cF_\mu$, which by Theorem \ref{thm:OT_properties}(3) and Proposition \ref{prop:irred_mod_grading} have the form $\cM_{r',s'}$ for $\alpha_{r',s'}\in\alpha_0+\alpha_{r,s}+2L^\circ$. By symmetries of intertwining operators and the fusion rule \eqref{eqn:Mrs_Flambda},
 \begin{align}\label{eqn:Srs_calc}
&  \hom_{\cM(p)}(  \cF_\lambda\tens\cF_\mu,\cM_{r',s'}) \cong I_{\cM(p)}\binom{\cM_{r',s'}}{\cF_\lambda\,\,\cF_{\alpha_0-\lambda+\alpha_{r,s}}} \cong I_{\cM(p)}\binom{\cF_{\lambda-\alpha_{r,s}}}{\cM_{2-r',s'}\,\cF_\lambda}\nonumber\\
  &\quad\cong \hom_{\cM(p)}(\cM_{2-r',s'}\tens\cF_\lambda,\cF_{\lambda-\alpha_{r,s}})\cong\bigoplus_{\ell=0}^{s'-1}\hom_{\cM(p)}(\cF_{\lambda+\alpha_{2-r',s'}+\ell\alpha_-},\cF_{\lambda-\alpha_{r,s}}).
 \end{align}
Thus $\hom_{\cM(p)}(\cF_\lambda\tens\cF_\mu,\cM_{r',s'})$ is non-zero (and one-dimensional) if and only if
\begin{equation}\label{eqn:Srs_def}
 \alpha_{2-r',s'}+\ell\alpha_-=-\alpha_{r,s}
\end{equation}
for some $\ell\in\lbrace 0,\ldots,s'-1\rbrace$. Let $S_{r,s}$ denote the set of labels $(r',s')$ such that \eqref{eqn:Srs_def} holds.

From the definitions, \eqref{eqn:Srs_def} holds if and only if
\begin{equation*}
 (r-r')p = s'+s-2(\ell+1).
\end{equation*}
Since $0\leq\ell\leq s'-1<p$, we have
\begin{equation*}
 -p+1\leq s'+s-2(\ell+1)\leq 2p-2,
\end{equation*}
which means that $r$ and $r-1$ are the only possibilities for $r'$. If $r'=r$, then we get $\ell+1=\frac{s'+s}{2}$, which must be an integer no larger than $s'$. That is, $(r,s')\in S_{r,s}$ if and only if $s\leq s'\leq p$ and $s'\equiv s\,\,(\mathrm{mod}\,2)$. If $r'=r-1$, then we get $\ell+1=\frac{s'+s-p}{2}$, which must be a positive integer. That is, $(r-1,s')\in S_{r,s}$ if and only if $p+2-s\leq s'\leq p$ and $s'\equiv p-s\,\,(\mathrm{mod}\,2)$.

Thus the theorem amounts to the claim $\cF_\lambda\tens\cF_\mu\cong\bigoplus_{(r',s')\in S_{r,s}} \cP_{r',s'}$. If $(r',p)\in S_{r,s}$ for $r'=r$ or $r'=r-1$, then the non-zero homomorphism $\cF_\lambda\tens\cF_\mu\rightarrow\cM_{r',p}=\cP_{r',p}$ is surjective. Thus because $\cP_{r',p}$ is projective in $\cO_{\cM(p)}^T$, it occurs as a direct summand of $\cF_\lambda\tens\cF_\mu$. For $(r',s')\in S_{r,s}$ with $1\leq s\leq p-1$, we can repeat the calculation \eqref{eqn:Srs_calc} with $\cM_{r',s'}$ replaced by $\cP_{r',s'}$ and then apply the fusion rule \eqref{eqn:Prs_Flambda}. It follows that there are two linearly independent $\cM(p)$-module homomorphisms $\cF_\lambda\tens\cF_\mu\rightarrow\cP_{r',s'}$. At most one of these homomorphisms has image contained in $\mathrm{Soc}(\cP_{r',s'})\cong\cM_{r',s'}$ since $\dim\hom_{\cM(p)}(\cF_\lambda\tens\cF_\mu,\cM_{r',s'})=1$ for such $(r',s')$. Thus from the Loewy diagram \eqref{eqn:Prs_Loewy_diag} of $\cP_{r',s'}$, the second linearly independent homomorphism $\cF_\lambda\tens\cF_\mu\rightarrow\cP_{r',s'}$ is either surjective or has image containing at least one of $\cM_{r'\pm1,p-s'}$ as simple quotient. But the latter option is impossible: it would imply $\hom_{\cM(p)}(\cF_\lambda\tens\cF_\mu,\cM_{r'\pm1,p-s'})\neq 0$, whereas it is easy to see that if $(r',s')\in S_{r,s}$, then $(r'\pm 1,p-s')\notin S_{r,s}$. Consequently, there is a surjective map $\cF_\lambda\tens\cF_\mu\rightarrow\cP_{r',s'}$ for all $(r',s')\in S_{r,s}$, and then $\cP_{r',s'}$ occurs as a summand of $\cF_\lambda\tens\cF_\mu$ by projectivity.

Since $\cF_\lambda\tens\cF_\mu$ is a finite-length $\cM(p)$-module and $\cP_{r',s'}$ is an indecomposable direct summand for any $(r',s')\in S_{r,s}$, there exists for each such $(r',s')$ a decomposition of $\cF_\lambda\tens\cF_\mu$ into a direct sum of indecomposable submodules that includes $\cP_{r',s'}$. Then because the indecomposable submodules appearing in such a decomposition are unique up to isomorphism by the Krull-Schmidt Theorem, and because the $\cP_{r',s'}$ for different $(r',s')$ are non-isomorphic, $\cF_\lambda\tens\cF_\mu\cong X\oplus\bigoplus_{(r',s')\in S_{r,s}} \cP_{r',s'}$ for some $X$. Thus for any $r''\in\ZZ$ and $1\leq s''\leq p$,
\begin{equation*}
 \hom_{\cM(p)}(\cF_\lambda\tens\cF_\mu,\cM_{r'',s''}) \cong\hom_{\cM(p)}(X,\cM_{r'',s''})\oplus\bigoplus_{(r',s')\in S_{r,s}}\hom_{\cM(p)}(\cP_{r',s'},\cM_{r'',s''}).
\end{equation*}
But since
\begin{equation*}
 \dim\hom_{\cM(p)}(\cF_\lambda\tens\cF_\mu,\cM_{r'',s''}) =\left\lbrace\begin{array}{lll}
                                                                        1 & \text{if} & (r'',s'')\in S_{r,s}\\
                                                                        0 & \text{if} & (r'',s'')\notin S_{r,s}\\
                                                                      \end{array}\right.
\end{equation*}
and since 
$$\dim\hom_{\cM(p)}(\cP_{r',s'},\cM_{r'',s''})=\delta_{(r',s'),(r'',s'')},$$
this means $\hom_{\cM(p)}(X,\cM_{r'',s''})=0$ for all $r''\in\ZZ$ and $1\leq s''\leq p$. Since $X$ is a (finite-length) module in $\cO^{\alpha_0+\alpha_{r,s}+2L^\circ}_{\cM(p)}$, it follows that $X=0$. This proves the theorem.
\end{proof}

It is worth recording the $r=s=1$ case of the preceding theorem as a corollary:
\begin{cor}\label{cor:Flambda_Flambda_prime}
 For $\lambda\in\CC\setminus L^\circ$, $\cF_\lambda\tens\cF_{\alpha_0-\lambda}\cong\bigoplus_{s\,\mathrm{odd}} \cP_{1,s}$.
\end{cor}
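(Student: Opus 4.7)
The plan is simply to specialize Theorem \ref{thm:typ_typ_atyp_fusion} to the case $\mu=\alpha_0-\lambda$. First I would verify the hypothesis $\mu\in\CC\setminus L^\circ$: since $\alpha_0=\alpha_++\alpha_-$ lies in $L^\circ$ (because $L=\ZZ\alpha_+\subseteq L^\circ$ and $\alpha_-=2\cdot\tfrac{\alpha_-}{2}\in L^\circ$), the assumption $\lambda\notin L^\circ$ forces $\alpha_0-\lambda\notin L^\circ$ as well; otherwise $\lambda=\alpha_0-(\alpha_0-\lambda)$ would be a difference of two elements of $L^\circ$ and hence lie in $L^\circ$, a contradiction.

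Next, to invoke the theorem I need to express $\lambda+\mu=\alpha_0$ in the form $\alpha_0+\alpha_{r,s}$ with $r\in\ZZ$ and $1\leq s\leq p$. This requires $\alpha_{r,s}=0$, and since $\alpha_{r,s}=\tfrac{1-r}{2}\alpha_++\tfrac{1-s}{2}\alpha_-$ with $\alpha_+$ and $\alpha_-$ linearly independent over $\QQ$, the unique solution in the allowed range is $(r,s)=(1,1)$.

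Finally, I would substitute $r=s=1$ into the formula of Theorem \ref{thm:typ_typ_atyp_fusion}. The first sum then runs over $s'$ with $1\leq s'\leq p$ and $s'\equiv 1\pmod{2}$, yielding exactly $\bigoplus_{s\,\mathrm{odd}}\cP_{1,s}$. The second sum starts at $s'=p+2-1=p+1$, which already exceeds $p$, so it is empty. Combining these gives the stated isomorphism. There is no real obstacle here and no separate argument needed; the corollary is a direct specialization of the preceding fusion computation, and I would present it as such.
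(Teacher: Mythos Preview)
Your approach is exactly the paper's: the corollary is presented there as the $r=s=1$ case of Theorem~\ref{thm:typ_typ_atyp_fusion} with no further argument.

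One small correction to your justification: $\alpha_+$ and $\alpha_-$ are \emph{not} linearly independent over $\QQ$, since $\alpha_+=\sqrt{2p}$ and $\alpha_-=-\sqrt{2/p}$ satisfy $\alpha_+=-p\alpha_-$. The equation $\alpha_{r,s}=0$ instead becomes $(r-1)p+(1-s)=0$, and the constraint $1\leq s\leq p$ then forces $r=1$, $s=1$ as you want. The rest of your specialization is fine.
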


Using this corollary and the fusion rule \eqref{eqn:Prs_Flambda}, we also prove the following lemma that we will need for the next theorem:
\begin{lem}\label{lem:trip_F_tens}
 For $\lambda,\mu\in\CC\setminus L^\circ$,
 \begin{equation*}
  (\cF_\lambda\tens\cF_\mu)\tens\cF_{\alpha_0-\mu}\cong\bigoplus_{\ell,\ell'=0}^{p-1} \cF_{\lambda+\alpha_0+(\ell+\ell')\alpha_-}.
 \end{equation*}
\end{lem}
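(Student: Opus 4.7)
The plan is to apply associativity, writing
\begin{equation*}
(\cF_\lambda \tens \cF_\mu) \tens \cF_{\alpha_0 - \mu} \cong \cF_\lambda \tens (\cF_\mu \tens \cF_{\alpha_0 - \mu}),
\end{equation*}
and then to invoke Corollary \ref{cor:Flambda_Flambda_prime}, which identifies the inner tensor product with $\bigoplus_{s \text{ odd}} \cP_{1,s}$, where the sum runs over odd $s$ in $\{1, \ldots, p\}$. This reduces the claim to an explicit computation of $\bigoplus_{s \text{ odd}} \cF_\lambda \tens \cP_{1,s}$.

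For each odd $s$ with $1 \leq s \leq p-1$, Theorem \ref{thm:Prs_Flambda} gives
\begin{equation*}
\cF_\lambda \tens \cP_{1, s} \cong \bigoplus_{\ell=0}^{p-1} \left(\cF_{\lambda + \alpha_{1, s} + \ell \alpha_-} \oplus \cF_{\lambda + \alpha_{0, p-s} + \ell \alpha_-}\right),
\end{equation*}
while when $p$ is odd the extra summand $s = p$ satisfies $\cP_{1,p} = \cM_{1,p}$ so Theorem \ref{thm:Mrs_Flambda} gives $\cF_\lambda \tens \cP_{1,p} \cong \bigoplus_{\ell=0}^{p-1} \cF_{\lambda + \alpha_{1,p} + \ell\alpha_-}$. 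The key algebraic observation is that $\alpha_+ = -p\alpha_-$, so that $\alpha_0 = (1-p)\alpha_-$, and for odd $s$ both $\alpha_{1,s} = -\frac{s-1}{2}\alpha_-$ and $\alpha_{0, p-s} = -\frac{2p-s-1}{2}\alpha_-$ are integer multiples of $\alpha_-$. Consequently every simple summand on either side of the asserted isomorphism has the form $\cF_{\lambda + n\alpha_-}$ for some integer $n$, and the proof reduces to matching multiplicities.

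For the right-hand side, $\cF_{\lambda + \alpha_0 + (\ell + \ell')\alpha_-} = \cF_{\lambda + (\ell + \ell' + 1 - p)\alpha_-}$, so $\cF_{\lambda + n\alpha_-}$ appears with multiplicity equal to the number of $(\ell,\ell') \in \{0,\ldots,p-1\}^2$ with $\ell + \ell' + 1 - p = n$, namely $p - |n|$ for $-(p-1) \leq n \leq p-1$. For the left-hand side, a straightforward range analysis shows that each $\cP_{1,s}$ with $s$ odd and $s \leq p-1$ contributes $\cF_{\lambda + n\alpha_-}$ with multiplicity $2$ when $|n| \leq \frac{s-1}{2}$ and multiplicity $1$ when $\frac{s+1}{2} \leq |n| \leq p - \frac{s+1}{2}$; when $p$ is odd, the $s = p$ summand adds $1$ for $|n| \leq \frac{p-1}{2}$. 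A short summation over odd $s$ then produces total multiplicity $p - |n|$, matching the right-hand side in both parities of $p$. The only obstacle is this combinatorial bookkeeping, which splits into the two parity cases and must be done carefully, but requires no new ideas beyond the fusion rules already cited.
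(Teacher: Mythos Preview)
Your proposal is correct and follows essentially the same route as the paper: associativity, Corollary~\ref{cor:Flambda_Flambda_prime}, and then Theorem~\ref{thm:Prs_Flambda} (with Theorem~\ref{thm:Mrs_Flambda} for the $s=p$ summand when $p$ is odd). The only difference is in the final bookkeeping: the paper observes directly that $\alpha_{1,s}=\alpha_0+\bigl(p-\tfrac{s+1}{2}\bigr)\alpha_-$ and $\alpha_{0,p-s}=\alpha_0+\tfrac{s-1}{2}\alpha_-$, so that as $s$ runs over odd values the shifts $p-\tfrac{s+1}{2}$ and $\tfrac{s-1}{2}$ exhaust $\{0,\ldots,p-1\}$ exactly once, giving an explicit bijection with the index $\ell'$ rather than your multiplicity count.
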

\begin{proof}
 We calculate
 \begin{align*}
  (\cF_\lambda\tens\cF_\mu)\tens\cF_{\alpha_0-\mu} &\cong\cF_\lambda\tens(\cF_\mu\tens\cF_{\alpha_0-\mu})\nonumber\\
  &\cong\bigoplus_{s\,\text{odd}}(\cP_{1,s}\tens\cF_{\lambda}) \cong\bigoplus_{s\,\text{odd}}\bigoplus_{\ell=0}^{p-1}(\cF_{\lambda+\alpha_{1,s}+\ell\alpha_-}[\oplus\cF_{\lambda+\alpha_{0,p-s}+\ell\alpha_-}]),
 \end{align*}
where the summand in brackets does not occur for $s=p$ (in case $p$ is odd). Since
\begin{equation*}
 \alpha_{1,s}=\alpha_0+\left(p-\frac{s+1}{2}\right)\alpha_-,\qquad\alpha_{0,p-s}=\alpha_0+\frac{s-1}{2}\alpha_-,
\end{equation*}
we have
\begin{equation*}
 \bigoplus_{s\,\text{odd}}(\cF_{\lambda+\alpha_{1,s}+\ell\alpha_-}[\oplus\cF_{\lambda+\alpha_{0,p-s}+\ell\alpha_-}]) =\bigoplus_{\ell'=0}^{p-1}\cF_{\lambda+\alpha_0+(\ell+\ell')\alpha_-}
\end{equation*}
for all $\ell$, proving the lemma.
\end{proof}

It remains to compute $\cF_\lambda\tens\cF_\mu$ when $\lambda+\mu\notin L^\circ$:
\begin{thm}\label{thm:typ_typ_typ_fusion}
For $\lambda,\mu\in\CC\setminus L^\circ$ such that $\lambda+\mu\in\CC\setminus L^\circ$,
\begin{equation*}
\cF_{\lambda}\tens\cF_\mu \cong \bigoplus_{\ell=0}^{p-1} \cF_{\lambda + \mu + \ell \alpha_-}.
\end{equation*}
\end{thm}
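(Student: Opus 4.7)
The plan is to compute the multiplicities in a direct-sum decomposition of $\cF_\lambda\tens\cF_\mu$ via (i)~an upper bound from Virasoro fusion, (ii)~a lower bound from explicit intertwining operators, and (iii)~a total-count argument based on Lemma \ref{lem:trip_F_tens}. By Theorem \ref{thm:OT_properties}(3), Proposition \ref{prop:irred_mod_grading}, and Corollary \ref{cor:Olambda_structure} (applicable since $\lambda+\mu\notin L^\circ$), together with the projectivity and simplicity of each typical Fock module (Theorem \ref{thm:Flambda_proj}), one writes $\cF_\lambda\tens\cF_\mu\cong\bigoplus_\nu n_\nu\cF_\nu$ with $\nu$ ranging over a finite subset of $\lambda+\mu+2L^\circ$ and $n_\nu=\dim I_{\cM(p)}\binom{\cF_\nu}{\cF_\lambda\;\cF_\mu}$.

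For the upper bound, since $\lambda,\mu,\nu\notin L^\circ$ each of $\cF_\lambda,\cF_\mu,\cF_\nu$ restricts to an irreducible Verma $\cV ir$-module at central charge $c_{p,1}$. The restriction map $I_{\cM(p)}\binom{\cF_\nu}{\cF_\lambda\;\cF_\mu}\hookrightarrow I_{\cL(p)}\binom{\cF_\nu}{\cF_\lambda\;\cF_\mu}$ is an injection into a space of dimension at most one by Remark \ref{rem:pi_injective}, so $n_\nu\leq 1$.

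For the lower bound, I would exhibit, for each $\ell\in\{0,1,\ldots,p-1\}$, a non-zero $\cM(p)$-intertwining operator of type $\binom{\cF_{\lambda+\mu+\ell\alpha_-}}{\cF_\lambda\;\cF_\mu}$. The case $\ell=0$ is the standard Heisenberg vertex operator, which is automatically an $\cM(p)$-intertwiner. For $\ell\geq 1$ one uses the screened Heisenberg intertwining operators obtained by inserting $\ell$ contour integrals of $Y(e^{\alpha_-},x)$: since $\cM(p)$ is the kernel of the screening operator $\til Q=\oint Y(e^{\alpha_-},x)\,dx$, these screened operators commute with the $\cM(p)$-action, and their non-vanishing on the lowest-weight vectors reduces to a Selberg integral (this is the standard Dotsenko--Fateev free-field construction, cf.~\cite{TW, AM-trip}). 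Combined with the upper bound, this yields $n_{\lambda+\mu+\ell\alpha_-}=1$ for $\ell=0,\ldots,p-1$ and hence $\sum_\nu n_\nu\geq p$.

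To close the count, tensor the decomposition on the right with $\cF_{\alpha_0-\mu}$ and apply Lemma \ref{lem:trip_F_tens}:
\begin{equation*}
\bigoplus_\nu n_\nu\,(\cF_\nu\tens\cF_{\alpha_0-\mu})\cong\bigoplus_{\ell,\ell'=0}^{p-1}\cF_{\lambda+\alpha_0+(\ell+\ell')\alpha_-},
\end{equation*}
whose right-hand side has exactly $p^2$ Fock summands counted with multiplicity. For each $\nu$ with $n_\nu\neq 0$ the triple $(\nu,\alpha_0-\mu,\nu+\alpha_0-\mu)$ satisfies the hypotheses of the theorem: $\nu\notin L^\circ$ by Corollary \ref{cor:Olambda_structure}, $\alpha_0-\mu\notin L^\circ$ because $\alpha_0\in L^\circ$ and $\mu\notin L^\circ$, and $\nu+\alpha_0-\mu\in\lambda+\alpha_0+2L^\circ\subseteq\CC\setminus L^\circ$ since $\lambda\notin L^\circ$. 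Applying the lower bound of the previous paragraph to $\cF_\nu\tens\cF_{\alpha_0-\mu}$ therefore produces at least $p$ Fock summands in each such product, and so $p\cdot\sum_\nu n_\nu\leq p^2$, i.e., $\sum_\nu n_\nu\leq p$. Combined with $\sum_\nu n_\nu\geq p$ this forces equality, pinning down the $p$ summands as $\cF_{\lambda+\mu+\ell\alpha_-}$ for $\ell=0,\ldots,p-1$. The main obstacle will be the construction and non-vanishing of the screened intertwining operators for $\ell\geq 1$.
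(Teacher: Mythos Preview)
Your overall architecture---decompose into Fock summands via Corollary \ref{cor:Olambda_structure}, bound each multiplicity by $1$ via Virasoro fusion, and control the total count through Lemma \ref{lem:trip_F_tens}---matches the paper's. The genuine difference is in how you obtain the lower bound $\sum_\nu n_\nu\geq p$. You invoke $p$ distinct screened Dotsenko--Fateev intertwiners and defer their non-vanishing to a Selberg-integral computation; the paper uses \emph{only} the unscreened Heisenberg intertwiner (your $\ell=0$ case) and then bootstraps all higher $\ell$ by induction. Concretely, writing $a_\ell=a_\ell(\lambda,\mu)$ for the multiplicity of $\cF_{\lambda+\mu+\ell\alpha_-}$, the paper extracts from Lemma \ref{lem:trip_F_tens} the identity
\[
\sum_{\ell+\ell'=n} a_\ell(\lambda,\mu)\,a_{\ell'}(\lambda+\mu+\ell\alpha_-,\alpha_0-\mu)=n+1\qquad(0\leq n\leq p-1),
\]
and since the left side has $n+1$ terms each bounded by $1$, every term equals $1$; in particular $a_n(\lambda,\mu)=1$ by induction on $n$, using only $a_0=1$ as the base case. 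This completely sidesteps the obstacle you correctly flag: the non-vanishing of the screened operators for all $\lambda,\mu\notin L^\circ$ with $\lambda+\mu\notin L^\circ$ is not automatic (the relevant Selberg-type integrals can degenerate at special parameter values, and the references you cite treat the triplet rather than the singlet setting), whereas the paper's inductive argument needs no such input. Your closing $p\cdot\sum_\nu n_\nu\leq p^2$ step is sound once the lower bound is in hand, and indeed the paper uses essentially the same comparison with Lemma \ref{lem:trip_F_tens} to force $a_\ell=0$ for $\ell>p-1$; but you should be aware that the paper reaches the lower bound by a purely internal counting trick rather than by constructing intertwiners.
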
 
\begin{proof}
First we claim that 
\begin{equation}\label{typtyp1}
\cF_{\lambda}\tens\cF_\mu\cong \bigoplus_{\ell=0}^{2(p-1)}  a_\ell(\lambda, \mu) \cF_{\lambda + \mu + \ell \alpha_-}
\end{equation}
for certain multiplicities $a_\ell(\lambda, \mu)$. The theorem will then reduce to showing that $a_\ell(\lambda, \mu) =1$ for $\ell \in \{0, \dots,  p-1\}$ and vanishes otherwise. To prove the claim, note that Heisenberg fusion rules and projectivity of $\cF_\lambda$ in $\cO^T_{\cM(p)}$ imply that $\cF_\lambda$ is a direct summand of $\cF_{\lambda+\mu-\alpha_0}\tens\cF_{\alpha_0-\mu}$:
\begin{equation*}
 \cF_{\lambda+\mu-\alpha_0}\tens\cF_{\alpha_0-\mu}\cong\cF_\lambda\oplus X
\end{equation*}
for some $X$. Then using Lemma \ref{lem:trip_F_tens},
\begin{align*}
 (\cF_\lambda\tens\cF_\mu)\oplus(X\tens\cF_\mu) \cong (\cF_{\lambda+\mu-\alpha_0}\tens\cF_{\alpha_0-\mu})\tens\cF_\mu\cong\bigoplus_{\ell,\ell'=0}^{p-1}\cF_{\lambda+\mu+(\ell+\ell')\alpha_-},
\end{align*}
proving \eqref{typtyp1}. It remains to determine the coefficients $a_\ell(\lambda, \mu)$.

Each $a_\ell(\lambda, \mu) \leq 1$ since $\cF_\lambda$, $\cF_\mu$, and $\cF_{\lambda+\mu+\ell\alpha_-}$ are simple Virasoro Verma modules, and thus $\dim I_{\cL(p)}\binom{\cF_{\lambda+\mu+\ell\alpha_-}}{\cF_\lambda\,\,\cF_\mu}=1$ (recall Remark \ref{rem:pi_injective}). Now by Lemma \ref{lem:trip_F_tens} again,
\begin{align}\label{eq:anotheridentity}
\bigoplus_{\ell , \ell'=0}^{p-1} \cF_{\lambda+\alpha_0+(\ell+\ell')\alpha_-} &\cong  (\cF_{\lambda} \tens \cF_{\mu}) \tens \cF_{\alpha_0-\mu} \nonumber\\
&\cong \bigoplus_{\ell = 0}^{2(p-1)} a_\ell(\lambda, \mu)  (\cF_{\lambda+\mu+\ell \alpha_-} \tens \cF_{\alpha_0-\mu}) \nonumber\\
&\cong \bigoplus_{\ell, \ell' = 0}^{2(p-1)} a_\ell(\lambda, \mu) a_{\ell'}(\lambda+\mu+\ell\alpha_-, \alpha_0-\mu)   \cF_{\lambda+\alpha_0+(\ell+\ell') \alpha_-}.
\end{align}
 In particular, for $0\leq n\leq p-1$, we get the identity 
\begin{equation}\label{eq:anidentity}
\sum_{\ell +\ell'=n}  a_\ell(\lambda, \mu) a_{\ell'}(\lambda+\mu+\ell\alpha_-, \alpha_0-\mu)   =  n+1.
\end{equation}
We prove that $a_\ell(\lambda, \mu) =1$ for $\ell \in \{0, \dots, p-1\}$ by induction on $\ell$. The case $\ell = 0$ follows from the $n=0$ case  of \eqref{eq:anidentity} (that is, from $a_0(\lambda, \mu)  a_{0}(\lambda+\mu, \alpha_0-\mu)=1$), or from the fact that there is a Heisenberg intertwining operator of type  $\binom{\cF_{\lambda+\mu}}{\cF_{{\lambda}}\,\,\cF_{\mu}}$. 
Now assume that $a_\ell(\lambda, \mu) =1$ for $\ell \in \{0, \dots, n-1\}$ and $n \leq p-1$.  Then \eqref{eq:anidentity} together with the induction hypothesis and $a_{0}(\lambda+\mu+n\alpha_-, \alpha_0-\mu)=1$  yields 
\begin{equation}\nonumber
a_n(\lambda, \mu) +   \sum_{\ell'=0}^{n-1}  a_{\ell'}(\lambda+\mu+(n-\ell')\alpha_-, \alpha_0-\mu)   = n+1.
\end{equation}
Since all coefficients are $0$ or $1$, the only possibility is $a_n(\lambda, \mu) = 1 = a_{\ell'}(\lambda+\mu+\ell\alpha_-, \alpha_0-\mu)$. Thus $a_\ell(\lambda, \mu) =1$ for $\ell \in \{0, \dots, p-1\}$ and any $\lambda, \mu$ such that $\lambda, \mu, \lambda+\mu \notin L^\circ$. In particular, $a_{\ell'}(\lambda+\mu+\ell\alpha_-, \alpha_0-\mu)=1$ for $\ell, \ell' \in \{0, \dots, p-1\}$.
Thus \eqref{eq:anotheridentity} can only hold if $a_\ell(\lambda, \mu) = 0$ for $\ell > p-1$.
\end{proof}

\section{Rigidity}\label{sec:rigidity}

In this section, we show that the tensor categories $\cO_{\cM(p)}$ and $\cO_{\cM(p)}^T$ are rigid. Thanks to \cite[Theorem 4.4.1]{CMY-singlet}, it is enough to show that all simple $\cM(p)$-modules are rigid. We already showed in \cite{CMY-singlet} that the atypical irreducible modules $\cM_{r,s}$ for $r\in\ZZ$, $1\leq s\leq p$ are rigid, so it remains to consider the typical Fock modules $\cF_\lambda$ for $\lambda\in\CC\setminus L^\circ$. The idea is to choose evaluations $e_\lambda: \cF_{\alpha_0-\lambda}\tens\cF_\lambda\rightarrow\cM_{1,1}$ and coevaluations $i_\lambda:\cM_{1,1}\rightarrow\cF_\lambda\tens\cF_{\alpha_0-\lambda}$ in such a way that (at least one matrix coefficient of) the rigidity composition
\begin{align*}
 \cF_\lambda\xrightarrow{l_{\cF_\lambda}^{-1}}\cM_{1,1}\tens\cF_\lambda\xrightarrow{i_\lambda\tens\Id_{\cF_\lambda}} & (\cF_\lambda\tens\cF_{\alpha_0-\lambda})\tens\cF_\lambda\nonumber\\
 \xrightarrow{\cA_{\cF_\lambda,\cF_{\alpha_0-\lambda},\cF_\lambda}^{-1}} & \cF_\lambda\tens(\cF_{\alpha_0-\lambda}\tens\cF_\lambda)\xrightarrow{\Id_{\cF_\lambda}\tens e_\lambda} \cF_\lambda\tens\cM_{1,1}\xrightarrow{r_{\cF_\lambda}} \cF_\lambda
\end{align*}
depends analytically on $\lambda$. Since this composition is non-zero for $\lambda=\alpha_{r,p}$, $r\in\ZZ$, it does not vanish identically and thus will be non-zero on a dense open set of $\lambda$. We will then use the fusion rules of the previous section to prove rigidity for all $\lambda$. We begin with the construction of suitable evaluation and coevaluation candidates. 

\subsection{Evaluation and coevaluation for typical Fock modules}\label{subsec:eval_and_coeval}

For all $\lambda\in\CC$, we fix a non-zero lowest conformal weight vector $v_\lambda\in\cF_\lambda$ of conformal weight $h_\lambda=\frac{1}{2}\lambda(\lambda-\alpha_0)$, and we identify $\cF_{\alpha_0-\lambda}$ as the contragredient of $\cF_\lambda$ via the unique non-degenerate $\mathcal{H}$-invariant bilinear form
\begin{equation*}
\langle\cdot,\cdot\rangle: \cF_{\alpha_0-\lambda}\times\cF_\lambda\rightarrow\CC
\end{equation*}
such that $\langle v_{\alpha_0-\lambda},v_\lambda\rangle=1$. We will prove that $\cF_\lambda$ is a rigid $\cM(p)$-module with dual $\cF_{\alpha_0-\lambda}$.

We first need an evaluation $e_\lambda: \cF_{\alpha_0-\lambda}\boxtimes\cF_\lambda\rightarrow\cM_{1,1}$. By intertwining operator symmetries from \cite[Equations 3.77 and 3.87]{HLZ2}, we get an intertwining operator $A_0(\Omega_0(Y_{\cF_{\alpha_0-\lambda}}))$ of type $\binom{\cM_{1,1}'}{\cF_{\alpha_0-\lambda}\,\cF_\lambda}$. Identifying $\cM_{1,1}'\cong\cM_{1,1}$ via the non-degenerate invariant bilinear form $(\cdot,\cdot)$ such that $(\vac,\vac)=1$, $A_0(\Omega_0(Y_{\cF_{\alpha_0-\lambda}}))$ becomes the intertwining operator of type $\binom{\cM_{1,1}}{\cF_{\alpha_0-\lambda}\,\cF_\lambda}$ such that
\begin{align}\label{eqn:unscaled_eval}
 \big(v,A_0(\Omega_0(Y_{\cF_{\alpha_0-\lambda}}))(w',x)w\big) =\big\langle Y_{\cF_{\alpha_0-\lambda}}(v,-x^{-1})e^{xL(1)}x^{-2L(0)}e^{\pi iL(0)} w', e^{xL(1)}w\big\rangle
\end{align}
for $v\in\cM_{1,1}$, $w\in\cF_\lambda$, and $w'\in\cF_{\alpha_0-\lambda}$. To simplify the dependence on $\lambda$, we rescale by setting $\cE_\lambda=e^{-\pi i h_\lambda}A_0(\Omega_0(Y_{\cF_{\alpha_0-\lambda}}))$. Then from \eqref{eqn:unscaled_eval} and the scaling $(\vac,\vac)=1$,
\begin{equation}\label{eqn:scaled_eval}
 \cE_\lambda(v_{\alpha_0-\lambda},x)v_\lambda\in x^{-2h_\lambda}\big(\vac+x\cM_{1,1}[[x]]\big).
\end{equation}
Now the universal property of vertex algebraic tensor products yields a unique evaluation $e_\lambda: \cF_{\alpha_0-\lambda}\tens\cF_\lambda\rightarrow\cM_{1,1}$ such that $e_\lambda\circ\cY_{\alpha_0-\lambda}=\cE_\lambda$, where $\cY_{\alpha_0-\lambda}$ denotes the tensor product intertwining operator of type $\binom{\cF_{\alpha_0-\lambda}\tens\cF_\lambda}{\cF_\lambda\,\cF_{\alpha_0-\lambda}}$. Likewise, $\cY_\lambda$ denotes the tensor product intertwining operator of type $\binom{\cF_\lambda\tens\cF_{\alpha_0-\lambda}}{\cF_\lambda\,\cF_{\alpha_0-\lambda}}$.

Next we construct a coevaluation $i_\lambda:\cM_{1,1}\rightarrow\cF_\lambda\tens\cF_{\alpha_0-\lambda}$ in the case that $\cF_\lambda$ is simple as an $\cM(p)$-module, that is, $\lambda\in(\CC\setminus L^\circ)\cup\lbrace\alpha_{r,p}\,\vert\,r\in\ZZ\rbrace$. Certainly a unique (up to scale) non-zero such map exists because the fusion rule
\begin{equation*}
 \cF_\lambda\tens\cF_{\alpha_0-\lambda} \cong\bigoplus_{s\,\text{odd}} \cP_{1,s}
\end{equation*}
of Corollary \ref{cor:Flambda_Flambda_prime} and the structure of $\cP_{1,1}$ as an $\cM(p)$-module shows that $\cF_\lambda\tens\cF_{\alpha_0-\lambda}$ contains a unique submodule isomorphic to $\cM_{1,1}$. (Corollary \ref{cor:Flambda_Flambda_prime} also holds for $\lambda=\alpha_{r,p}$, $r\in\ZZ$, by the $r'=2-r$, $s=s'=p$ case of the fusion rules in \cite[Theorem 5.2.1(1)]{CMY-singlet}.) However, we would like to choose $i_\lambda$ in such a way that its dependence on $\lambda$ is not too arbitrary. To choose a suitable $i_\lambda$, note first that
\begin{equation*}
L(0)(\cP_{1,1})_{[0]}\subseteq\mathrm{Soc}(\cP_{1,1})_{[0]}\cong(\cM_{1,1})_{[0]}=\CC\vac,
\end{equation*}
so we might at first attempt to define $i_\lambda$ such that $i_\lambda(\vac)$ is the coefficient of $x^{-2h_\lambda}$ in $L(0)\cY_\lambda(v_\lambda,x)v_{\alpha_0-\lambda}$ (since this coefficient is a vector in $(\cF_\lambda\tens\cF_{\alpha_0-\lambda})_{[0]}$).
But this will not work because $(\cP_{1,s})_{[0]}$ might be non-zero for some $s\neq 1$, so that the coefficient of $x^{-2h_\lambda}$ in $\cY_\lambda(v_\lambda,x)v_{\alpha_0-\lambda}$ might involve a contribution from such $\cP_{1,s}$. 
The next lemma, whose proof uses the higher-level Zhu algebras $A_N(\cM(p))$ of \cite{DLM}, provides a way to filter out such unwanted contributions; recall that for $v\in\cM(p)$, $o(v)$ is the component of an $\cM(p)$-module vertex operator that preserves conformal weights:
\begin{lem}
 There exists $v\in \cM(p)$, independent of $\lambda$, such that for all $\lambda\in(\CC\setminus L^\circ)\cup\lbrace\alpha_{r,p}\,\vert\,r\in\ZZ\rbrace$,
 \begin{equation*}
 o(v)\cdot\mathrm{Res}_x\,x^{2h_\lambda-1}\cY_\lambda(v_\lambda,x)v_{\alpha_0-\lambda}\in(\cF_\lambda\tens\cF_{\alpha_0-\lambda})_{[0]}
 \end{equation*}
 generates the unique submodule of $\cF_\lambda\tens\cF_{\alpha_0-\lambda}$ isomorphic to $\cM_{1,1}$.
\end{lem}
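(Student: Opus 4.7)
The plan is to exploit the decomposition
\begin{equation*}
\cF_\lambda\tens\cF_{\alpha_0-\lambda}\cong\bigoplus_{s\text{ odd}}\cP_{1,s}
\end{equation*}
from Corollary \ref{cor:Flambda_Flambda_prime} (extended to $\lambda=\alpha_{r,p}$ via the remark just after it) and to pick $v$ so that $o(v)$ sends $w_\lambda:=\mathrm{Res}_x\,x^{2h_\lambda-1}\cY_\lambda(v_\lambda,x)v_{\alpha_0-\lambda}$ to a nonzero element of $\mathrm{Soc}(\cP_{1,1})\cong\cM_{1,1}$, the unique submodule of the tensor product isomorphic to $\cM_{1,1}$.

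The main structural witness is the Heisenberg intertwining operator $\cY^{\mathcal{H}}$ of type $\binom{\cF_{\alpha_0}}{\cF_\lambda\,\cF_{\alpha_0-\lambda}}$: since $\cM(p)\subseteq\mathcal{H}$ it is automatically an $\cM(p)$-intertwining operator, so the universal property of $\tens$ yields an $\cM(p)$-module map $F\colon\cF_\lambda\tens\cF_{\alpha_0-\lambda}\rightarrow\cF_{\alpha_0}$ with $F\circ\cY_\lambda=\cY^{\mathcal{H}}$. The standard lowest-order expansion $\cY^{\mathcal{H}}(v_\lambda,x)v_{\alpha_0-\lambda}=x^{-2h_\lambda}(v_{\alpha_0}+O(x))$ gives $F(w_\lambda)=v_{\alpha_0}\neq 0$. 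On the other hand, using the nonsplit extension $0\to\cM_{0,p-1}\to\cF_{\alpha_0}\to\cM_{1,1}\to 0$ one checks $\hom_{\cM(p)}(\cM_{1,1},\cF_{\alpha_0})=0$, and comparing simple quotients shows $\hom_{\cM(p)}(\cP_{1,s},\cF_{\alpha_0})=0$ for every odd $s>1$ (since the top $\cM_{1,s}$ of $\cP_{1,s}$ does not occur as a composition factor of $\cF_{\alpha_0}$). Hence $F$ vanishes on $\mathrm{Soc}(\cP_{1,1})$ and on every $\cP_{1,s}$-summand with $s>1$, so $F(w_\lambda)\neq 0$ forces the $\cP_{1,1}$-component of $w_\lambda$ to have a nontrivial lift $\tilde\vac$ of the vacuum of the top $\cM_{1,1}$-quotient.

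Because the middle layer of $\cP_{1,1}$ begins at weight $h_{2,p-1}=1$, the space $(\cP_{1,1})_{[0]}$ is two-dimensional with basis $\{\vac_{\mathrm{soc}},\tilde\vac\}$, and some $o(v_0)$ with $v_0\in\cM(p)$ must send $\tilde\vac$ to a nonzero multiple of $\vac_{\mathrm{soc}}$: otherwise $\tilde\vac$ would generate a simple $\cM(p)$-submodule isomorphic to $\cM_{1,1}$ not contained in the (simple) socle, a contradiction. It then remains to refine $v_0$ to $v\in\cM(p)$ whose weight-preserving action also annihilates every $(\cP_{1,s})_{[0]}$ with odd $s>1$. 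Using $h_{1,s}=-k(p-k-1)/p$ for $s=2k+1\in\{3,\ldots,p-1\}$ and $h_{1,p}=-(p-1)^2/(4p)$, one verifies that all these subspaces vanish when $p$ is prime, in which case $v=\omega$ suffices directly; in the remaining composite cases, where $p\mid k(k+1)$ for some $1\leq k\leq (p-1)/2$, one refines $v$ using the block decomposition of a sufficiently high-level Zhu algebra $A_N(\cM(p))$, which separates the pairwise non-isomorphic simple socles $\cM_{1,s}$. Constructing this refined $v$ uniformly in $\lambda$—uniformity being automatic since $\cP_{1,s}$ and the Zhu-algebra data are $\lambda$-independent—is the main obstacle; once it is done the Heisenberg witness $F$ together with the nondegeneracy of $o(v_0)$ on $\tilde\vac$ yield $o(v)w_\lambda\in\mathrm{Soc}(\cP_{1,1})\setminus\{0\}$, as required.
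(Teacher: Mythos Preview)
Your overall strategy matches the paper's: use the decomposition $\cF_\lambda\tens\cF_{\alpha_0-\lambda}\cong\bigoplus_{s\text{ odd}}\cP_{1,s}$, show the residue $w_\lambda$ has nontrivial image in the top of $\cP_{1,1}$, and then find $v$ so that $o(v)$ pushes this into the socle while killing the $s>1$ pieces. Your non-vanishing witness is different but valid: you map to $\cF_{\alpha_0}$ via a Heisenberg intertwining operator, whereas the paper maps to $\cM_{1,1}$ via the already-constructed evaluation $e_{\alpha_0-\lambda}$ (using \eqref{eqn:scaled_eval}). Both detect the nontrivial top-component of $w_\lambda$ in $\cP_{1,1}$.

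There are two gaps. First, the reason you give in step~4 is not right: if $\CC\tilde\vac$ were an $A(\cM(p))$-submodule, the $\cM(p)$-submodule it generates would be a quotient of $\cG_{1,1}$, hence would contain $\cM_{1,1}$ only once, and would therefore be a \emph{complement} to the socle rather than a second simple submodule; the contradiction is with indecomposability of $\cP_{1,1}$. Equivalently and more directly, $\cP_{1,1}$ is logarithmic, so $L(0)$ has a rank-$2$ Jordan block on $(\cP_{1,1})_{[0]}$ and one may simply take $v_0=\omega$.

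Second, and this is the substantive gap, your ``refinement'' for composite $p$ is only a sketch. Invoking a block decomposition of $A_N(\cM(p))$ separating $(\cP_{1,1})_{[0]}$ from the other $(\cP_{1,s})_{[0]}$ is not justified: nothing rules out nontrivial $A_N$-extensions among the simples $(\cM_{1,s'})_{[0]}$, $(\cM_{0,p-s'})_{[0]}$, $(\cM_{2,p-s'})_{[0]}$ appearing there. The paper's move avoids this problem entirely: first apply $L(0)=o(\omega)$, which sends every $(\cP_{1,s})_{[0]}$ into its socle piece $(\cM_{1,s})_{[0]}$. Now one is working inside the \emph{semisimple} $A_N$-module $\bigoplus_{s\text{ odd}}(\cM_{1,s})_{[0]}$, with pairwise non-isomorphic summands, and the Jacobson Density Theorem immediately produces $[u]\in A_N(\cM(p))$ acting as the projector onto the $s=1$ piece. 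One then takes $[v]=[u][\omega]$. The two-step factoring through the semisimple socle layer is what makes the construction go through uniformly; your proposal should incorporate it.
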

\begin{proof}
 First, $(\cF_\lambda\tens\cF_{\alpha_0-\lambda})_{[0]}$ is an $A_N(\cM(p))$-module for $N=\left\lfloor\frac{(p-1)^2}{4p}\right\rfloor$ since $h_{1,p}=-\frac{(p-1)^2}{4p}$ is the minimum of all conformal weights $h_{1,s}$. By Corollary \ref{cor:Flambda_Flambda_prime}, there is an $A_N(\cM(p))$-module isomorphism
 \begin{equation*}
  f_\lambda: (\cF_\lambda\tens\cF_{\alpha_0-\lambda})_{[0]}\longrightarrow\bigoplus_{s\,\text{odd}} (\cP_{1,s})_{[0]}.
 \end{equation*}
Then since
\begin{equation*}
 L(0)(\cP_{1,s})_{[0]}\subseteq\mathrm{Soc}(\cP_{1,s})_{[0]}\cong(\cM_{1,s})_{[0]},
\end{equation*}
for each $s$, we have
\begin{equation*}
 L(0)\cdot\mathrm{Res}_x\,x^{2h_\lambda-1}\cY_\lambda(v_\lambda,x)v_{\alpha_0-\lambda}\in S
\end{equation*}
where $S$ is an $A_N(\cM(p))$-submodule of $(\cF_\lambda\tens\cF_{\alpha_0-\lambda})_{[0]}$ isomorphic to $\bigoplus_{s\,\text{odd}}(\cM_{1,s})_{[0]}$.

As each $\cM_{1,s}$ is a simple $\cM(p)$-module, $S$ is a semisimple $A_N(\cM(p))$-module. Moreover, the distinct non-zero $(\cM_{1,s})_{[0]}$ are non-isomorphic $A_N(\cM(p))$-modules: If $(\cM_{1,s})_{[0]}\cong(\cM_{1,s'})_{[0]}$, then $\cM_{1,s}$ and $\cM_{1,s'}$ are both simple quotients of the generalized Verma $\cM(p)$-module $G_N((\cM_{1,s})_{[0]})$. So recalling from Subsection \ref{subsec:VOAs_and_intw_ops} that $G_N((\cM_{1,s})_{[0]})$ has a unique simple quotient, $\cM_{1,s}\cong\cM_{1,s'}$ as desired. Now by the Jacobson Density Theorem, there exists $[u]\in A_N(\cM(p))$ (independent of $\lambda$) such that $o(u)$ acts on $\bigoplus_{s\,\text{odd}} (\cM_{1,s})_{[0]}$ by $\bigoplus_{s\,\text{odd}} \delta_{s,1}\Id_{(\cM_{1,s})_{[0]}}$. So taking $v\in\cM(p)$ such that $[v]=[u][\omega]$ in $A_N(\cM(p))$,
\begin{align*}
 o(v)\cdot\mathrm{Res}_x\,x^{2h_\lambda-1}\cY_\lambda(v_\lambda,x)v_{\alpha_0-\lambda} & = o(u)L(0)\cdot\mathrm{Res}_x\,x^{2h_\lambda-1}\cY_\lambda(v_\lambda,x)v_{\alpha_0-\lambda}\nonumber\\
 & = (f_\lambda^{-1}\circ o(u)\circ f_\lambda)\left(L(0)\cdot\mathrm{Res}_x\,x^{2h_\lambda-1}\cY_\lambda(v_\lambda,x)v_{\alpha_0-\lambda}\right)\nonumber\\
 & =(f_\lambda^{-1}\circ p_1\circ f_\lambda)\left(L(0)\cdot\mathrm{Res}_x\,x^{2h_\lambda-1}\cY_\lambda(v_\lambda,x)v_{\alpha_0-\lambda}\right)
\end{align*}
where $p_1:\bigoplus_{s\,\text{odd}}\cP_{1,s}\rightarrow\cP_{1,1}$ is the $\cM(p)$-module projection. Consequently,
\begin{equation*}
 o(v)\cdot\mathrm{Res}_x\,x^{2h_\lambda-1}\cY_\lambda(v_\lambda,x)v_{\alpha_0-\lambda}
\end{equation*}
is a vector of conformal weight $0$ in the $\cM(p)$-submodule of $\cF_\lambda\tens\cF_{\alpha_0-\lambda}$ isomorphic to $\cM_{1,1}$.

We still need to show that $o(v)\cdot\mathrm{Res}_x\,x^{2h_\lambda-1}\cY_\lambda(v_\lambda,x)v_{\alpha_0-\lambda}\neq 0$. To do so, observe that
\begin{equation*}
 e_{\alpha_0-\lambda}\left(\mathrm{Res}_x\,x^{2h_\lambda-1}\cY_\lambda(v_\lambda,x)v_{\alpha_0-\lambda}\right) =\vac\neq 0
\end{equation*}
by \eqref{eqn:scaled_eval}. Thus because $f_\lambda^{-1}((\cP_{1,s})_{[0]})\subseteq\ker e_{\alpha_0-\lambda}$ for $s>1$, we must have
\begin{equation*}
 f_\lambda\left(\mathrm{Res}_x\,x^{2h_\lambda-1}\cY_\lambda(v_\lambda,x)v_{\alpha_0-\lambda}\right)\notin\ker p_1.
\end{equation*}
Moreover, $(p_1\circ f_\lambda)\left(\mathrm{Res}_x\,x^{2h_\lambda-1}\cY_\lambda(v_\lambda,x)v_{\alpha_0-\lambda}\right)$ is not in the maximal proper submodule of $\cP_{1,1}$ because this submodule is contained in the kernel of any homomorphism to $\cP_{1,1}\rightarrow\cM_{1,1}$. Thus by the structure of $\cP_{1,1}$, 
\begin{equation*}
 L(0)(p_1\circ f_\lambda)\left(\mathrm{Res}_x\,x^{2h_\lambda-1}\cY_\lambda(v_\lambda,x)v_{\alpha_0-\lambda}\right)\neq 0,
\end{equation*}
and then
\begin{align*}
 o(v)\cdot\mathrm{Res}_x\,x^{2h_\lambda-1}\cY_\lambda(v_\lambda,x)v_{\alpha_0-\lambda} 
 & =(f_\lambda^{-1}\circ p_1\circ f_\lambda)\left(L(0)\cdot\mathrm{Res}_x\,x^{2h_\lambda-1}\cY_\lambda(v_\lambda,x)v_{\alpha_0-\lambda}\right)\nonumber\\
 & =f_\lambda^{-1}\left(L(0)(p_1\circ f_\lambda)(\mathrm{Res}_x\,x^{2h_\lambda-1}\cY_\lambda(v_\lambda,x)v_{\alpha_0-\lambda})\right)\neq 0
\end{align*}
as well.
\end{proof}

For $\lambda\in(\CC\setminus L^\circ)\cup\lbrace\alpha_{r,p}\,\vert\,r\in\ZZ\rbrace$, we now have a non-zero coevaluation candidate
\begin{align*}
 i_\lambda: \cM_{1,1} & \rightarrow \cF_\lambda\tens\cF_{\alpha_0-\lambda}\nonumber\\
 \vac & \mapsto o(v)\cdot\mathrm{Res}_x\, x^{2h_\lambda-1}\cY_\lambda(v_\lambda,x)v_{\alpha_0-\lambda},
\end{align*}
with $v\in\cM(p)$ as in the preceding lemma. We can rewrite the formula for $i_\lambda$ using the commutator formula \eqref{eqn:intw_op_comm}; for $j\in\NN$, we use $\pi_j$ to denote the projection from $\cM(p)$ to the conformal weight space $\cM(p)_{(j)}$:
\begin{align}\label{eqn:i_lambda_vac}
 i_\lambda(\vac) & = o(v)\cdot\mathrm{Res}_x\,x^{2h_\lambda-1}\cY_\lambda(v_\lambda,x)v_{\alpha_0-\lambda}\nonumber\\
 & =\mathrm{Res}_x\,x^{2h_\lambda-1}\bigg(\cY_\lambda(v_\lambda,x) o(v)v_{\alpha_0-\lambda}+\sum_{j\geq 0}\sum_{n\geq 0}\binom{j-1}{n} x^{j-n-1}\cY_\lambda(\pi_j(v)_n v_\lambda,x)v_{\alpha_0-\lambda} \bigg)\nonumber\\
 & =\mathrm{Res}_x\,x^{-1}\bigg(\cY_\lambda(x^{L(0)}v_\lambda,x) x^{L(0)}o(v)v_{\alpha_0-\lambda}\nonumber\\
 &\hspace{9em}+\sum_{j\geq 0}\sum_{n\geq 0}\binom{j-1}{n}\cY_\lambda(x^{L(0)}\pi_j(v)_n v_\lambda,x)x^{L(0)}v_{\alpha_0-\lambda} \bigg)\nonumber\\
 & =\sum_{j=1}^J\mathrm{Res}_x\,x^{L(0)-1}\cY_\lambda(u^{(j)}_{n_j}v_\lambda,1)o(v^{(j)})v_{\alpha_0-\lambda}
\end{align}
for suitable homogeneous $u^{(j)}, v^{(j)}\in\cM(p)$ and $n_j\in\ZZ$ (with $u^{(j)},v^{(j)}=\vac$ as appropriate), and the substitution $x\mapsto 1$ in $\cY_\lambda$ is accomplished using the branch of logarithm $\log 1=0$. Since the $u^{(j)}$, $v^{(j)}$, and $n_j$ are independent of $\lambda$, the right side of \eqref{eqn:i_lambda_vac} is defined for all $\lambda\in\CC$, though it might vanish if $\cF_\lambda$ is not simple.

\subsection{The rigidity argument}

In this subsection, we give the proof that all typical Fock modules, and indeed the entire categories $\cO_{\cM(p)}$ and $\cO_{\cM(p)}^T$, are rigid, modulo some complex analytic results that we will prove in the next subsections.

To show that $\cF_\lambda$ is rigid for $\lambda\in\CC\setminus L^\circ$, \cite[Corollary 4.2.2]{CMY3} shows that it is enough to prove that the rigidity composition $\mathfrak{R}_\lambda$ given by
\begin{align*}
 \cF_\lambda\xrightarrow{l_{\cF_\lambda}^{-1}}\cM_{1,1}\tens\cF_\lambda\xrightarrow{i_\lambda\tens\Id_{\cF_\lambda}} & (\cF_\lambda\tens\cF_{\alpha_0-\lambda})\tens\cF_\lambda\nonumber\\
 \xrightarrow{\cA_{\cF_\lambda,\cF_{\alpha_0-\lambda},\cF_\lambda}^{-1}} & \cF_\lambda\tens(\cF_{\alpha_0-\lambda}\tens\cF_\lambda)\xrightarrow{\Id_{\cF_\lambda}\tens e_\lambda} \cF_\lambda\tens\cM_{1,1}\xrightarrow{r_{\cF_\lambda}} \cF_\lambda
\end{align*}
is non-zero. In particular, it is sufficient to show that
\begin{equation}\label{eqn:rig_crit_1}
 \langle v_{\alpha_0-\lambda},\mathfrak{R}_\lambda(v_\lambda)\rangle =\langle v_{\alpha_0-\lambda}, \cY(i_\lambda(\vac),1)v_\lambda\rangle\neq 0,
\end{equation}
where
\begin{equation*}
 \cY=r_{\cF_\lambda}\circ(\Id_{\cF_\lambda}\tens e_\lambda)\circ\cA_{\cF_\lambda,\cF_{\alpha_0-\lambda},\cF_\lambda}^{-1}\circ\cY_\tens;
\end{equation*}
here $\cY_\tens$ is the tensor product intertwining operator of type $\binom{(\cF_{\lambda}\tens\cF_{\alpha_0-\lambda})\tens\cF_\lambda}{\cF_\lambda\tens\cF_{\alpha_0-\lambda}\,\,\cF_\lambda}$. By \eqref{eqn:i_lambda_vac}, $\langle v_{\alpha_0-\lambda},\mathfrak{R}_\lambda(v_\lambda)\rangle$ is the constant term in the formal series
\begin{equation}\label{eqn:i_lambda_power_series}
\varphi(\lambda,x)=\sum_{j=1}^J \langle v_{\alpha_0-\lambda},\cY(x^{L(0)}\cY_\lambda(u^{(j)}_{n_j}v_\lambda, 1)o(v^{(j)})v_{\alpha_0-\lambda},1)v_\lambda\rangle.
\end{equation}
By convergence of iterates of intertwining operators among $C_1$-cofinite $\cM(p)$-modules (see \cite{Hu-diff-eqn} or \cite[Section 11.2]{HLZ7}), we can substitute $x\mapsto\frac{1-z}{z}$ in this formal series (using any choice of branch of logarithm) and get an absolutely convergent series for $ \vert z\vert>\vert 1-z\vert>0$.

We always use $\log$ to denote the principal branch of logarithm on $\CC\setminus(-\infty,0]$:
\begin{equation*}
 \log\zeta =\ln\vert\zeta\vert+i\arg\zeta
\end{equation*}
where $-\pi<\arg\zeta\leq\pi$. For $\zeta=\frac{1-z}{z}$, note that $\log(\frac{1-z}{z})$ defines a single-valued analytic function on $\CC\setminus((-\infty,0]\cup[1,\infty))$, and that this function agrees with $\log(1-z)-\log z$ since both functions obviously agree on the real interval $(0,1)$. Thus substituting $x\mapsto e^{\log(\frac{1-z}{z})}$ in \eqref{eqn:i_lambda_power_series} and using the definitions of $\cY$ and of the unit and associativity isomorphisms in $\cO_{\cM(p)}$ (see \cite{HLZ8} or the exposition in \cite[Section 3.3]{CKM-exts}), we get
\begin{align}\label{eqn:iterate_equals_product}
&\sum_{j=1}^J \left\langle v_{\alpha_0-\lambda},\cY(e^{\log(\frac{1-z}{z})L(0)}\cY_\lambda(u^{(j)}_{n_j}v_\lambda, 1)o(v^{(j)})v_{\alpha_0-\lambda},1)v_\lambda\right\rangle\nonumber\\
& =\sum_{j=1}^J (1-z)^{2h_\lambda+\mathrm{wt}\,u^{(j)}-n_j-1}\left\langle v_{\alpha_0-\lambda}, \cY(e^{-(\log z)L(0)}\cY_\lambda(u^{(j)}_{n_j}v_\lambda,e^{\log(1-z)})o(v^{(j)})v_{\alpha_0-\lambda},1)v_\lambda\right\rangle\nonumber\\
& =\sum_{j=1}^J (1-z)^{2h_\lambda+\mathrm{wt}\,u^{(j)}-n_j-1}\cdot\nonumber\\
&\hspace{4em}\cdot\left\langle e^{-(\log z)L(0)}v_{\alpha_0-\lambda}, \cY(\cY_\lambda(u^{(j)}_{n_j}v_\lambda,e^{\log(1-z)})o(v^{(j)})v_{\alpha_0-\lambda},e^{\log z})e^{(\log z)L(0)}v_\lambda\right\rangle\nonumber\\
& = \sum_{j=1}^J (1-z)^{2h_\lambda+\mathrm{wt}\,u^{(j)}-n_j-1}\left\langle v_{\alpha_0-\lambda}, \Omega_0(Y_{\cF_\lambda})(u^{(j)}_{n_j} v_\lambda, 1)\cE_\lambda(o(v^{(j)})v_{\alpha_0-\lambda},e^{\log z})v_\lambda\right\rangle
\end{align}
for $z$ such that both sides converge, that is, $1>\vert z\vert>\vert 1-z\vert>0$. For such $z$, the factors $(1-z)^{2h_\lambda+\mathrm{wt}\,u^{(j)}-n_j-1}$ agree with their binomial expansions as power series in $z$. Thus the right side of \eqref{eqn:iterate_equals_product} is a series in powers of $z$ which converges absolutely to a multivalued analytic function on the punctured disk $B_1(0)\setminus\lbrace 0\rbrace$ of radius $1$ centered at $0$, or to a single-valued analytic function on the simply-connected open set $B_1(0)\setminus(-1,0]$.

To prove \eqref{eqn:rig_crit_1} and thus show $\cF_\lambda$ is rigid, we will show that $\langle v_{\alpha_0-\lambda},\mathfrak{R}_\lambda(v_\lambda)\rangle$ is an analytic function of $\lambda$ which does not vanish identically, and thus is generically non-zero. For this, we need to enhance the analyticity in $z$ of the right side of \eqref{eqn:iterate_equals_product} to analyticity in both $\lambda$ and $z$; this is the content of the following theorem, which we will prove in the next subsection:
\begin{thm}\label{thm:prod_analytic_modified}
 For any homogeneous $u,v\in\cM(p)$, $n\in\ZZ$, and analytic function $r(\lambda)$ on $\CC$, there is a finite set $S\subseteq\CC$ such that the series
 \begin{equation}\label{eqn:intw_op_prod}
(1-z)^{r(\lambda)}\big\langle v_{\alpha_0-\lambda}, \Omega_0(Y_{\cF_\lambda})(u_n v_\lambda,1)\cE_\lambda(o(v)v_{\alpha_0-\lambda},e^{\log z})v_\lambda\big\rangle
 \end{equation}
converges absolutely to a function analytic in both $\lambda$ and $z$ on $(\CC\setminus S)\times(B_1(0)\setminus(-1,0])$. Moreover, this analytic function is the solution of a differential equation of the form
\begin{equation}\label{eqn:diff_eqn_form}
\frac{d^N\varphi}{dz^N} =\sum_{n=0}^{N-1} a_n(\lambda,z)\dfrac{d^n\varphi}{dz^n}
\end{equation}
whose coefficient functions $a_n(\lambda,z)$ are analytic in $\lambda$ and $z$ on $(\CC\setminus S)\times(\CC\setminus\lbrace 0,1\rbrace)$.
\end{thm}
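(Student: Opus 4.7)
The proof implements Huang's derivation from \cite{Hu-diff-eqn} of regular-singular-point differential equations for four-point functions built from intertwining operators among $C_1$-cofinite modules, but performed generically in the Heisenberg weight $\lambda$. First I would recognize the series \eqref{eqn:intw_op_prod} as the value at $(z_1, z_2) = (1, z)$ of the four-point function
\begin{equation*}
F(\lambda; z_1, z_2) = (z_1 - z_2)^{r(\lambda)} z_1^{-r(\lambda)} \langle v_{\alpha_0-\lambda}, \cY_1(u_n v_\lambda, z_1) \cY_2(o(v) v_{\alpha_0-\lambda}, z_2) v_\lambda \rangle,
\end{equation*}
with $\cY_1 = \Omega_0(Y_{\cF_\lambda})$ and $\cY_2 = \cE_\lambda$ and intermediate module $\cM(p)$. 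The key to analytic dependence on $\lambda$ is to regard $\cF_\lambda$ as the specialization at a fixed value of a \emph{generic Fock module} defined over $\CC[\lambda]$, with underlying space $\CC[\lambda] \otimes_\CC S(h(-1), h(-2), \ldots)$ on which $h(0)$ acts by multiplication by the formal variable $\lambda$; the intertwining operators $\Omega_0(Y_{\cF_\lambda})$ and $\cE_\lambda$ arise as specializations of universal intertwining operators whose matrix coefficients (obtained by restricting explicit Heisenberg intertwining operators) are polynomial in $\lambda$.

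The next step is Huang's $C_1$-cofiniteness reduction, carried out uniformly in $\lambda$. Let $T$ be the module generated over $\CC(\lambda)[z_1^{\pm 1}, z_2^{\pm 1}, (z_1 - z_2)^{-1}]$ by expressions
\begin{equation*}
\langle v_{\alpha_0-\lambda}, \cY_1(a_1 v_\lambda, z_1) \cY_2(a_2 v_{\alpha_0-\lambda}, z_2) v_\lambda \rangle
\end{equation*}
as $a_1, a_2$ range over a fixed homogeneous basis of $\cM(p)$. Using the commutator formula \eqref{eqn:intw_op_comm} and iterate formula \eqref{eqn:intw_op_iterate} together with a basis of $C_1$-cofiniteness relations for the generic Fock module (whose existence uses that $\cF_\lambda$ is $C_1$-cofinite as an $\cM(p)$-module for every $\lambda$ by \cite[Theorem 13]{CMR}), every such expression reduces to a finite list of generators. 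Clearing the denominators of $\lambda$ that appear in the reduction isolates a finite set $S \subset \CC$ of exceptional values outside which the reduction is valid integrally; that is, after inverting a fixed polynomial vanishing exactly on $S$. Since all derivatives $(\partial/\partial z_2)^k F$ lie in $T$, a linear dependence among finitely many of them yields a differential equation; setting $z_1 = 1$ produces the required equation \eqref{eqn:diff_eqn_form}, with coefficients rational in $\lambda$ and in $z$, analytic on $(\CC \setminus S) \times (\CC \setminus \{0, 1\})$.

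Convergence of \eqref{eqn:intw_op_prod} for each fixed $\lambda$ in a punctured neighborhood of $z = 0$ is guaranteed by \cite{Hu-diff-eqn} and \cite[Section 11.2]{HLZ7}, and joint analyticity in $(\lambda, z)$ on $(\CC \setminus S) \times (B_1(0) \setminus (-1, 0])$ then follows from the standard theory of linear ODEs with regular singularities depending holomorphically on parameters: the solution of \eqref{eqn:diff_eqn_form} determined by a prescribed power-series behavior at the regular singular point $z = 0$ depends analytically on $\lambda$ wherever the coefficients and the indicial data do, and the indicial exponents are algebraic functions of $\lambda$. The main obstacle will be the uniform $C_1$-cofiniteness reduction of the second paragraph: one must verify that a single finite set of generators for $T$, chosen from the recursion produced by \eqref{eqn:intw_op_comm} and \eqref{eqn:intw_op_iterate}, works simultaneously on $\CC \setminus S$. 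This amounts to a generic-rank argument using that $T$ is finitely presented over the Noetherian ring $\CC[\lambda][z_1^{\pm 1}, z_2^{\pm 1}, (z_1 - z_2)^{-1}]$ and that its rank is constant on a Zariski-open subset of $\CC$ in $\lambda$; the finitely many bad values form the set $S$, whose explicit identification should be possible from the Zhu algebra relation \eqref{eqn:H0_square}, though only finiteness is needed for the theorem.
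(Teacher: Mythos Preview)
Your outline is essentially the paper's approach: work with generic Fock modules so that matrix coefficients are polynomial in $\lambda$, run Huang's $C_1$-cofiniteness argument over a Noetherian base to produce a differential equation with coefficients rational in $\lambda$, identify the finite bad set $S$ as the zeros of the denominators, and then invoke analytic dependence of ODE solutions on parameters. Two technical points are handled differently in the paper and deserve attention. First, rather than working over $\CC(\lambda)[z_1^{\pm1},z_2^{\pm1},(z_1-z_2)^{-1}]$ and separately checking that $z=0$ is a regular singular point, the paper works from the start over $R=\CC[z,(1-z)^{-1}]$ (no $z^{-1}$) and builds extra powers of $z$ into the $\mathcal{B}$- and $\mathcal{C}$-type relations so that the output is directly of the form $z^N\varphi^{(N)}=\sum a_n(\lambda,z)z^n\varphi^{(n)}$ with $a_n$ analytic at $z=0$; your version would need an additional Fuchs-type argument here. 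Second, your appeal to ``indicial exponents are algebraic functions of $\lambda$'' is too weak in general for the joint-analyticity step: the paper instead proves a separate lemma (Proposition~\ref{prop:poly_coeff}) that the series has the form $\sum_{m\geq0}q_m(\lambda)z^{-2h_\lambda+m}$ with $q_m$ \emph{polynomial} in $\lambda$, so the single exponent $-2h_\lambda$ and all coefficients are globally analytic, and Theorem~\ref{thm:diff_eqn_thm_1} applies directly. With these two refinements your sketch becomes the paper's proof.
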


Recall that $\langle v_{\alpha_0-\lambda},\mathfrak{R}_\lambda(v_\lambda)\rangle$ is the constant term in the formal series $\varphi(\lambda, x)$ of \eqref{eqn:i_lambda_power_series}. Substituting $x\mapsto\zeta$ using the principal branch of logarithm $\log\zeta$ yields a function $\varphi(\lambda,\zeta)$ which is analytic in $\zeta$ for $\zeta\in B_1(0)\setminus(-1,0]$. Moreover, the calculation \eqref{eqn:iterate_equals_product} shows that
\begin{align*}
 \varphi\left(\lambda,\frac{1-z}{z}\right)&=\sum_{j=1}^J (1-z)^{2h_\lambda+\mathrm{wt}\,u^{(j)}-n_j-1}\cdot\nonumber\\
 &\hspace{6em}\cdot\big\langle v_{\alpha_0-\lambda}, \Omega_0(Y_{\cF_\lambda})(u^{(j)}_{n_j} v_\lambda, 1)\cE_\lambda(o(v^{(j)})v_{\alpha_0-\lambda},e^{\log z})v_\lambda\big\rangle
\end{align*}
for $1>\vert z\vert>\vert 1-z\vert>0$. For $1\leq j\leq J$, let $S_j\subseteq\CC$ be the finite set of Theorem \ref{thm:prod_analytic_modified} corresponding to $u^{(j)}, v^{(j)}\in\cM(p)$, $n_j\in\ZZ$, and the analytic function $2h_\lambda+\mathrm{wt}\,u^{(j)}-n_j-1$.

 Now applying Theorem \ref{thm:prod_analytic_modified} and the variable change $z=(1+\zeta)^{-1}$, $\frac{d}{dz}=-(1+\zeta)^2\frac{d}{d\zeta}$, we see that $\varphi(\lambda,\zeta)$ is analytic in both $\lambda$ and $\zeta$ for $\lambda\in\CC\setminus S$, where $S=\cup_{j=1}^J S_j$, and for $\zeta$ such that $\vert 1+\zeta\vert>1>\vert\zeta\vert>0$ (a non-empty simply-connected open subset of $B_1(0)\setminus(-1,0]$ that we will call $V_0$). Moreover, the analytic function $\varphi(\lambda,\zeta)$ on $(\CC\setminus S)\times V_0$ is a finite sum of solutions to differential equations of the form
\begin{equation}\label{eqn:diff_eqn_zeta}
\frac{d^N\varphi}{d\zeta^N} =\sum_{n=0}^{N-1} b_n(\lambda,\zeta)\dfrac{d^n\varphi}{d\zeta^n}
\end{equation}
whose coefficient functions $b_n(\lambda,\zeta)$ are analytic for $\lambda\in\CC\setminus S$ and $\zeta\in\CC\setminus\lbrace -1,0\rbrace$ (when changing variables from $z$ to $\zeta$ in the differential equation \eqref{eqn:diff_eqn_form}, the singularities at $z=0,1,\infty$ become singularities at $\zeta=\infty,0,-1$, respectively).

Since each summand of $\varphi(\lambda,\zeta)$ is analytic in $\zeta$ for all $\zeta\in B_1(0)\setminus(-1,0]$ and also solves a differential equation with analytic coefficients for $\zeta$ in the non-empty open subset $V_0\subseteq B_1(0)\setminus(-1,0]$, each summand of $\varphi(\lambda,\zeta)$ is actually a solution to the differential equation on the entire connected set $B_1(0)\setminus(-1,0]$, for all $\lambda\in\CC\setminus S$. It is not immediately evident that $\varphi(\lambda,\zeta)$ is also analytic in $\lambda$ at any $\zeta\in B_1(0)\setminus(-1,0]$ (outside of $V_0$), but this is proved as part of the proof of the following theorem:
\begin{thm}\label{thm:diff_eqn_thm_2}
 Suppose $\varphi(\lambda,\zeta)=\sum_{m=1}^M\sum_{k=0}^K f_{m,k}(\lambda,\zeta) e^{h_m\log\zeta} (\log\zeta)^k$ is a series solution to a differential equation of the form \eqref{eqn:diff_eqn_zeta} such that:
\begin{itemize}
 \item The $h_m\in\CC$ for $1\leq m\leq M$ are pairwise non-congruent mod $\ZZ$.
 \item For each $m$ and $k$, $f_{m,k}(\lambda,\zeta)=\sum_{n\in\ZZ} f_{m,k,n}(\lambda)\,\zeta^n$ is a Laurent series in $\zeta$ whose coefficients are functions of $\lambda$ defined on a non-empty open subset $U\subseteq\CC$.
 \item The coefficient functions $b_n(\lambda,\zeta)$ in \eqref{eqn:diff_eqn_zeta} are analytic in both $\lambda$ and $\zeta$ on the open set $U\times(B_1(0)\setminus\lbrace 0\rbrace)$.
 \item The series $\varphi(\lambda,\zeta)$ converges absolutely on $U\times(B_1(0)\setminus(-1,0])$, and thus for any $\lambda\in U$, $\varphi(\lambda,\zeta)$ is analytic in $\zeta$ on $B_1(0)\setminus(-1,0]$.
 \item For some non-empty open subset $V_0\subseteq B_1(0)\setminus(-1,0]$, $\varphi(\lambda,\zeta)$ is analytic in both $\lambda$ and $\zeta$ on $U\times V_0$.
\end{itemize}
Then for $1\leq m\leq M$, $0\leq k\leq K$, and all $n\in\ZZ$, the function $f_{m,k,n}(\lambda)$ is analytic on $U$.
\end{thm}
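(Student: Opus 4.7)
The plan is to (i) upgrade joint analyticity of $\varphi$ to $U\times(B_1(0)\setminus(-1,0])$ via the ODE, (ii) use monodromy around $\zeta=0$ together with Hermite interpolation to express each $f_{m,k}(\lambda,\zeta)$ as a fixed polynomial combination of $\varphi$ and its analytic continuations, and (iii) recover each coefficient $f_{m,k,n}(\lambda)$ by a Cauchy contour integral.

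For (i), fix any $\zeta_0\in V_0$. The data $\partial_\zeta^j\varphi(\lambda,\zeta_0)$ for $0\leq j\leq N-1$ are analytic in $\lambda$ on $U$ by the hypothesized joint analyticity on $V_0$. Since the coefficients $b_n(\lambda,\zeta)$ of the ODE are jointly analytic on $U\times(B_1(0)\setminus\{0\})$, the standard theorem on analytic dependence of solutions of linear ODEs on parameters yields a unique solution jointly analytic on $U\times W$ for any simply-connected open $W\subseteq B_1(0)\setminus\{0\}$ containing $\zeta_0$. Taking $W=B_1(0)\setminus(-1,0]$, uniqueness of analytic continuation in $\zeta$ for each fixed $\lambda$ forces this solution to equal $\varphi$, so $\varphi$ is jointly analytic on $U\times(B_1(0)\setminus(-1,0])$. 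The same reasoning applied to successive analytic continuations of $\varphi$ around $\zeta=0$ shows that each iterate $T^j\varphi$ of the monodromy operator $T\colon\log\zeta\mapsto\log\zeta+2\pi i$ is jointly analytic there as well.

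For (ii), the eigenvalues $e^{2\pi ih_m}$ of $T$ on the span of $\{e^{h_m\log\zeta}(\log\zeta)^k\}_{m,k}$ are pairwise distinct by the non-congruence hypothesis, and the nilpotent part of $T$ on the generalized eigenspace for $e^{2\pi ih_m}$ has order at most $K+1$. Hermite interpolation therefore produces polynomials $P_m(X)\in\CC[X]$, with coefficients depending only on the fixed data $h_1,\ldots,h_M,K$, such that $P_m(T)\varphi=\varphi_m:=\sum_k f_{m,k}(\lambda,\zeta)e^{h_m\log\zeta}(\log\zeta)^k$. A direct calculation, using that $(T-e^{2\pi ih_m})$ drops the $\log\zeta$-degree of $\varphi_m$ by exactly one, gives
\begin{equation*}
(T-e^{2\pi ih_m})^K\varphi_m=K!\,(2\pi i)^K\,e^{2\pi iKh_m}\,f_{m,K}(\lambda,\zeta)\,e^{h_m\log\zeta},
\end{equation*}
isolating $f_{m,K}$ as a jointly analytic function on $U\times(B_1(0)\setminus(-1,0])$; downward induction on $k$ yields the same for every $f_{m,k}$. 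Running the same argument on a complementary simply-connected slit subdomain (e.g.\ $B_1(0)\setminus[0,1)$, with the corresponding branch of $\log$) produces joint analyticity on that slit as well, and because each $f_{m,k}$ is single-valued around $\zeta=0$ (being a Laurent series in $\zeta$) the two agree on overlaps and glue to a jointly analytic function on $U\times(B_1(0)\setminus\{0\})$.

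For (iii), absolute convergence of $\varphi$ on $U\times(B_1(0)\setminus(-1,0])$ forces each Laurent series $\sum_n f_{m,k,n}(\lambda)\zeta^n$ to converge on the full annulus $0<|\zeta|<1$, so the Cauchy coefficient formula
\begin{equation*}
f_{m,k,n}(\lambda)=\frac{1}{2\pi i}\oint_{|\zeta|=r}f_{m,k}(\lambda,\zeta)\,\zeta^{-n-1}\,d\zeta,\qquad r\in(0,1),
\end{equation*}
presents $f_{m,k,n}$ as the integral of a jointly analytic integrand over a fixed compact contour, and analyticity of $f_{m,k,n}$ on $U$ follows from Morera's theorem after Fubini. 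The main obstacle is step (ii): one must exhibit each $f_{m,k}$ as a fixed linear combination of the $T^j\varphi$ with scalar coefficients, which in turn requires both the solvability of a Hermite interpolation problem (guaranteed by pairwise non-congruence of the $h_m$ modulo $\ZZ$) and the uniform bound $K+1$ on the order of the nilpotent part of $T$.
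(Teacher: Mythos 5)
Your proposal is correct, and it follows the same high-level strategy as the paper (upgrade to joint analyticity on a slit disk via the ODE, exploit the $\zeta=0$ monodromy to isolate the coefficient series $f_{m,k}$, then apply Cauchy's formula), but the key middle step is packaged genuinely differently. The paper's Lemma \ref{lem:coeff_analytic} works directly with the set $\cS(U,V)$ of admissible expansions and proceeds by a double induction: for $K=0$ it inverts a Vandermonde matrix built from $\psi, \psi^{(1)}, \dots, \psi^{(M-1)}$; for $K\geq 1$ it descends by one power of $\log\zeta$ at a time via explicit finite differences $\psi^{(1)} - e^{2\pi i h_{\til m}}\psi$ and an auxiliary induction on the largest $\til m$ with $g_{\til m,K}\neq 0$. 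You instead observe that the monodromy operator $T$ acts on $\mathrm{span}\{e^{h_m\log\zeta}(\log\zeta)^k\}$ with distinct eigenvalues $e^{2\pi i h_m}$ and nilpotency order $\leq K+1$, so Hermite interpolation produces scalar polynomials $P_m(X)$ (depending only on $h_1,\dots,h_M,K$) realizing the spectral projectors, after which $(T-e^{2\pi i h_m})^k$ strips off the top $\log\zeta$-degree. This is cleaner conceptually (no need to introduce $\cS(U,V)$), and the Vandermonde inversion and finite-difference computations in the paper are exactly the explicit form of your projectors; it is the same linear algebra in spectral rather than matrix form. One small presentational gap: in step (i) you invoke ``the standard theorem on analytic dependence'' to get joint analyticity on all of $U\times W$ at once, but that theorem is local in $\zeta$; to get the global statement you still need a compactness/path-covering continuation argument, which is precisely what the paper's Lemma \ref{lem:analytic_continuation} supplies. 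Also worth stating explicitly: your gluing of the two slit domains requires that the $f_{m,k}$, being Laurent series in $\zeta$, are single-valued around $\zeta=0$, so the two constructions agree on overlaps --- you mention this, but it is load-bearing since it is what lets the contour $|\zeta|=r$ in step (iii) wind fully around $0$. Your final step differs mildly from the paper's, which expands $f_{m,k}$ in a two-variable Laurent series about $(\lambda_0,0)$ and shows the negative-$\lambda$-power coefficients vanish via a double contour integral; your single-contour-plus-Morera version is equivalent and equally valid.
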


We defer the proof of this theorem to Subsection \ref{subsec:analytic_thm_proof}. We take $\varphi(\lambda,\zeta)$ in the theorem to be the individual summands of \eqref{eqn:i_lambda_power_series} (with $x\mapsto e^{\log\zeta}$). Then the $h_m\in\CC$ for $1\leq m\leq M$ are a maximal set of $h_{1,s}$, $s$ odd, that are pairwise non-congruent mod $\ZZ$ (since $\cF_\lambda\tens\cF_{\alpha_0-\lambda}\cong\bigoplus_{s\,\text{odd}}\cP_{1,s}$) and $K=1$ (since the nilpotent part of $L(0)$ squares to $0$ on any $\cP_{1,s}$). The differential equation of the theorem is that of Theorem \ref{thm:prod_analytic_modified} (with a change of variables from $z$ to $\zeta=\frac{1-z}{z}$), the open set $U$ is $\CC\setminus S$, and $V_0=\lbrace\zeta\in\CC\,\vert\,\vert1+\zeta\vert>1>\vert\zeta\vert>0\rbrace$. Thus taking $h_1=h_{1,1}=0$, Theorem \ref{thm:diff_eqn_thm_2} implies that
\begin{equation*}
 f_{1,0,0}(\lambda)=\langle v_{\alpha_0-\lambda},\mathfrak{R}_\lambda(v_\lambda)\rangle
\end{equation*}
is analytic on $U=\CC\setminus S$. (Although we explicitly defined $\mathfrak{R}_\lambda$ only for $\lambda$ such that $\cF_\lambda$ is simple, the function $\varphi(\lambda,\zeta)$, and thus also the coefficient function $f_{1,0,0}(\lambda)$ is defined for all $\lambda\in\CC$, as noted at the end of Subsection \ref{subsec:eval_and_coeval}.) We can now prove:
\begin{thm}\label{thm:F_lambda_rigid}
 For all $\lambda\in\CC$, the $\cM(p)$-module $\cF_\lambda$ is rigid.
\end{thm}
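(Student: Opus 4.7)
The plan is to combine the analytic setup of Section 5.2 with the typical--typical fusion rule from Section 4.2. For $\lambda$ such that $\cF_\lambda$ is simple, i.e., $\lambda \in (\CC \setminus L^\circ) \cup \{\alpha_{r,p} : r \in \ZZ\}$, Schur's lemma gives $\Endo_{\cM(p)}(\cF_\lambda) = \CC\cdot\Id$, so the rigidity composition $\mathfrak{R}_\lambda$ is automatically a scalar multiple of the identity. Pairing with $v_{\alpha_0-\lambda}$ identifies this scalar with $f_{1,0,0}(\lambda) = \langle v_{\alpha_0-\lambda}, \mathfrak{R}_\lambda(v_\lambda)\rangle$, so $\cF_\lambda$ is rigid with dual $\cF_{\alpha_0-\lambda}$ if and only if $f_{1,0,0}(\lambda) \ne 0$. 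As noted in the discussion preceding the theorem, Theorems \ref{thm:prod_analytic_modified} and \ref{thm:diff_eqn_thm_2} together show that $f_{1,0,0}$ is analytic on $\CC \setminus S$ for some finite set $S$.

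The key step is to exhibit at least one value of $\lambda$ where $f_{1,0,0}$ does not vanish. I will specialize to $\lambda = \alpha_{r,p}$ for any $r \in \ZZ$ with $\alpha_{r,p} \notin S$. Here $\cF_{\alpha_{r,p}} = \cM_{r,p}$ is simple with simple dual $\cF_{\alpha_0-\alpha_{r,p}} \cong \cM_{2-r,p}$, and $\cM_{r,p}$ is rigid by the results of \cite{CMY-singlet}. Rigidity and simplicity together force the one-dimensionality of both $\hom_{\cM(p)}(\cF_{\alpha_0-\alpha_{r,p}} \tens \cF_{\alpha_{r,p}}, \cM_{1,1})$ and $\hom_{\cM(p)}(\cM_{1,1}, \cF_{\alpha_{r,p}} \tens \cF_{\alpha_0-\alpha_{r,p}})$, so the non-zero maps $e_{\alpha_{r,p}}$ and $i_{\alpha_{r,p}}$ are each non-zero scalar multiples of some rigid pair; consequently $\mathfrak{R}_{\alpha_{r,p}}$ is a non-zero scalar times $\Id$, i.e., $f_{1,0,0}(\alpha_{r,p}) \ne 0$. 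Combining this non-vanishing with analyticity, the set $U := \{\lambda \in \CC \setminus S : f_{1,0,0}(\lambda) \ne 0\}$ is open and dense in $\CC$, so $\cF_\lambda$ is rigid for every $\lambda \in U$.

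To propagate rigidity to all $\lambda \in \CC \setminus L^\circ$, I will fix such a $\lambda$ and pick $\mu$ satisfying $\mu, \lambda - \mu \in U$ and $\mu, \lambda - \mu \notin L^\circ$; each of these conditions cuts out a set with dense complement to a countable union of affine subspaces or thin analytic varieties, so a suitable $\mu$ exists. Since $\lambda = \mu + (\lambda - \mu) \notin L^\circ$, Theorem \ref{thm:typ_typ_typ_fusion} gives
\[\cF_\mu \tens \cF_{\lambda - \mu} \cong \bigoplus_{\ell=0}^{p-1} \cF_{\lambda + \ell\alpha_-},\]
exhibiting $\cF_\lambda$ (the $\ell = 0$ summand) as a direct summand of the rigid module $\cF_\mu \tens \cF_{\lambda - \mu}$; because $\cO_{\cM(p)}$ is abelian and hence idempotent-complete, direct summands of rigid objects are rigid, so $\cF_\lambda$ is rigid. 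For the remaining case $\lambda = \alpha_{r,s}$ with $1 \le s \le p-1$, the module $\cF_{\alpha_{r,s}}$ is a length-$2$ extension of the rigid simples $\cM_{r,s}$ and $\cM_{r+1,p-s}$ (cf.\ \eqref{eqn:Frs_structure}), so its rigidity will follow from closure of rigid objects under extensions once all simple $\cM(p)$-modules have been shown rigid---this is exactly the mechanism of \cite[Theorem 4.4.1]{CMY-singlet} applied to $\cO_{\cM(p)}$, and feeds into Theorem \ref{thm:rig:O}.

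The main obstacle is the analytic input of Theorems \ref{thm:prod_analytic_modified} and \ref{thm:diff_eqn_thm_2}: once joint analyticity in $\lambda$ and $z$ of the relevant iterated intertwining operators and their regular-singular-point ODEs is established, the rigidity argument above is a comparatively short combination of Schur's lemma, a density argument, and the typical--typical fusion rule.
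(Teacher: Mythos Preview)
Your proposal is correct and follows essentially the same strategy as the paper: use the analyticity of $f_{1,0,0}(\lambda)$ on $\CC\setminus S$ together with non-vanishing at the rigid points $\alpha_{r,p}$ to get rigidity on a dense open set, then propagate via fusion rules to all typical $\lambda$, and finally cite \cite{CMY-singlet} for $\lambda\in L^\circ$.

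The only noteworthy difference is in the propagation step. The paper does it in two stages: first it uses the atypical--typical fusion rule (Theorem \ref{thm:Mrs_Flambda}) to transport rigidity from a ball $B_\varepsilon(\alpha_{r,p})$ to $B_\varepsilon(0)$, and then uses the typical--typical fusion rule to reach an arbitrary $\lambda\in\CC\setminus L^\circ$ by writing $\cF_\lambda$ as a summand of $\cF_{\lambda+\mu}\tens\cF_{-\mu}$ with $|\mu|<\varepsilon$. You instead go in one step, choosing $\mu$ with $\mu,\lambda-\mu\in U\setminus L^\circ$ by intersecting four open dense subsets of $\CC$ and invoking only Theorem \ref{thm:typ_typ_typ_fusion}. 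Your version is slightly more streamlined; the paper's two-stage argument is a bit more explicit about which $\mu$ to take but ultimately uses the same typical--typical fusion rule in its second stage. Both rely on the same categorical fact that direct summands of rigid objects in an idempotent-complete monoidal category are rigid.
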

\begin{proof}
 Since $\langle v_{\alpha_0-\lambda},\mathfrak{R}_\lambda(v_\lambda)\rangle$ is analytic on the connected open set $\CC\setminus S$, it is either identically $0$ or its zeros form a discrete subset of $\CC\setminus S$. We claim that $\langle v_{\alpha_0-\lambda},\mathfrak{R}_\lambda(v_\lambda)\rangle$ is not identically $0$: Indeed, it was shown in \cite{CMY-singlet} that $\cF_{\alpha_{r,p}}$ for $r\in\ZZ$ is rigid and therefore
 \begin{align*}
  &\mathrm{Hom}_{\cM(p)}(\cF_{\alpha_{2-r},p}\tens\cF_{\alpha_{r,p}},\cM_{1,1})\cong\mathrm{End}_{\cM(p)}(\cF_{\alpha_{r,p}})\cong\CC,\nonumber\\
  &\mathrm{Hom}_{\cM(p)}(\cM_{1,1},\cF_{\alpha_{r,p}}\tens\cF_{\alpha_{2-r,p}})\cong\mathrm{End}_{\cM(p)}(\cF_{\alpha_{r,p}})\cong\CC.
 \end{align*}
Thus because $e_{\alpha_{r,p}}$ and $i_{\alpha_{r,p}}$ are non-zero, the actual evaluation and coevaluation for $\cF_{\alpha_{r,p}}$ have to be non-zero multiples of $e_{\alpha_{r,p}}$ and $i_{\alpha_{r,p}}$, respectively. Consequently, $\mathrm{Id}_{\cF_{\alpha_{r,p}}}$ is a non-zero multiple of $\mathfrak{R}_{\alpha_{r,p}}$, so that $\mathfrak{R}_{\alpha_{r,p}}\neq 0$ for $r\in\ZZ$. Since $S$ is a finite set, $\alpha_{r,p}\notin S$ for infinitely many $r\in\ZZ$, and therefore $\langle v_{\alpha_0-\lambda},\mathfrak{R}_\lambda(v_\lambda)\rangle\neq 0$ for infinitely many $\lambda\in\CC\setminus S$. 

The above argument combined with the analyticity of $\langle v_{\alpha_0-\lambda},\mathfrak{R}_\lambda(v_\lambda)\rangle$ shows that there exists $r\in\ZZ$ and $\varepsilon>0$ such that for all $\lambda$ in the open ball $B_\varepsilon(\alpha_{r,p})$ of radius $\varepsilon$ around $\alpha_{r,p}$, $\cF_\lambda$ is rigid as an $\cM(p)$-module (and we may assume $\varepsilon$ is small enough so that $\cF_\lambda$ is simple for all $\lambda\in B_\varepsilon(\alpha_{r,p})$). Then for any $\lambda$ such that $0<\vert\lambda\vert<\varepsilon$, the tensor product $\cF_{\lambda+\alpha_{r,p}}\tens\cF_{\alpha_{2-r,p}}$ is also rigid and contains $\cF_{\lambda+\alpha_{r,p}+\alpha_{2-r,p}+(p-1)\alpha_-}=\cF_\lambda$ as a direct summand by Theorem \ref{thm:Mrs_Flambda}. Thus $\cF_\lambda$ is rigid for all $\lambda\in B_\varepsilon(0)$. 

Now consider any $\lambda\in\CC\setminus L^\circ$ and any $0<\delta<\varepsilon$. Since the zeros of $\langle v_{\alpha_0-\lambda},\mathfrak{R}_\lambda(v_\lambda)\rangle$ form a discrete set of $\CC\setminus S$, the circle $\lbrace\lambda +\mu\in\CC\,\vert\,\vert\mu\vert=\delta\rbrace$ can contain infinitely many such zeros only if it contains one of the (finitely many) elements of $S$. Thus there is some $\delta<\varepsilon$ and some $\mu$ with $\vert\mu\vert=\delta$ such that $\lambda+\mu\in\CC\setminus L^\circ$ and $\mathfrak{R}_{\lambda+\mu}\neq 0$. Thus $\cF_{\lambda+\mu}\tens\cF_{-\mu}$ is rigid and contains $\cF_\lambda$ as a direct summand, proving that $\cF_\lambda$ is rigid for any $\lambda\in\CC\setminus L^\circ$ (and rigidity of $\cF_\lambda$ for $\lambda\in L^\circ$ was proved in \cite{CMY-singlet}).
\end{proof}

Combined with the rigidity results of \cite{CMY-singlet}, the preceding theorem shows that all simple objects of the tensor categories $\cO_{\cM(p)}$ and $\cO_{\cM(p)}^T$ are rigid. As every object of $\cO_{\cM(p)}$ has finite length, \cite[Theorem 4.4.1]{CMY-singlet} shows that all objects of $\cO_{\cM(p)}$ are rigid:
\begin{thm}\label{thm:rig:O}
 The tensor category $\cO_{\cM(p)}$ is rigid and ribbon, with duals given by contragredient modules and ribbon twist $\theta=e^{2\pi i L(0)}$.
\end{thm}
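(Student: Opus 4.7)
The plan is to reduce the rigidity of every object of $\cO_{\cM(p)}$ to the rigidity of the simple modules, which has already been established in full. Since every object of $\cO_{\cM(p)}$ is a finite-length grading-restricted generalized $\cM(p)$-module by Theorem \ref{thm:C1_equals_fl}, and since the simple objects of $\cO_{\cM(p)}$ are precisely the atypical modules $\cM_{r,s}$ for $r\in\ZZ$, $1\leq s\leq p$ and the typical Fock modules $\cF_\lambda$ for $\lambda\in\CC\setminus L^\circ$, the atypical simples are rigid by \cite[Theorem 4.2.1]{CMY-singlet} while the typical simples are rigid by Theorem \ref{thm:F_lambda_rigid}. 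The extension from simple to finite-length is precisely the content of \cite[Theorem 4.4.1]{CMY-singlet}: in any braided tensor category of finite-length modules for a vertex operator algebra in which every simple module is rigid, every object is rigid.

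Next I would identify the dual of a simple $\cM(p)$-module with its contragredient. For the atypical modules this is already part of \cite[Theorem 4.2.1]{CMY-singlet}, while for the typical modules this is built into the construction: the rigidity argument of Theorem \ref{thm:F_lambda_rigid} uses evaluation and coevaluation morphisms $e_\lambda:\cF_{\alpha_0-\lambda}\tens\cF_\lambda\to\cM_{1,1}$ and $i_\lambda:\cM_{1,1}\to\cF_\lambda\tens\cF_{\alpha_0-\lambda}$ obtained from intertwining operator symmetries, so that once the rigidity composition is shown to be a non-zero scalar multiple of the identity, $\cF_\lambda^\vee \cong\cF_{\alpha_0-\lambda}\cong\cF_\lambda'$ automatically. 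For a general finite-length object $M$, the inductive construction in \cite[Theorem 4.4.1]{CMY-singlet} builds the dual $M^\vee$ as an extension whose composition factors are the contragredients of the composition factors of $M$; then one invokes uniqueness of duals up to isomorphism together with the general fact that contragredient defines a contravariant involution on $\cO_{\cM(p)}$ which agrees with the duality functor on simples, to conclude $M^\vee\cong M'$.

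Finally, once rigidity is established, the ribbon structure is essentially automatic: any braided tensor category of grading-restricted generalized $V$-modules has a canonical natural automorphism of the identity functor given by $\theta=e^{2\pi iL(0)}$ (which is well defined because conformal weights are bounded on any object of finite length), and this automorphism satisfies the balancing equation $\theta_{M\tens N}=(\theta_M\tens\theta_N)\circ\cR^2_{N,M}$ and is compatible with contragredients in the sense that $\theta_{M'}=(\theta_M)'$. These together with rigidity give the ribbon axiom $(\theta_M)^\vee=\theta_{M^\vee}$, so $\cO_{\cM(p)}$ is ribbon with twist $\theta$.

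The main obstacle has already been overcome in Theorem \ref{thm:F_lambda_rigid}; what remains here is essentially a bookkeeping argument assembling the simple-module rigidity results with \cite[Theorem 4.4.1]{CMY-singlet} and the standard facts about the natural twist $e^{2\pi iL(0)}$ on finite-length module categories. The only mildly non-trivial check is that the duality functor from the rigid structure coincides with the contragredient functor as endofunctors of $\cO_{\cM(p)}$, which follows from Frobenius reciprocity on hom spaces combined with exactness of both functors and their agreement on simples.
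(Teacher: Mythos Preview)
Your proposal is correct and follows essentially the same approach as the paper: reduce to rigidity of simple modules (atypical from \cite{CMY-singlet}, typical from Theorem \ref{thm:F_lambda_rigid}), then invoke \cite[Theorem 4.4.1]{CMY-singlet} to pass to all finite-length objects. The paper is actually terser than you are, stating only this reduction and leaving the identification of duals with contragredients and the ribbon twist $e^{2\pi i L(0)}$ as standard facts; your additional paragraphs on these points are accurate but not strictly needed.
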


\begin{thm}\label{thm:rig:OT}
 The tensor category $\cO_{\cM(p)}^T$ is rigid and ribbon.
\end{thm}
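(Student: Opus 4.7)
The plan is to deduce rigidity of $\cO_{\cM(p)}^T$ from Theorem \ref{thm:rig:O}. Since $\cO_{\cM(p)}^T$ is a full tensor subcategory of the rigid ribbon category $\cO_{\cM(p)}$ (by Theorem \ref{thm:OT_properties}) and contains the unit $\cM_{1,1}$, it suffices to show that the contragredient $M'$ of any object $M$ of $\cO_{\cM(p)}^T$ again lies in $\cO_{\cM(p)}^T$. The evaluation $M'\tens M \to \cM_{1,1}$ and coevaluation $\cM_{1,1} \to M \tens M'$ from $\cO_{\cM(p)}$ then automatically become morphisms of $\cO_{\cM(p)}^T$ (as $\cO_{\cM(p)}^T$ is closed under tensor products), so the rigidity diagrams restrict; the ribbon twist $\theta = e^{2\pi i L(0)}$ is inherited directly.

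For the key step I would first verify preservation on simples using Proposition \ref{prop:irred_mod_grading}, the identifications $\cF_\lambda' \cong \cF_{\alpha_0 - \lambda}$ and $\cM_{r,s}' \cong \cM_{2-r,s}$ from \eqref{eqn:Heis_contras} and \eqref{Mrs_contra}, together with the identities $\alpha_0 = (1-p)\alpha_- \in 2L^\circ$ and $\alpha_{r,s}+\alpha_{2-r,s} = (1-s)\alpha_- \in 2L^\circ$: these force contragredient to carry simples in $\cO_{\cM(p)}^t$ to simples in $\cO_{\cM(p)}^{-t}$. Next I would verify preservation on the indecomposable projectives of $\cO_{\cM(p)}^T$ listed in Theorem \ref{thm:intro_projective}: the typical modules $\cF_\lambda$ are covered by the simple case, and for $\cP_{r,s}$ with $1\le s\le p-1$, dualizing the Loewy diagram \eqref{eqn:Prs_Loewy_diag} via \eqref{Mrs_contra} yields a length-four indecomposable with socle $\cM_{2-r,s}$ whose composition factors and socle series match those of $\cP_{2-r,s}$; identifying $\cP_{r,s}' \cong \cP_{2-r,s}$ places it in $\cO_{\cM(p)}^T$.

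Given preservation on indecomposable projectives, an arbitrary $M$ in $\cO_{\cM(p)}^T$ admits a surjection $P \twoheadrightarrow M$ with $P$ a finite direct sum of indecomposable projectives of $\cO_{\cM(p)}^T$ (using Theorem \ref{thm:intro_projective} and finite length of $M$). Applying the exact contravariant contragredient functor produces an injection $M' \hookrightarrow P'$, and since $P' \in \cO_{\cM(p)}^T$ by the previous step, closure of $\cO_{\cM(p)}^T$ under subobjects (Theorem \ref{thm:OT_properties}(2)) forces $M' \in \cO_{\cM(p)}^T$. The main obstacle is the isomorphism $\cP_{r,s}' \cong \cP_{2-r,s}$, and specifically the prerequisite that $\cP_{r,s}'$ itself lies in $\cO_{\cM(p)}^T$, which is needed to invoke projectivity of $\cP_{2-r,s}$ for the lift $\cP_{2-r,s} \twoheadrightarrow \cP_{r,s}'$ (after which matching lengths yields the isomorphism). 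This preliminary membership can be established by computing the monodromy on $\cP_{r,s}'$ directly: relating $\cR^2_{\cM_{2,1}, \cP_{r,s}'}$ to $\cR^2_{\cM_{0,1}, \cP_{r,s}}$ through the ribbon duality (with $\cM_{0,1}$ the dual simple current of $\cM_{2,1}$) shows that it is a scalar whenever the corresponding monodromy on $\cP_{r,s}$ is, which it is since $\cP_{r,s} \in \cO_{\cM(p)}^T$.
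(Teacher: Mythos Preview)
Your argument is correct, but it takes a longer route than the paper's. The paper, like you, reduces to showing that $\cO_{\cM(p)}^T$ is closed under contragredients, but then handles this in two quick strokes: for $t\in L^\circ/2L^\circ$ the subcategory $\cO_{\cM(p)}^{0+2L^\circ}\oplus\cO_{\cM(p)}^{\alpha_-/2+2L^\circ}$ equals $\cC_{\cM(p)}^0$ by Proposition~\ref{prop:C0_and_OT}, and contragredient closure of $\cC_{\cM(p)}^0$ was already established in \cite[Corollary~4.4.3]{CMY-singlet}; for $t=\lambda+2L^\circ$ with $\lambda\notin L^\circ$, Corollary~\ref{cor:Olambda_structure} says every object is a finite direct sum of typical Fock modules, so contragredients are immediate from \eqref{eqn:Heis_contras} and Proposition~\ref{prop:irred_mod_grading}. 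In particular, your ``main obstacle'' dissolves: $\cP_{r,s}$ already lies in $\cC_{\cM(p)}^0$, whose contragredient closure is known, so no separate monodromy computation is needed to place $\cP_{r,s}'$ in $\cO_{\cM(p)}^T$. It is also worth noting that your proposed monodromy argument for $\cP_{r,s}'$---relating $\cR^2_{\cM_{2,1},M'}$ to $\cR^2_{\cM_{0,1},M}$ via rigidity of $\cO_{\cM(p)}$---applies equally well to \emph{any} $M$ in $\cO_{\cM(p)}^T$, which would make your projective-cover detour unnecessary. The paper's approach is shorter because it leverages the semisimplicity of the typical blocks and the prior work on $\cC_{\cM(p)}^0$; yours has the virtue of being uniform, but at the cost of extra machinery.
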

\begin{proof}
 We just need to check that $\cO_{\cM(p)}^T$ is closed under contragredients. For the subcategory $\cC^0_{\cM(p)}=\cO_{\cM(p)}^{0+2L^\circ}\oplus\cO_{\cM(p)}^{\alpha_-/2+2L^\circ}$, this was done in \cite[Corollary 4.4.3]{CMY-singlet}. For $\lambda\in\CC\setminus L^\circ$, all objects of $\cO_{\cM(p)}^{\lambda+2L^\circ}$ are direct sums of Fock modules $\cF_\mu$ such that $\lambda-\mu\in 2L^\circ$ by Corollary \ref{cor:Olambda_structure}, and the contragredient of such a direct sum is an object of $\cO_{\cM(p)}^{\alpha_0-\lambda+2L^\circ}$ by Proposition \ref{prop:irred_mod_grading}. 
\end{proof}

\subsection{Generic Fock modules and differential equations}

In this subsection, we prove Theorem \ref{thm:prod_analytic_modified}. The idea is repeat Huang's derivation in \cite{Hu-diff-eqn} of regular-singular-point differential equations satisfied by products of intertwining operators such as \eqref{eqn:intw_op_prod}, but in such a way as to guarantee that the coefficient functions of the differential equation are analytic in the Heisenberg weight $\lambda$. To do so, we replace the Fock modules $\cF_\lambda$ and $\cF_{\alpha_0-\lambda}$ in the derivation of the differential equations with a generic Fock module $\cF_x$ on which the Heisenberg zero-mode $h(0)$ acts by the monomial $x$. We define the generic Fock module more precisely as follows.

Recall that the Zhu algebra $A(\mathcal{H})$ of the Heisenberg vertex operator algebra $\mathcal{H}$ is isomorphic to $\CC[x]$. Thus $\CC[x]$ is the top level of the generic Fock $\mathcal{H}$-module $\cF_x:=G(\CC[x])$; we identify $1\in\CC[x]$ with a generating highest-weight vector $v_x\in\cF_x$ such that
\begin{equation*}
 h(n)v_x=\delta_{n,0}x\cdot v_x
\end{equation*}
for $n\in\NN$. The operator $h(0)$ gives $\cF_x$ the structure of a $\CC[x]$-module, and each homogeneous space $\cF_x(n)$, $n\in\NN$, in the natural $\NN$-grading of $\cF_x$ is a finitely-generated $\CC[x]$-module (generated by vectors of the form
\begin{equation*}
 h(-n_1)\cdots h(-n_k)v_x
\end{equation*}
such that each $n_i\in\ZZ_+$ and $n_1+\cdots+n_k=n$).

For each $\lambda\in\CC$, there is an $A(\mathcal{H})$-module homomorphism $\CC[x]\rightarrow\CC v_\lambda\subseteq\cF_\lambda$ sending $p(x)\in\CC[x]$ to $p(\lambda)v_\lambda$. The universal property of generalized Verma $\mathcal{H}$-modules then induces a unique (and surjective) $\mathcal{H}$-module homomorphism $p_\lambda:\cF_x\rightarrow\cF_\lambda$ such that $p_\lambda(v_x)=v_\lambda$.
\begin{lem}\label{lem:ker_p_lambda}
 For each $\lambda\in\CC$, $\ker p_\lambda=(x-\lambda)\cdot\cF_x$.
\end{lem}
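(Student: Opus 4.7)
The plan is to establish the two inclusions separately, with the forward inclusion being essentially formal and the reverse inclusion requiring a dimension count via a PBW-type basis.

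First, I would argue that multiplication by $(x-\lambda)$, namely the operator $h(0)-\lambda$ on $\cF_x$, is an $\mathcal{H}$-module endomorphism of $\cF_x$. Indeed, the Heisenberg commutation relations give $[h(0),h(n)]=0$ for all $n\in\ZZ$, so $h(0)$ commutes with every mode of every vertex operator $Y_{\cF_x}(v,x)$ for $v\in\mathcal{H}$; equivalently, $h(0)v=0$ for all $v\in\mathcal{H}$ (since $\mathcal{H}$ is the weight-zero Fock module) and hence the commutator formula $[h(0),Y_{\cF_x}(v,x)]=Y_{\cF_x}(h(0)v,x)$ vanishes. Therefore $(x-\lambda)\cdot\cF_x$ is an $\mathcal{H}$-submodule. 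It is contained in $\ker p_\lambda$ because its generator $(x-\lambda)v_x$ maps to $(\lambda-\lambda)v_\lambda=0$, and $\ker p_\lambda$ is a submodule.

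For the reverse inclusion, I would use that $\cF_x$ is a \emph{free} $\CC[x]$-module with a PBW-type basis
\begin{equation*}
\bigl\{h(-n_1)h(-n_2)\cdots h(-n_k)v_x\,\bigl|\,n_1\geq n_2\geq\cdots\geq n_k\geq 1\bigr\}
\end{equation*}
(together with $v_x$ for $k=0$). This follows from the construction $\cF_x=G(\CC[x])$ combined with the standard PBW argument for Heisenberg Fock modules applied mode by mode. Consequently, the quotient $\cF_x/(x-\lambda)\cdot\cF_x$ has a $\CC$-basis given by the images of these vectors, which agrees term by term with the analogous basis of $\cF_\lambda$. Thus the induced surjection $\cF_x/(x-\lambda)\cdot\cF_x\twoheadrightarrow\cF_\lambda$ is a graded isomorphism, giving $\ker p_\lambda\subseteq(x-\lambda)\cdot\cF_x$.

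The only slightly subtle step is the first: verifying that $h(0)-\lambda$ truly commutes with the full $\mathcal{H}$-action (as opposed to just the Heisenberg modes), but this is immediate from $h(0)\vac=0$ and the fact that $\mathcal{H}$ is generated by $h(-1)\vac$. The PBW freeness in the second step is standard and, if desired, can be made rigorous by observing that $\cF_x\cong\CC[x]\otimes_{\CC}\cF_0$ as graded $\CC[x]$-modules, where $\cF_0$ denotes the vacuum Fock module viewed as a graded vector space.
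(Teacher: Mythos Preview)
Your proof is correct but differs from the paper's argument for the reverse inclusion. The paper proceeds purely by universal properties: after noting $(x-\lambda)\cdot\cF_x\subseteq\ker p_\lambda$, it observes that $\overline{v_x}=v_x+(x-\lambda)\cdot\cF_x$ lies in the top level of the quotient and satisfies $h(0)\overline{v_x}=\lambda\overline{v_x}$, so the universal property of the generalized Verma module $\cF_\lambda=G(\CC v_\lambda)$ produces a map $q_\lambda:\cF_\lambda\to\cF_x/(x-\lambda)\cdot\cF_x$ with $q_\lambda(v_\lambda)=\overline{v_x}$; then $q_\lambda\circ\overline{p_\lambda}=\Id$ since $\overline{v_x}$ generates the quotient, so $\overline{p_\lambda}$ is injective. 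Your PBW dimension-count argument is more hands-on and equally valid, but it relies on the explicit free $\CC[x]$-module structure of $\cF_x$, whereas the paper's argument works formally from the two universal properties (of $\cF_x$ and of $\cF_\lambda$) without ever writing down a basis. The paper's approach would generalize more readily to other generalized Verma modules where an explicit PBW description might be less transparent; your approach has the advantage of making the isomorphism $\cF_x\cong\CC[x]\otimes_\CC\cF_0$ visible.
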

\begin{proof}
 From the definition of $p_\lambda$, $(x-\lambda)\cdot\cF_x\subseteq\ker p_\lambda$, so $p_\lambda$ induces a surjective map 
 \begin{equation*}
  \overline{p_\lambda}: \cF_x/(x-\lambda)\cdot\cF_x\longrightarrow\cF_\lambda
 \end{equation*}
such that $\overline{p_\lambda}(v_x+(x-\lambda)\cdot\cF_x)=v_\lambda$ and $\ker\overline{p_\lambda}=\ker p_\lambda/(x-\lambda)\cdot\cF_x$. Setting $\overline{v_x}=v_x+(x-\lambda)\cdot\cF_x$, note that $\overline{v_x}$ is in the top level $T(\cF_x/(x-\lambda)\cdot\cF_x)$, and that $h(0)\overline{v_x}=\lambda\overline{v_x}$. Thus by the universal property of generalized Verma $\mathcal{H}$-modules, there is an $\mathcal{H}$-module homomorphism
\begin{equation*}
 q_\lambda: \cF_\lambda\longrightarrow\cF_x/(x-\lambda)\cdot\cF_x
\end{equation*}
such that $q_\lambda(v_\lambda)=\overline{v_x}$. Then $q_\lambda\circ\overline{p_\lambda}=\Id_{\cF_x/(x-\lambda)\cdot\cF_x}$ since $\overline{v_x}$ generates $\cF_x/(x-\lambda)\cdot\cF_x$. In particular, $\overline{p_\lambda}$ is injective, which implies $\ker p_\lambda=(x-\lambda)\cdot\cF_x$.
\end{proof}

Using $\cF_x$ and the homomorphisms $p_\lambda$, we now prove that the coefficients of series similar to \eqref{eqn:intw_op_prod} depend polynomially on $\lambda$:
\begin{prop}\label{prop:poly_coeff}
 Fix homogeneous $u,v\in\cM(p)$ and $n\in\ZZ$. Then for all $\lambda\in\CC$,
 \begin{equation}\label{eqn:poly_coeff}
  \left\langle v_{\alpha_0-\lambda},\Omega_0(Y_{\cF_\lambda})(u_n v_{\lambda},1)\cE_\lambda(o(v)v_{\alpha_0-\lambda}, z)v_\lambda\right\rangle =\sum_{m\geq 0} q_m(\lambda)\,z^{-2h_\lambda+m}
 \end{equation}
as formal series in powers of $z$, where $q_m(\lambda)\in\CC[\lambda]$ are polynomials depending only on $u$, $v$, and $n$.
\end{prop}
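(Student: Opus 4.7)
The plan is to lift the matrix coefficient on the left side of \eqref{eqn:poly_coeff} to the generic Fock module $\cF_x$ introduced above Lemma \ref{lem:ker_p_lambda}, where each step becomes $\CC[x]$-polynomial in the indeterminate $x$, and then recover \eqref{eqn:poly_coeff} by specializing at $x=\lambda$ via the surjection $p_\lambda\colon\cF_x\to\cF_\lambda$.

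First I would construct generic-$x$ analogs of $\Omega_0(Y_{\cF_\lambda})$ and $\cE_\lambda$ with $\CC[x]$-polynomial coefficients. Since only the Heisenberg zero-mode $h(0)$ carries $x$-dependence --- acting by multiplication by $x$ on $\cF_x$ and by $\alpha_0-x$ on $\cF_{\alpha_0-x}$ --- while the remaining modes $h(n)$, $n\neq 0$, are $x$-independent, every mode of $Y_{\cF_x}$ (and of $Y_{\cF_{\alpha_0-x}}$) has $\CC[x]$-polynomial coefficients on each finite-dimensional homogeneous subspace, and the same holds for the Virasoro modes $L(n)$ derived from $\omega$. I then define $\Omega_0(Y_{\cF_x})(w,z)v = e^{zL(-1)}Y_{\cF_x}(v,-z)w$ as usual, and define $\cE_x$ by the formula \eqref{eqn:unscaled_eval} with $\lambda$ replaced by $x$ and with the Heisenberg-invariant bilinear pairing extended $\CC[x]$-linearly to $\langle\cdot,\cdot\rangle\colon\cF_{\alpha_0-x}\times\cF_x\to\CC[x]$, normalized so that $\langle v_{\alpha_0-x},v_x\rangle=1$. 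The only factors in this definition of $\cE_x$ not manifestly polynomial in $x$ are the rescaling $e^{-\pi i h_x}$ and the operators $z^{-2L(0)}e^{\pi i L(0)}$ acting on $\cF_{\alpha_0-x}$; on a vector of level $n$ in $\cF_{\alpha_0-x}$ the $L(0)$-eigenvalue is $h_x+n$, so these factors combine to the scalar $(-1)^n z^{-2h_x-2n}$, and after pulling out the overall $z^{-2h_x}$ only integer powers of $z$ and signs remain.

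Second I would evaluate the generic matrix coefficient
\[
\big\langle v_{\alpha_0-x},\,\Omega_0(Y_{\cF_x})(u_n v_x,1)\,\cE_x(o(v)v_{\alpha_0-x},z)\,v_x\big\rangle
\]
entirely within the $\CC[x]$-linear framework. Every operation preserves polynomiality in $x$: the modes of $\cE_x$ and $\Omega_0(Y_{\cF_x})$, the exponential $e^{zL(\pm 1)}$ (whose action on any fixed vector is a finite sum), and the bilinear pairing all take values in $\CC[x]$. The result therefore has the form $z^{-2h_x}\sum_{m\geq 0}\widetilde{q}_m(x)\,z^m$ with each $\widetilde{q}_m(x)\in\CC[x]$, depending only on $u$, $v$, $n$.

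Finally I would specialize at $x=\lambda$. By construction, the surjection $p_\lambda$ of Lemma \ref{lem:ker_p_lambda} (together with its counterpart for $\cF_{\alpha_0-x}\to\cF_{\alpha_0-\lambda}$) sends $v_x\mapsto v_\lambda$, intertwines every vertex-operator mode, and is compatible with the bilinear pairing; it therefore carries $Y_{\cF_x}$, $\Omega_0(Y_{\cF_x})$, and $\cE_x$ to $Y_{\cF_\lambda}$, $\Omega_0(Y_{\cF_\lambda})$, and $\cE_\lambda$, respectively. Substituting $x=\lambda$ in the generic matrix coefficient reproduces the left side of \eqref{eqn:poly_coeff}, with $q_m(\lambda)=\widetilde{q}_m(\lambda)$. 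The main obstacle is the bookkeeping required to set up the generic-Fock-module formalism rigorously --- in particular, confirming that the intertwining-operator-like objects with $\CC[x]$-polynomial coefficients can be manipulated as expected and that specialization at $x=\lambda$ commutes with each step --- but this should be routine once the framework is in place.
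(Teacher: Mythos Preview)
Your approach is correct and rests on the same core idea as the paper---lift everything to the generic Fock module $\cF_x$ so that the $\lambda$-dependence becomes polynomial in $x$, then specialize. The organization differs, however. The paper does not define generic intertwining operators $\cE_x$ and $\Omega_0(Y_{\cF_x})$; instead it inserts a homogeneous basis $\{v_i\}$ of $\cM(p)$ (with dual basis $\{v_i'\}$) between the two intertwining operators, splitting the matrix coefficient into a sum of products
\[
\sum_{i}\big\langle v_{\alpha_0-\lambda},\Omega_0(Y_{\cF_\lambda})(u_nv_\lambda,1)v_i\big\rangle\big(v_i',\cE_\lambda(o(v)v_{\alpha_0-\lambda},z)v_\lambda\big),
\]
and then unwinds each factor via the defining formulas for $\Omega_0$ and $\cE_\lambda$ down to the module vertex operators $Y_{\cF_\lambda}$ and $Y_{\cF_{\alpha_0-\lambda}}$ acting on lowest-weight vectors. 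At that point the lift to $Y_{\cF_x}$ is immediate, and each factor becomes a projection onto $\cF_x(0)=\CC[x]\cdot v_x$, yielding explicit polynomials $q_i^{(1)}(\lambda)$ and $q_i^{(2)}(\alpha_0-\lambda)$.

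The advantage of the paper's route is that it bypasses the formalism you flag as ``routine bookkeeping'': there is no need to construct a generic $\CC[x]$-valued pairing $\cF_{\alpha_0-x}\times\cF_x\to\CC[x]$, nor to verify that the full intertwining-operator machinery (including the $A_0\Omega_0$ construction) behaves well over $\CC[x]$. Your approach is more conceptual and would generalize more readily, but the basis-insertion trick gets to the polynomials faster and with fewer foundational checks.
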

\begin{proof}
 Let $\lbrace v_i\rbrace_{i\in I}$ be a basis of $\cM(p)$ consisting of homogeneous vectors, and let $\lbrace v_i'\rbrace_{i\in I}$ be the dual basis with respect to the nondegenerate invariant bilinear form $(\cdot,\cdot)$ such that $(\vac,\vac)=1$. The definitions of $\Omega_0(Y_{\cF_\lambda})$ and $\cE_\lambda$ (recall in particular \eqref{eqn:unscaled_eval}) yield
 \begin{align*}
  \langle v_{\alpha_0-\lambda},&\,\Omega_0(Y_{\cF_\lambda})  (u_n v_{\lambda},1)\cE_\lambda(o(v)v_{\alpha_0-\lambda}, z)v_\lambda\rangle\nonumber\\
  &= \sum_{i\in I}\left\langle v_{\alpha_0-\lambda},\Omega_0(Y_{\cF_\lambda})(u_nv_\lambda,1)v_i\right\rangle\left( v_i',\cE_\lambda(o(v)v_{\alpha_0-\lambda},z)v_\lambda\right)\nonumber\\
  & =\sum_{i\in I}\left\langle e^{L(1)} v_{\alpha_0-\lambda}, Y_{\cF_\lambda}(v_i,-1)u_n v_\lambda\right\rangle e^{-\pi ih_\lambda}\cdot\nonumber\\
  &\hspace{6em}\cdot\left\langle Y_{\cF_{\alpha_0-\lambda}}(v_i',-z^{-1})e^{zL(1)} z^{-2L(0)} e^{\pi iL(0)} o(v)v_{\alpha_0-\lambda}, e^{zL(1)}v_{\lambda}\right\rangle\nonumber\\
  & =\sum_{i\in I} z^{-2h_\lambda}\left\langle v_{\alpha_0-\lambda}, Y_{\cF_\lambda}(v_i,-1)u_n p_\lambda(v_x)\right\rangle\left\langle Y_{\cF_{\alpha_0-\lambda}}(v_i',-z^{-1})  o(v)p_{\alpha_0-\lambda}(v_x), v_{\lambda}\right\rangle\nonumber\\
  & =\sum_{i\in I} z^{-2h_\lambda}\left\langle v_{\alpha_0-\lambda}, p_\lambda\left(Y_{\cF_x}(v_i,-1)u_n v_x\right)\right\rangle\left\langle p_{\alpha_0-\lambda}(Y_{\cF_x}(v_i',-z^{-1})o(v)v_x), v_\lambda\right\rangle.
 \end{align*}
Now for each $i\in I$, the projection of $Y_{\cF_x}(v_i,-1)u_n v_x$ to the degree-$0$ space $\cF_x(0)$ has the form $q_i^{(1)}(x)v_x$ for some $q_i^{(1)}(x)\in\CC[x]$ depending on $v_i$, $u$, and $n$, so
\begin{equation*}
 \left\langle v_{\alpha_0-\lambda}, p_\lambda\left(Y_{\cF_x}(v_i,-1)u_n v_x\right)\right\rangle=q_i^{(1)}(\lambda).
\end{equation*}
Similarly,
\begin{align*}
 \left\langle p_{\alpha_0-\lambda}(Y_{\cF_x}(v_i',-z^{-1})o(v)v_x), v_\lambda\right\rangle &=\sum_{k\in\ZZ} (-1)^{k+1}\left\langle p_{\alpha_0-\lambda}((v_i')_k o(v)v_x),v_\lambda\right\rangle z^{k+1}\nonumber\\
 & =(-1)^{\mathrm{wt}\,v_i'}\left\langle p_{\alpha_0-\lambda}(o(v_i')o(v)v_x),v_\lambda\right\rangle z^{\mathrm{wt}\,v_i'}\nonumber\\
 &= q^{(2)}_i(\alpha_0-\lambda)\,z^{\mathrm{wt}\,v_i'}
\end{align*}
where $(-1)^{\mathrm{wt}\,v_i'}o(v_i')o(v)v_x=q^{(2)}_i(x)v_x$. Thus \eqref{eqn:poly_coeff} holds with
\begin{equation*}
 q_m(\lambda)=\sum_{\mathrm{wt}\,v_i=m} q^{(1)}_i(\lambda)q^{(2)}_i(\alpha_0-\lambda)
\end{equation*}
for $m\in\NN$ (since $\mathrm{wt}\,v_i=\mathrm{wt}\,v_i'$ for all $i\in I$).
\end{proof}

We now consider the generic Fock module $\cF_x$ as an $\cM(p)$-module by restriction; recall the $C_1$-quotient $\cF_x/C_1(\cF_x)$ where
\begin{equation*}
 C_1(\cF_x)=\mathrm{span}\left\lbrace v_{-1} w\,\,\vert\,\,w\in\cF_x, v\in\cM(p), \mathrm{wt}\,v\geq 1\right\rbrace.
\end{equation*}
The natural $\NN$-grading on $\cF_x$ restricts to a grading on $C_1(\cF_x)$, so each homogeneous space $[\cF_x/C_1(\cF_x)](n)=\cF_x(n)/[C_1(\cF_x)](n)$, $n\in\NN$, is a finitely-generated $\CC[x]$-module. Although $\cF_x$ is certainly not a $C_1$-cofinite $\NN$-gradable weak $\cM(p)$-module, we do have:
\begin{lem}
 For any sufficiently large $n\in\NN$, there is a non-zero polynomial $d_n(x)\in\CC[x]$ such that $d_n(x)\cdot\cF_x(n)\subseteq [C_1(\cF_x)](n)$. That is, $[\cF_x/C_1(\cF_x)](n)$ is a torsion $\CC[x]$-module.
\end{lem}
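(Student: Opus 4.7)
The plan is to exploit the specialization homomorphisms $p_\lambda\colon\cF_x\twoheadrightarrow\cF_\lambda$ together with the fact, due to \cite[Theorem 13]{CMR}, that every typical Fock module $\cF_\lambda$ with $\lambda\in\CC\setminus L^\circ$ is $C_1$-cofinite. First, for each $\lambda\in\CC$ and $n\in\NN$ I would establish a natural $\CC$-linear isomorphism
$$M_n/(x-\lambda)M_n \xrightarrow{\cong} [\cF_\lambda/C_1(\cF_\lambda)](n), \qquad M_n:=[\cF_x/C_1(\cF_x)](n).$$
Indeed, $p_\lambda$ is a surjective $\cM(p)$-module map, so $p_\lambda(C_1(\cF_x))=C_1(\cF_\lambda)$; conversely, if $w\in\cF_x$ satisfies $p_\lambda(w)\in C_1(\cF_\lambda)$, one lifts a presentation of $p_\lambda(w)$ as an element of $C_1(\cF_\lambda)$ back to $C_1(\cF_x)$ and then applies Lemma \ref{lem:ker_p_lambda} to conclude $w\in C_1(\cF_x)+(x-\lambda)\cF_x$. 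Restricting to homogeneous degree $n$ (and noting that $x=h(0)$ preserves the grading) yields the displayed isomorphism.

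Next, since $\CC[x]$ is a PID and $M_n$ is a finitely generated $\CC[x]$-module, the structure theorem gives a decomposition $M_n\cong\CC[x]^{r_n}\oplus T_n$ with $T_n$ a finite-dimensional torsion module. Consequently
$$\dim_{\CC} M_n/(x-\lambda)M_n = r_n \quad \text{for all } \lambda\in\CC \text{ outside the finite set of roots of the annihilator of } T_n.$$
Hence $M_n$ is a torsion $\CC[x]$-module if and only if $r_n=0$.

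To force $r_n=0$ for $n$ sufficiently large, I would select a single $\lambda_0\in\CC\setminus L^\circ$ avoiding the countable union, over $n\in\NN$, of the finite exceptional sets arising from the $T_n$; such a $\lambda_0$ exists because $\CC\setminus L^\circ$ is uncountable. Then $\dim_{\CC} M_n/(x-\lambda_0)M_n=r_n$ for every $n$. Since $\cF_{\lambda_0}$ is simple and hence $C_1$-cofinite, there exists $N\in\NN$ with $[\cF_{\lambda_0}/C_1(\cF_{\lambda_0})](n)=0$ for all $n>N$, and the first-step isomorphism therefore forces $r_n=0$ for $n>N$. Finally, a finitely generated torsion $\CC[x]$-module is annihilated by a non-zero polynomial $d_n(x)$, which translates via the action of $h(0)$ to the desired inclusion $d_n(x)\cdot\cF_x(n)\subseteq[C_1(\cF_x)](n)$.

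The main subtlety is verifying the first-step isomorphism, which hinges on the identification $p_\lambda^{-1}(C_1(\cF_\lambda))=C_1(\cF_x)+(x-\lambda)\cF_x$; this uses both the surjectivity of $p_\lambda$ and Lemma \ref{lem:ker_p_lambda}. Once that is in hand, the $\CC[x]$-structure theorem together with a standard cardinality argument completes the proof.
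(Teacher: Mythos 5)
Your proof is correct, but takes a genuinely different route from the paper's. The paper fixes a single $\lambda\in\CC$, uses $C_1$-cofiniteness of $\cF_\lambda$ and surjectivity of $p_\lambda$ to deduce $\cF_x(n)=[C_1(\cF_x)](n)+(x-\lambda)\cF_x(n)$ for $n$ large, and then expresses a finite generating set of $\cF_x(n)$ as solutions of a $\CC[x]$-linear system; multiplying by the adjugate of the coefficient matrix produces an explicit annihilating determinant $d_n(x)$ with $d_n(\lambda)=1\neq 0$. You instead establish the structural isomorphism $M_n/(x-\lambda)M_n\cong[\cF_\lambda/C_1(\cF_\lambda)](n)$ for all $\lambda$, then apply the structure theorem for finitely generated modules over the PID $\CC[x]$ together with a cardinality argument to find a single $\lambda_0$ that is generic for all $n$ simultaneously. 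Both arguments are sound, but the cardinality step is unnecessary: for $M_n\cong\CC[x]^{r_n}\oplus T_n$ one has $\dim_\CC M_n/(x-\lambda)M_n\geq r_n$ for \emph{every} $\lambda$ (with equality only off a finite set), so a single fixed $\lambda$ with $\cF_\lambda$ $C_1$-cofinite --- and by \cite{CMR} this is every $\lambda$, since $\cF_\lambda$ always has finite length --- already forces $r_n=0$ for $n$ large. That is precisely the simplification the paper exploits by fixing one $\lambda$ at the outset. Beyond that, the paper's adjugate trick yields the annihilating polynomial explicitly with controlled non-vanishing at the chosen $\lambda$, whereas the structure theorem produces $d_n$ abstractly; since the later steps in the paper only use that $d_n\neq 0$, either approach is sufficient.
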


\begin{proof}
 Fix any $\lambda\in\CC$. Since $\cF_\lambda$ is a $C_1$-cofinite $\cM(p)$-module by \cite[Theorem 13]{CMR}, $\cF_\lambda(n)=[C_1(\cF_\lambda)](n)$ for $n\in\NN$ sufficiently large. For such $n$ and for any $w\in\cF_x(n)$,
 \begin{equation*}
  p_\lambda(w)=\sum v_{-1}^{(i)} w^{(i)} =\sum v_{-1}^{(i)}p_\lambda(\til{w}^{(i)})=\sum p_\lambda(v^{(i)}_{-1}\til{w}^{(i)})
 \end{equation*}
for suitable $v^{(i)}\in\cM(p)$, $w^{(i)}\in\cF_\lambda$, and $\til{w}^{(i)}\in\cF_x$ such that $p_\lambda(\til{w}^{(i)})=w^{(i)}$ (recall that $p_\lambda$ is surjective). Since $p_\lambda$ preserves the $\NN$-gradings of $\cF_x$ and $\cF_\lambda$, we may assume that each $v_{-1}^{(i)}\til{w}^{(i)}$ has degree $n$, and thus
\begin{equation}\label{eqn:Fxn_torsion}
 \cF_x(n)=[C_1(\cF_x)](n)+(\ker p_\lambda)(n)=[C_1(\cF_x)](n)+(x-\lambda)\cdot\cF_x(n)
\end{equation}
for $n$ sufficiently large, using Lemma \ref{lem:ker_p_lambda}.

Now as a finitely-generated $\CC[x]$-module, $\cF_x(n) =\sum_{i=1}^I \CC[x]\cdot w_i$ for certain $w_i\in\cF_x(n)$. By \eqref{eqn:Fxn_torsion}, each generator $w_i$ satisfies
\begin{equation*}
 w_i = c_i+(x-\lambda)\sum_{j=1}^I p_{ij}(x)\cdot w_j
\end{equation*}
for suitable $c_i\in[C_1(\cF_x)](n)$ and $p_{ij}(x)\in\CC[x]$. Equivalently,
\begin{equation*}
\left[ \begin{array}{cccc}
        1-(x-\lambda)p_{11}(x) & -(x-\lambda)p_{12}(x) & \cdots & -(x-\lambda)p_{1I}(x)\\
        -(x-\lambda)p_{21}(x) & 1-(x-\lambda)p_{22}(x) & \cdots & -(x-\lambda)p_{2I}(x)\\
        \vdots & \vdots & \ddots & \vdots\\
        -(x-\lambda)p_{I1}(x) & -(x-\lambda)p_{I2}(x) & \cdots & 1-(x-\lambda)p_{II}(x)\\
       \end{array}
\right]\left[\begin{array}{c}
              w_1\\
              w_2\\
              \vdots\\
              w_I\\
             \end{array}
\right] =\left[\begin{array}{c}
                c_1\\
                c_2\\
                \vdots\\
                c_I\\
               \end{array}
\right].
\end{equation*}
Multiplying both sides by the adjugate of the matrix on the left and noting that $[C_1(\cF_x)](n)$ is a $\CC[x]$-submodule of $\cF_x(n)$ since $x$ commutes with $v_{-1}$ for any $v\in\cM(p)$, we get
\begin{equation*}
 d_n(x)\cdot w_i\in[C_1(\cF_x)](n)
\end{equation*}
for each $i$, where $d_n(x)$ is the determinant of the matrix. This determinant is not identically $0$ because $d_n(\lambda)=1$. Since $d_n(x)\cdot\cF_x(n)\subseteq[C_1(\cF_x)](n)$, this proves the lemma.
\end{proof}

We now fix non-zero polynomials $d_n(x)$ for all $n\in\NN$: for small $n$ we choose $d_n(x)=1$, and for all $n$ sufficiently large we choose $d_n(x)$ such that $d_n(x)\cdot\cF_x(n)\subseteq[C_1(\cF_x)](n)$. Then for all $N\in\NN$, set $p_N(x)=\prod_{n=0}^N d_n(x)$; by construction, these polynomials satisfy
\begin{equation*}
 p_N(x)\cdot\cF_x(n)\subseteq p_n(x)\cdot\cF_x(n)
\end{equation*}
whenever $n\leq N$. Finally, for $N\in\NN$, set $P_N(x)=\prod_{n=0}^N p_n(x)$; by construction, these polynomials have the property
\begin{equation}\label{eqn:p_n_polys}
 P_N(x)\cdot\cF_x(n)\subseteq P_{N-1}(x)\cdot[C_1(\cF_x)](n)
\end{equation}
whenever $n\leq N$ is sufficiently large.

We can now begin proving that series such as \eqref{eqn:intw_op_prod} satisfy suitable differential equations. Since $\cF_\lambda$ and $\cF_{\alpha_0-\lambda}$ are $C_1$-cofinite $\cM(p)$-modules, \cite[Theorems 1.4 and 2.3]{Hu-diff-eqn} already show that such series are solutions to differential equations with a regular singular point at $z=0$ and thus converge absolutely to multivalued analytic functions in $z$. However, we need these multivalued functions to be also analytic in $\lambda$, so here we adapt the methods of \cite{Hu-diff-eqn} using generic Fock modules to show that the coefficients of the differential equations may be taken to be analytic in $\lambda$. To shorten the discussion, we will follow a somewhat different exposition than \cite{Hu-diff-eqn} and derive the existence of the differential equations and the regularity of the singular point $z=0$ simultaneously.

Similar to \cite{Hu-diff-eqn}, let $R=\CC[z, (1-z)^{-1}]$ be the (Noetherian) ring of suitable rational functions in $z$. Then take three generic Fock modules $\cF_{x_1}$, $\cF_{x_2}$, $\cF_{x_3}$ and consider
\begin{equation*}
 T=R\otimes\cF_{x_1}\otimes\cF_{x_2}\otimes\cF_{x_3},
\end{equation*}
which is an $R[x_1,x_2,x_3]$-module in the obvious way. The $\NN$-gradings of the generic Fock modules induce an $\NN$-grading of $T$:
\begin{equation}\label{eqn:grading_of_T}
 T(n)=\bigoplus_{n_1+n_2+n_3=n} R\otimes\cF_{x_1}(n_1)\otimes\cF_{x_2}(n_2)\otimes\cF_{x_3}(n_3).
\end{equation}
As in \cite{Hu-diff-eqn}, we take the quotient of $T$ by certain intertwining-operator-inspired relations. Our relations are simpler than in \cite{Hu-diff-eqn} because in series such as \eqref{eqn:intw_op_prod}, the leftmost insertion is always the lowest-conformal-weight vector $v_{\alpha_0-\lambda}$. So for $v\in\cM(p)$ with $\mathrm{wt}\,v>0$ and homogeneous $w_1\in\cF_{x_1}$, $w_2\in\cF_{x_2}$, and $w_3\in\cF_{x_3}$, we define
\begin{align*}
 \cA(v,w_1,w_2,w_3)  = 1 & \otimes v_{-1} w_1\otimes w_2\otimes w_3\nonumber\\
 &-\sum_{k\geq 0} \left((1-z)^{-k-1}\otimes w_1\otimes v_k w_2\otimes w_3+1\otimes w_1\otimes w_2\otimes v_k w_3\right),
\end{align*}
\begin{align*}
 \mathcal{B}(v,&\,w_1, w_2,w_3)  = z^{\mathrm{wt}\,v+\mathrm{deg}\,w_3} \otimes w_1\otimes v_{-1} w_2\otimes w_3\nonumber\\
 &+\sum_{k\geq 0} z^{\mathrm{wt}\,v+\mathrm{deg}\,w_3}\left((-1)^k (1-z)^{-k-1}\otimes v_k w_1\otimes w_2\otimes w_3- z^{-k-1}\otimes w_1\otimes w_2\otimes v_k w_3\right),
\end{align*}
\begin{align*}
 \cC(v,w_1,w_2,w_3) & =  z^{\mathrm{wt}\,v+\mathrm{deg}\,w_2}  \otimes w_1\otimes  w_2\otimes v_{-1} w_3\nonumber\\
 &+\sum_{k\geq 0} (-1)^k z^{\mathrm{wt}\,v+\mathrm{deg}\,w_2}\left( 1\otimes v_k w_1\otimes w_2\otimes w_3
 + z^{-k-1}\otimes w_1\otimes v_k w_2\otimes  w_3\right).
\end{align*}
Since
\begin{equation*}
 \mathrm{deg}\,v_k w =\mathrm{wt}\,v+\mathrm{deg}\,w-k-1
\end{equation*}
for homogeneous $v\in\cM(p)$ and $w\in\cF_x$, the relations $\mathcal{B}(v,w_1,w_2,w_3)$ and $\cC(v,w_1,w_2,w_3)$ are indeed elements of $T$. 

Let $J$ be the $R[x_1,x_2,x_3]$-submodule of $T$ generated by $\cA(v,w_1,w_2,w_3)$, $\mathcal{B}(v,w_1,w_2,w_3)$, and $\cC(v,w_1,w_2,w_3)$ for all homogeneous $w_1\in\cF_{x_1}$, $w_2\in\cF_{x_2}$, $w_3\in\cF_{x_3}$, and $v\in\cM(p)$ such that $\mathrm{wt}\,v>0$. Somewhat differently from \cite{Hu-diff-eqn}, $T/J$ is not a finitely-generated $R[x_1,x_2,x_3]$-module; we need to take a submodule instead. Recall the polynomials $P_N(x)\in\CC[x]$ chosen above; we define $S$ to be the $R[x_1,x_2,x_3]$-submodule of $T/J$ generated by all
\begin{equation*}
 P_N(x_1)P_N(x_2)P_N(x_3)z^N\cdot w+J
\end{equation*}
for $N\in\NN$ and $w\in T(N)$. Similar to \cite[Corollary 1.2]{Hu-diff-eqn}, we have:
\begin{prop}
 The $R[x_1,x_2,x_3]$-module $S$ is finitely generated.
\end{prop}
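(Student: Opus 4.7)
The plan is to follow the strategy of Huang's derivation of finite generation for $C_1$-cofinite modules \cite[Corollary 1.2]{Hu-diff-eqn}, adapted to the ``torsion $C_1$-cofiniteness'' of the generic Fock modules encoded in \eqref{eqn:p_n_polys}. The main tool is induction on total $\NN$-degree of $T$, using the relations $\cA, \cB, \cC$ in combination with $P_N(x)\cdot\cF_x(n) \subseteq P_{N-1}(x)\cdot [C_1(\cF_x)](n)$ to express each generator of $S$ as an $R[x_1,x_2,x_3]$-linear combination of lower-degree generators.

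Before starting the induction one needs to check that $T/J$ is genuinely an $R[x_1,x_2,x_3]$-module: since $\cM(p) \subseteq \mathcal{H} = \cF_0$ has trivial $h(0)$-weight, the commutator $[h(0), v_k] = (h(0)v)_k$ vanishes for every $v \in \cM(p)$, so multiplication by $x_i = h(0)$ on slot $i$ preserves each of the relations $\cA, \cB, \cC$. Then I would fix $N_0 \in \NN$ such that $P_N(x) \cdot \cF_x(n) \subseteq P_{N-1}(x) \cdot [C_1(\cF_x)](n)$ whenever $N \geq n \geq N_0$, and take the candidate finite generating set
\begin{equation*}
G = \bigl\{P_N(x_1)P_N(x_2)P_N(x_3)\,z^N\cdot w + J \;\big\vert\; N < 3N_0,\ w \in T(N) \bigr\}.
\end{equation*}
Because each $\cF_{x_i}(n_i)$ is finitely generated over $\CC[x_i]$, each graded piece $T(N)$ is finitely generated as an $R[x_1,x_2,x_3]$-module, and hence $G$ may be shrunk to a finite set of elements.

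It then suffices to show by induction on $N$ that $P_N(x_1)P_N(x_2)P_N(x_3) z^N \cdot w + J \in \langle G\rangle$ for every $N$ and every $w \in T(N)$. Suppose $N \geq 3N_0$ and $w = 1\otimes w_1 \otimes w_2 \otimes w_3$ with $w_i \in \cF_{x_i}(n_i)$. By pigeonhole some $n_i \geq N_0$, say $n_1$. Then $P_N(x_1)w_1 = \sum_i P_{N-1}(x_1)\,v^{(i)}_{-1}u^{(i)}$ with $v^{(i)} \in \cM(p)$ homogeneous of positive weight and $u^{(i)} \in \cF_{x_1}(n_1 - \mathrm{wt}\,v^{(i)})$. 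Applying $\cA$ rewrites each $1\otimes v^{(i)}_{-1}u^{(i)}\otimes w_2\otimes w_3$ modulo $J$ as a finite sum of terms of total degree $M = N - k - 1 < N$. Using the divisibilities $P_M(x) \mid P_{N-1}(x) \mid P_N(x)$ and the factorization $z^N = z^{N-M}\cdot z^M$, each resulting term becomes an $R[x_1,x_2,x_3]$-multiple of some $P_M(x_1)P_M(x_2)P_M(x_3)z^M\cdot w' + J$ with $w' \in T(M)$, which lies in $\langle G\rangle$ by the induction hypothesis (or directly, if $M < 3N_0$). The analogous reductions on slots 2 and 3 use relations $\cB$ and $\cC$; the extra $z^{\mathrm{wt}\,v + \deg w_j}$ factors there simply contribute to the $R$-coefficient.

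The main obstacle will be the bookkeeping of $z$-powers and polynomial factors $P_N(x_i)$: one must check that after applying $\cA$, $\cB$, or $\cC$ and invoking \eqref{eqn:p_n_polys}, every resulting term genuinely lies in the $R[x_1,x_2,x_3]$-span of lower-degree generators of $S$ (rather than merely in $T/J$). Beyond this bookkeeping no new ideas should be required once the preliminary commutativity of $x_i$ with $\cM(p)$-modes is established.
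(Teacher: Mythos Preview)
Your proposal is correct and follows essentially the same approach as the paper's proof: both reduce to showing that for $N$ large enough, each generator $P_N(x_1)P_N(x_2)P_N(x_3)z^N\cdot w + J$ with $w\in T(N)$ lies in the $R[x_1,x_2,x_3]$-span of lower-degree generators, using \eqref{eqn:p_n_polys} to land in $C_1$ in one slot and then the relations $\cA,\cB,\cC$ to drop the degree. Your version is a bit more explicit than the paper's (spelling out the pigeonhole threshold $3N_0$ and the divisibilities $P_M\mid P_{N-1}$, $z^M\mid z^N$), and your preliminary check that $h(0)$ commutes with the $\cM(p)$-modes is harmless but unnecessary here, since $J$ is \emph{defined} as the $R[x_1,x_2,x_3]$-submodule generated by the relations, so $T/J$ is automatically an $R[x_1,x_2,x_3]$-module.
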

\begin{proof}
 Since the homogeneous spaces of each generic Fock module are finitely-generated $\CC[x]$-modules, each homogeneous space $T(N)$ is a finitely-generated $R[x_1,x_2,x_3]$-module. Thus to prove the proposition, it is enough to show that when $N$ is sufficiently large, any generator $P_N(x_1)P_N(x_2)P_N(x_3)z^N\cdot w+J$ of $S$, where $w\in T(N)$, can be written as an $R[x_1,x_2,x_3]$-linear combination of generators $P_n(x_1)P_n(x_2)P_n(x_3)z^n\cdot\til{w}+J$ where $n<N$ and $\til{w}\in T(n)$. Indeed, \eqref{eqn:p_n_polys} and \eqref{eqn:grading_of_T} show that when $N$ is large enough and $w\in T(N)$,
 \begin{align*}
  P_N(x_1) P_N(x_2) & P_N(x_3)z^N\cdot w\in P_{N-1}(x_1)  P_{N-1}(x_2)P_{N-1}(x_3)z^N R\otimes\cdot\nonumber\\
  &\cdot\left(C_1(\cF_{x_1})\otimes\cF_{x_2}\otimes\cF_{x_3}+\cF_{x_1}\otimes C_1(\cF_{x_2})\otimes\cF_{x_3}+\cF_{x_1}\otimes\cF_{x_2}\otimes C_1(\cF_{x_3})\right).
 \end{align*}
Then because for any homogeneous $v\in\mathcal\cM(p)$, vertex operator degrees satisfy
\begin{equation*}
 \deg v_{-1} =\mathrm{wt}\,v>\mathrm{wt}\,v-k-1 =\deg v_k
\end{equation*}
for $k\geq 0$, the form of the generators $\cA(v,w_1,w_2,w_3)$, $\mathcal{B}(v,w_1,w_2,w_3)$, $\cC(v,w_1,w_2,w_3)$ of $J$ imply $P_N(x_1)P_N(x_2)P_N(x_3)z^N\cdot w+J$ is in the $R[x_1,x_2,x_3]$-submodule of $T/J$ generated by elements $P_n(x_1)P_n(x_2)P_n(x_3)z^n\cdot\til{w}+J$ for $n<N$ and $\til{w}\in T(n)$, as desired.
\end{proof}
Now similar to \cite[Corollary 1.3]{Hu-diff-eqn}, we get:
\begin{cor}
 For any homogeneous $w_1\in\cF_{x_1}$, $w_2\in\cF_{x_2}$, and $w_3\in\cF_{x_3}$, there exist $N\in\ZZ_+$ and elements $a_n(z;x_1,x_2,x_3)\in R[x_1,x_2,x_3]$ for $0\leq n\leq N-1$ such that
 \begin{align}\label{eqn:L(-1)_power_reln}
  P_{N+\sigma}(x_1)P_{N+\sigma}(x_2)&P_{N+\sigma}( x_3) \cdot(z^{N+\sigma} \otimes w_1\otimes L(-1)^N w_2\otimes w_3)+J\nonumber\\ &=\sum_{n=0}^{N-1}  a_n(z;x_1,x_2,x_3)\cdot(z^{n+\sigma}\otimes w_1\otimes L(-1)^n w_2\otimes w_3)+J,
 \end{align}
 where $\sigma=\mathrm{deg}\,w_1+\mathrm{deg}\,w_2+\mathrm{deg}\,w_3$.
\end{cor}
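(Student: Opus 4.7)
The plan is to exploit the Noetherianity of the module $S$ established in the preceding proposition. First I would note that, since $L(-1)$ raises the natural $\NN$-grading of a Fock module by $1$, each vector $L(-1)^n w_2$ lies in $\cF_{x_2}(\deg w_2+n)$. Consequently the element
\begin{equation*}
f_n := z^{n+\sigma}\otimes w_1\otimes L(-1)^n w_2\otimes w_3 + J
\end{equation*}
lives in the homogeneous component of degree $\sigma+n$ of $T/J$, so by the very definition of $S$,
\begin{equation*}
g_n := P_{n+\sigma}(x_1)P_{n+\sigma}(x_2)P_{n+\sigma}(x_3)\cdot f_n \in S
\end{equation*}
for every $n\geq 0$.

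Next I would introduce the ascending chain of $R[x_1,x_2,x_3]$-submodules
\begin{equation*}
S_0\subseteq S_1\subseteq S_2\subseteq\cdots\subseteq S, \qquad S_N := \sum_{n=0}^{N-1} R[x_1,x_2,x_3]\cdot g_n.
\end{equation*}
Since $R=\CC[z,(1-z)^{-1}]$ is a localization of $\CC[z]$, it is Noetherian, and then so is the polynomial ring $R[x_1,x_2,x_3]$ by Hilbert's basis theorem. The preceding proposition says that $S$ is finitely generated, hence a Noetherian $R[x_1,x_2,x_3]$-module, so the chain $(S_N)$ stabilizes. Choose $N$ so that $g_N\in S_N$; this gives $b_0,\dots,b_{N-1}\in R[x_1,x_2,x_3]$ with
\begin{equation*}
g_N=\sum_{n=0}^{N-1} b_n(z;x_1,x_2,x_3)\cdot g_n.
\end{equation*}

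Finally I would unpack this identity: setting $a_n(z;x_1,x_2,x_3):=b_n(z;x_1,x_2,x_3)\,P_{n+\sigma}(x_1)P_{n+\sigma}(x_2)P_{n+\sigma}(x_3)\in R[x_1,x_2,x_3]$ (which is allowed since there is no constraint on the $x_i$-degrees of the $a_n$), the relation $g_N=\sum b_n g_n$ becomes exactly \eqref{eqn:L(-1)_power_reln}. There is no substantive obstacle here beyond bookkeeping; the whole content is the Noetherian stabilization, and the only point that deserves care is verifying that the $g_n$ really do lie in $S$, which follows from the degree computation above together with the definition of $S$ via the cut-off polynomials $P_N$.
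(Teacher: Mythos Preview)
Your proof is correct and follows essentially the same approach as the paper: both arguments observe that the $g_n$ lie in the finitely generated module $S$ over the Noetherian ring $R[x_1,x_2,x_3]$, and then use Noetherianity to extract a linear relation among finitely many of them, absorbing the factors $P_{n+\sigma}(x_i)$ into the coefficients $a_n$. The only cosmetic difference is that the paper phrases the Noetherian step as ``the submodule generated by all the $g_n$ is finitely generated, hence generated by finitely many of the $g_n$,'' whereas you use the equivalent ascending chain condition directly.
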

\begin{proof}
 Since $R[x_1,x_2,x_3]$ is a Noetherian ring by the Hilbert Basis Theorem, any submodule of the finitely-generated $R[x_1,x_2,x_3]$-module $S$ is finitely generated. In particular, the submodule generated by
 \begin{equation}\label{eqn:big_gen_set}
  \lbrace P_{n+\sigma}(x_1)P_{n+\sigma}(x_2)P_{n+\sigma}(x_3)\cdot(z^{n+\sigma}\otimes w_1\otimes L(-1)^n w_2\otimes w_3)+J\rbrace_{n\in\NN}
 \end{equation}
has a finite generating set. As each of these finitely many generators is a finite $R[x_1,x_2,x_3]$-linear combination of elements from the generating set \eqref{eqn:big_gen_set}, we may take the generating set to be finitely many of the elements in \eqref{eqn:big_gen_set}. Consequently, \eqref{eqn:L(-1)_power_reln} holds for $N$ sufficiently large (where we have absorbed the factors $P_{n+\sigma}(x_1)P_{n+\sigma}(x_2)P_{n+\sigma}(x_3)$ for $n<N$ into the elements $a_n(z;x_1,x_2,x_3)$). 
\end{proof}

We can use the preceding corollary to obtain differential equations for products of intertwining operators. Thus suppose we have families of intertwining operators $\cY_1^\lambda$ and $\cY_2^\lambda$ for $\lambda\in\CC$ of types $\binom{\cF_\lambda}{\cF_\lambda\,W}$ and $\binom{W}{\cF_{\alpha_0-\lambda}\,\cF_\lambda}$, respectively. Similar to the proof of \cite[Theorem 1.4]{Hu-diff-eqn}, there is for each $\lambda\in\CC$ a linear map
\begin{align*}
 \phi_{\cY_1^\lambda,\cY_2^\lambda}: T & \rightarrow \CC[\log z]\lbrace z\rbrace\nonumber\\
  \frac{f(z)}{(1-z)^m}\otimes w_1\otimes w_2\otimes w_3 &\mapsto \sum_{k\geq 0} (-1)^k\binom{-m}{k} z^k f(z)\cdot\nonumber\\
  &\hspace{5em}\cdot\langle v_{\alpha_0-\lambda},\cY_1^\lambda(p_\lambda(w_1),1)\cY_2^\lambda(p_{\alpha_0-\lambda}(w_2),z)p_\lambda(w_3)\rangle,
\end{align*}
where $f(z)\in\CC[z]$ and $m\in\NN$. As in \cite{Hu-diff-eqn}, the $R[x_1,x_2,x_3]$-submodule $J$ is contained in the kernel of $\phi_{\cY_1^\lambda,\cY_2^\lambda}$. This follows from the fact that $v_{\alpha_0-\lambda}$ is a lowest-conformal-weight vector of $\cF_{\alpha_0-\lambda}$ together with the Jacobi identity commutator formula \eqref{eqn:intw_op_comm} and the $n=-1$ case of the iterate formula \eqref{eqn:intw_op_iterate}. Thus $\phi_{\cY_1^\lambda,\cY_2^\lambda}$ descends to a well-defined linear map on $T/J$, which then restricts to a map $S\rightarrow\CC[\log z]\lbrace z\rbrace$. If we apply this map to the relation \eqref{eqn:L(-1)_power_reln}, multiply by $(1-z)^{r(\lambda)}$ for some function $r(\lambda)$, use the $L(-1)$-derivative property for intertwining operators and the product rule, and then divide by $z^\sigma$, we get:
\begin{thm}\label{thm:prod_diff_eqn}
 For any $w_1\in\cF_{x_1}$, $w_2\in\cF_{x_2}$, and $w_3\in\cF_{x_3}$, there exist $N\in\ZZ_+$ and elements $a_n(z;x_1,x_2,x_3)\in R[x_1,x_2,x_3]$ for $0\leq n\leq N-1$ such that for any families $\lbrace\cY_1^\lambda\rbrace_{\lambda\in\CC}$ and $\lbrace\cY_2^\lambda\rbrace_{\lambda\in\CC}$ of $\cM(p)$-module intertwining operators of types $\binom{\cF_\lambda}{\cF_\lambda\,W}$ and $\binom{W}{\cF_{\alpha_0-\lambda}\,\cF_\lambda}$, respectively, and for any analytic function $r(\lambda)$, the series
 \begin{equation*}
 (1-z)^{r(\lambda)}\langle v_{\alpha_0-\lambda},\cY_1^\lambda(p_\lambda(w_1),1)\cY_2^\lambda(p_{\alpha_0-\lambda}(w_2),z)p_\lambda(w_3)\rangle\in\CC[\log z]\lbrace z\rbrace
 \end{equation*}
 is a formal solution to the differential equation
 \begin{align*}
  P_N(\lambda)P_N(\alpha_0  -\lambda) P_N(\lambda) & z^N\left(\frac{d}{dz}+\frac{r(\lambda)}{1-z}\right)^N\varphi(\lambda,z)  \nonumber\\
  & =\sum_{n=0}^{N-1} a_n(z;\lambda,\alpha_0-\lambda,\lambda)z^n \left(\frac{d}{dz}+\frac{r(\lambda)}{1-z}\right)^n\varphi(\lambda,z).
 \end{align*}
\end{thm}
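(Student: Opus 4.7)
The plan is to apply the linear map $\phi_{\cY_1^\lambda,\cY_2^\lambda}:T\to\CC[\log z]\{z\}$ to both sides of the relation \eqref{eqn:L(-1)_power_reln} from the preceding corollary, convert powers of $L(-1)$ acting on the middle slot into $z$-derivatives via the $L(-1)$-derivative property, and then multiply by $(1-z)^{r(\lambda)}$, exploiting the elementary identity
\[
(1-z)^{r(\lambda)}\Bigl(\frac{d}{dz}\Bigr)^{n}(1-z)^{-r(\lambda)}=\Bigl(\frac{d}{dz}+\frac{r(\lambda)}{1-z}\Bigr)^{n}
\]
(verified by induction on $n$) to absorb the multiplication by $(1-z)^{r(\lambda)}$ into the differential operators acting on $\varphi(\lambda,z)$.

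Concretely, recall from Lemma \ref{lem:ker_p_lambda} that $p_\lambda,p_{\alpha_0-\lambda}:\cF_x\to\cF_\lambda,\cF_{\alpha_0-\lambda}$ are $\CC[x]$-module homomorphisms with $x$ acting on the targets by the scalars $\lambda$ and $\alpha_0-\lambda$ respectively. Consequently, applying $\phi_{\cY_1^\lambda,\cY_2^\lambda}$ to \eqref{eqn:L(-1)_power_reln} and using $J\subseteq\ker\phi_{\cY_1^\lambda,\cY_2^\lambda}$ converts the polynomial coefficients $P_{N+\sigma}(x_1)P_{N+\sigma}(x_2)P_{N+\sigma}(x_3)$ into the scalars $P_{N+\sigma}(\lambda)^{2}P_{N+\sigma}(\alpha_0-\lambda)$, producing the identity
\[
P_{N+\sigma}(\lambda)^{2}P_{N+\sigma}(\alpha_0-\lambda)\,z^{N+\sigma}F_{N}(\lambda,z)=\sum_{n=0}^{N-1}a_{n}(z;\lambda,\alpha_0-\lambda,\lambda)\,z^{n+\sigma}F_{n}(\lambda,z),
\]
where $F_n(\lambda,z)=\langle v_{\alpha_0-\lambda},\cY_{1}^{\lambda}(p_{\lambda}(w_1),1)\cY_{2}^{\lambda}(p_{\alpha_0-\lambda}(L(-1)^{n}w_2),z)p_{\lambda}(w_3)\rangle$. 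Since $p_{\alpha_0-\lambda}$ commutes with $L(-1)$, the $L(-1)$-derivative property gives $F_{n}(\lambda,z)=(d/dz)^{n}f(\lambda,z)$ for $f(\lambda,z):=F_0(\lambda,z)$, and $\varphi(\lambda,z)=(1-z)^{r(\lambda)}f(\lambda,z)$. Dividing by $z^{\sigma}$, multiplying by $(1-z)^{r(\lambda)}$, and applying the conjugation identity above to each term then yields the differential equation asserted in the theorem (after renaming $N+\sigma\mapsto N$; since $\sigma$ is determined by $w_1,w_2,w_3$, this is legitimate, with $a_n$ set to zero for $n<\sigma$ in the new indexing if needed).

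The main technical point is verifying $J\subseteq\ker\phi_{\cY_1^\lambda,\cY_2^\lambda}$. For the $\cA$-generators this follows from the intertwining-operator commutator formula \eqref{eqn:intw_op_comm} applied to $\cY_1^\lambda(v_{-1}(\cdot),1)$, moving $v_{-1}$ past $\cY_1^\lambda(p_\lambda(w_1),1)$ and past $\cY_2^\lambda(p_{\alpha_0-\lambda}(w_2),z)$ while expanding $(1-x)^{-k-1}$ at $x=1$ to produce the factors $(1-z)^{-k-1}$ appearing in $\cA$; positive modes eventually acting on $v_{\alpha_0-\lambda}$ on the left vanish because $v_{\alpha_0-\lambda}$ is a lowest-conformal-weight vector. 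For the $\mathcal{B}$- and $\cC$-generators, the iterate formula \eqref{eqn:intw_op_iterate} with $n=-1$ applied to $\cY_{2}^{\lambda}(v_{-1}(\cdot),z)$ produces the $z^{-k-1}$ and $(1-z)^{-k-1}$ expansions that match $\mathcal{B}$ and $\cC$ precisely, with the leftmost slot occupied by the lowest-weight vector again killing boundary terms. This verification mirrors the derivation of differential equations in \cite{Hu-diff-eqn}, simplified here because one of the four insertions is always the lowest-weight vector $v_{\alpha_0-\lambda}$.
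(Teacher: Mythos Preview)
Your proposal is correct and follows essentially the same approach as the paper: apply $\phi_{\cY_1^\lambda,\cY_2^\lambda}$ to the relation \eqref{eqn:L(-1)_power_reln}, use the $L(-1)$-derivative property to convert $L(-1)^n$ into $(d/dz)^n$, divide by $z^\sigma$, and multiply by $(1-z)^{r(\lambda)}$ (your conjugation identity is exactly the paper's ``product rule''). The only cosmetic wrinkle is the index mismatch between $P_{N+\sigma}$ and the $P_N$ appearing in the theorem statement, which your renaming does not quite resolve cleanly, but this harmless discrepancy is already implicit in the paper and is irrelevant for the subsequent application.
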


Since all the coefficient rational functions $a_n(z;\lambda,\alpha_0-\lambda,\lambda)$ in this theorem are analytic at $z=0$, the differential equation has a regular singular point at $z=0$. We will apply the theorem to $\cY_1^\lambda=\Omega_0(Y_{\cF_\lambda})$ and $\cY_2^\lambda=\cE_\lambda$ to prove Theorem \ref{thm:prod_analytic_modified}; we will also use the following result from the theory of ordinary differential equations (see for example \cite[Appendix A]{McR-rat} for a proof):
\begin{thm}\label{thm:diff_eqn_thm_1}
 Consider a regular-singular-point differential equation with parameter $\lambda$,
 \begin{equation*}
  z^N\dfrac{d^N\varphi}{dz^N} =\sum_{n=0}^{N-1} a_n(\lambda,z) z^n\dfrac{d^n\varphi}{dz^n},
 \end{equation*}
where the coefficient functions $a_n(\lambda,z)$ are analytic on $U\times B_1(0)$ for $U$ a non-empty open set of $\lambda\in\CC$ and $B_1(0)$ the open ball of radius $1$ centered at $z=0$. Suppose moreover that
\begin{equation*}
 \bigg\lbrace\varphi(\lambda,z)=\sum_{m\geq 0} q_m(\lambda)\,z^{h(\lambda)+m}\bigg\rbrace_{\lambda\in U}
\end{equation*}
is a family of formal series which solve the differential equation for each $\lambda\in U$, where $h(\lambda)$ and $q_m(\lambda)$ are analytic on $U$. Then for each $\lambda\in U$, the series $\varphi(\lambda,z)$ converges absolutely for each $z\in B_1(0)\setminus\lbrace 0\rbrace$, and
\begin{equation*}
 \varphi(\lambda,e^{\log z})= e^{(\log z)h(\lambda)}\sum_{m\geq 0} q_m(\lambda)\,z^m
\end{equation*}
defines a (single-valued) function which is analytic in both $\lambda$ and $z$ on $U\times(B_1(0)\setminus(-1,0])$.
\end{thm}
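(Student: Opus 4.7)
The plan is to reduce to the classical Frobenius theory of regular singular points and then upgrade pointwise convergence in $z$ to joint convergence in $\lambda$ and $z$ via uniform estimates. First I would substitute the ansatz $\varphi(\lambda,z)=\sum_{m\geq 0}q_m(\lambda)z^{h(\lambda)+m}$ into the differential equation and collect like powers of $z$. This yields on the one hand the indicial polynomial
\begin{equation*}
P(s,\lambda)=s(s-1)\cdots(s-N+1)-\sum_{n=0}^{N-1}a_n(\lambda,0)\,s(s-1)\cdots(s-n+1),
\end{equation*}
of which $h(\lambda)$ must be a root (from the $m=0$ coefficient), and on the other hand a recursion
\begin{equation*}
P(h(\lambda)+m,\lambda)\,q_m(\lambda)=R_m\bigl(\lambda;\,q_0(\lambda),\ldots,q_{m-1}(\lambda)\bigr)\qquad(m\geq 1),
\end{equation*}
where $R_m$ is linear in $q_0,\ldots,q_{m-1}$ with coefficients that are polynomial expressions in the Taylor coefficients $a_{n,k}(\lambda)$ of $a_n(\lambda,z)$ at $z=0$.

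For pointwise convergence at fixed $\lambda\in U$, I would apply the classical Frobenius argument: $P(h(\lambda)+m,\lambda)$ is monic of degree $N$ in $m$, hence nonzero for all sufficiently large $m$, and the Cauchy estimates $|a_{n,k}(\lambda)|\leq M(\lambda)r^{-k}$ (valid for any $r<1$ since $a_n(\lambda,\cdot)$ is analytic on $B_1(0)$), together with the method of majorants, imply $\limsup_{m\to\infty}|q_m(\lambda)|^{1/m}\leq 1$. Hence $\sum_m q_m(\lambda)z^m$ converges absolutely on $B_1(0)$, and the classical theory then shows the single-valued function $e^{h(\lambda)\log z}\sum_m q_m(\lambda)z^m$ is analytic on $B_1(0)\setminus(-1,0]$ for each fixed $\lambda$.

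The main step is making these bounds uniform over compact subsets $K\subseteq U$, and the main obstacle will be the potential for resonance. I would first show that the set of resonant indices $\{m\in\NN:P(h(\lambda)+m,\lambda)=0\text{ for some }\lambda\in K\}$ is finite: for each fixed $\lambda$, $P(\cdot,\lambda)$ has only $N$ roots, so only finitely many $m\in\NN$ are resonant at that $\lambda$; continuous dependence of the roots of $P(s,\lambda)$ on $\lambda$, continuity of $h$ on $K$, and compactness then bound the resonant $m$'s uniformly. Outside this finite set one obtains $|P(h(\lambda)+m,\lambda)|\geq c_Km^N$ uniformly on $K$, while on the finite set the hypothesized analyticity of the $q_m(\lambda)$ gives the required bound directly. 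Combined with uniform Cauchy estimates $|a_{n,k}(\lambda)|\leq M_Kr^{-k}$ on $K$ for every $r<1$, a standard majorant comparison then yields $|q_m(\lambda)|\leq C_K(r')^{-m}$ for every $r'<1$, with $C_K=C_K(r')$ independent of $\lambda\in K$.

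Consequently $\sum_m q_m(\lambda)z^m$ converges uniformly on $K\times\{|z|\leq r'\}$ for every $r'<1$, so as a uniform limit of jointly holomorphic functions it defines a jointly analytic function on $U\times B_1(0)$. Multiplying by $e^{h(\lambda)\log z}$, which is jointly analytic on $U\times(\CC\setminus(-\infty,0])$ and in particular on $U\times(B_1(0)\setminus(-1,0])$, then gives the desired conclusion. The only nontrivial point is the uniform control of resonances described above; everything else is a routine compactness and majorant argument.
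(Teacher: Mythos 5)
Your argument is correct, and notably it is self-contained, whereas the paper itself does not give a proof of this theorem but only cites \cite[Appendix A]{McR-rat}. The structure of your proof — substitute the Frobenius ansatz to obtain the indicial polynomial $P(s,\lambda)$ and the recursion $P(h(\lambda)+m,\lambda)q_m = R_m(q_0,\dots,q_{m-1})$; observe that the resonant indices (those $m$ for which $P(h(\lambda)+m,\lambda)=0$ for some $\lambda$ in a given compact $K\subseteq U$) are bounded, hence finite, because the roots of the monic polynomial $P(\cdot,\lambda)$ and the function $h(\lambda)$ are both bounded on $K$; get the uniform lower bound $|P(h(\lambda)+m,\lambda)|\geq c_K m^N$ for non-resonant $m$; combine with uniform Cauchy estimates on the Taylor coefficients $a_{n,k}(\lambda)$ to run a majorant recursion; absorb the finitely many resonant indices into the constant using the hypothesized analyticity (hence compactness-boundedness) of the $q_m$; conclude uniform geometric decay $|q_m(\lambda)|\leq C_K(r')^{-m}$ for every $r'<1$, hence locally uniform convergence on $U\times B_1(0)$ and joint analyticity by Weierstrass; and finally multiply by the jointly analytic factor $e^{h(\lambda)\log z}$ — is exactly the natural route and fills in the argument the paper leaves to a reference. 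Two small remarks: the claim that $h(\lambda)$ ``must be a root'' of the indicial polynomial tacitly assumes $q_0\not\equiv 0$, which the hypotheses do not require; this is harmless since nothing in the subsequent argument uses it (the resonance bound only uses boundedness of roots and of $h$, not that $h$ is among the roots). Also, the lower bound $|P(h(\lambda)+m,\lambda)|\geq c_K m^N$ with the full power $N$ is genuinely needed in the majorant step, since the coefficients $R_m$ involve factors $\pi_n(h(\lambda)+j)=\mathcal{O}(j^{N-1})$ that must be dominated; you correctly state the bound with exponent $N$, so the estimate closes.
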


We can now complete the proof of Theorem \ref{thm:prod_analytic_modified}:
\begin{proof}
For any homogeneous $u,v\in\cM(p)$, $n\in\ZZ$, and analytic function $r(\lambda)$ on $\CC$, we take $w_1=u_n v_x$, $w_2=o(v) v_x$, and $w_3=v_x$ in Theorem \ref{thm:prod_diff_eqn} to conclude that
\begin{equation}\label{eqn:intw_op_prod_2}
(1-z)^{r(\lambda)}\big\langle v_{\alpha_0-\lambda}, \Omega_0(Y_{\cF_\lambda})(u_n v_\lambda,1)\cE_\lambda(o(v)v_{\alpha_0-\lambda},e^{\log z})v_\lambda\big\rangle
\end{equation}
is a formal solution to a differential equation of the form \eqref{eqn:diff_eqn_form} whose coefficient functions are analytic for $z\in\CC\setminus\lbrace 0,1\rbrace$ and $\lambda\in\CC\setminus S$ where $S$ is the finite set of roots of $P_N(\lambda)P_N(\alpha_0-\lambda)$.
Moreover, this differential equation has a regular singular point at $z=0$, and Proposition \ref{prop:poly_coeff} shows that \eqref{eqn:intw_op_prod_2} satisfies the conditions of Theorem \ref{thm:diff_eqn_thm_1} with $h(\lambda)=-2h_\lambda$ and $U=\CC\setminus S$. Thus \eqref{eqn:intw_op_prod_2} converges absolutely to an analytic function in both $\lambda$ and $z$ on $(\CC\setminus S)\times(B_1(0)\setminus(-1,0])$.
\end{proof}

\subsection{Proof of Theorem \ref{thm:diff_eqn_thm_2}}\label{subsec:analytic_thm_proof}

Throughout this subsection, we use the notation $B_r(z_0)$ for the open ball of radius $r$ centered at $z_0\in\CC$. Recall the setting of Theorem \ref{thm:diff_eqn_thm_2}: we have a series solution $\varphi(\lambda,\zeta)=\sum_{m=1}^M\sum_{k=0}^K f_{m,k}(\lambda,\zeta)e^{h_m\log\zeta}(\log\zeta)^k$ to a differential equation
\begin{equation}\label{eqn:diff_eqn_zeta_2}
\frac{d^N\varphi}{d\zeta^N} =\sum_{n=0}^{N-1} b_n(\lambda,\zeta)\dfrac{d^n\varphi}{d\zeta^n}
\end{equation}
whose coefficient functions $b_n(\lambda,\zeta)$ are analytic in $\lambda$ and $\zeta$ on $U\times(B_1(0)\setminus\lbrace 0\rbrace)$, where $U$ is a non-empty open subset of $\CC$. Moreover:
\begin{itemize}
 \item The $h_m\in\CC$ for $1\leq m\leq M$ are pairwise non-congruent mod $\ZZ$.
 \item For each $m$ and $k$, $f_{m,k}(\lambda,\zeta)=\sum_{n\in\ZZ} f_{m,k,n}(\lambda)\,\zeta^n$ is a Laurent series in $\zeta$ whose coefficients are functions of $\lambda$ defined on $U$.
 \item The series $\varphi(\lambda,\zeta)$ converges absolutely on $U\times(B_1(0)\setminus(-1,0])$, and thus for any $\lambda\in U$, $\varphi(\lambda,\zeta)$ is analytic in $\zeta$ on $B_1(0)\setminus(-1,0]$.
 \item For some non-empty open subset $V_0\subseteq B_1(0)\setminus(-1,0]$, $\varphi(\lambda,\zeta)$ is analytic in both $\lambda$ and $\zeta$ on $U\times V_0$.
\end{itemize}
Our goal is to show that for $1\leq m\leq M$, $0\leq k\leq K$, and $n\in\ZZ$, the coefficient functions $f_{m,k,n}(\lambda)$  are analytic in $\lambda$ on $U$. 
 
 First, since the series $\varphi(\lambda,\zeta)$ is absolutely convergent for $\lambda\in U$ and $\zeta\in B_1(0)\setminus\lbrace 0\rbrace$, so are the Laurent series $f_{m,k}(\lambda,\zeta)=\sum_{n\in\ZZ} f_{m,k,n}(\lambda)\,\zeta^n$. This means that for any simply-connected open subset $V\subseteq B_1(0)\setminus\lbrace 0\rbrace$ and any single-valued branch of logarithm $\ell(\zeta)$ defined on $V$, the series
\begin{equation*}
 \sum_{m=1}^M\sum_{k=0}^K f_{m,k}(\lambda,\zeta)\,e^{h_m\ell(\zeta)}\ell(\zeta)^k
\end{equation*}
also converges absolutely for all $\lambda\in U$ to a function that is analytic in $\zeta$ on $V$. We now show that this new series is also analytic in $\lambda$:
  
\begin{lem}\label{lem:analytic_continuation}
 For any simply-connected open subset $V\subseteq B_1(0)\setminus\lbrace 0\rbrace$ and any single-valued branch of logarithm $\ell(\zeta)$ defined on $V$, the function $\sum_{m=1}^M\sum_{k=0}^K f_{m,k}(\lambda,\zeta)\,e^{h_m\ell(\zeta)}\ell(\zeta)^k$ is analytic in both $\lambda$ and $\zeta$ on $U\times V$.
\end{lem}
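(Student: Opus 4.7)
The plan is to analytically continue $\varphi(\lambda,\zeta)$ out of $U \times V_0$ using the linear differential equation \eqref{eqn:diff_eqn_zeta_2} together with its analytic dependence on the parameter $\lambda$, and then to identify the continuation on $U \times V$ with the sum $\Phi(\lambda,\zeta) := \sum_{m,k} f_{m,k}(\lambda,\zeta)\,e^{h_m\ell(\zeta)}\ell(\zeta)^k$. Joint analyticity of $\Phi$ will then follow from joint analyticity of the continuation.

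The key ODE input is the standard theorem that near each point $(\lambda_0,\zeta_0) \in U \times (B_1(0)\setminus\{0\})$ the equation admits a local fundamental system of solutions $\psi_1(\lambda,\zeta),\ldots,\psi_N(\lambda,\zeta)$ jointly analytic in $(\lambda,\zeta)$ on a product neighborhood; the Wronskian at $\zeta_0$ is then a non-vanishing analytic function of $\lambda$. Any solution $\eta(\lambda,\zeta)$ jointly analytic on such a product neighborhood is uniquely expressible as $\eta = \sum_{j=1}^N c_j(\lambda)\,\psi_j(\lambda,\zeta)$ with $c_j(\lambda)$ analytic in $\lambda$: the coefficients are obtained by applying the analytic inverse of the Wronskian matrix to the initial data $(\eta(\lambda,\zeta_0),\ldots,\partial_\zeta^{N-1}\eta(\lambda,\zeta_0))$, which is analytic in $\lambda$ since $\eta$ is. Applied to $\varphi$ on a subregion of $U \times V_0$, this gives an initial local representation; I then propagate it along any path $\gamma \subset B_1(0)\setminus\{0\}$ starting in $V_0$ by patching along a finite chain of overlapping product neighborhoods, the uniqueness of the representation on each overlap guaranteeing that the patched function is jointly analytic in $(\lambda,\zeta)$ at every stage.

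To finish, I would fix $\zeta_* \in V$ and pick a path $\gamma$ in $B_1(0)\setminus\{0\}$ from some $\zeta_0 \in V_0$ to $\zeta_*$. Let $\ell_\gamma$ denote the branch of $\log$ near $\zeta_*$ obtained by continuing the principal branch along $\gamma$; by composing $\gamma$ with a suitable integer number of small loops around $0$, I may arrange $\ell_\gamma(\zeta_*) = \ell(\zeta_*)$, and then by simple-connectedness of $V$ the germs of $\ell_\gamma$ and $\ell$ agree throughout $V$. The Laurent coefficients $f_{m,k}(\lambda,\zeta) = \sum_n f_{m,k,n}(\lambda)\,\zeta^n$ are single-valued on $B_1(0)\setminus\{0\}$, while $e^{h_m\log\zeta}(\log\zeta)^k$ continues along $\gamma$ precisely to $e^{h_m\ell_\gamma(\zeta)}\ell_\gamma(\zeta)^k$; hence the analytic continuation of $\varphi$ along $\gamma$ coincides with $\Phi$ in a neighborhood of $\zeta_*$. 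Since $\zeta_* \in V$ was arbitrary, $\Phi$ is jointly analytic on $U \times V$. The main technical obstacle is ensuring that the patching in the previous paragraph genuinely delivers joint analyticity in $\lambda$ throughout a punctured disk where no global fundamental system is available; this is precisely what the Wronskian argument, combined with the uniqueness of the local expansion, is designed to handle.
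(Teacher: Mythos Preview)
Your proof is correct and follows essentially the same strategy as the paper: analytically continue $\varphi(\lambda,\zeta)$ along a path from $V_0$ to an arbitrary point of $V$, using the differential equation to propagate joint analyticity in $(\lambda,\zeta)$ across a finite chain of overlapping disks, and then identify the continuation with $\Phi$ via the chosen branch of logarithm. The only difference is cosmetic: you invoke a local fundamental system and a Wronskian argument to pass joint analyticity from one disk to the next, whereas the paper expands the solution as a power series at the new center with coefficients analytic in $\lambda$ and appeals to the regular-point theory of the ODE to extend that series over the full disk.
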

\begin{proof}
Fix any $\zeta_1\in V$; we need to show that $\varphi_V(\lambda,\zeta)=\sum_{m=1}^M\sum_{k=0}^K f_{m,k}(\lambda,\zeta)\,e^{h_m\ell(\zeta)}\ell(\zeta)^k$ is analytic in both $\lambda$ and $\zeta$ for $\lambda\in U$ and $\zeta$ contained in an open neighborhood of $\zeta_1$. Recall we are assuming that $\varphi(\lambda,\zeta)=\sum_{m=1}^M\sum_{k=0}^K f_{m,k}(\lambda,\zeta)\,e^{h_m\log\zeta}(\log\zeta)^k$ is analytic in both $\lambda$ and $\zeta$ on $U\times V_0$ for some non-empty open set $V_0\subseteq B_1(0)\setminus(-1,0]$. Then if we fix $\zeta_0\in V_0$, we can obtain $\varphi_V(\lambda,\zeta)$ on $V$ (for any $\lambda\in U$) by analytic continuation of $\varphi(\lambda,\zeta)$ along some continuous path $\gamma: [0,1]\rightarrow B_1(0)\setminus\lbrace 0\rbrace$ such that $\gamma(0)=\zeta_0$ and $\gamma(1)=\zeta_1$.

We can cover the image of the path $\gamma$ with finitely many overlapping open disks $B_r(\gamma(t_i))$, $0\leq i\leq I$, as follows: First take $r>0$ to be no larger than the minimum distance from the image of $\gamma$ to the compact set $(\overline{B_1(0)}\setminus B_1(0))\cup\lbrace 0\rbrace$, so that $B_r(\gamma(t))\subseteq B_1(0)\setminus\lbrace 0\rbrace$ for all $t\in[0,1]$. We then take $t_0=0$ so that our first disk is $B_r(\zeta_0)$; for convenience, we may assume $r$ is small enough so that $B_r(\zeta_0)\subseteq V_0$. Now assuming we have chosen $t_i$ for some $i\geq 0$, we choose $t_{i+1}$ (if it exists) to be the minimum element of $[t_i,1]$  such that $\vert\gamma(t_{i+1})-\gamma(t_i)\vert\geq \frac{r}{2}$. This process terminates after finitely many steps: Since $\gamma$ is (uniformly) continuous on the compact set $[0,1]$, there is a $\delta>0$ such that $\vert\gamma(t)-\gamma(\til{t})\vert<\frac{r}{2}$ if $\vert t-\til{t}\vert<\delta$; consequently, $t_{i+1}-t_i>\delta$ for all $i$ such that $t_{i+1}$ exists. At the end of the process, we have $\zeta_1=\gamma(1)\in B_{r/2}(\gamma(t_I))$ by construction.

We now prove by induction on $i$ that the analytic continuation of $\varphi(\lambda,\zeta)$ along the path $\gamma$ is analytic in both $\lambda$ and $\zeta$ on $U\times B_r(\gamma(t_i))$. Since $\zeta_1\in B_r(\gamma(t_I))$, this will prove the lemma. For $i=0$, the analyticity claim holds by assumption since we are assuming $B_r(\gamma(t_0))=B_r(\zeta_0)\subseteq V_0$. If we now assume the claim holds for some $i\geq 0$, then the analytic continuation to $B_r(\gamma(t_{i+1}))$, which we denote as $\varphi_{i+1}(\lambda,\zeta)$, is analytic in both $\lambda$ and $\zeta$ on $U\times(B_r(\gamma(t_i))\cap B_r(\gamma(t_{i+1})))$. Since $\gamma(t_{i+1})\in B_r(\gamma(t_i))\cap B_r(\gamma(t_{i+1}))$ by construction, we can thus expand
\begin{equation}\label{eqn:phi_power_series}
 \varphi_{i+1}(\lambda,\zeta)=\sum_{n\geq 0} \varphi_{i+1,n}(\lambda)(\zeta-\gamma(t_{i+1}))^n
\end{equation}
for all $\lambda\in U$ and $\zeta\in B_\varepsilon(\gamma(t_{i+1}))$ for $\varepsilon>0$ sufficiently small, where the functions $\varphi_{i+1,n}(\lambda)$ can be expanded as a power series about any $\lambda\in U$ and thus are analytic functions on $U$.

We also know by induction that $\varphi_{i+1}(\lambda,\zeta)$ is a solution to the differential equation \eqref{eqn:diff_eqn_zeta_2} on $U\times B_r(\gamma(t_{i+1}))$. Since $\gamma(t_{i+1})$ is a regular point of the differential equation, since the coefficient functions $b_n(\lambda,\zeta)$ in \eqref{eqn:diff_eqn_zeta_2} are analytic in both $\lambda$ and $\zeta$ on $U\times B_r(\gamma(t_{i+1}))$, and since the coefficient functions $\varphi_{i+1,n}(\lambda)$ are analytic on $U$, it follows that the power series \eqref{eqn:phi_power_series} converges absolutely to a solution of \eqref{eqn:diff_eqn_form} on the entire domain $U\times B_r(\gamma(t_{i+1}))$, and that moreover this solution is analytic in both $\lambda$ and $\zeta$ on $U\times B_r(\gamma(t_{i+1}))$ (recall the regular singular point generalization of this result in Theorem \ref{thm:diff_eqn_thm_1}). Thus $\varphi_{i+1}(\lambda,\zeta)$ is analytic in both $\lambda$ and $\zeta$, proving the inductive hypothesis and thus also the lemma.
\end{proof}

We will use the preceding lemma to show that the functions $f_{m,k}(\lambda,\zeta)$ are analytic in both $\lambda$ and $\zeta$. To do so, we fix for any simply-connected open subset of $V\subseteq B_1(0)\setminus\lbrace 0\rbrace$ an analytic single-valued branch of logarithm $\ell(\zeta)$ defined on $V$. Let $\cS(U,V)$ denote the set of all functions $\psi(\lambda,\zeta)$ such that:
\begin{itemize}
 \item $\psi(\lambda,\zeta)$ is analytic in both $\lambda$ and $\zeta$ on $U\times V$.
 \item $\psi(\lambda,\zeta)$ has the form
\begin{equation*}
 \psi(\lambda,\zeta)=\sum_{m=1}^M\sum_{k=0}^K g_{m,k}(\lambda,\zeta)e^{h_m\ell(\zeta)}\ell(\zeta)^k
\end{equation*}
for some $M\in\ZZ_+$ and $K\in\NN$, where the $h_m\in\CC$ are non-congruent mod $\ZZ$ and each $g_{m,k}(\lambda,\zeta)=\sum_{n\in\ZZ} g_{m,k,n}(\lambda)\zeta^n$ is an absolutely-convergent Laurent series for each $\lambda\in U$.

\item Every single-valued branch
\begin{equation*}
 \psi^{(n)}(\lambda,\zeta)=\sum_{m=1}^M\sum_{k=0}^K e^{2\pi i n h_m} g_{m,k}(\lambda,\zeta)e^{h_m\ell(\zeta)}(\ell(\zeta)+2\pi i n)^k
\end{equation*}
for $n\in\ZZ$ is also analytic in both $\lambda$ and $\zeta$ on $U\times V$.
\end{itemize}
It is clear that every $e^{h\ell(\zeta)}\CC[\ell(\zeta)]$-linear combination of functions in $\cS(U,V)$ is an element of $\cS(U,V)$, for any $h\in\CC$, and that if $\psi\in\cS(U,V)$, then $\psi^{(n)}\in\cS(U,V)$ as well for any $n\in\ZZ$. By Lemma \ref{lem:analytic_continuation}, our original series $\varphi(\lambda,\zeta)$ is a function in $\cS(U,V)$ for any simply-connected open set $V\subseteq B_1(0)\setminus\lbrace 0\rbrace$.

\begin{lem}\label{lem:coeff_analytic}
 For any function $\psi(\lambda,\zeta)=\sum_{m=1}^M\sum_{k=0}^K g_{m,k}(\lambda,\zeta)e^{h_m\ell(\zeta)}\ell(\zeta)^k\in\cS(U,V)$, the Laurent series $g_{m,k}(\lambda,\zeta)$ are analytic in both $\lambda$ and $\zeta$ on $U\times V$, for all $1\leq m\leq M$ and $0\leq k\leq K$.
\end{lem}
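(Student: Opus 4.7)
The plan is to show that the $g_{m,k}(\lambda,\zeta)$ can be recovered as constant-coefficient linear combinations of the branches $\psi^{(n)}(\lambda,\zeta)$ for finitely many $n \in \ZZ$, and therefore inherit their joint analyticity on $U \times V$.

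First I would expand each $\psi^{(n)}$ using the binomial theorem on $(\ell(\zeta)+2\pi i n)^k$ and regroup by powers of $n$, obtaining
\begin{equation*}
\psi^{(n)}(\lambda,\zeta) = \sum_{m=1}^M q_m^{n} \sum_{j=0}^K A_{m,j}(\lambda,\zeta)\,n^{j},
\end{equation*}
where $q_m := e^{2\pi i h_m}$ and, for each $m$,
\begin{equation*}
A_{m,j}(\lambda,\zeta) = (2\pi i)^j e^{h_m\ell(\zeta)} \sum_{k=j}^K \binom{k}{j}\ell(\zeta)^{k-j}\,g_{m,k}(\lambda,\zeta).
\end{equation*}
The hypothesis that the $h_m$ are pairwise non-congruent mod $\ZZ$ is precisely the condition that the $q_m$ are distinct non-zero complex numbers.

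Next I would set $N = M(K+1)$ and use the integer values $n_i = i$ for $i=0,\ldots,N-1$. This yields the linear system
\begin{equation*}
\psi^{(i)}(\lambda,\zeta) = \sum_{(m,j)} q_m^{i}\,i^{j}\,A_{m,j}(\lambda,\zeta),\qquad i = 0,\ldots,N-1,
\end{equation*}
whose coefficient matrix $\mathbf{M} = (q_m^{i}\,i^{j})_{i,(m,j)}$ has constant entries. I would then prove $\mathbf{M}$ is invertible as follows: the sequences $(q_m^n n^j)_{n\ge 0}$, for $1\le m\le M$ and $0\le j\le K$, form a basis of the $N$-dimensional solution space of the linear recurrence with characteristic polynomial $\prod_{m=1}^M (x-q_m)^{K+1}$. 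Since any such sequence is determined by its first $N$ values, the evaluation map from the solution space to $\CC^N$ is an isomorphism, and its matrix in this basis is exactly $\mathbf{M}$.

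Consequently each $A_{m,j}(\lambda,\zeta)$ is a constant linear combination of $\psi^{(0)}(\lambda,\zeta),\ldots,\psi^{(N-1)}(\lambda,\zeta)$, all of which are jointly analytic on $U\times V$ by the definition of $\cS(U,V)$; hence every $A_{m,j}$ is analytic on $U\times V$. Finally I would recover the $g_{m,k}$ by back-substitution on $j$: starting from $g_{m,K} = (2\pi i)^{-K}e^{-h_m\ell(\zeta)}A_{m,K}$ and using that $\ell(\zeta)$ and $e^{-h_m\ell(\zeta)}$ are analytic on $V$, one solves successively for $g_{m,K-1},g_{m,K-2},\ldots,g_{m,0}$ as analytic functions on $U\times V$. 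The only subtle point is the invertibility of $\mathbf{M}$, and it is handled cleanly by the linear-recurrence argument above; all other steps are formal manipulations.
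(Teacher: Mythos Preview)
Your argument is correct and complete. It differs from the paper's proof in organization rather than in underlying content.

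The paper proceeds by a nested induction: first on $K$, with the base case $K=0$ handled by an ordinary Vandermonde matrix in the $q_m=e^{2\pi i h_m}$; then, for $K\geq 1$, by induction on the largest index $\til{m}$ with $g_{\til{m},K}\neq 0$, using the difference $\psi^{(1)}-e^{2\pi i h_{\til{m}}}\psi$ to kill that top coefficient and drop either $\til{m}$ or $K$ by one. Your approach instead assembles the whole $N\times N$ system ($N=M(K+1)$) at once from the branches $\psi^{(0)},\dots,\psi^{(N-1)}$ and inverts it in a single stroke; the matrix $(q_m^{\,i}\,i^{\,j})$ is a confluent Vandermonde, and your linear-recurrence argument is a clean way to see its invertibility. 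The back-substitution step recovering $g_{m,k}$ from the $A_{m,j}$ is then the upper-triangular inverse of the binomial transform, exactly parallel to the paper's descent in $K$. What your route buys is uniformity and brevity (no nested induction); what the paper's route buys is that it never needs to name or justify the full confluent Vandermonde, only the ordinary one in the base case, with the rest done by an explicit one-step reduction.
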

\begin{proof}
 We prove the lemma by induction on $K$. When $K=0$, and setting $g_m=g_{m,0}$ for $1\leq m\leq M$, we have
 \begin{equation*}
  \left[\begin{array}{cccc}
         1 & 1 & \cdots & 1\\
         e^{2\pi i h_1} & e^{2\pi i h_2} & \cdots & e^{2\pi i h_M}\\
         \vdots & \vdots & \ddots & \vdots \\
         e^{2\pi i (M-1) h_1} & e^{2\pi i (M-1) h_2} & \cdots & e^{2\pi i (M-1) h_M}\\
        \end{array}\right]
\left[\begin{array}{c}
       g_{1}(\lambda,\zeta)e^{h_1\ell(\zeta)}\\
       g_{2}(\lambda,\zeta)e^{h_2\ell(\zeta)}\\
       \vdots\\
       g_{M}(\lambda,\zeta)e^{h_M\ell(\zeta)}\\
      \end{array}\right]
=\left[\begin{array}{c}
       \psi(\lambda,\zeta)\\
       \psi^{(1)}(\lambda,\zeta)\\
       \vdots\\
       \psi^{(M-1)}(\lambda,\zeta)\\
      \end{array}\right]
 \end{equation*}
Because the $h_m$ are non-congruent mod $\ZZ$, the Vandermonde matrix is invertible and hence each $g_{m}(\lambda,\zeta)e^{h_m\ell(\zeta)}$ is a $\CC$-linear combination of functions which are analytic in both $\lambda$ and $\zeta$ on $U\times V$. Thus each $g_{m}(\lambda,\zeta)$ is also analytic on $U\times V$.

Now we prove the case $K\geq 1$ by induction on the maximum $\til{m}\leq M$ such that $g_{\til{m},K}(\lambda,\zeta)\neq 0$. We have
\begin{align*}
 \psi^{(1)}(\lambda,\zeta)-e^{2\pi i h_{\til{m}}}\psi(\lambda,\zeta) & =\sum_{m\neq \til{m}}\sum_{k=0}^K g_{m,k}(\lambda,\zeta)e^{h_m\ell(\zeta)}\left(e^{2\pi i h_m}(\ell(\zeta)+2\pi i)^k-e^{2\pi i h_{\til{m}}}\ell(\zeta)^k\right)\nonumber\\
 &\hspace{4em}+e^{2\pi i h_{\til{m}}}\sum_{k=0}^K g_{\til{m},k}(\lambda,\zeta)e^{h_{\til{m}}\ell(\zeta)}\left((\ell(\zeta)+2\pi i)^k-\ell(\zeta)^k\right)\nonumber\\
 & =\sum_{m=1}^M\sum_{k=0}^K g'_{m,k}(\lambda,\zeta) e^{h_m\ell(\zeta)}\ell(\zeta)^k,
\end{align*}
where $g'_{m,K}(\lambda,\zeta)=0$ for $m\geq \til{m}$ and
\begin{equation*}
 g'_{\til{m},K-1} =2\pi i K e^{2\pi i h_{\til{m}}} g_{\til{m},K}(\lambda,\zeta).
\end{equation*}
Since $\psi^{(1)}(\lambda,\zeta)-e^{2\pi i h_{\til{m}}}\psi(\lambda,\zeta)\in\cS(U,V)$, induction on $\til{m}$ implies that $g_{\til{m},K}(\lambda,\zeta)$ is analytic in both $\lambda$ and $\zeta$ on $U\times V$. Then also $g_{\til{m},K}(\lambda,\zeta)e^{h_{\til{m}}\ell(\zeta)}\ell(\zeta)^K\in\cS(U,V)$, so that
\begin{equation*}
 \psi(\lambda,\zeta)-g_{\til{m},K}(\lambda,\zeta)e^{h_{\til{m}}\ell(\zeta)}\ell(\zeta)^K\in\cS(U,V)
\end{equation*}
as well. Then by induction on $\til{m}$ again, this implies that $g_{m,k}(\lambda,\zeta)$ is analytic in both $\lambda$ and $\zeta$ on $U\times V$ for all $m$ and $k$, completing the induction. 

Note that this argument also works for the base case $\til{m}=1$ of the induction on $\til{m}$, since in this case  $K-1$ is the maximum power of $\ell(\zeta)$ in $\psi^{(1)}(\lambda,\zeta)-e^{2\pi i h_1}\psi(\lambda,\zeta)$, and then the inductive hypothesis for the induction on $K$ yields the desired analyticity of $g_{1,K}(\lambda,\zeta)$.
\end{proof}

The preceding lemma implies that the coefficient functions $f_{m,k}(\lambda,\zeta)$ of our original series $\varphi(\lambda,\zeta)$ are analytic in both $\lambda$ and $\zeta$ on $U\times(B_1(0)\setminus\lbrace 0\rbrace)$. Thus each $f_{m,k}$ has a Laurent series expansion
\begin{equation*}
 f_{m,k}(\lambda,\zeta)=\sum_{n,n'\in\ZZ} c_{m,k,n,n'}\,(\lambda-\lambda_0)^{n'}\zeta^n
\end{equation*}
about $(\lambda_0,0)$ for any $\lambda_0\in U$, where 
\begin{equation*}
 c_{m,k,n,n'}=\frac{1}{(2\pi i)^2}\oint_{\vert\zeta\vert=r}\oint_{\vert\lambda-\lambda_0\vert=r'} \zeta^{-n-1}(\lambda-\lambda_0)^{-n'-1} f_{m,k}(\lambda,\zeta)\,d\lambda\,d\zeta
\end{equation*}
for suitable $r,r'>0$. Since $f_{m,k}(\lambda,\zeta)$ is analytic in $\lambda$ at $\lambda_0$ for any $\zeta$ such that $\vert\zeta\vert=r$, we get $c_{m,k,n,n'}=0$ for $n'<0$. Thus by the uniqueness of Laurent series expansions, all coefficient functions $f_{m,k,n}(\lambda)$ for $n\in\ZZ$ have power series expansions about any $\lambda_0\in U$ and thus are analytic on $U$. This completes the proof of Theorem \ref{thm:diff_eqn_thm_2}.

\section{Application to cyclic orbifolds of the triplet algebras}

The full automorphism group of the triplet vertex operator algebra $\cW(p)$, $p>1$, is $PSL(2,\CC)$ \cite{ALM}, and thus the (conjugacy classes of) finite automorphism groups of $\cW(p)$ follow an ADE classification. For $m\in\ZZ_+$, the finite subgroup of $PSL(2,\CC)$ of type $A_m$ is $\ZZ/m\ZZ$, and the vertex operator algebra $\cW(p)^{A_m}$ is the corresponding cyclic orbifold subalgebra of $\cW(p)$. It is a simple current extension of $\cM(p)$:
\begin{equation}\label{eqn:Wp_Am_decomp}
\cW(p)^{A_m} = \bigoplus_{n \in \ZZ} \cM_{2mn+1, 1}.
\end{equation}
In \cite{ALM}, Adamovi\'{c}, Lin, and Milas proved that $\cW(p)^{A_m}$ is $C_2$-cofinite and they constructed $2pm^2$ distinct irreducible $\cW(p)^{A_m}$-modules, which they conjectured to be the full list of irreducible $\cW(p)^{A_m}$-modules \cite[Conjecture~4.10]{ALM}. They verified this conjecture for small values of $m$ and $p$ in \cite{ALM2}, and then in \cite{AM-log-mods-app}, Adamovi\'{c} and Milas reduced the conjecture to \cite[Conjecture~2.3]{AM-log-mods-app}, which amounts to the simple current property of the $\cM(p)$-modules $\cM_{r,1}$, $r\in\ZZ$. Thus the fusion rules in Lemma \ref{lem:simple_curr_tens} and \cite[Theorem 5.2.1(1)]{CMY-singlet} combined with \cite[Theorem 2.5]{AM-log-mods-app} already complete the classification of irreducible $\cW(p)^{A_m}$-modules. In this section, we will use the tensor category structure on $\cO^T_{\cM(p)}$ and the vertex operator algebra extension theory of \cite{CKM-exts, CMY-completions} to quickly rederive this classification of irreducible $\cW(p)^{A_m}$-modules. We will also describe the projective covers of all irreducible $\cW(p)^{A_m}$-modules, compute all fusion rules involving irreducible $\cW(p)^{A_m}$-modules, and establish rigidity and non-degeneracy of the braided tensor category of $\cW(p)^{A_m}$-modules.

Before studying the representation theory of $\cW(p)^{A_m}$ in more detail, we  recall the direct limit completions of vertex tensor categories studied in \cite{CMY-completions}. For any vertex operator algebra $V$ and category $\cC$ of grading-restricted generalized $V$-modules, the direct limit completion, or $\mathrm{Ind}$-category, of $\cC$ is defined to be the category $\ind(\cC)$ of generalized $V$-modules (typically with infinite-dimensional conformal weight spaces) whose objects are the unions of their $\cC$-submodules. Equivalently, a generalized $V$-module $X$ is an object of $\ind(\cC)$ if and only if every vector $b\in X$ generates a $V$-submodule which is an object of $\cC$. The main Theorem 1.1 of \cite{CMY-completions} states that $\ind(\cC)$ is a vertex algebraic braided tensor category (with structure as given in \cite{HLZ1}-\cite{HLZ8}) under the following conditions:
\begin{itemize}
 \item The category $\cC$ is closed under submodules, quotients, and finite direct sums, and every module in $\cC$ is finitely generated.
 
 \item The vertex operator algebra $V$ is an object of $\cC$, and $\cC$ admits the vertex algebraic braided tensor category structure of \cite{HLZ1}-\cite{HLZ8}.
 
 \item For any intertwining operator $\cY$ of type $\binom{X}{W_1\,W_2}$ where $W_1$, $W_2$ are objects of $\cC$ and $X$ is an object of $\ind(\cC)$, the image $\im\cY\subseteq X$ is an object of $\cC$.
\end{itemize}
For the third condition above, recall that the image of an intertwining operator $\cY$ of type $\binom{X}{W_1\,W_2}$ is the submodule of $X$ spanned by coefficients of powers of $x$ and $\log x$ in $\cY(w_1,x)w_2$ for $w_1\in W_1$, $w_2\in W_2$. In the case $V=\cM(p)$, we have:
\begin{prop}\label{prop:ind_OT_tens_cat}
 The direct limit completions $\ind(\cO_{\cM(p)})$ and $\ind(\cO_{\cM(p)}^T)$ both admit the vertex algebraic braided tensor category structure of \cite{HLZ8}.
\end{prop}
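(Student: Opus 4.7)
The plan is to invoke the main theorem of \cite{CMY-completions} on direct limit completions (recapitulated in the excerpt immediately before the statement), which reduces the assertion to verifying three conditions for each of the tensor categories $\cC = \cO_{\cM(p)}$ and $\cC = \cO_{\cM(p)}^T$: (i) $\cC$ is closed under submodules, quotients, and finite direct sums, and every object of $\cC$ is finitely generated; (ii) $\cM(p)$ is an object of $\cC$, and $\cC$ carries the HLZ vertex algebraic braided tensor structure; (iii) for any intertwining operator $\cY$ of type $\binom{X}{W_1\,W_2}$ with $W_1, W_2 \in \cC$ and $X \in \ind(\cC)$, the image $\im\cY \subseteq X$ again lies in $\cC$.

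For $\cC = \cO_{\cM(p)}$, conditions (i) and (ii) are immediate from Theorem \ref{thm:C1_equals_fl}: that result identifies $\cO_{\cM(p)}$ with the category of finite-length grading-restricted generalized $\cM(p)$-modules (so every object is finitely generated and the listed stability properties are obvious), and equips it with the HLZ braided tensor structure. For condition (iii), I will apply Miyamoto's Key Theorem \cite[Key Theorem]{Mi} to conclude that the image $\im\cY$ of an intertwining operator among $C_1$-cofinite modules is itself $C_1$-cofinite; since $\im\cY$ is then a finitely generated submodule of $X \in \ind(\cO_{\cM(p)})$, the defining property of the Ind-completion places $\im\cY$ inside some $\cO_{\cM(p)}$-submodule of $X$, and closure under submodules yields $\im\cY \in \cO_{\cM(p)}$.

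For $\cC = \cO_{\cM(p)}^T$, conditions (i) and (ii) are assembled from earlier results: Theorem \ref{thm:OT_properties}(2) supplies closure under submodules and quotients; Definition \ref{defi:OT} gives closure under finite direct sums; Proposition \ref{prop:irred_mod_grading} places $\cM(p) = \cM_{1,1}$ into $\cO_{\cM(p)}^{0+2L^\circ}\subseteq \cO_{\cM(p)}^T$; and Theorem \ref{thm:OT_properties}(3) identifies $\cO_{\cM(p)}^T$ as a full tensor subcategory of $\cO_{\cM(p)}$, so it inherits the HLZ tensor structure. For condition (iii), the previous paragraph shows $\im\cY \in \cO_{\cM(p)}$, so $\im\cY$ has finite length and is in particular finitely generated. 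Writing $X$ as the directed union of its $\cO_{\cM(p)}^T$-submodules, the finitely many generators of $\im\cY$ must all lie in a common $\cO_{\cM(p)}^T$-submodule $Y \subseteq X$, and then $\im\cY \subseteq Y$ belongs to $\cO_{\cM(p)}^T$ by Theorem \ref{thm:OT_properties}(2). I anticipate no substantive technical obstacle: the only non-formal ingredient is Miyamoto's theorem, invoked as a black box, and the remainder of the argument is bookkeeping against the structural results established earlier in the paper.
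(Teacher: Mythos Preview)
Your proof is correct and follows essentially the same strategy as the paper's: verify the three hypotheses of \cite[Theorem 1.1]{CMY-completions}. For $\cO_{\cM(p)}$, the paper simply cites \cite[Theorem 7.1]{CMY-completions} (which packages the verification for $C_1$-cofinite categories closed under contragredients), whereas you unpack that verification directly via Miyamoto's Key Theorem; this is the same content at a different level of abstraction.

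There is one genuine, if minor, difference in how you handle condition (iii) for $\cO_{\cM(p)}^T$. After establishing $\im\cY\in\cO_{\cM(p)}$, the paper observes that the universal property of the tensor product in $\cO_{\cM(p)}$ exhibits $\im\cY$ as a \emph{quotient} of $W_1\tens W_2$, which lies in $\cO_{\cM(p)}^T$ since that is a tensor subcategory; closure under quotients (Theorem \ref{thm:OT_properties}(2)) then finishes. You instead use finite generation of $\im\cY$ to embed it in some $\cO_{\cM(p)}^T$-\emph{submodule} of $X$ and invoke closure under submodules. Both arguments are valid; the paper's route avoids appealing to the Ind-completion structure of $X$ a second time, while yours avoids invoking the universal property. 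Neither approach offers a substantive advantage over the other.
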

\begin{proof}
For $\ind(\cO_{\cM(p)})$, the result follows from \cite[Theorem 7.1]{CMY-completions}, which states that for any vertex operator algebra $V$, the category of $C_1$-cofinite grading-restricted generalized $V$-modules satisfies the conditions of \cite[Theorem 1.1]{CMY-completions} if it is closed under contragredient modules. For $\ind(\cO_{\cM(p)}^T)$, the intertwining operator condition is the only one left to check. If $\cY$ is an intertwining operator of type $\binom{X}{W_1\,W_2}$ where $W_1$, $W_2$ are objects of $\cO^T_{\cM(p)}$ and $X$ is an object of $\ind(\cO^T_{\cM(p)})$, then $\im\cY$ is an object of $\cO_{\cM(p)}$ since the category of $C_1$-cofinite $\cM(p)$-modules and its direct limit completion satisfy the conditions of \cite[Theorem 1.1]{CMY-completions}. Thus by the universal property of tensor products in $\cO_{\cM(p)}$, $\im\cY$ is a quotient of $W_1\tens W_2$. But since $W_1\tens W_2$ is also the tensor product of $W_1$ and $W_2$ in $\cO_{\cM(p)}^T$, and since $\cO_{\cM(p)}^T$ is closed under quotients, $\im\cY$ is also an object of $\cO_{\cM(p)}^T$.
\end{proof}

We also have:
\begin{prop}\label{prop:gen_Mp_mod_in_DLC}
 Any grading-restricted generalized $\cM(p)$-module is an object of the direct limit completion $\ind(\cO_{\cM(p)})$.
\end{prop}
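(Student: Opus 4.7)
The plan is to show that every cyclic submodule $\langle b\rangle$ of a grading-restricted generalized $\cM(p)$-module $M$ has finite length; since $M$ is the union of its cyclic submodules, this will place it in $\ind(\cO_{\cM(p)})$ by Theorem \ref{thm:C1_equals_fl}, which identifies $\cO_{\cM(p)}$ with the category of finite-length grading-restricted generalized $\cM(p)$-modules. Because $b$ has only finitely many non-zero components in the generalized $L(0)$-eigenspace decomposition of $M$, and because the class of finite-length grading-restricted modules is closed under submodules and finite sums, I first reduce to the case in which $b$ is itself a generalized $L(0)$-eigenvector of some weight $h$. Then $N:=\langle b\rangle$ inherits grading-restriction from $M$ and has conformal weights in $h_0+\NN$ for some minimal $h_0\leq h$, since $M_{[h-n]}=0$ for $n$ sufficiently large.

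The key construction will be the finite filtration $N^{(n)}:=\langle\bigoplus_{k=0}^n N_{[h_0+k]}\rangle$ for $n=0,1,\ldots,h-h_0$, which exhausts $N$ because $b\in N_{[h]}\subseteq N^{(h-h_0)}$ generates $N$. In the quotient $N/N^{(n-1)}$, every weight space of conformal weight strictly less than $h_0+n$ vanishes by construction, so the image $T_n$ of $N_{[h_0+n]}$ lies entirely in the minimum-weight space of $N/N^{(n-1)}$. Thus $T_n$ is a finite-dimensional $A(\cM(p))$-submodule of the level-zero top level $T(N/N^{(n-1)})$, and it generates $N^{(n)}/N^{(n-1)}$ as an $\cM(p)$-module. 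By the universal property of the generalized Verma functor $G$, each $N^{(n)}/N^{(n-1)}$ is then a quotient of $G(T_n)$.

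It will remain to show that each $G(T_n)$ has finite length. Since $T_n$ is finite-dimensional over $\CC$, it admits a finite composition series as an $A(\cM(p))$-module, and right-exactness of the left adjoint $G$ will lift this to a filtration of $G(T_n)$ whose successive quotients are quotients of $G(S)$ for various finite-dimensional simple $A(\cM(p))$-modules $S$. By Zhu's correspondence, such $S$ are precisely the top levels of the irreducible $\NN$-gradable $\cM(p)$-modules classified in Section 2.3, namely the $T(\cM_{r,s})$ and $T(\cF_\lambda)$, so Theorems \ref{thm:atypical_GVM} and \ref{thm:typical_GVM} apply directly and deliver finite length of each $G(S)$. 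Combining these facts yields finite length of $G(T_n)$, of each $N^{(n)}/N^{(n-1)}$, and hence of $N$.

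The main subtlety is ensuring that $T_n$ really lies in the level-zero top level $T(N/N^{(n-1)})$, not merely in some higher-level Zhu filtration, so that Theorems \ref{thm:typical_GVM} and \ref{thm:atypical_GVM} can be invoked without any generalization to the $A_N(\cM(p))$ setting; this will be automatic from the construction, since $N^{(n-1)}$ already absorbs every weight space of conformal weight $<h_0+n$, forcing $T_n$ into the minimum-weight space of the quotient.
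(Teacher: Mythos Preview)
Your argument is correct and follows essentially the same strategy as the paper's proof: both use Theorems \ref{thm:typical_GVM} and \ref{thm:atypical_GVM} to bound subquotients by finite-length generalized Verma modules. The organizational details differ slightly. The paper first decomposes $M=\bigoplus_{\mu\in\CC/\ZZ} M_\mu$ and then, for each $M_\mu$, builds a possibly infinite exhausting filtration $M_1\subseteq M_2\subseteq\cdots$ by repeatedly choosing a single irreducible $A(\cM(p))$-submodule of the lowest weight space of $M_\mu/M_{i-1}$; each $M_i/M_{i-1}$ is then a quotient of some $G(S)$ with $S$ simple. You instead prove directly that every cyclic submodule has finite length, using a finite filtration indexed by weights below the generator and handling non-simple $T_n$ via right-exactness of the left adjoint $G$. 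Your version is slightly more streamlined for the purpose at hand (membership in $\ind(\cO_{\cM(p)})$ is by definition about finitely-generated, hence cyclic, submodules), while the paper's version avoids invoking right-exactness of $G$ by reducing to simple $A(\cM(p))$-modules one at a time. Either way, the substantive input is the same.
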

\begin{proof}
 Let $M$ be a grading-restricted generalized $\cM(p)$-module. The grading-restriction conditions imply that
 \begin{equation*}
  M=\bigoplus_{\mu\in\CC/\ZZ}\bigoplus_{n\in\NN} M_{[h_\mu+n]},
 \end{equation*}
where for any coset $\mu\in\CC/\ZZ$, $h_\mu\in\mu$ is chosen so that $M_{[h_\mu-n]}=0$ for $n\in\ZZ_+$. Moreover, each $M_{[h_\mu+n]}$ is finite dimensional. To show that $M$ is an object of $\ind(\cO_{\cM(p)})$, it is enough to show that each (grading-restricted) submodule $M_\mu=\bigoplus_{n\in\NN} M_{h_\mu+n}$ is an object of $\ind(\cO_{\cM(p)})$ (since $\ind(\cO_{\cM(p)})$ is closed under arbitrary direct sums).

Pick an irreducible $A(\cM(p))$-submodule of the (finite-dimensional) lowest conformal weight space of $M_\mu$; it generates an $\cM(p)$-submodule $M_1\subseteq M_\mu$. Then pick an irreducible $A(\cM(p))$-submodule of the lowest conformal weight space of $M_\mu/M_1$; it generates an $\cM(p)$-submodule $M_2/M_1\subseteq M_\mu/M_1$. Continuing in this manner, we obtain a filtration
\begin{equation*}
0 \subseteq M_1 \subseteq M_2 \subseteq \cdots \subseteq M_\mu,
\end{equation*}
where each $M_i/M_{i-1}$ is a homomorphic image of a generalized Verma $\cM(p)$-module. 

Since Theorems \ref{thm:typical_GVM} and \ref{thm:atypical_GVM} show that all generalized Verma $\cM(p)$-modules have finite length, each $M_i$ has finite length and thus is an object of $\cO_{\cM(p)}$. Further, finite-dimensionality of the weight spaces of $M_\mu$ imply that $M_\mu=\cup_{i=1}^\infty M_i$, so each $M_\mu$, and thus also $M=\bigoplus_{\mu\in\CC/\ZZ} M_\mu$, is the union of its $\cO_{\cM(p)}$-submodules. Thus $M$ is an object of $\ind(\cO_{\cM(p)})$.
\end{proof}

Now by \eqref{eqn:Wp_Am_decomp}, the vertex operator algebra $\cW(p)^{A_m}$ restricts to an $\cM(p)$-module in $\ind(\cO_{\cM(p)})$ (and also in $\ind(\cO_{\cM(p)}^T)$). Thus by \cite[Theorem 3.2]{HKL} (or more precisely \cite[Theorem 7.5]{CMY-completions}), $\cW(p)^{A_m}$ is a commutative algebra in the braided tensor category $\ind(\cO_{\cM(p)})$ (or $\ind(\cO_{\cM(p)}^T)$). We use $\rep \cW(p)^{A_m}$ to denote the tensor category of (possibly non-local) $\cW(p)^{A_m}$-modules (as in \cite{KO, HKL, CKM-exts, CMY-completions}) which restrict to generalized $\cM(p)$-modules in $\ind(\cO_{\cM(p)})$. Then $\rep^0 \cW(p)^{A_m}$ is the braided tensor category of (local) generalized $\cW(p)^{A_m}$-modules in $\ind(\cO_{\cM(p)})$. Let $\cC_{\cW(p)^{A_m}}$ be the category of all grading-restricted generalized $\cW(p)^{A_m}$-modules; it is a braided tensor category by \cite{Hu-C2}. Since all objects of $\cC_{\cW(p)^{A_m}}$ are also grading-restricted generalized $\cM(p)$-modules, Proposition \ref{prop:gen_Mp_mod_in_DLC} shows that $\cC_{\cW(p)^{A_m}}$ is a subcategory of $\rep^0\cW(p)^{A_m}$; indeed it is a braided tensor subcategory by \cite[Theorem 3.65]{CKM-exts} (or \cite[Theorem 7.7]{CMY-completions}).

 Let $\cF_{\cW(p)^{A_m}}: \cO_{\cM(p)} \rightarrow \rep \cW(p)^{A_m}$ be the tensor functor of induction, defined on objects by $\cF_{\cW(p)^{A_m}}(M)=\cW(p)^{A_m}\tens M$ (where $\tens$ denotes the tensor product on $\ind(\cO_{\cM(p)})$) and on morphisms by $\cF_{\cW(p)^{A_m}}(f)=\Id_{\cW(p)^{A_m}}\tens f$. Induction is exact since $\cO_{\cM(p)}$ is rigid (see for example the proof of \cite[Theorem 3.2.4]{CMY-singlet}). Moreover, $\cF_{\cW(p)^{A_m}}$ maps simple objects in $\cO_{\cM(p)}$ to simple objects in $\rep \cW(p)^{A_m}$ by \cite[Proposition 4.4]{CKM-exts} (which applies because tensoring with $\cM_{2mn+1,1}$, $n\neq 0$, does not fix any simple object in $\cO_{\cM(p)}$). Moreover, the argument of \cite[Proposition 5.0.4]{CMY3} shows that every simple object of $\rep\cW(p)^{A_m}$ is isomorphic to the induction of a simple $\cM(p)$-module in $\cO_{\cM(p)}$, and that $\cF_{\cW(p)^{A_m}}(M_1)\cong\cF_{\cW(p)^{A_m}}(M_2)$ for simple modules $M_1$ and $M_2$ if and only if $M_2\cong\cM_{2mn+1,1}\tens M_1$ for some $n\in\ZZ$. This discussion shows that we can use induction to classify all irreducible $\cW(p)^{A_m}$-modules (see also \cite[Theorem~2.5]{AM-log-mods-app}):
\begin{thm}\label{thm:exhaust-simple}
The category $\cC_{\cW(p)^{A_m}}$ of grading-restricted generalized $\cW(p)^{A_m}$-modules has precisely $2pm^2$ distinct simple objects, given by 
\begin{equation*}
\cW_{\overline{r},s}:=\cF_{\cW(p)^{A_m}}(\cM_{r,s}),\qquad\overline{r}=r+2m\ZZ\in\ZZ/2m\ZZ,\,\, 1\leq s\leq p
\end{equation*}
and 
\begin{equation*}
\cV_{\lambda+mL}:=\cF_{\cW(p)^{A_m}}(\cF_{\lambda}),\qquad \lambda+mL\in\left(\frac{1}{m}L^\circ\setminus L^\circ\right)\bigg/ mL.
\end{equation*}
\end{thm}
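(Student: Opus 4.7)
The plan is to identify simple objects of $\cC_{\cW(p)^{A_m}}$ with inductions $\cF_{\cW(p)^{A_m}}(M)$ for $M$ a simple $\cM(p)$-module, keep those that are both local (i.e.\ in $\rep^0\cW(p)^{A_m}$) and grading-restricted, and then count equivalence classes under the simple-current action $M\mapsto\cM_{2mn+1,1}\tens M$. By the discussion preceding the theorem (invoking \cite[Proposition 5.0.4]{CMY3}), every simple of $\rep\cW(p)^{A_m}$ is an induction of some simple $\cM(p)$-module, and two such inductions are isomorphic precisely when the modules differ by a simple current $\cM_{2mn+1,1}$. Since $\cM_{2m+1,1}$ generates the subgroup $\{\cM_{2mn+1,1}\}_{n\in\ZZ}$, the locality condition on $M$ reduces to $\cR^2_{\cM_{2m+1,1},M}=\Id$.

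For the atypical simples $\cM_{r,s}$, the balancing equation together with the fusion rule $\cM_{2m+1,1}\tens\cM_{r,s}\cong\cM_{r+2m,s}$ from \cite[Theorem 3.2.8(1)]{CMY-singlet} yield $\cR^2_{\cM_{2m+1,1},\cM_{r,s}}=e^{2\pi i(h_{r+2m,s}-h_{2m+1,1}-h_{r,s})}\Id$, and a direct calculation shows the exponent equals $m(p(r-1)+1-s)\in\ZZ$, so locality always holds. Grading-restriction of $\cF_{\cW(p)^{A_m}}(\cM_{r,s})=\bigoplus_{n\in\ZZ}\cM_{r+2mn,s}$ follows because the minimal conformal weights grow quadratically in $n$, so only finitely many summands contribute to each weight space. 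The simple-current action identifies $r$ with $r+2mn$, giving equivalence classes parameterized by $(\overline{r},s)\in\ZZ/2m\ZZ\times\{1,\ldots,p\}$, hence $2mp$ simples $\cW_{\overline{r},s}$.

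For the typical simples $\cF_\lambda$ with $\lambda\in\CC\setminus L^\circ$, a similar computation (generalizing that in Proposition \ref{prop:irred_mod_grading}) combined with Lemma \ref{lem:simple_curr_tens} yields $\cR^2_{\cM_{2m+1,1},\cF_\lambda}=e^{2\pi i\lambda\alpha_{2m+1,1}}\Id=e^{-2\pi im\lambda\alpha_+}\Id$, so locality requires $m\lambda\alpha_+\in\ZZ$, equivalently $\lambda\in\tfrac{1}{m}L^\circ$ (using $\alpha_+\alpha_-=-2$, hence $\tfrac{1}{\alpha_+}=-\tfrac{\alpha_-}{2}$, and $L^\circ=\ZZ\tfrac{\alpha_-}{2}$). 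Under this constraint, $\cF_{\cW(p)^{A_m}}(\cF_\lambda)=\bigoplus_{n\in\ZZ}\cF_{\lambda-mn\alpha_+}$ is grading-restricted by the same quadratic-growth argument applied to $h_\mu=\tfrac{1}{2}\mu(\mu-\alpha_0)$. Simple currents identify $\lambda$ modulo $mL$, so equivalence classes of typical simples are parameterized by $(\tfrac{1}{m}L^\circ\setminus L^\circ)/mL$, yielding the modules $\cV_{\lambda+mL}$.

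To verify the count, a lattice-index calculation shows $|\tfrac{1}{m}L^\circ/mL|=2m^2p$ and $|L^\circ/mL|=2mp$, so $|(\tfrac{1}{m}L^\circ\setminus L^\circ)/mL|=2mp(m-1)$; combined with the $2mp$ atypical classes this totals $2m^2p$ simples, matching the theorem. The main obstacle is the monodromy calculation for typical Fock modules that pins down the locality constraint $\lambda\in\tfrac{1}{m}L^\circ$, which is precisely what reduces the a priori infinite family of typical $\cM(p)$-modules to finitely many cosets inducing into the cyclic orbifold.
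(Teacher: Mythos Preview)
Your proof is correct and follows essentially the same approach as the paper's: reduce to classifying simple inductions in $\rep^0\cW(p)^{A_m}$ via the monodromy condition $\cR^2_{\cM_{2m+1,1},M}=\Id$, compute this monodromy using the balancing equation and the simple-current fusion rules, and then count orbits under the action of $\{\cM_{2mn+1,1}\}_{n\in\ZZ}$. The only minor differences are that the paper unifies the atypical and typical cases via the notation $\cM_\lambda=\mathrm{Soc}(\cF_\lambda)$ (so that locality becomes the single condition $\lambda\in\frac{1}{m}L^\circ$, with the atypicals corresponding to $\lambda\in L^\circ$), and it invokes $C_2$-cofiniteness of $\cW(p)^{A_m}$ via \cite{ABD} to obtain grading-restriction of simple objects, whereas you verify grading-restriction directly from the quadratic growth of conformal weights.
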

\begin{proof}
Any simple generalized $\cW(p)^{A_m}$-module in $\rep^0\cW(p)^{A_m}$ is necessarily grading-restricted since $\cW(p)^{A_m}$ is $C_2$-cofinite (see \cite[Corollary 5.7]{ABD}).  Thus it is enough to determine all simple objects of $\rep^0\cW(p)^{A_m}$, and for this it is enough to determine which irreducible $\cM(p)$-modules induce to local $\cW(p)^{A_m}$-modules. As in the discussion preceding Lemma \ref{lem:untwisted_induction}, $\cF_{\cW(p)^{A_m}}(M)$ is local if and only if $\cR^2_{\cM_{2m+1,1},M}=\Id_{\cM_{2m+1,1}\tens M}$. For simplicity of notation, we use $\cM_\lambda$ for any $\lambda\in\CC$ to denote the irreducible $\cM(p)$-socle of $\cF_\lambda$. Then similar calculations as in the proof of Proposition \ref{prop:irred_mod_grading} show that
\begin{align*}
\cR_{\cM_{2m+1,1},\cM_\lambda}^2 & =\theta_{\cM_{2m+1,1}\tens\cM_\lambda}\circ(\theta_{\cM_{2m+1,1}}^{-1}\tens\theta_{\cM_\lambda}^{-1})\nonumber\\
& =e^{-2\pi i\alpha_{2m+1,1}\lambda}\Id_{\cM_{2m+1,1}\tens\cM_\lambda} =e^{2\pi i m\alpha_+\lambda}\Id_{\cM_{\alpha_{2m+1,1}}\tens\cM_\lambda},
\end{align*}
so $\cF_{\cW(p)^{A_m}}(\cM_\lambda)$ is local if and only if $\lambda\in\frac{1}{m} L^\circ$.

For $\lambda_1,\lambda_2\in\frac{1}{m} L^\circ$, we also have $\cF_{\cW(p)^{A_m}}(\cM_{\lambda_1})\cong\cF_{\cW(p)^{A_m}}(\cM_{\lambda_2})$ if and only if
\begin{equation*}
\cM_{\lambda_2}\cong\cM_{2mn+1,1}\tens\cM_{\lambda_1}\cong\cM_{\lambda_1+\alpha_{2mn+1,1}}
\end{equation*}
for some $n\in\ZZ$. Since
\begin{equation*}
\lbrace\alpha_{2mn+1,1}\,\vert\,n\in\ZZ\rbrace = -m\alpha_+\ZZ=mL,
\end{equation*}
we see that $\cW(p)^{A_m}$ has precisely $2pm^2$ distinct irreducible modules parametrized by $\frac{1}{m}L^\circ/mL\cong\frac{1}{m}\ZZ/2pm\ZZ$. Moreover, the modules in the statement of the proposition give a complete list of isomorphism class representatives.
\end{proof}

By \cite{Hu-C2}, every simple module in $\cC_{\cW(p)^{A_m}}$ has a projective cover. To determine these projective modules, we treat $\cW(p)^{A_m}$ as a commutative algebra in $\ind(\cO_{\cM(p)}^T)$. First:
\begin{prop}
Every generalized $\cW(p)^{A_m}$-module in $\rep^0\cW(p)^{A_m}$ restricts to an $\cM(p)$-module in $\ind(\cO_{\cM(p)}^T)$.
\end{prop}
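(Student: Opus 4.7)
Given $W\in\rep^0\cW(p)^{A_m}$, we must show that every $\cM(p)$-submodule of $W$ generated by a single vector belongs to $\cO_{\cM(p)}^T$. Since $W$ lies in $\ind(\cO_{\cM(p)})$ as an $\cM(p)$-module by assumption, such a submodule $M$ is automatically an object of $\cO_{\cM(p)}$ and hence has finite length. So the content of the proposition is the upgrade from $\cO_{\cM(p)}$ to $\cO_{\cM(p)}^T$.

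The natural tool is the open Hopf link endomorphism $h_M:=h^{\cM_{2,1}}_M\in\Endo_{\cM(p)}(M)$ from the proof of Theorem \ref{thm:OT_properties}(2). As observed there, membership $M\in\cO_{\cM(p)}^T$ is equivalent to diagonalizability of $h_M$: any nonzero eigenvalue can be uniquely written as $e^{\pi i\beta\sqrt{2p}}$ for some $\beta+2L^\circ\in T$; the corresponding eigenspace is an $\cM(p)$-submodule because $h_M$ is an $\cM(p)$-endomorphism; and scalarity of $h$ on each eigenspace pulls back, via invertibility of the tensor functor $\cM_{0,1}\tens-$, to the required scalarity of $\cR^2_{\cM_{2,1},-}$. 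Thus the task reduces to proving that $h_M$ is diagonalizable.

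Locality of $W$ gives $\cR^2_{\cW(p)^{A_m},W}=\Id$; projecting onto the summand $\cM_{2m+1,1}\subseteq\cW(p)^{A_m}$ in the decomposition \eqref{eqn:Wp_Am_decomp} and invoking naturality of monodromy along the inclusion $M\hookrightarrow W$ yields $\cR^2_{\cM_{2m+1,1},M}=\Id_{\cM_{2m+1,1}\tens M}$, equivalently $h^{\cM_{2m+1,1}}_M=\Id_M$. The isomorphism $\cM_{2m+1,1}\cong\cM_{2,1}^{\boxtimes 2m}$, obtained by iterating the fusion rule $\cM_{r,1}\tens\cM_{s,1}\cong\cM_{r+s-1,1}$ of \cite[Theorem~3.2.8(1)]{CMY-singlet}, together with a $2m$-fold repetition of the hexagon-axiom computation appearing in the proof of Proposition \ref{prop:C0_and_OT} (using compatible evaluations for the various $\cM_{2n+1,1}$), identifies $h^{\cM_{2m+1,1}}_M$ with $h_M^{2m}$. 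Hence $h_M^{2m}=\Id_M$; since $x^{2m}-1$ is separable, $h_M$ is diagonalizable, and $M\in\cO_{\cM(p)}^T$ as required.

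The delicate point is the multiplicativity identity $h^{\cM_{2m+1,1}}_M=h_M^{2m}$, i.e.\ multiplicativity of the open Hopf link under tensor products of simple currents. This must be verified with compatible coevaluation morphisms for the $\cM_{2n+1,1}$, and can alternatively be bypassed by expanding $\cR^2_{\cM_{2,1}^{\boxtimes 2m},M}$ directly via iterated hexagon axioms; either route leads to the conclusion that $\cR^2_{\cM_{2,1},M}$ is diagonalizable with $2m$-th root of unity eigenvalues, and the decomposition of $M$ into $h_M$-eigenspaces then realizes the required $T$-grading.
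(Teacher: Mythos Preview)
Your overall strategy coincides with the paper's: reduce to diagonalizability of the open Hopf link endomorphism $h_M$, establish $h_M^{2m}=\Id_M$ by first obtaining $\cR^2_{\cM_{2m+1,1},M}=\Id$ from locality, and then conclude. However, there is a genuine gap at the locality step. The assertion ``Locality of $W$ gives $\cR^2_{\cW(p)^{A_m},W}=\Id$'' is not what locality means in the Kirillov--Ostrik/\cite{CKM-exts} framework used here: a module $X$ lies in $\rep^0 A$ precisely when $\mu_X\circ\cR^2_{A,X}=\mu_X$, where $\mu_X:A\tens X\to X$ is the action map. Since $\mu_X$ is far from injective (its source is typically much larger than its target), this does not by itself yield $\cR^2_{A,X}=\Id$. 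The paper closes this gap by restricting $\mu_X$ to the summand $\cM_{2m+1,1}\tens X$ and proving that this restriction \emph{is} an isomorphism, exhibiting an explicit inverse built from the simple current property of $\cM_{2m+1,1}$ and associativity of $\mu_X$. Only after this can one cancel $\mu_X$ and conclude $\cR^2_{\cM_{2m+1,1},X}=\Id$, whence the same for $M\hookrightarrow X$ by naturality. Your ``projecting onto the summand'' gestures at this, but the substantive point---injectivity of the restricted action---is missing.

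On the multiplicativity $h^{\cM_{2m+1,1}}_M=h_M^{2m}$, your iterated hexagon computation is a valid route (and you are right to flag the need for compatible (co)evaluations). The paper instead passes to the standard open Hopf link $\Phi_{\bullet,M}$ via the relation $\Phi_{\cM_{2,1},M}=(\dim_{\cM(p)}\cM_{2,1})\,h_M$ of Remark~\ref{rem:open_Hopf_link}, and uses that $\Phi_{\bullet,M}$ defines a ring homomorphism from the Grothendieck ring of $\cO_{\cM(p)}$ to $\Endo_{\cM(p)}(M)$. Since $\cM_{2m+1,1}\cong\cM_{2,1}^{\tens 2m}$, this immediately gives $\Phi_{\cM_{2m+1,1},M}=\Phi_{\cM_{2,1},M}^{2m}$, and dividing by the appropriate categorical dimensions yields $h_M^{2m}=\Id_M$ with no bookkeeping of coherence isomorphisms. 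This is cleaner than the direct hexagon expansion, though both reach the same destination.
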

\begin{proof}
Let $X$ be a generalized $\cW(p)^{A_m}$-module in $\rep^0\cW(p)^{A_m}$. By definition, $X$ is the union of its $\cO_{\cM(p)}$-submodules, so we just need to show that any finite-length $\cM(p)$-submodule $M\subseteq X$ is an object of $\cO_{\cM(p)}^T$.  We may assume that $M$ is indecomposable, in which case we need to show that $\cR_{\cM_{2,1},M}^2$ is a scalar multiple of the identity.

We first claim that $\cR_{\cM_{2m+1,1},M}^2=\Id_{\cM_{2m+1,1}\tens M}$. To prove this, let $i: M\hookrightarrow X$ and $j: \cM_{2m+1,1}\hookrightarrow \cW(p)^{A_m}$ denote the inclusions, and let $\mu_X: \cW(p)^{A_m}\tens X\rightarrow X$ denote the morphism induced by the vertex operator $Y_X: A\otimes X\rightarrow X((x))$ and the universal property of the tensor product in $\ind(\cO_{\cM(p)})$. By naturality of the monodromy isomorphisms in $\ind(\cO_{\cM(p)})$, the diagram
 \begin{equation*}
 \xymatrixcolsep{5pc}
 \xymatrixrowsep{2.5pc}
  \xymatrix{
  \cM_{2m+1,1}\tens M \ar[d]^{\cR_{\cM_{2m+1,1},M}^2} \ar[r]^{\Id_{\cM_{2m+1,1}}\tens i} & \cM_{2m+1,1}\tens X \ar[d]^{\cR_{\cM_{2m+1,1},X}^2} \ar[r]^{j\tens\Id_X} & \cW(p)^{A_m}\tens X \ar[d]^{\cR^2_{\cW(p)^{A_m},X}}\\
  \cM_{2m+1,1}\tens M \ar[r]^{\Id_{\cM_{2m+1,1}}\tens i} \ar[rd] & \cM_{2m+1,1}\tens X \ar[d]^(.4){\mu_X\vert_{\cM_{2m+1,1}\tens X}} \ar[r]^{j\tens\Id_X} & \cW(p)^{A_m}\tens X \ar[ld]^{\mu_X}\\
  & X & \\
  }
 \end{equation*}
commutes, with $\Id_{\cM_{2m+1,1}}\tens i$ injective by the exactness of $\cM_{2m+1,1}\tens\bullet$ and $j\tens\Id_X$ injective because $\cM_{2m+1,1}$ is a direct summand of $\cW(p)^{A_m}$. Now, $\mu_X\circ\cR_{\cW(p)^{A_m},X}^2=\mu_X$ by the definition of $\rep^0 \cW(p)^{A_m}$ (as given in \cite{KO,CKM-exts}, for example), so
\begin{equation*}
 \mu_X\vert_{\cM_{2m+1,1}\tens X}\circ(\Id_{\cM_{2m+1,1}}\tens i)\circ\cR_{\cM_{2m+1,1},M}^2=\mu_X\circ(j\tens i)=\mu_X\vert_{\cM_{2m+1,1}\tens X}\circ(\Id_{\cM_{2m+1,1}}\tens i).
\end{equation*}
Because $\Id_{\cM_{2m+1,1}}\tens i$ is injective, it is enough to show $\mu_X\vert_{\cM_{2m+1,1}\tens X}$ is injective as well. In fact $\mu_X\vert_{\cM_{2m+1,1}\tens X}$ is an isomorphism with inverse
\begin{align*}
 X\xrightarrow{l_X^{-1}} & \cM_{1,1}\tens X\xrightarrow{(\mu_X\vert_{\cM_{2m+1,1}\tens \cM_{-2m+1,1}})^{-1}\tens\Id_X} (\cM_{2m+1,1}\tens \cM_{-2m+1,1})\tens X\nonumber\\
 &\xrightarrow{\cA_{\cM_{2m+1,1},\cM_{-2m+1,1},X}^{-1}} \cM_{2m+1,1}\tens(\cM_{-2m+1,1}\tens X)\nonumber\\
 &\xrightarrow{\Id_{\cM_{2m+1,1}}\tens\mu_X\vert_{\cM_{-2m+1,1}\tens X}} \cM_{2m+1,1}\tens X,
\end{align*}
since $\cM_{2m+1,1}$ is a simple current and the multiplication $\mu_X$ is associative. This proves the claim.

Now we consider $\cR^2_{\cM_{2,1},M}$. Recall the open Hopf link map $h_M$ defined in the proof of Theorem \ref{thm:OT_properties}, as well as the standard open Hopf link $\Phi_{\bullet,M}$ discussed in Remark \ref{rem:open_Hopf_link}. Since $\Phi_{\bullet,M}$ defines a ring homomorphism from the Grothendieck ring of $\cO_{\cM(p)}$ to $\mathrm{End}_{\cM(p)}\,M$ (see for example the graphical proof in \cite[Section 3.1.3]{CG}), the relation between $h_M$ and $\Phi_{\cM_{2,1},M}$ from Remark \ref{rem:open_Hopf_link} combined with $\cR_{\cM_{2m+1,1},M}^2=\Id_{\cM_{2m+1,1}\tens M}$ implies
\begin{equation*}
h_M^{2m}=\frac{\Phi_{\cM_{2,1},M}^{2m}}{(\dim_{\cM(p)} \cM_{2,1})^{2m}} =\frac{\Phi_{\cM_{2m+1,1},M}}{\dim_{\cM(p)}\cM_{2m+1,1}} =\Id_M.
\end{equation*}
Thus $h_M$ has finite order on all finite-dimensional conformal weight spaces of $M$, which means that $h_M$ is diagonalizable on $M$ with $2m$th roots of unity as eigenvalues. Assuming as we may that $M$ is indecomposable, $h_M=e^{\pi i n/m}\Id_M$ for some $n\in\lbrace 0,1,\ldots, 2m-1\rbrace$. It is now immediate from the definition of $h_M$ that
\begin{equation*}
\Id_{\cM_{0,1}}\tens\cR_{\cM_{2,1},M}^2 = e^{\pi i n/m}\Id_{\cM_{0,1}\tens(\cM_{2,1}\tens M)}.
\end{equation*}
Then by naturality of the unit and associativity isomorphisms, 
\begin{equation*}
\cR_{\cM_{2,1},M}^2 =F\circ(\Id_{\cM_{2,1}}\tens(\Id_{\cM_{0,1}}\tens\cR_{\cM_{2,1},M}^2))\circ F^{-1} = e^{\pi i n/m}\Id_{\cM_{2,1}\tens M}
\end{equation*} 
where $F$ is the composition
\begin{align*}
\cM_{2,1}\tens(\cM_{0,1}\tens(\cM_{2,1}\tens M))  &\xrightarrow{\cA_{\cM_{2,1},\cM_{0,1},\cM_{2,1}\tens M}} (\cM_{2,1}\tens\cM_{0,1})\tens(\cM_{2,1},\tens M) \nonumber\\
&\xrightarrow{\til{e}\tens\Id_{\cM_{2,1}\tens M}} \cM_{1,1}\tens(\cM_{2,1}\tens M)\xrightarrow{l_{\cM_{2,1}\tens M}} \cM_{2,1}\tens M
\end{align*}
and $\til{e}: \cM_{2,1}\tens\cM_{0,1}\rightarrow\cM_{1,1}$ is any isomorphism. This proves the proposition.
\end{proof}

By the preceding proposition, $\rep^0\cW(p)^{A_m}$ is precisely the braided tensor category of generalized $\cW(p)^{A_m}$-modules which restrict to $\cM(p)$-modules in $\cO_{\cM(p)}^T$. Since $\cC_{\cW(p)^{A_m}}$ is a braided tensor subcategory, we can now identify the projective objects in $\cC_{\cW(p)^{A_m}}$ as the inductions of projective objects in $\cO_{\cM(p)}^T$:
\begin{thm}
For $\lambda+mL\in\left(\frac{1}{m}L^\circ\setminus L^\circ\right)\big/ mL$ and $\overline{r}\in\ZZ/2m\ZZ$, the irreducible $\cW(p)^{A_m}$-modules $\cV_{\lambda+mL}$ and $\cW_{\overline{r},p}$ are projective in $\cC_{\cW(p)^{A_m}}$. For $\overline{r}=r+2m\ZZ\in\ZZ/2m\ZZ$ and $1\leq s\leq p-1$, the irreducible $\cW(p)^{A_m}$-module $\cW_{\overline{r},s}$ has a projective cover $\mathcal{R}_{\overline{r},s}:=\cF_{\cW(p)^{A_m}}(\cP_{r,s})$ with Loewy diagram
\begin{equation*}
\begin{matrix}
  \begin{tikzpicture}[->,>=latex,scale=1.5]
\node (b1) at (1,0) {$\cW_{\overline{r},s}$};
\node (c1) at (-1, 1){$\mathcal{R}_{\overline{r},s}$:};
   \node (a1) at (0,1) {$\cW_{\overline{r-1},p-s}$};
   \node (b2) at (2,1) {$\cW_{\overline{r+1},p-s}$};
    \node (a2) at (1,2) {$\cW_{\overline{r},s}$};
\draw[] (b1) -- node[left] {} (a1);
   \draw[] (b1) -- node[left] {} (b2);
    \draw[] (a1) -- node[left] {} (a2);
    \draw[] (b2) -- node[left] {} (a2);
\end{tikzpicture}
\end{matrix} .
\end{equation*}
\end{thm}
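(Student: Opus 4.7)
The plan is to transfer the projectivity results of Theorem \ref{thm:intro_projective} across the exact induction functor $\cF_{\cW(p)^{A_m}}\colon \cO_{\cM(p)}^T \to \rep^0\cW(p)^{A_m}$. By Frobenius reciprocity, restriction to $\cM(p)$ is right adjoint to $\cF_{\cW(p)^{A_m}}$, and restriction sends $\cC_{\cW(p)^{A_m}}\subseteq \rep^0\cW(p)^{A_m}$ into $\cO_{\cM(p)}^T$ exactly. The standard adjunction argument thus shows that if $M$ is projective in $\cO_{\cM(p)}^T$ and $\cF_{\cW(p)^{A_m}}(M)$ happens to lie in $\cC_{\cW(p)^{A_m}}$, then it is projective there.

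First I would verify that the relevant inductions are grading-restricted. Using the decomposition \eqref{eqn:Wp_Am_decomp} together with the simple-current fusion rules of Lemma \ref{lem:simple_curr_tens}, \cite[Theorem 3.2.8(1)]{CMY-singlet}, and the analogous rule $\cM_{r',1}\tens \cP_{r,s} \cong \cP_{r+r'-1,s}$, I obtain the $\cM(p)$-module isomorphisms
\[
\cF_{\cW(p)^{A_m}}(\cF_\lambda) \cong \bigoplus_{n\in\ZZ} \cF_{\lambda+\alpha_{2mn+1,1}}, \qquad \cF_{\cW(p)^{A_m}}(\cP_{r,s}) \cong \bigoplus_{n\in\ZZ} \cP_{r+2mn,s},
\]
and similarly for $\cM_{r,p}$. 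Since $\cF_{\cW(p)^{A_m}}$ sends every composition factor $\cM_{r+2mn,s}$ (respectively $\cM_{r\pm1+2mn,p-s}$) to the same simple module $\cW_{\overline{r},s}$ (respectively $\cW_{\overline{r\pm1},p-s}$), each orbit of $\cM(p)$-composition factors collapses to a single $\cW(p)^{A_m}$-composition factor, yielding a $\cW(p)^{A_m}$-module of finite length. Combined with $C_2$-cofiniteness of $\cW(p)^{A_m}$, this produces objects of $\cC_{\cW(p)^{A_m}}$; Theorem \ref{thm:intro_projective} then gives the projectivity of $\cV_{\lambda+mL}$, $\cW_{\overline{r},p}$, and $\mathcal{R}_{\overline{r},s}$.

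For the Loewy diagram of $\mathcal{R}_{\overline{r},s}$, I would apply the exact functor $\cF_{\cW(p)^{A_m}}$ to the socle filtration of $\cP_{r,s}$ from \eqref{eqn:Prs_Loewy_diag}, obtaining a length-three filtration of $\mathcal{R}_{\overline{r},s}$ with successive quotients $\cW_{\overline{r},s}$, $\cW_{\overline{r-1},p-s}\oplus\cW_{\overline{r+1},p-s}$, and $\cW_{\overline{r},s}$. The main obstacle is confirming that this is actually the socle filtration and not a coarser one. For this, I would compute $\hom_{\cW(p)^{A_m}}(\cW, \mathcal{R}_{\overline{r},s})$ for each simple $\cW$ using Frobenius reciprocity, reducing to $\hom_{\cM(p)}(\mathrm{Res}\,\cW, \bigoplus_n \cP_{r+2mn,s})$; the known socle $\cM_{r+2mn,s}$ of each $\cP_{r+2mn,s}$ forces $\mathrm{Soc}(\mathcal{R}_{\overline{r},s}) = \cW_{\overline{r},s}$, and the same reasoning applied to the quotient $\mathcal{R}_{\overline{r},s}/\mathrm{Soc}(\mathcal{R}_{\overline{r},s}) \cong \cF_{\cW(p)^{A_m}}(\cP_{r,s}/\cM_{r,s})$ pins down the middle row. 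A minor subtlety to handle is the case $p = 2s$, where the first indices $r+2mn$ and $r\pm1+2mn'$ differ by an odd integer and hence never coincide for $m \geq 1$; this rules out any accidental identification of simple factors. Finally, indecomposability together with simple head $\cW_{\overline{r},s}$ (the unique simple module occurring as a quotient, again verified by Frobenius reciprocity) identifies $\mathcal{R}_{\overline{r},s}$ as the projective cover of $\cW_{\overline{r},s}$.
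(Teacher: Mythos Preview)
Your approach is essentially the same as the paper's --- transfer projectivity via the induction--restriction adjunction and then read off the Loewy diagram by applying the exact induction functor to the socle filtration of $\cP_{r,s}$ --- but there is a gap in the projectivity step. You assert that restriction sends $\cC_{\cW(p)^{A_m}}$ into $\cO_{\cM(p)}^T$; it does not. Already $\cW(p)^{A_m}$ itself restricts to the infinite-length module $\bigoplus_{n\in\ZZ}\cM_{2mn+1,1}$, so restriction lands only in $\ind(\cO_{\cM(p)}^T)$. The adjunction argument therefore requires $\cP_{r,s}$ and $\cF_\lambda$ to be projective in $\ind(\cO_{\cM(p)}^T)$, not just in $\cO_{\cM(p)}^T$. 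This is true --- finitely generated projectives survive passage to the direct limit completion --- but it is a step that has to be justified; the paper handles it by citing \cite[Lemma 5.0.6]{CMY3} and \cite[Lemma 17]{ACKM}. (The paper also first checks that $\cF_{\cW(p)^{A_m}}(\cP_{r,s})$ is \emph{local}, via \cite[Theorem 1.4(1)]{CKL}, before it makes sense to speak of it as a genuine $\cW(p)^{A_m}$-module; your argument implicitly assumes this.)

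A related slip: in the socle computation you reduce $\hom_{\cW(p)^{A_m}}(\cW,\mathcal{R}_{\overline{r},s})$ to $\hom_{\cM(p)}(\mathrm{Res}\,\cW,\bigoplus_n\cP_{r+2mn,s})$. That is the wrong direction of the adjunction, and the resulting Hom-space is infinite-dimensional since both arguments are infinite direct sums indexed by $\ZZ$. The correct reduction, writing $\cW\cong\cF_{\cW(p)^{A_m}}(M)$ for a \emph{single} simple $\cM(p)$-module $M$, is to $\hom_{\cM(p)}(M,\bigoplus_n\cP_{r+2mn,s})$, which recovers the intended one-dimensional answer. Once these two points are fixed, your argument and the paper's coincide.
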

\begin{proof}
For notational simplicity, let $\cP_\lambda$ for $\lambda\in\CC$ denote the projective cover in $\cO_{\cM(p)}^T$ of the irreducible $\cM(p)$-module $\cM_\lambda\subseteq\cF_\lambda$. Since $\cO_{\cM(p)}^T$ is generated as a tensor category by its simple objects, \cite[Theorem~1.4(1)]{CKL} implies that the induced module $\cF_{\cW(p)^{A_m}}(\cP_\lambda)$ is local if and only if 
$\cF_{\cW(p)^{A_m}}(\cM_\lambda)$ is local. That is, $\cF_{\cW(p)^{A_m}}(\cP_\lambda)$ is an object of $\rep^0\cW(p)^{A_m}$ for $\lambda\in\frac{1}{m}L^\circ$, and then because $\cP_\lambda$ is projective in $\cO_{\cM(p)}^T$, the same argument as in \cite[Lemma 5.0.6]{CMY3} and \cite[Lemma 17]{ACKM} shows that $\cF_{\cW(p)^{A_m}}(\cP_\lambda)$ is projective in $\ind(\cO_{\cM(p)}^T)$ and then also in $\rep^0\cW(p)^{A_m}$.

The proof that $\cF_{\cW(p)^{A_m}}(\cP_\lambda)$ for $\lambda\in\frac{1}{m}L^\circ$ is a projective cover of $\cF_{\cW(p)^{A_m}}(\cM_\lambda)$ in $\rep^0\cW(p)^{A_m}$ is the same as the proof of \cite[Proposition 5.0.7]{CMY3}, so we omit it here. For $\lambda\in\frac{1}{m}L^\circ\setminus L^\circ$ or $\lambda=\alpha_{r,p}$, $r\in\ZZ$, we have $\cF_{\cW(p)^{A_m}}(\cP_\lambda)=\cF_{\cW(p)^{A_m}}(\cM_\lambda)$, so $\cF_{\cW(p)^{A_m}}(\cM_\lambda)$ is a simple projective object in the subcategory $\cC_{\cW(p)^{A_m}}$. For $\lambda=\alpha_{r,s}$ with $r\in\ZZ$, $1\leq s\leq p-1$, the Loewy diagram of $\mathcal{R}_{\overline{r},s}=\cF_{\cW(p)^{A_m}}(\cP_{r,s})$ can be derived using a similar argument as that in \cite[Theorem 7.9]{MY}, using the Loewy diagram of $\cP_{r,s}$ from \eqref{eqn:Prs_Loewy_diag}, exactness of $\cF_{\cW(p)^{A_m}}$, and Frobenius reciprocity. In particular, $\mathcal{R}_{\overline{r},s}$ has finite length and thus is a projective object in $\cC_{\cW(p)^{A_m}}$ as well as a projective cover of $\cW_{\overline{r},s}$.
\end{proof}

Since the induction functor $\cF_{\cW(p)^{A_m}}$ is monoidal, all tensor products of simple objects in $\cC_{\cW(p)^{A_m}}$ follow immediately from the $\cM(p)$-module fusion rules in \cite[Theorem 5.2.1(1)]{CMY-singlet}, Theorem \ref{thm:Mrs_Flambda}, Theorem \ref{thm:typ_typ_atyp_fusion}, and Theorem \ref{thm:typ_typ_typ_fusion}:
\begin{thm}
Tensor products of simple modules in $\cC_{\cW(p)^{A_m}}$ are as follows:
\begin{enumerate}
\item For $\overline{r},\overline{r'}\in\ZZ/2m\ZZ$ and $1\leq s,s'\leq p$,
\begin{equation*}
  \cW_{\overline{r},s}\tens \cW_{\overline{r'},s'}\cong\bigoplus_{\substack{\ell=\vert s-s'\vert+1\\ \ell+s+s'\equiv 1\,(\mathrm{mod}\,2)}}^{\mathrm{min}(s+s'-1, 2p-1-s-s')} \cW_{\overline{r+r'-1},\ell}\oplus\bigoplus_{\substack{\ell=2p+1-s-s'\\ \ell+s+s'\equiv 1\,(\mathrm{mod}\,2)}}^p \mathcal{R}_{\overline{r+r'-1},\ell},
 \end{equation*}
where sums are taken to be empty if the lower bound exceeds the upper bound, and we use the notation $\cR_{\overline{r},p}:=\cW_{\overline{r},p}$ for $\overline{r}\in\ZZ/2m\ZZ$.

\item For $\overline{r}\in\ZZ/2m\ZZ$, $1\leq s\leq p$, and $\lambda+mL\in\left(\frac{1}{m}L^\circ\setminus L^\circ\right)\big/ mL$,
\begin{equation*}
\cW_{\overline{r},s}\tens\cV_{\lambda+mL}\cong\bigoplus_{\ell=0}^{s-1} \cV_{\lambda+\alpha_{r,s}+\ell\alpha_-+mL}.
\end{equation*}

\item For $\lambda+mL,\mu+mL\in\left(\frac{1}{m}L^\circ\setminus L^\circ\right)\big/mL$ such that $\lambda+\mu\in\alpha_0+\alpha_{r,s}+mL$ for some $r\in\ZZ$ and $1\leq s\leq p$,
\begin{equation*}
  \cV_{\lambda+mL}\tens\cV_{\mu+mL}\cong\bigoplus_{\substack{\ell= s\\ \ell\equiv s\,\,(\mathrm{mod}\,2)\\}}^p \cR_{\overline{r},\ell}\oplus\bigoplus_{\substack{\ell=p+2-s\\\ell\equiv p-s\,\,(\mathrm{mod}\,2)\\}}^p \cR_{\overline{r-1},\ell}.
 \end{equation*}

\item For $\lambda+mL,\mu+mL\in\left(\frac{1}{m}L^\circ\setminus L^\circ\right)\big/mL$ such that $\lambda+\mu\notin L^\circ$,
\begin{equation*}
\cV_{\lambda+mL}\tens\cV_{\mu+mL} \cong \bigoplus_{\ell=0}^{p-1} \cV_{\lambda + \mu + \ell \alpha_-+mL}.
\end{equation*}
\end{enumerate}
\end{thm}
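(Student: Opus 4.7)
The proof will be a direct application of the monoidality and exactness of the induction functor $\cF_{\cW(p)^{A_m}}\colon \cO_{\cM(p)}\to\rep\cW(p)^{A_m}$, combined with the already-established $\cM(p)$-module fusion rules. Specifically, by \cite[Theorem 2.59]{CKM-exts} the functor is monoidal, so for any objects $M_1,M_2$ of $\cO_{\cM(p)}$ there is a natural isomorphism
\begin{equation*}
\cF_{\cW(p)^{A_m}}(M_1)\tens\cF_{\cW(p)^{A_m}}(M_2)\cong\cF_{\cW(p)^{A_m}}(M_1\tens M_2).
\end{equation*}
The plan is to apply this to each of the four cases, expand the right-hand side using the corresponding $\cM(p)$-module fusion rule, and identify the resulting induced simple and projective modules with the notation $\cW_{\overline{r},s}$, $\cV_{\lambda+mL}$, and $\cR_{\overline{r},s}$.

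Concretely, for case (1) I would start from the fusion rule $\cM_{r,s}\tens\cM_{r',s'}$ given in \cite[Theorem 5.2.1(1)]{CMY-singlet}, induce both sides, and use exactness together with $\cF_{\cW(p)^{A_m}}(\cM_{r,s})=\cW_{\overline{r},s}$ and $\cF_{\cW(p)^{A_m}}(\cP_{r,s})=\cR_{\overline{r},s}$. For cases (2)--(4), the analogous inputs are Theorem \ref{thm:Mrs_Flambda}, Theorem \ref{thm:typ_typ_atyp_fusion}, and Theorem \ref{thm:typ_typ_typ_fusion} respectively. In each case the summands on the $\cM(p)$-module side are already in the list of simples or projectives whose inductions were identified above, so the computation is essentially formal.

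The only point requiring genuine verification is that no hidden identifications collapse distinct summands in the induced decomposition. Recall that $\cF_{\cW(p)^{A_m}}(\cM_{\lambda_1})\cong\cF_{\cW(p)^{A_m}}(\cM_{\lambda_2})$ if and only if $\lambda_2-\lambda_1\in mL=m\alpha_+\ZZ$, and similarly for projective covers. Since $\langle \alpha_+,\alpha_-\rangle$ is rational with $\alpha_-/\alpha_+=-1/p$, the relation $(\ell-\ell')\alpha_-\in m\alpha_+\ZZ$ forces $\ell-\ell'\in mp\ZZ$. In cases (2) and (4) the ranges of $\ell$ have length at most $p\leq mp$, so distinct $\ell$ produce non-isomorphic induced modules; in cases (1) and (3) the sums are over a single second index with the first index fixed, so the same disjointness holds a fortiori. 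This verification, together with well-definedness of $\cR_{\overline{r},s}$ under $\overline{r}\mapsto \overline{r}+2m$ (which follows from $\cM_{2m+1,1}\tens\cP_{r,s}\cong\cP_{r+2m,s}$, itself a simple-current consequence of \cite[Theorem 5.2.1(1)]{CMY-singlet}), completes the proof.

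The main obstacle, if any, is the bookkeeping of indices and the consistent use of the identification $\cV_{\lambda+mL}=\cF_{\cW(p)^{A_m}}(\cF_\lambda)$ and its invariance under $\lambda\mapsto\lambda+\alpha_{2mn+1,1}$, but no new representation-theoretic input beyond the cited fusion rules and the monoidality and exactness of induction is needed.
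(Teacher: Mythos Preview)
Your proposal is correct and follows essentially the same approach as the paper, which simply observes that since the induction functor $\cF_{\cW(p)^{A_m}}$ is monoidal, all the tensor products follow immediately from the $\cM(p)$-module fusion rules in \cite[Theorem 5.2.1(1)]{CMY-singlet} and Theorems \ref{thm:Mrs_Flambda}, \ref{thm:typ_typ_atyp_fusion}, and \ref{thm:typ_typ_typ_fusion}. Your additional check that distinct summands remain distinct after induction is harmless but unnecessary: since induction is additive, $\cF_{\cW(p)^{A_m}}$ applied to a direct sum is the direct sum of the inductions regardless of whether any of the induced summands happen to be isomorphic.
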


Finally, we establish the non-semisimple modularity of $\cC_{\cW(p)^{A_m}}$:
\begin{thm}
 The tensor category $\cC_{\cW(p)^{A_m}}$ of grading-restricted generalized $\cW(p)^{A_m}$-modules is rigid and thus also ribbon, and its braiding is non-degenerate. That is, $\cC_{\cW(p)^{A_m}}$ is a non-semisimple modular tensor category.
\end{thm}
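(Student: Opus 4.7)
The plan is to establish the three properties—rigidity, ribbon structure, and non-degeneracy—in turn, leveraging what has already been proved for $\cO_{\cM(p)}^T$. For rigidity, I would first observe that the monoidal induction functor $\cF_{\cW(p)^{A_m}} \colon \cO_{\cM(p)}^T \to \rep \cW(p)^{A_m}$ sends rigid objects to rigid objects with duals given by inducing the original duals. Since every simple module in $\cO_{\cM(p)}^T$ is rigid (Theorem \ref{thm:rig:OT}) and every simple object of $\cC_{\cW(p)^{A_m}}$ is isomorphic to some $\cW_{\overline{r},s}=\cF_{\cW(p)^{A_m}}(\cM_{r,s})$ or $\cV_{\lambda+mL}=\cF_{\cW(p)^{A_m}}(\cF_\lambda)$ by Theorem \ref{thm:exhaust-simple}, every simple object of $\cC_{\cW(p)^{A_m}}$ is rigid. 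Similarly, each projective cover $\cR_{\overline{r},s}=\cF_{\cW(p)^{A_m}}(\cP_{r,s})$ is rigid.

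Next, because $\cW(p)^{A_m}$ is $C_2$-cofinite \cite{ALM}, every object of $\cC_{\cW(p)^{A_m}}$ has finite length (see \cite{ABD}). I would then invoke the length induction argument of \cite[Theorem 4.4.1]{CMY-singlet}, which asserts that in a braided tensor category where every object has finite length and every simple object is rigid, every object is rigid. This promotes rigidity from the simples to all of $\cC_{\cW(p)^{A_m}}$. The ribbon structure is then inherited from $\cO_{\cM(p)}^T$: the twist $\theta = e^{2\pi i L(0)}$ is a well-defined natural automorphism of the identity functor on $\cC_{\cW(p)^{A_m}}$ since all conformal weight spaces are finite dimensional, and the balancing equation continues to hold because it holds already on $\cO_{\cM(p)}$ (Theorem \ref{thm:rig:O}) and the unit, associativity, and braiding isomorphisms in $\cC_{\cW(p)^{A_m}}$ arise from those of $\ind(\cO_{\cM(p)}^T)$ (Proposition \ref{prop:ind_OT_tens_cat}).

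For non-degeneracy of the braiding, I would appeal to the general criterion that a simple, $C_2$-cofinite, self-contragredient vertex operator algebra whose module category is rigid has a non-degenerate (equivalently, factorizable) braided module category. Self-contragredience of $\cW(p)^{A_m}$ follows from self-contragredience of $\cM(p)=\cM_{1,1}$ together with the fact that the simple-current decomposition \eqref{eqn:Wp_Am_decomp} is closed under $\cM(p)$-contragredient, since by \eqref{Mrs_contra}, $\cM_{2mn+1,1}'\cong\cM_{1-2mn,1}=\cM_{2m(-n)+1,1}$; hence the invariant form on $\cM(p)$ extends to a non-degenerate invariant bilinear form on $\cW(p)^{A_m}$. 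Combined with rigidity, this yields the non-degenerate braiding.

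The main obstacle will likely be locating and correctly applying the precise form of the non-degeneracy criterion: a number of recent works (notably by McRae and by Gannon--Negron) establish non-degeneracy for $C_2$-cofinite, self-contragredient, rigid module categories, but the hypotheses must be checked carefully in our setting. As a backup, I would exploit the further extension $\cW(p)^{A_m}\subseteq\cW(p)$ by the simple-current group $\ZZ/m\ZZ$. Non-degeneracy of $\cC_{\cW(p)}$ (known for $p=2$ from \cite{MY}, and obtainable in general by the same methods once the singlet tensor category of this paper is available) together with the standard Müger-centralizer/orbifold theory for simple-current extensions would allow us to transfer non-degeneracy back to $\cW(p)^{A_m}$. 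Either route yields that $\cC_{\cW(p)^{A_m}}$ is a non-semisimple modular tensor category.
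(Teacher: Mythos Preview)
Your proposal is correct and follows essentially the same route as the paper: rigidity of simples via induction from $\cO_{\cM(p)}^T$, then \cite[Theorem~4.4.1]{CMY-singlet} plus finite length for full rigidity, and a general $C_2$-cofinite/self-contragredient/rigid criterion for non-degeneracy. The precise reference you are looking for is \cite[Main Theorem 1]{McR-rat}; the paper also notes (as you do) that one can alternatively verify non-degeneracy by direct monodromy calculations as in \cite[Theorem 4.7]{GN}.
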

\begin{proof}
By Theorem \ref{thm:exhaust-simple} and because induction maps rigid objects to rigid objects, all simple $\cW(p)^{A_m}$-modules are rigid. Rigidity of $\cC_{\cW(p)^{A_m}}$ then follows from \cite[Theorem~4.4.1]{CMY-singlet} since every object in $\cC_{\cW(p)^{A_m}}$ has finite length. Non-degeneracy of the braiding follows from \cite[Main Theorem 1]{McR-rat} (alternatively, we could prove this through direct calculation of monodromies using the classification of simple $\cW(p)^{A_m}$-modules and the balancing equation, as in the proof of the $m=1$ case in \cite[Theorem 4.7]{GN}).
\end{proof}

\end{document}